\setlist{topsep=1pt,parsep=1pt,itemsep=1pt} 
\newcommand{\nn}{\mathbf{n}}
\newcommand{\ex}{\mathbf{e}_x}
\newcommand{\ey}{\mathbf{e}_y}
\newcommand{\A}{\cA}
\newcommand{\B}{\cB}
\newcommand{\AB}{\A\B}
\newcommand{\ABc}{(\A\cup \B)^c}
\newcommand{\Deltabar}{\bar\Delta}
\newcommand{\taujump}{\tau^{{\rm jump}}}
\newcommand{\rsi}{\varrho^2\sigma^2}
\newcommand{\rsii}{\frac{\varrho^2\sigma^2}{2}}
\newcommand{\Law}{\mathsf{Law}}
\newcommand{\NB}[1]{#1}
\newcommand{\avrg}[1]{\langle #1 \rangle}
\newcommand{\bigavrg}[1]{\bigl\langle #1 \bigr\rangle}
\title{An Eyring--Kramers law\\
for slowly oscillating bistable diffusions}
\author{Nils Berglund}
\date{}
\begin{document}

\maketitle

\begin{abstract}
We consider two-dimensional stochastic differential equations, describing the 
motion of a slowly and periodically forced overdamped particle in a  double-well 
potential, subjected to weak additive noise. We give sharp asymptotics of 
Eyring--Kramers type for the expected transition time from one potential well to 
the other one. Our results cover a range of forcing frequencies that are large 
with respect to the maximal transition rate between potential wells of the 
unforced system. The main difficulty of the analysis is that the forced system 
is non-reversible, so that standard methods from potential theory used to 
obtain Eyring--Kramers laws for reversible diffusions do not apply. Instead, we 
use results by Landim, Mariani and Seo that extend the potential-theoretic 
approach to non-reversible systems.  
\end{abstract}

\leftline{\small{\it Date.\/} Revised, May 31, 2021.}
\leftline{\small 2020 {\it Mathematical Subject Classification.\/} 
60H10, 		
34F05   	
(primary), 
60J45,		
81Q20		
(secondary)}
\noindent{\small{\it Keywords and phrases.\/}
Stochastic exit problem, 
diffusion exit,  
first-exit time, 
limit cycle, 
large deviations, 
potential theory, 
semiclassical analysis,
cycling, 
Gumbel distribution.}


\section{Introduction}
\label{sec:intro} 

This work is concerned with time-periodic perturbations of the stochastic 
differential equation (SDE)
\begin{equation}
\label{eq:SDE} 
 \6x_t = -V_0'(x_t)\6t + \sigma \6W_t\;, 
\end{equation} 
describing the overdamped motion of a Brownian particle in a double-well 
potential $V_0: \R\to\R$, which is bounded below and grows at least 
quadratically at infinity. 
\NB{Such periodically forced SDEs appear in many applications. For instance, in 
simple climate models, they have been used to provide an explanation for the 
regular appearance of glaciations in the last million years \cite{Nicolis,BSV}, 
via the mechanism called stochastic resonance. Though this explanation 
remains controversial for Ice Ages, models combining periodic forcing and 
stochastic perturbations are very common in climatology and ecology, see for 
instance~\cite{Velez01,Eisenman_Wettlaufer_08,ATW_21} for various examples. 
Periodically forced SDEs also appear in several other applications, 
including neuroscience and quantum electronics~\cite{WM}. We refer 
to~\cite{WJ,GHM,HanggiReview02} for further reviews on this topic.}

Let us start by recalling some well-known properties of the unperturbed 
system~\eqref{eq:SDE}. Its unique invariant measure has density 
$Z^{-1}\e^{-2V_0(x)/\sigma^2}$ with respect to Lebesgue measure, where $Z$ is 
the normalisation. Furthermore, the dynamics is reversible with respect to this 
measure. 
Denote the local minima of $V_0$ by $x^*_\pm$, and its local maximum by 
$x^*_0$, with $x^*_- < x^*_0 < x^*_+$ (see~\figref{fig:potential} below). Let 
$\tau_+ = \inf\setsuch{t>0}{x_t = x^*_+}$ be the first-hitting time of $x^*_+$. 
Then one has the explicit expression 
\begin{equation}
 \bigexpecin{x}{\tau_+} = 
 \frac2{\sigma^2} \int_x^{x^*_+} \int_{-\infty}^{x_2}  
\e^{2[V_0(x_2)-V_0(x_1)]/\sigma^2}\6x_1\6x_2
\end{equation} 
for the expectation of $\tau_+$ when starting at any $x<x^*_+$. This result is 
obtained by solving an ordinary differential equation (ODE) satisfied by the 
function $x\mapsto \bigexpecin{x}{\tau_+}$, owing to Dynkin's formula (which 
we recall in Appendix~\ref{app:pot} below). In 
particular, the Laplace method shows that when starting in $x^*_-$, this 
expectation satisfies the so-called Eyring--Kramers law~\cite{Eyring,Kramers}
\begin{equation}
\label{eq:EK} 
 \bigexpecin{x^*_-}{\tau_+} = 
 \frac{2\pi}{\sqrt{\abs{V_0''(x^*_0)}V_0''(x^*_-)}}
 \e^{2[V_0(x^*_0)-V_0(x^*_-)]/\sigma^2}
 \bigbrak{1 + \Order{\sigma^2}}\;.
\end{equation} 
Furthermore, in~\cite{Day1}, Day has shown that the law of $\tau_+$ is 
asymptotically exponential, in the sense that 
\begin{equation}
\label{eq:exp_law} 
 \lim_{\sigma\to 0}
 \bigprob{\tau_+ > s \, \expecin{x^*_-}{\tau_+}} = \e^{-s} 
\end{equation} 
holds for all $s>0$. 

While the expected transition time from $x^*_-$ to $x^*_+$ is exponentially 
long, the actual successful transition, also known as the reactive or 
transition path, takes much less time. In~\cite{CerouGuyaderLelievreMalrieu12}, 
C\'erou, Guyader, Leli\`evre and Malrieu have shown that for any fixed $a < x_0 
< x^*_0 < b$ in $(x^*_- ,  x^*_+)$, one has the convergence in law 
\begin{equation}
 \label{eq:reactive}
  \lim_{\sigma\to0} \Law \bigl( \abs{V_0''(x^*_0)} \tau_b - 2 \log(\sigma^{-1}) 
  \bigm| \tau_b < \tau_a \bigr)
 = \Law \Bigl( Z + T(x_0,b) \Bigr) \;,
\end{equation} 
where $T(x_0,b)$ is an explicit deterministic quantity independent of $\sigma$, 
and $Z$ is a standard Gumbel variable, that is, $\prob{Z \leqs t} = 
\exp\set{-\e^{-t}}$ holds for all $t\in\R$. Therefore, the duration of a 
transition is of order $\log(\sigma^{-1})$. See 
also Bakhtin's works~\cite{Bakhtin_2013a,Bakhtin_2014a} for insights on the 
relation of this result to extreme-value theory. 

Several of these results have been generalised to multidimensional diffusions 
of the form 
\begin{equation}
 \6x_t = -\nabla V_0(x_t)\6t + \sigma \6W_t\;, 
\end{equation} 
where now $V_0 : \R^d\to\R$. These are still reversible with respect to the 
invariant measure $Z^{-1}\e^{-2V_0/\sigma^2}$. \NB{Physical derivations of 
multidimensional generalisations of the Eyring--Kramers law~\eqref{eq:EK} have 
been obtained by Landauer and Swanson~\cite{Landauer_Swanson_61} and 
Langer~\cite{Langer_69}. Regarding mathematically rigorous results, a} weaker 
form of the Eyring--Kramers law (that is, without a sharp control of the 
prefactor of the exponential in~\eqref{eq:EK}), known as Arrhenius law, follows 
from the theory of large deviations developed for diffusions by Freidlin and 
Wentzell~\cite{FreidlinWentzell_book}. In~\cite{BEGK,BGK}, Bovier, Eckhoff, 
Gayrard and Klein used potential theory to prove a generalisation 
of~\eqref{eq:EK} to the multidimensional gradient case, as well as the 
asymptotically exponential character~\eqref{eq:exp_law} of the law of transition 
times. Similar results have been obtained by Helffer, Klein and Nier 
in~\cite{HelfferKleinNier04} using methods from semiclassical analysis. See 
also~\cite{LePeutrec_2010,LePeutrec_2011,LPNV_2013,Nectoux_LePeutrec_19} for 
generalisations to diffusions on manifolds with or without boundary. The 
potential-theoretic approach has also been successfully applied to obtain 
Eyring--Kramers laws for stochastic 
PDEs~\cite{BG12a,Barret15,Berglund_DiGesu_Weber_16}. See 
also~\cite{Berglund_Kramers,B_Sarajevo} and references therein, as well 
as~\cite{Bovier_denHollander_book} for a comprehensive account of the 
potential-theoretic approach. 

The situation is much less understood for non-gradient diffusions, whose 
invariant measure is not explicitly known in general, and which are not 
reversible. While the theory of large deviations in~\cite{FreidlinWentzell_book} 
allows to derive Arrhenius laws for these systems as well, determining precise 
asymptotics on transition times of Eyring--Kramers type is much harder than in 
the reversible case. Some partial results in this direction have nevertheless 
been obtained. In~\cite{Bouchet_Reygner_2016}, Bouchet and Reygner proposed an 
Eyring--Kramers law for non-reversible diffusions in a bistable situation, based 
on formal asymptotic computations. In~\cite{Landim_Mariani_Seo19}, Landim, 
Mariani and Seo obtained a generalisation of the potential-theoretic approach 
of~\cite{BEGK,BGK} to non-reversible systems. This allowed them in particular to 
justify the formal result of Bouchet and Reygner for a particular class of  
systems whose invariant measure is known explicitly. See also the 
work~\cite{LePeutrec_Michel_19} by Le Peutrec and Michel for semiclassical 
results on non-reversible diffusions with known invariant measure. In a 
different direction, a reactive path theory for multidimensional, non-reversible 
diffusions was developed by Lu and Nolen in~\cite{Lu_Nolen_2015}, based on 
ideas by E and Vanden-Eijnden~\cite{E_VandeEijnden_2006}. 

In this work, we are concerned with extensions of~\eqref{eq:EK} to systems of 
another type, namely to periodically perturbed versions of~\eqref{eq:SDE} of 
the form
\begin{align}
 \6x_t &= -\partial_x V_0(x_t,y_t)\6t + \sigma \6W^x_t\;, 
\\
 \6y_t &= \eps\6t + \sigma\sqrt{\eps}\varrho \6W^y_t\;,
\label{eq:SDE_fast-slow0} 
\end{align}
where $\set{W^x_t}_t$ and $\set{W^y_t}_t$ are independent standard Wiener 
processes. The parameter $\varrho$ has to be strictly positive for technical 
reasons (we need the diffusion to be elliptic), but our results do not depend on 
$\varrho$ to leading order. This system is a particular case of systems studied 
by the author and Barbara Gentz in~\cite{BG14}. The main result in 
that work gives a rather sharp description of the density of $\tau_0$, the 
first-passage time at the saddle $x^*_0(y)$ of $x\mapsto V_0(x,y)$ (or, more 
precisely, at the deterministic periodic solution tracking the saddle). A 
slightly less precise, but more transparent way of formulating this result
is that 
\begin{equation}
\label{eq:cycling} 
 \lim_{\sigma\to0} \Law \Bigl( \theta(y_{\tau_0}) - \log(\sigma^{-1}) -
\frac{\lambda_+}{\eps} Y^\sigma \Bigr)
 = \Law \biggl( \frac{Z}{2} - \frac{\log2}{2}\biggr) \;,
\end{equation} 
where 
\begin{itemize}
\item 	$\theta(y)$ is a convenient and explicit parametrisation of the 
periodic orbit tracking $x^*_0(y)$;
\item 	$\lambda_+$ is the Lyapunov exponent of this orbit;
\item 	$Z$ follows again a standard Gumbel law;
\item 	and $Y^\sigma$ is asymptotically geometric, meaning that it has 
positive integer values and satisfies 
\begin{equation}
 \lim_{n\to\infty} \pcond{Y^\sigma=n+1}{Y^\sigma>n} = p(\sigma)\;,
\end{equation} 
for a constant $p(\sigma)$ that is exponentially small in $\sigma^2$. 
\end{itemize}
(In fact, we have slighly simplified the precise result, which is given 
in~\cite[Theorem~4.2]{berglund_ihp14_mprf}.) The most striking feature 
of~\eqref{eq:cycling} is that the law of $\theta(y_{\tau_0})$ is shifted by an 
amount $\log(\sigma^{-1})$ as $\sigma$ decreases, and thus does not admit a 
limit as $\sigma\to0$. This is the phenomenon of cycling discovered by 
Day~\cite{Day7,Day3,Day6,Day4}. In fact, this shift by $\log(\sigma^{-1})$ is 
also present in~\eqref{eq:reactive}. As for $Y^\sigma$, its interpretation is 
as follows: under a non-degeneracy assumption, the system has a \lq\lq window 
of opportunity\rq\rq\ during each period to make a transition, which is defined 
by the minimisers of its large-deviation rate function. The integer variable 
$Y^\sigma$ simply gives the period during which the actual transition takes 
place. 

The expectation $\expec{\tau_0}$ can be deduced from~\eqref{eq:cycling}, and is 
close to the inverse of the parameter $p(\sigma)$ (see~\cite{BG9}). Since 
transitions from the saddle to the local minima $x^*_\pm(y)$ take a time of 
order $\log(\sigma^{-1})$ (see~\cite[Theorem~6.2]{berglund_ihp14_mprf}), the 
expectation of the first-hitting time $\tau_+$ of $x^*_+(y)$ has the same sharp
asymptotics as $\expec{\tau_0}$. In~\cite{BG14}, we did not attempt to obtain 
sharp asymptotics for $p(\sigma)$, but only showed that it is close, in the 
sense of logarithmic equivalence, to $\e^{-I/\sigma^2}$ where $I$ is the 
Freidlin--Wentzell quasipotential, which can be expressed as the solution of a 
variational principle.

The aim of the present work is to obtain sharp asymptotics of Eyring--Kramers 
type for $\expec{\tau_+}$, which is equivalent to getting precise asymptotics 
for $p(\sigma)$. \NB{To formulate our results, we introduce the notations 
\begin{equation}
 h_\pm(y) = V_0(x^*_0(y),y)-V_0(x^*_\pm(y),y)
\end{equation} 
for the depths of the two potential wells. Let $h_\pm^{\min}$ and $h_\pm^{\max}$ 
denote the minimal and maximal values reached by $h_\pm(y)$ during one period, 
and let $h^{\min} = \min\set{h_-^{\min},h_+^{\min}}$,  $h^{\max} = 
\max\set{h_-^{\max},h_+^{\max}}$. Then there are several parameter regimes, 
depending on the value of the forcing period compared to different time scales 
related to the noise; we employ here the terminology from the discussion 
in~\cite[Section~4.1.2]{Berglund_Gentz_book}, but see 
also~\cite{GHM,HerrmannImkeller05,ChenGemmerSilberVolkening_19} for related 
discussions with a sometimes slightly different terminology.  
\begin{enumerate}
 \item 	In the \emph{super-adiabatic regime} $\eps \ll 
\e^{-2h^{\max}/\sigma^2}$, the forcing period is much longer than the 
instantaneous Eyring--Kramers time for any $y$, so that transitions between the 
potential wells are expected to be instantaneous compared to the system's 
period.   
\item 	In the \emph{intermediate regime} $\e^{-2h^{\max}/\sigma^2} \lesssim 
\eps \lesssim \e^{-2h^{\min}/\sigma^2}$, transitions times between wells can be 
short or long compared to the forcing period, depending on the current value 
of $y$. This is the parameter regime in which stochastic resonance can 
occur~\cite{BSV,Nicolis,WM,GHM,HanggiReview02,HerrmannImkeller05}.
\item 	In the \emph{fast forcing regime} $\e^{-2h^{\min}/\sigma^2} \ll \eps$, 
the forcing period is much shorter than the 
instantaneous Eyring--Kramers time, so that an averaging effect takes place: 
the expected transition time will be almost constant as a function of the 
starting time.
\end{enumerate}
The fast-forcing regime can be further subdivided into a strong-noise regime 
$\e^{-2h^{\min}/\sigma^2} \ll \eps \ll \sigma^2$ and a weak-noise regime $\eps 
\gg \sigma^2$. Our results apply to part of the strong-noise regime. The 
weak-noise regime has been studied 
formally in~\cite{ChenGemmerSilberVolkening_19}, using a path-integral approach 
similar to the large-deviation approach used in~\cite{Freidlin2}. Indeed, if 
$\sigma^2 \ll \eps$, the spreading of sample paths around minimisers of the 
large-deviation principle (also called \emph{instantons}) becomes small 
compared to the forcing period, so that saddle-point techniques can be applied.
Note that in~\cite{ChenGemmerSilberVolkening_19}, the super-adiabatic and 
intermediate regime together are called adiabatic regime, while our fast 
forcing regime rather corresponds to the intermediate regime in that work  
(though no very precise boundaries between regimes are provided there).}

\NB{Our main result, Theorem~\ref{thm:main_ff}, applies to the 
fast-forcing regime $\e^{-2h^{\min}/\sigma^2} \ll \eps$. It states that for any 
starting point on $x^*_-(y)$,
\begin{equation}
\label{eq:main_intro} 
 \bigexpec{\tau_+}
 = \frac{2\pi\bigbrak{1 + R_1(\eps,\sigma)}}
 {\displaystyle
 \int_0^1\sqrt{\abs{\partial_{xx}V_0(x^*_0(y),y)}\partial_{xx}V_0(x^*_-(y),y)}
 \e^{-2h_-(y)/\sigma^2} \6y}\;,
\end{equation} 
where $R_1(\eps,\sigma)$ is some (complicated) error term. This error term is 
small for sufficiently small noise intensity $\sigma$, and for values of 
$\eps$ satisfying 
\begin{equation}
  \e^{-2h^{\min}/\sigma^2} \ll \eps \ll 
\bigpar{\e^{-2h^{\min}/\sigma^2}}^{1/4}\;.
\end{equation} 
The lower bound in this condition is natural, because it expresses the 
requirement that the probability of making a transition during one period is 
small. The upper bound, on the other hand, is not natural at all, and we would 
rather expect that the condition $\eps \ll \sigma^2$ suffices, i.e.\ 
that~\eqref{eq:main_intro} holds in the full strong noise fast-forcing regime. 
However, our technique of proof currently does not allow us to obtain such a 
weaker condition.}

Note that~\eqref{eq:main_intro} is indeed a generalisation of the static 
Eyring--Kramers law~\eqref{eq:EK}. \NB{However, it is important to remark 
that we obtain a different expression for the mean transition time as soon as 
the barrier height $h_-(y)$ is non-constant in $y$. Assume for instance that 
the map $y\mapsto h_-(y)$ reaches its maximum at an isolated point 
$y^*\in[0,1)$, and that $h_-''(y^*)$ is strictly negative. Then Laplace 
asymptotics yield
\begin{equation}
 \bigexpec{\tau_+}
 = \frac{2\pi \e^{2h_-(y^*)/\sigma^2}}
 {\sqrt{\abs{\partial_{xx}V_0(x^*_0(y^*),y^*)}\partial_{xx}V_0(x^*_-(y^*),y^*)}}
 \frac{\sqrt{h_-''(y^*)}}{\sigma\sqrt{\pi}} \bigbrak{1 + R_1(\eps,\sigma)}\;,
\end{equation} 
which differs from the static Eyring--Kramers law~\eqref{eq:EK} by a factor 
$\sqrt{h_-''(y^*)}/(\sigma\sqrt{\pi})$. The extra factor is due to the fact 
that the transition rate is dominated by time windows of width of order 
$\sigma$ around $y^*$ and its translates, as opposed to the constant transition 
rate one has in the static case.}

\NB{In addition to Theorem~\ref{thm:main_ff}, we establish a more general 
result, Theorem~\ref{thm:main_general}, which also covers the superadiabatic 
and intermediate regimes. This result is less precise in those regimes, since 
it does not characterise the mean transition time when starting from a 
particular point at the bottom of one of the wells, but only the mean time 
averaged along the well bottom. Averaging takes place with respect to a 
probability measure, the so-called equilibrium measure, for which we do not have 
a sharp control in this regime. However, as discussed in 
Section~\ref{ssec:discussion}, combining an approximate knowledge of the 
equilibrium measure with Theorem~\ref{thm:main_general} yields results 
compatible with those discussed for instance in~\cite{GHM,HerrmannImkeller05}. 
In particular, in the superadiabatic regime, the predicted mean transition time 
follows the static Eyring--Kramers law with frozen $y$.}

The remainder of this paper is organised as follows. In 
Section~\ref{sec:results}, we define precisely the considered equations, and 
state all main results. These are proved in Sections~\ref{sec:pot} 
to~\ref{sec:proof_main}, see Section~\ref{ssec:proof} for a more precise 
outline of the structure of the proofs. Finally, the appendix contains some of 
the more technical proofs. 

\subsection*{Notations}

The system studied in this work depends on two small parameters $\eps$ and 
$\sigma$. We write $X \lesssim Y$ to indicate that $X \leqs c Y$ for a constant 
$c$ independent of $\eps$ and $\sigma$, as long as $\eps$ and $\sigma$ are small 
enough. The notation $X\asymp Y$ indicates that one has both $X \lesssim Y$ and 
$Y\lesssim X$, while Landau's notation $X = \Order{Y}$ means that $\abs{X} 
\lesssim Y$. The canonical basis of $\R^2$ is denoted $(\ex,\ey)$. If $a, 
b\in\R$, $a\wedge b$ denotes the minimum of $a$ and $b$, and $a\vee b$ denotes 
the maximum of $a$ and $b$. Finally, we write $\indicator{\cD}(x)$ for the 
indicator function of a set or event $\cD$. 

\subsection*{Acknowledgments}

This work is supported by the ANR project PERISTOCH, ANR–19–CE40–0023. 
To a large extent, it was written during the Spring 2020 lockdown due to the 
Covid-19 epidemics, which probably goes a long way toward explaining why it 
contains a lot of detailed computations. The author wishes to thank all members 
of the probability community who helped keeping the community's spirits intact 
by organising and giving many interesting online seminars. \NB{Thanks also to 
the three anonymous reviewers, whose constructive comments allowed to improve 
the first version of this manuscript.}


\section{ Results}
\label{sec:results} 


\subsection{Set-up}
\label{ssec:setup} 

We will consider a version of~\eqref{eq:SDE_fast-slow0} in which time has been 
scaled by a factor $\eps$, given by 
\begin{align}
 \6x_t &= \frac{1}{\eps} b(x_t,y_t)\6t + \frac{\sigma}{\sqrt{\eps}} \6W^x_t\;, 
\\
 \6y_t &= \6t + \sigma\varrho \6W^y_t\;,
\label{eq:SDE_fast-slow} 
\end{align}
where $\set{W^x_t}_{t\geqs0}$ and $\set{W^y_t}_{t\geqs0}$ are independent 
Wiener processes on a filtered probability space 
$(\Omega,\sF,\fP,\set{\sF_t}_{t\geqs0})$, $\eps$, $\sigma$ and $\varrho$  are 
strictly positive parameters, and the drift term $b$ satisfies the following 
assumptions:
\begin{itemize}
\item 	$b:\R^2\to\R$ is of class $\sC^4$ and is periodic, of period $1$, in 
its second argument.
\item 	For any $y\in[0,1]$, the map $x\mapsto b(x,y)$ vanishes at exactly $3$ 
points $x^*_-(y) < x^*_0(y) < x^*_+(y)$, and the derivative $\partial_x b(x,y)$ 
is nonzero for these $3$ values of $x$.
\item	There are constants $M,L>0$ such that $x b(x,y) \leqs -Mx^2$ whenever 
$\abs{x}\geqs L$.  
\end{itemize}
The above conditions guarantee existence of a pathwise unique strong solution 
$(x_t,y_t)_{t\geqs0}$ for any initial condition $(x_0,y_0)$. We denote by 
$\probin{x,y}{\cdot}$ the law of the process starting in $(x,y)$, 
and by $\expecin{x,y}{\cdot}$ expectations with respect to 
$\probin{x,y}{\cdot}$.

\begin{figure}
\begin{center}
\begin{tikzpicture}[>=stealth',point/.style={circle,inner 
sep=0.035cm,fill=white,draw},x=2.5cm,y=5cm, 
declare function={V(\x) = (1/4)*(\x)^4 - (1/12)*(\x)^3 - (1/2)*(\x)^2;}]

\draw[->,semithick] (-2,0) -> (2.5,0);
\draw[->,semithick] (0.5,-0.5) -> (0.5,0.7);

\pgfmathsetmacro{\aa}{0.25}
\pgfmathsetmacro{\xm}{(\aa - sqrt(\aa * \aa + 4))/2}
\pgfmathsetmacro{\xp}{(\aa + sqrt(\aa * \aa + 4))/2}

\draw[purple,<->,semithick,shorten >=1.5pt,shorten <=1.5pt] (\xm,0) -- 
(\xm,{V(\xm)});
\draw[purple,<->,semithick,shorten >=1.5pt,shorten <=1.5pt] (\xp,0) -- 
(\xp,{V(\xp)});

\draw[purple,<->,semithick,shorten >=1.5pt,shorten <=1.5pt] (\xm,{V(\xm)}) -- 
(\xm,{V(\xp)});

\draw[purple,dashed] (\xm,{V(\xp)}) -- (\xp,{V(\xp)});

\draw[blue,thick,-,smooth,domain=-1.55:1.8,samples=75,/pgf/fpu,/pgf/fpu/output
format=fixed] plot (\x, {V(\x)});

\node[point] at (\xm,0) {};
\node[point] at (0,0) {};
\node[point] at (\xp,0) {};

\node[] at (2.35,-0.05) {$x$};
\node[blue] at (0.8,0.58) {$V_0(x,y)$};
\node[] at (\xm,0.05) {$x^\star_-(y)$};
\node[] at (0,0.05) {$x^\star_0(y)$};
\node[] at (\xp,0.05) {$x^\star_+(y)$};

\node[purple] at ({\xm+0.25},{V(\xm)/2}) {$h_-(y)$};
\node[purple] at ({\xp-0.24},{V(\xp)/2}) {$h_+(y)$};
\node[purple] at ({\xm+0.22},{(V(\xm)+V(\xp))/2}) {$\Delta(y)$};

\end{tikzpicture}
\vspace{-5mm}
\end{center}
\caption[]{For each $y$, the map $x\mapsto V_0(x,y)$ is a double-well 
potential. 
}
\label{fig:potential}
\end{figure}
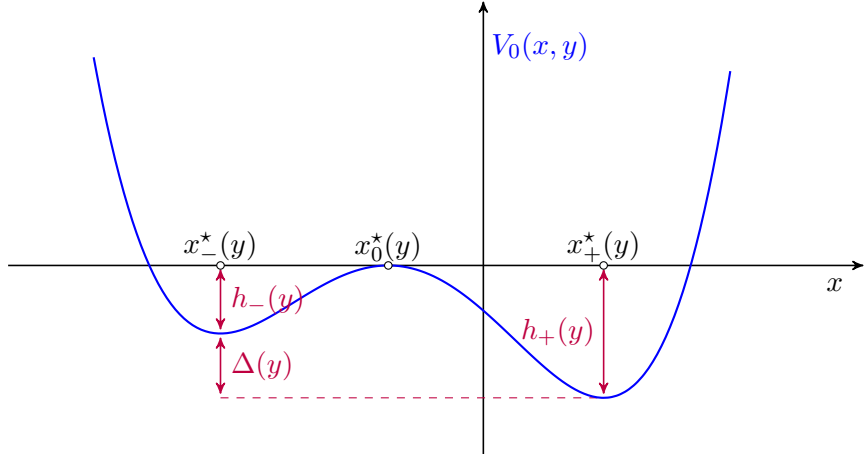

We define the potential 
\begin{equation}
 V_0(x,y) = -\int_{x^*_0(y)}^x b(\bar x,y)\6\bar x\;.
\end{equation}
The assumptions on $b$ imply that for any $y$, $x\mapsto V_0(x,y)$ has local 
minima at $x^*_\pm(y)$, a local maximum at $x^*_0(y)$, and grows at least 
quadratically for large $\abs{x}$. We say that $V_0$ is a double-well 
potential (\figref{fig:potential}). We denote the well depths by 
\begin{equation}
 h_\pm(y) = V_0(x^*_0(y),y)) - V_0(x^*_\pm(y),y)
 = - V_0(x^*_\pm(y),y)\;,  
\end{equation}
and measure the curvatures at stationary points by 
\begin{align}
 \omega_\pm(y) &= \sqrt{\partial_{xx} V_0(x^*_\pm(y),y)}
 = \sqrt{-\partial_x b(x^*_\pm(y),y)}\;, \\
 \omega_0(y) &= \sqrt{\abs{\partial_{xx} V_0(x^*_0(y),y)}}
 = \sqrt{\partial_x b(x^*_0(y),y)}\;.
\end{align} 
The assumptions on $b$ imply that all these quantities are finite and bounded 
away from zero, uniformly in $y$. We further write $\Delta(y) = h_+(y) - 
h_-(y)$ for the difference of the two potential well depths. 


\subsection{Static system}
\label{ssec:static} 

We recall some well-known properties of the static system 
\begin{equation}
\label{eq:static} 
 \6x_t = b(x_t,y)\6t + \sigma \6W^x_t
\end{equation} 
in which $y$ is kept constant. Its direct and adjoint infinitesimal generators 
are the differential operators 
\begin{align}
\label{eq:Lx} 
 \sL_x f &= \frac{\sigma^2}{2} \partial_{xx}f + b\partial_x f\;, 
 &
 \sL^\dagger_x \mu &= \frac{\sigma^2}{2} \partial_{xx}\mu - \partial_x[b\mu]\;, 
 \\
 &= \frac{\sigma^2}{2} \e^{2V_0/\sigma^2} \partial_x 
\bigpar{\e^{-2V_0/\sigma^2}\partial_xf}\;, 
 & 
 &= \frac{\sigma^2}{2} \partial_x 
\bigpar{\e^{-2V_0/\sigma^2}\partial_x(\e^{2V_0/\sigma^2}\mu)}\;.
\end{align} 
In particular, the kernel of $\sL_x$ is spanned by constant functions, while 
the kernel of $\sL^\dagger_x$ is spanned by the density $\pi_0(x \vert y)$ of 
the invariant measure of~\eqref{eq:static}, which is given by
\begin{equation}
 \pi_0(x \vert y) 
 = \frac{1}{Z_0(y)} \e^{-2V_0(x,y)/\sigma^2}\;, 
 \qquad 
 Z_0(y) = \int_{-\infty}^\infty \e^{-2V_0(x,y)/\sigma^2} \6x
\end{equation} 
(\figref{fig:eigenfunctions}). 
\begin{figure}
\begin{center}
\begin{tikzpicture}[>=stealth',point/.style={circle,inner 
sep=0.035cm,fill=white,draw},x=2.5cm,y=7cm, 
declare function={V(\x) = (1/4)*(\x)^4 - (1/12)*(\x)^3 - (1/2)*(\x)^2;}]

\draw[->,semithick] (-2,0) -> (2.5,0);
\draw[->,semithick] (0.5,-0.1) -> (0.5,0.65);

\pgfmathsetmacro{\aa}{0.25}
\pgfmathsetmacro{\sig}{0.25}
\pgfmathsetmacro{\xm}{(\aa - sqrt(\aa * \aa + 4))/2}
\pgfmathsetmacro{\xp}{(\aa + sqrt(\aa * \aa + 4))/2}
\pgfmathsetmacro{\del}{2}
\pgfmathsetmacro{\pp}{0.07*cosh(\del)*(1 + tanh(\del))}

\draw[blue,thick,-,smooth,domain=-2:2.4,samples=75,/pgf/fpu,/pgf/fpu/output
format=fixed] plot (\x, {\sig*exp(-(V(\x)+0.3)/(\sig*\sig))});

\draw[purple,thick,-,smooth,domain=-2:2.4,samples=75,/pgf/fpu,/pgf/fpu/output
format=fixed] plot (\x, {-0.07*cosh(\del)*(tanh(2*\del*\x) - tanh(\del))});

\node[point] at (\xm,0) {};
\node[point] at (0,0) {};
\node[point] at (\xp,0) {};

\node[] at (2.35,-0.05) {$x$};
\node[] at (\xm,-0.05) {$x^\star_-(y)$};
\node[] at (0,-0.05) {$x^\star_0(y)$};
\node[] at (\xp,-0.05) {$x^\star_+(y)$};

\node[blue] at (1.5,0.58) {$\pi_0(x\vert y)$};
\node[purple] at (-1.5,0.55) {$\phi_1(x\vert y)$};

\node[point] at (0.5,{\pp}) {};
\node[] at (0.2,{\pp}) {$\e^{\Deltabar(y)/\sigma^2}$};

\end{tikzpicture}
\vspace{-5mm}
\end{center}
\caption[]{Sketch of the static eigenfunctions $\pi_0(x\vert y)$ and 
$\phi_1(x\vert y)$ for the potential of \figref{fig:potential}.
}
\label{fig:eigenfunctions}
\end{figure}
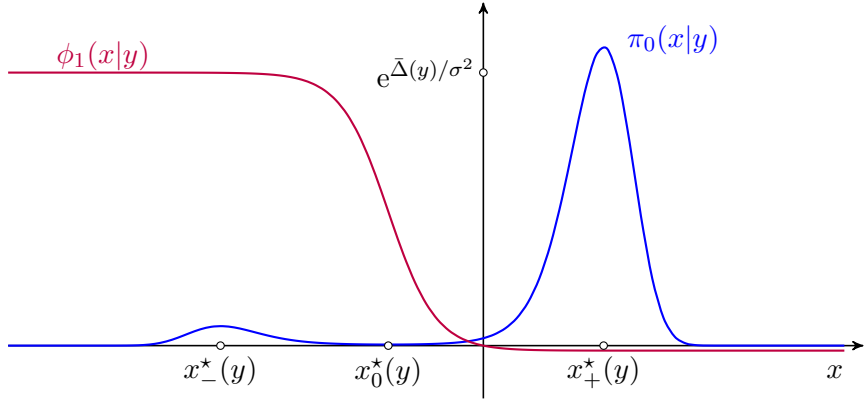
We denote the eigenvalues of $\sL_x$ and $\sL^\dagger_x$ by 
\begin{equation}
 0 = -\lambda_0(y) > -\lambda_1(y) > -\lambda_2(y) \geqs \dots\;,
\end{equation}
and the corresponding $L^2$-normalised eigenfunctions by $\phi_n(\cdot\vert y)$ 
and $\pi_n(\cdot\vert y)$. These are related by 
\begin{equation}
\pi_n(x\vert y) = \pi_0(x\vert y) \phi_n(x\vert y)\;. 
\end{equation} 
There is a spectral gap of order $1$, separating $\lambda_1(y)$ from 
$\lambda_2(y)$ and all subsequent eigenvalues, which is why an important role 
will be played by $\lambda_1(y)$ and the associated eigenfunctions. The 
eigenvalue satisfies 
\begin{equation}
\label{eq:lambda1} 
 \lambda_1(y) = \bigbrak{r_+(y) + r_-(y)} \bigbrak{1+\Order{\sigma^2}}\;,
\end{equation} 
where
\begin{equation}
 r_\pm(y) = \frac{\omega_\pm(y)\omega_0(y)}{2\pi}
 \e^{-2h_{\pm}(y)/\sigma^2}\;.
\end{equation} 
The corresponding eigenfunction can be approximated in terms of the committor 
\begin{equation}
\label{eq:h01} 
 h_0(x\vert y) = 
 \bigprobin{x}{\tau_{x^*_-(y)} < \tau_{x^*_+(y)}}\;,
 \qquad 
 \tau_{\bar x} = \inf\setsuch{t>0}{x_t = \bar x}\;,
\end{equation} 
which satisfies $\sL_x h_0 = 0$ with boundary conditions 
$h_0(x^*_-(y)\vert y) = 1$ and $h_0(x^*_+(y)\vert y) = 0$.
\NB{Indeed, this follows from Dynkin's formuma, see~\eqref{eq:Dynkin} in 
Appendix~\ref{app:pot}.}
Solving this equation, one obtains that for all $x\in[x^*_-(y),x^*_+(y)]$, 
\begin{equation}
\label{eq:h02} 
 h_0(x\vert y) 
 = \frac{1}{N(y)} \int_x^{x^*_+(y)} \e^{2V_0(\bar x,y)/\sigma^2} \6\bar x\;, 
 \qquad 
 N(y) = \int_{x^*_-(y)}^{x^*_+(y)} \e^{2V_0(\bar x,y)/\sigma^2} \6\bar x\;.
\end{equation} 
The first eigenfunction of $\sL_x$ is related to $h_0(x\vert y)$ by 
\begin{equation}
\label{eq:phi1} 
 \phi_1(x\vert y) 
 = \Bigbrak{\e^{\Deltabar(y)/\sigma^2} h_0(x\vert y) 
 - \e^{-\Deltabar(y)/\sigma^2} \bigpar{1-h_0(x\vert y)} }
 \bigbrak{1 + \bigOrder{\lambda_1(y)\lsig}}\;,
\end{equation}
where $\Deltabar(y)$ is defined by 
\begin{equation}
\label{eq:Deltabar} 
 \e^{2\Deltabar(y)/\sigma^2} = \frac{r_-(y)}{r_+(y)}
 \qquad\Rightarrow\qquad 
 \Deltabar(y) = \Delta(y) + 
\frac{\sigma^2}{2}\log\biggpar{\frac{\omega_-(y)}{\omega_+(y)}}\;.
\end{equation} 
The function $x\mapsto\phi_1(x\vert y)$ is almost constant except near 
$x^*_0(y)$, with a value close to $\e^{\Deltabar(y)/\sigma^2}$ for $x < 
x^*_0(y)$ and close to $-\e^{-\Deltabar(y)/\sigma^2}$ for $x > x^*_0(y)$  
(\figref{fig:eigenfunctions}). We give a precise statement of~\eqref{eq:phi1}, 
including bounds on derivatives of $\phi_1$, in 
Section~\ref{ssec:invariant_static}.


\subsection{Two-state jump process}
\label{ssec:jump} 

The spectral-gap property implies that for small $\sigma$, the dynamics of the 
static system~\eqref{eq:static} is well-approximated by a two-state Markovian 
jump process with rates $r_\pm(y)$. 
It is thus natural to expect that the dynamics of the fast-slow  
system~\eqref{eq:SDE_fast-slow} is well-approximated by a time-dependent 
two-state process, in which $y$ plays the role of time (\figref{fig:jump}). 
Its law $(p_-(y),p_+(y))$ satisfies the system  
\begin{align}
 \eps p_-'(y) &= \phantom{-} r_+(y)p_+(y) - r_-(y)p_-(y) \\
 \eps p_+'(y) &= - r_+(y)p_+(y) + r_-(y)p_-(y)\;. 
\label{eq:twostate} 
\end{align}
Let 
\begin{equation}
\label{eq:defA} 
 A(y) = \frac{r_-(y)-r_+(y)}{r_-(y)+r_+(y)}
 = \tanh\biggpar{\frac{\Deltabar(y)}{\sigma^2}}\;, 
\end{equation} 
and let $\delta(y)$ be the $1$-periodic solution of 
\begin{equation}
\label{eq:ODE_delta} 
 \eps \delta'(y) = -\lambda_1(y) \bigbrak{\delta(y) - A(y)}\;.
\end{equation} 
Then it is straightforward to check that the solution of  
System~\eqref{eq:twostate} with initial condition $(p_+(y_0),p_-(y_0))$ 
satisfying $p_+(y_0) + p_-(y_0) = 1$ is given by 
\begin{equation}
\label{eq:ppm} 
 p_\pm(y) = \frac12 \bigbrak{1\pm\delta(y)} 
 \pm \frac12 \Bigbrak{p_+(y_0) - p_-(y_0) - \delta(y_0)} 
 \e^{-\Lambda(y,y_0)/\eps}\;,
\end{equation} 
where 
\begin{equation}
 \Lambda(y,y_0) = \int_{y_0}^y \lambda_1(\bar y)\6\bar y\;.
\end{equation} 
Note that $\delta(y)$ admits the explicit integral representation 
\begin{equation}
\label{eq:deltay} 
 \delta(y) = \frac{1}{\eps(\e^{\Lambda(1,0)/\eps}-1)}
 \int_y^{y+1} \lambda_1(\bar y) A(\bar y) \e^{\Lambda(\bar y,y)/\eps} 
 \6\bar y\;.
\end{equation} 
Two regimes are of particular interest:
\begin{itemize}
\item 	In the \emph{fast forcing regime} $\eps \gg \max_{y\in[0,1]} 
\lambda_1(y)$, the dynamics is averaged, and $\delta(y)$ satisfies 
\begin{equation}
 \delta(y) = \frac{1}{\Lambda(1,0)}
 \int_0^1 \lambda_1(\bar y) A(\bar y)\6\bar y 
 \biggbrak{1+\biggOrder{\frac{\max_{y\in[0,1]} \lambda_1(y)}\eps}}\;.
\end{equation} 
In this case, $\delta(y)$ and $p_\pm(y)$ are asymptotically almost constant. 

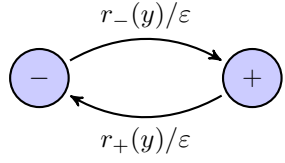
\begin{figure}
\begin{center}
\begin{tikzpicture}[->,>=stealth',shorten >=2pt,shorten 
<=2pt,auto,node
distance=4.0cm, thick,main node/.style={circle,scale=0.7,minimum size=1.1cm,
fill=blue!20,draw,font=\sffamily\Large}]

  \node[main node] (1) {$-$};
  \node[main node] (2) [right of=1] {$+$};

  \path[every node/.style={font=\sffamily\small}]
    (1) edge [bend left,above] node {$r_-(y)/\eps$} (2)
    (2) edge [bend left,below] node {$r_+(y)/\eps$} (1)
    ;
\end{tikzpicture}
\vspace{-5mm}
\end{center}
\caption[]{Time-dependent two-state markovian jump process.
}
\label{fig:jump}
\end{figure}

\item 	In the \emph{super-adiabatic regime} $\eps \ll \min_{y\in[0,1]} 
\lambda_1(y)$, integration by parts shows that 
\begin{equation}
 \delta(y) = A(y) 
 \biggbrak{1+\biggOrder{\frac{\eps}{\min_{y\in[0,1]} \lambda_1(y)}}}\;.
\end{equation} 
Thus $\delta(y)$ tracks $A(y)$, which is close to the sign of 
$\Deltabar(y)$, meaning that with high probability, the jump process is found 
in the currently deepest potential well. 
\end{itemize}

It is also possible to compute explicitly the expectation of the transition 
time $\taujump_+$ from the $-$ state to the $+$ state. We give the simple proof 
of the following result in Appendix~\ref{app:jump}. 

\begin{prop}
\label{prop:twostate} 
For any $y_0\in[0,1]$, one has 
\begin{equation}
 \bigexpecin{-,y_0}{\taujump_+} 
 = \frac{1}{1 - \e^{-R_-(1,0)/\eps}} 
 \int_0^1 \e^{-R_-(y_0+y,y_0)/\eps} \6y\;,
\end{equation} 
where
\begin{equation}
 R_-(y_1,y_0) = \int_{y_0}^{y_1} r_-(\bar y)\6\bar y\;.
\end{equation} 
A similar expression holds for the transition time $\taujump_-$ from the $+$ 
state to the $-$state. 
\end{prop}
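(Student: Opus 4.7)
Since the jump process has only two states, starting in state $-$ the only possible transition is the one to state $+$. Therefore $\taujump_+$ coincides with the time of the first jump out of $-$. The plan is to compute the tail distribution of this first jump time explicitly, integrate to obtain the expectation, and then exploit the periodicity of $r_-$ to reduce the resulting integral on $[0,\infty)$ to the compact integral on $[0,1]$ appearing in the statement.

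First I would recall that for a time-inhomogeneous Markov jump process, the probability of not having jumped yet satisfies the simple Kolmogorov equation $\eps \tdtot{}{y} \bigprobin{-,y_0}{\taujump_+ > y-y_0} = -r_-(y)\bigprobin{-,y_0}{\taujump_+ > y-y_0}$ with initial value $1$ at $y=y_0$, whose solution is
\begin{equation}
\bigprobin{-,y_0}{\taujump_+ > t} = \exp\biggpar{-\frac{1}{\eps}\int_{y_0}^{y_0+t} r_-(\bar y)\6\bar y} = \e^{-R_-(y_0+t,y_0)/\eps}\;.
\end{equation}
Integrating the tail gives $\bigexpecin{-,y_0}{\taujump_+} = \int_0^\infty \e^{-R_-(y_0+t,y_0)/\eps}\6t$.

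Next I would split the interval $[0,\infty)$ into successive unit blocks $[n,n+1)$ for $n\in\N$. On each block write $t = n+s$ with $s\in[0,1)$. Using the $1$-periodicity of $r_-$, the additive property $R_-(y_0+n+s,y_0) = R_-(y_0+n+s,y_0+n) + R_-(y_0+n,y_0)$ reduces to $R_-(y_0+s,y_0) + nR_-(1,0)$, because $R_-(y_0+n,y_0) = n R_-(1,0)$ and $R_-(y_0+n+s,y_0+n) = R_-(y_0+s,y_0)$. Substituting yields
\begin{equation}
\bigexpecin{-,y_0}{\taujump_+} = \sum_{n=0}^\infty \e^{-nR_-(1,0)/\eps}\int_0^1 \e^{-R_-(y_0+s,y_0)/\eps}\6s\;,
\end{equation}
and summing the geometric series in $n$ gives exactly the prefactor $1/(1-\e^{-R_-(1,0)/\eps})$, completing the proof.

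There is no real obstacle here: the only points that deserve care are to justify that $\taujump_+$ is indeed the first jump time (trivial in two states), to handle the factor $1/\eps$ consistently when writing down the survival probability, and to exploit periodicity cleanly in the splitting of the integral. The analogous formula for the transition time $\taujump_-$ from $+$ to $-$ follows by the identical argument with $r_+$ in place of $r_-$.
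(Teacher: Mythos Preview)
Your proposal is correct and follows essentially the same approach as the paper's proof: compute the survival probability via the ODE $\eps p_-' = -r_- p_-$, integrate the tail to get the expectation, then split into unit blocks using periodicity and sum the resulting geometric series. The only cosmetic difference is that the paper phrases the first step as making the $+$ state absorbing, which is equivalent to your observation that $\taujump_+$ is the first jump time out of $-$.
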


The same distinction between regimes as above can be made here:
\begin{itemize}
\item 	If $\eps \gg \max_{y\in[0,1]}r_-(y)$, then the expected jump time does 
not depend on $y_0$ to leading order, and is given by the average 
\begin{equation}
 \bigexpecin{-,y_0}{\taujump_+} = 
 \frac{\eps}{\NB{R_-(1,0)}}
 \biggbrak{1+\biggOrder{\frac{\max_{y\in[0,1]} r_-(y)}\eps}}\;.
\end{equation} 

\item 	If $\eps \ll \min_{y\in[0,1]} r_-(y)$, then the expected jump time is 
much shorter than the oscillation period, and thus given by the instantaneous 
value  
\begin{equation}
 \bigexpecin{-,y_0}{\taujump_+} = 
 \frac{\eps}{r_-(y_0)}
 \biggbrak{1+\biggOrder{\frac{\eps}{\min_{y\in[0,1]} r_-(y)}}}\;.
\end{equation}
\end{itemize}


\subsection{Invariant measure}
\label{ssec:invariant} 

We now return to the fast-slow SDE~\eqref{eq:SDE_fast-slow}. In order to be 
able to apply the potential-theoretic approach of~\cite{Landim_Mariani_Seo19}, 
it is necessary to control the invariant measure of the system. The main result 
of this section is the following theorem, which will be proved in 
Section~\ref{sec:invariant}. 

\begin{theorem}[Invariant measure]
\label{thm:pi} 
For sufficiently small $\sigma$ and $\eps$, 
\NB{the system~\eqref{eq:SDE_fast-slow} admits a unique invariant measure, 
which} has the density 
\begin{equation}
\label{eq:pi_prop} 
 \pi(x,y) = \pi_0(x\vert y) 
 \bigbrak{1 + \alpha_1(y)\phi_1(x\vert y) + \Phi_\perp(x,y)}\;, 
\end{equation} 
where 
\begin{equation}
 \alpha_1(y) = \sinh\biggpar{\frac{\Deltabar(y)}{\sigma^2}}
 - \delta_1(y) \cosh\biggpar{\frac{\Deltabar(y)}{\sigma^2}}\;.
\end{equation} 
Here $\Deltabar(y)$ is given by~\eqref{eq:Deltabar}, and $\delta_1(y)$ is the 
unique periodic solution of the linear second-order equation 
\begin{equation}
\label{eq:ode_delta1} 
  \frac{\varrho^2}{2} \eps\sigma^2\delta_1''
 - \eps q_1(y)\delta_1' 
 - \lambda_1(y) q_2(y)\biggbrak{\delta_1 - 
\tanh\biggpar{\frac{\Deltabar(y)}{\sigma^2}}} + q_3(y) = 0\;,
\end{equation} 
where 
\begin{align}
q_1(y) &= 1 + \bigOrder{\lambda_1(y)\lsig^2}\;, \\
q_2(y) &= 1 + \BigOrder{\frac{\eps}{\sigma^2}\lsig^3}\;,\\
q_3(y) &= \biggOrder{\frac{\eps}{\sigma^2}\lambda_1(y)\lsig^3}
+ \biggOrder{\frac{\eps^3}{\sigma^6} \sqrt{\lambda_1(y)\lsig^3}}\;.
\label{eq:q123} 
\end{align}
Furthermore, the error term $\Phi_\perp(x,y)$ in~\eqref{eq:pi_prop} is 
orthogonal to the span of $\phi_0$ and $\phi_1$, and satisfies 
\begin{equation}
\label{eq:bound_Phiperp_main} 
 \pscal{\pi_0}{\Phi_\perp^2}^{1/2} 
 \lesssim \frac{\eps}{\sigma^2}\cosh\biggpar{\frac{\Deltabar(y)}{\sigma^2}}\;, 
\end{equation} 
where $\pscal{\cdot}{\cdot}$ denotes the standard inner product for 
$L^2(\R,\6x)$.
\end{theorem}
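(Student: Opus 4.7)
The plan is to make a spectral ansatz in the frozen-$y$ eigenfunctions: seek the invariant density as
\[
  \pi(x,y) = \pi_0(x\vert y)\bigbrak{1 + c(y)\phi_1(x\vert y) + \Phi_\perp(x,y)}\;,
\]
with $\Phi_\perp(\cdot,y)$ orthogonal in $L^2(\R,\pi_0(\cdot\vert y)\6x)$ to both $\phi_0\equiv 1$ and $\phi_1(\cdot\vert y)$. Since $y_t$ is a Brownian motion with drift on $\R/\Z$, its marginal on the circle is uniform, which forces $\int \pi(x,y)\6x = 1$ for every $y$ and hence fixes the coefficient in front of $\phi_0$ to be $1$. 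Existence and uniqueness of an invariant measure I would establish independently from the dissipativity condition $xb(x,y)\leqs -Mx^2$ together with ellipticity (thanks to $\varrho>0$), which provide a Lyapunov function and an irreducibility argument of standard form.

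The next step is to plug the ansatz into $\sL^\dagger\pi = 0$, where $\sL = \eps^{-1}\sL_x + \partial_y + \frac{\varrho^2\sigma^2}{2}\partial_{yy}$, and to project onto $\phi_1$ and onto $\vspan(\phi_0,\phi_1)^\perp$ with respect to the $\pi_0$-weighted inner product $\pscal{\cdot}{\cdot}_{\pi_0}$. Using $\sL_x^\dagger(\pi_0\phi_n) = -\lambda_n\pi_0\phi_n$, the $\phi_1$-projection decouples $c$ from $\Phi_\perp$ at leading order and yields a second-order linear ODE for $c(y)$ whose source terms originate from $\partial_y$ and $\partial_{yy}$ acting on $\pi_0$ and $\phi_1$. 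The key computational ingredients are the explicit integral expressions \eqref{eq:h02}--\eqref{eq:phi1} for $\phi_1$, from which I would extract controlled bounds on $\partial_y\phi_1$ and $\partial_{yy}\phi_1$ polynomial in $\lsig$. After the change of variable $c(y) = \sinh(\Deltabar(y)/\sigma^2) - \delta_1(y)\cosh(\Deltabar(y)/\sigma^2)$---chosen so that the quasi-stationary value $\delta_1 = \tanh(\Deltabar/\sigma^2)$ makes the $\phi_1$-projection of the source vanish to leading order---the ODE takes the form \eqref{eq:ode_delta1} with coefficients $q_1,q_2,q_3$ of the stated sizes. The unique $1$-periodic solution $\delta_1$ then exists by a Fredholm-type argument using that $\lambda_1(y)q_2(y) > 0$.

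The projection onto $\vspan(\phi_0,\phi_1)^\perp$ gives a parametrised PDE of the schematic form
\[
  \tfrac{1}{\eps}\sL_x\Phi_\perp + \bigpar{\partial_y + \tfrac{\varrho^2\sigma^2}{2}\partial_{yy}}\Phi_\perp = \mathcal{S}(x,y)\;,
\]
with a source $\mathcal{S}$ built from $c$, $\phi_1$ and $y$-derivatives of $\pi_0$. The restriction of $\sL_x$ to $\vspan(\phi_0,\phi_1)^\perp$ has spectral gap $\lambda_2(y) \gtrsim 1$, hence is boundedly invertible. An energy estimate, obtained by testing the equation against $\Phi_\perp$ and integrating against $\pi_0(x\vert y)\6x\6y$, turns the dissipation of the Dirichlet form of $\sL_x$ into an $L^2(\pi_0)$ bound on $\Phi_\perp$; the factor $\cosh(\Deltabar/\sigma^2)$ in \eqref{eq:bound_Phiperp_main} tracks the size of $c\phi_1$, which enters the source through $\partial_y(c\phi_1)$ and $\partial_{yy}(c\phi_1)$.

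The hardest part, I expect, will be propagating the estimates sharply through this coupled scalar/transverse system: because $\partial_y$ and $\partial_{yy}$ do not respect the spectral decomposition of $\sL_x$, the equation for $c$ actually feeds back on $\Phi_\perp$ and vice versa, and the fine error orders $\Order{\lambda_1\lsig^2}$, $\Order{\eps\sigma^{-2}\lsig^3}$ and the two contributions to $q_3$ can only be matched by decomposing $\partial_y\phi_1$ carefully into its $\phi_1$-component and its orthogonal component, and by iterating the bound on $\Phi_\perp$ once back into the equation for $\delta_1$.
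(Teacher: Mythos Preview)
Your overall strategy matches the paper's: spectral expansion in the frozen-$y$ eigenbasis, the projection onto $\phi_0$ giving $\alpha_0\equiv1$, the projection onto $\phi_1$ giving a scalar ODE, the change of unknown $\alpha_1=(A-\delta_1)/B$ (equivalently your hyperbolic formula), and the recognition that the coupling to $\Phi_\perp$ feeds back into the $\delta_1$ equation through a term like $\tilde w_1=B\sum_{m\geqslant2}f_{1m}\alpha_m$. Existence and uniqueness of $\pi$ are also handled the same way (Lyapunov function $x^2$, Meyn--Tweedie).

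The one place where the paper does something more specific than your plan is the control of $\Phi_\perp$ and, through it, the sharp $q_3$. Rather than a single $L^2(\pi_0)$ energy estimate, the paper works componentwise with the infinite ODE system for $(\alpha_n)_{n\geqslant2}$, extracts the explicit slow-manifold value $\alpha_n^*=-\tfrac{\eps}{\sigma^2\lambda_n}\bigl[f_{n0}+\alpha_1 f_{n1}\bigr]$, and uses an invariant-set (phase-plane) argument to show the remainder $\alpha_n^1=\alpha_n-\alpha_n^*$ is smaller by an extra factor $\eps/\sigma^2$. This splitting $\Phi_\perp=\Phi_\perp^*+\Phi_\perp^1$ is what produces the stated form of $q_3$: the contribution of $\Phi_\perp^*$ to $\tilde w_1$ is $\sum_m f_{1m}\alpha_m^*$, and the paper shows separately (via delicate cancellations in sums like $\sum_{m\geqslant2}\lambda_m^{-1}f_{1m}f_{mi}$) that this is $\Order{(\eps/\sigma^2)\lambda_1\ell^3}$, while only the smaller $\Phi_\perp^1$ contributes the $\sqrt{\lambda_1}$ term, now with prefactor $\eps^3/\sigma^6$ instead of $\eps^2/\sigma^4$. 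Your ``iterate the bound once'' is in the right spirit, but a plain energy estimate on $\Phi_\perp$ would naturally yield only the cruder $\Order{(\eps^2/\sigma^4)\sqrt{\lambda_1\ell^3}}$ for $q_3$; you would need the explicit $\alpha_n^*$ subtraction and the corresponding matrix-element estimates to recover the exponent $3$ on $\eps$.
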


As we will see in Section~\ref{sec:invariant}, the periodic solution 
of~\eqref{eq:ode_delta1} is in fact close to the periodic solution of the 
first-order equation 
\begin{equation}
 \eps\delta_1' 
 = -\lambda_1(y) \frac{q_2(y)}{q_1(y)} \Bigbrak{\delta_1 - A(y)} + 
\frac{q_3(y)}{q_1(y)}\;,
\end{equation} 
which is similar to~\eqref{eq:ODE_delta}. The function $\delta_1(y)$ also has a 
similar interpretation as $\delta(y)$ in~\eqref{eq:ppm}. Indeed, one has (by a 
similar argument as in the proof of Corollary~\ref{cor:hstarAB_pi}) 
\begin{equation}
 p_-(y) := \int_{-\infty}^{x^*_0(y)} \pi(x,y)\6x 
 = \frac12 \bigbrak{1 - \delta_1(y) + \Order{\sigma^2}}\;.
\end{equation} 
By analogy with~\eqref{eq:ppm}, $p_-(y)$ can be interpreted as the \lq\lq 
instantaneous\rq\rq\ probability to be in the left-hand potential well at 
equilibrium.

Given a function $f:[0,1]\to\R$, we introduce the notation 
\begin{equation}
\label{eq:def_avrg} 
 \avrg{f} = \int_0^1 f(y)\6y\;. 
\end{equation}
We will mainly be concerned with the fast-forcing regime 
$\eps\gg\avrg{\lambda_1}$. Then $\delta_1(y)$ is actually nearly constant, in 
the sense that 
\begin{equation}
\label{eq:delta1ff} 
 \delta_1(y) = \bar\delta_1 
 \biggbrak{1 + \biggOrder{\frac{\avrg{\lambda_1}}{\eps}}}\;,
\end{equation} 
where 
\begin{equation}
\label{eq:delta1bar} 
 \bar\delta_1 
 = \frac{1}{\avrg{\lambda_1}} \biggbrak{\avrg{\lambda_1 A} + 
 \biggOrder{\frac{\eps}{\sigma^2}\lsig^3\avrg{\lambda_1}}
+ \biggOrder{\frac{\eps^3}{\sigma^6} \lsig^{3/2}\avrg{\sqrt{\lambda_1}}}}\;.
\end{equation} 
One should note that the main limitation of Theorem~\ref{thm:pi} lies in the 
error term proportional to $\sqrt{\lambda_1(y)}$ in~\eqref{eq:q123}, which 
causes the error term in $\avrg{\sqrt{\lambda_1}}$ in~\eqref{eq:delta1bar}. 
This is due to technical difficulties in controlling $\Phi_\perp$, and will 
limit the applicability of our results to the regime 
\begin{equation}
 \eps \ll \avrg{\lambda_1}^{1/4}\;.
\end{equation} 
In fact, there is already a substantial amount of work involved in getting an 
error term proportional to $(\eps/\sigma^2)^3\avrg{\sqrt{\lambda_1}}$, rather 
than $(\eps/\sigma^2)^2\avrg{\sqrt{\lambda_1}}$. This improvement is due to the 
fact that we are able to prove that 
\begin{equation}
 \Phi_\perp(x,y) = \Phi_\perp^*(x,y) + \Phi_\perp^1(x,y)\;,
\end{equation} 
where $\Phi_\perp^*$ is explicit, and has a contribution of order 
$\lambda_1(y)$ to $q_3(y)$, while $\Phi_\perp^1$ satisfies a bound of the 
form~\eqref{eq:bound_Phiperp_main}, but with a larger power of $\eps$. See 
Corollary~\ref{cor:bound_Phiperp} for details.


\subsection{Main results: expected transition time}
\label{ssec:transition} 

In order to formulate our main result, we introduce two functions 
\begin{align}
 a(y) &= x^*_-(y) + \hat\rho\;, \\
 b(y) &= x^*_+(y) - \hat\rho\;,
 \label{eq:def_ab} 
\end{align}
where $\hat\rho>0$ is a parameter of order $1$ that will be taken sufficiently 
small. We then define two set 
\begin{align}
 \A &= \bigsetsuch{(x,y)\in\R\times[0,1]}{x \leqs a(y)}\;, \\
 \B &= \bigsetsuch{(x,y)\in\R\times[0,1]}{x \geqs b(y)}\; 
\end{align}
see \figref{fig:AB}.

\begin{figure}
\begin{center}
\begin{tikzpicture}[>=stealth',point/.style={circle,inner 
sep=0.035cm,fill=white,draw},x=7cm,y=1.5cm, 
declare function={ya(\x) = -1 + 0.2*sin(\x*360);
yb(\x) = -0.25*cos(\x*360 - 30);
yc(\x) = 1 + 0.2*sin(\x*360 + 45);}]

\path[fill=blue!30,smooth,domain=-0.1:1.1,samples=60,/pgf/fpu,
/pgf/fpu/output format=fixed] plot (\x, {ya(\x) + 0.1}) -- (1.1,-1.4) 
-- (-0.1,-1.4);

\path[fill=blue!30,smooth,domain=-0.1:1.1,samples=60,/pgf/fpu,
/pgf/fpu/output format=fixed] plot (\x, {yc(\x) - 0.1}) -- (1.1,1.4) 
-- (-0.1,1.4);

\draw[->,semithick] (0,-1.4) -> (0,1.8);
\draw[->,semithick] (-0.2,0) -> (1.2,0);

\draw[green!50!black,thick,-,smooth,domain=-0.1:1.1,samples=75,/pgf/fpu,
/pgf/fpu/output format=fixed] plot (\x, {ya(\x)});

\draw[dashed,purple,thick,-,smooth,domain=-0.1:1.1,samples=75,/pgf/fpu,
/pgf/fpu/output format=fixed] plot (\x, {yb(\x)});

\draw[green!50!black,thick,-,smooth,domain=-0.1:1.1,samples=75,/pgf/fpu,
/pgf/fpu/output format=fixed] plot (\x, {yc(\x)});

\node[] at (1.15,0.12) {$y$};
\node[] at (0.035,1.55) {$x$};
\node[blue] at (0.2,-1.1) {$\A$};
\node[blue] at (0.6,1.1) {$\B$};
\node[green!50!black] at (1.18,{ya(1.1)}) {$x^*_-(y)$};
\node[purple] at (0.4,{yb(0.4) + 0.2}) {$x^*_0(y)$};
\node[green!50!black] at (1.18,{yc(1.1)}) {$x^*_+(y)$};

\end{tikzpicture}
\vspace{-5mm}
\end{center}
\caption[]{Definition of the sets $\A$ and $\B$, in relation with the extrema 
$x^*_\pm(y)$ and $x^*_0(y)$ of the map $x \mapsto V_0(x,y)$. 
}
\label{fig:AB}
\end{figure}
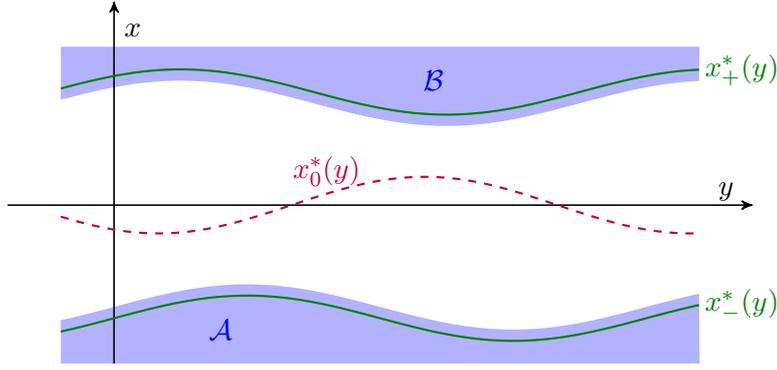

Our first main result gives a general expression for the expected first-hitting 
time of $\B$, when starting with a specific distribution on $\partial\A$, the 
so-called equilibrium measure. 

\begin{theorem}[Main result, general case]
\label{thm:main_general} 
There exists a probability measure $\nu_{\AB}$, supported on $\partial\A$, such 
that 
\begin{equation}
\label{eq:main_general} 
 \int_{\partial\A} \bigexpecin{(x,y)}{\tau_\B} \6\nu_{\AB} 
 = \frac{2\eps [1 - \avrg{\delta_1}]}{\avrg{\lambda_1[1-A\delta_1]}} 
 \bigbrak{1 + R_0(\eps,\sigma)}\;,
\end{equation} 
where $R_0(\eps,\sigma)$ is an error term satisfying
\begin{equation}
\label{eq:def_R0} 
 \bigabs{R_0(\eps,\sigma)} 
 \lesssim \sigma^2 
+ \frac{\eps\lsig\avrg{\sqrt{\lambda_1}}}{\sigma^2[1-\avrg{\delta_1}]}  
+ \frac{\eps\lsig^2\avrg{\lambda_1}}{\sigma^2\avrg{\lambda_1[1-A\delta_1]}} 
+ \frac{\eps^2\lsig\avrg{\sqrt{\lambda_1}}} 
{\sigma^{7/2}\avrg{\lambda_1[1-A\delta_1]}}\;.
\end{equation} 
\end{theorem}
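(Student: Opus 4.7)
I would invoke the potential-theoretic formula for non-reversible diffusions due to Landim, Mariani and Seo, referenced in the introduction. For a generator $\sL$ with invariant measure $\pi$, the framework employs the $\pi$-adjoint $\sL^\dagger$, the symmetrised equilibrium potential $h^*_{\AB}$ solving $\frac12(\sL+\sL^\dagger)h^*_{\AB} = 0$ on $\ABc$ with boundary values $1$ on $\A$ and $0$ on $\B$, the equilibrium measure $\nu_{\AB}$ supported on $\partial\A$, and the capacity $\mathrm{cap}(\A,\B)$. The key identity reads schematically
\begin{equation*}
 \int_{\partial\A} \bigexpecin{(x,y)}{\tau_\B}\6\nu_{\AB}
 = \frac{\int \pi(x,y)\, h^*_{\AB}(x,y)\6x\6y}{\mathrm{cap}(\A,\B)}\;.
\end{equation*}
The first step would be to check that the hypotheses of this identity are met using Theorem~\ref{thm:pi}, which provides the required existence and control of $\pi$.

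The second step is to bound $\mathrm{cap}(\A,\B)$ from above and below via the non-reversible Dirichlet principles, using as test function the natural approximation $h^*_{\AB}(x,y) \approx h_0(x\vert y)$, the static committor at frozen $y$. By~\eqref{eq:h02}--\eqref{eq:Deltabar} this is essentially $\phi_1(x\vert y)$ rescaled by the $y$-dependent factor $2\cosh(\Deltabar(y)/\sigma^2)$. The leading contribution to the Dirichlet form comes from the $x$-part, which integrated against $\pi_0(\cdot\vert y)$ reproduces $\lambda_1(y)$ in each $y$-slice, weighted by $1/\{2\cosh^2(\Deltabar(y)/\sigma^2)\}$; the $\alpha_1\phi_1$-correction in $\pi$ then multiplies the integrand by $1 - A(y)\delta_1(y)$. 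Averaging in $y$ yields $\mathrm{cap}(\A,\B) \sim \avrg{\lambda_1(1-A\delta_1)}/(2\eps)$, the factor $\eps^{-1}$ reflecting the time-rescaling in~\eqref{eq:SDE_fast-slow}.

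For the numerator, since $h^*_{\AB}$ is close to $1$ for $x<x^*_0(y)$ and to $0$ for $x>x^*_0(y)$ with a transition layer of width $\Order{\sigma}$, the integral $\int \pi h^*_{\AB}$ reduces, up to lower-order terms, to the $\pi$-mass of $\{x<x^*_0(y)\}$. Using $\pi = \pi_0(1 + \alpha_1\phi_1 + \Phi_\perp)$ together with the definition of $\alpha_1$ (and the interpretation $p_-(y) = \frac12[1-\delta_1(y)+\Order{\sigma^2}]$ given after Theorem~\ref{thm:pi}), this mass equals $\frac12[1-\avrg{\delta_1}] + \Order{\sigma^2}$. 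Dividing numerator by denominator then produces the announced main term $2\eps[1-\avrg{\delta_1}]/\avrg{\lambda_1(1-A\delta_1)}$.

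The main obstacle is tracking the error term $R_0(\eps,\sigma)$ in~\eqref{eq:def_R0}. The $\sigma^2$-contribution encodes the Gaussian/Laplace approximations in the $\pi_0$-mass estimates at the well bottoms; the terms carrying $\avrg{\sqrt{\lambda_1}}$ propagate from the $L^2$-bound~\eqref{eq:bound_Phiperp_main} on $\Phi_\perp$ (this is where Theorem~\ref{thm:pi} is the bottleneck, and where the factor $\eps^2/\sigma^{7/2}$ enters via $\norm{\Phi_\perp}_{L^2(\pi_0)}$ combined with the cosh growth); and the $\lsig^2\avrg{\lambda_1}$-term reflects the mismatch between the true symmetrised equilibrium potential $h^*_{\AB}$ and the $h_0$-based test function, both in the bulk and near $\partial(\A\cup\B)$. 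I would therefore proceed by first establishing matching upper and lower capacity bounds (the delicate step, because non-reversibility forces one to use the Landim--Mariani--Seo variational characterization rather than the Dirichlet/Thomson pair), then bounding the numerator by combining the decomposition of $\pi$ with Cauchy--Schwarz against~\eqref{eq:bound_Phiperp_main}, and finally assembling all remainders into the stated form of $R_0$.
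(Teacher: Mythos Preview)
Your strategy is the paper's: the theorem is assembled from Proposition~\ref{prop:magic} (the Landim--Mariani--Seo identity), Corollary~\ref{cor:hstarAB_pi} (the numerator $\int_{\B^c}h^*_{\AB}\6\pi$), and Theorem~\ref{thm:capacity} (two-sided capacity bounds). Two points need correction, however.

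First, $h^*_{\AB}$ is \emph{not} the solution of the symmetrised problem $\tfrac12(\sL+\sL^*)h=0$; it is the committor of the \emph{adjoint} (time-reversed) process, satisfying $\sL^*h^*_{\AB}=0$ with $\sL^*=\sym{\sL}-\asym{\sL}$. The paper approximates it by the static committor $\tilde h_0^*$ built from the adjoint potential $V_0^*=V_0-\sigma^2\log\Phi$ (see~\eqref{eq:V0star} and~\eqref{eq:def_htilde0_star}), not simply by $h_0$. For the numerator this distinction is harmless, since both are close to the indicator of $\{x<x^*_0(y)\}$, but conceptually it matters and would affect any attempt to make the committor approximation quantitative (Proposition~\ref{prop:bounds_hAB}).

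Second, your capacity computation is off by a factor of $2$ (the paper obtains $\capacity(\A,\B)\simeq C_0=\tfrac{1}{4\eps}\avrg{\lambda_1[1-A\delta_1]}$), and more importantly the two-sided bound is not obtained from a single test function. The upper bound uses the Dirichlet principle with $f=\tfrac12(\tilde h_0+\tilde h_0^*)$ together with a companion flow $\ph=\Phi_f-\Psi_{\tilde h_0}$, while the lower bound uses the Thomson principle with $f=0$ and the test flow $\ph\propto\lambda_1(y)B(y)^2\Phi(x,y)\,\ex$. Neither test flow is exactly divergence-free, so the \emph{defective} versions~\eqref{eq:Dirichlet_defective} and~\eqref{eq:Thomson_defective} must be invoked; controlling the divergence defect (Propositions in Sections~\ref{ssec:capacity_upper}--\ref{ssec:capacity_lower}) is precisely what generates the $\eps\ell(\sigma)^2\sigma^{-2}\avrg{\lambda_1}$ and $\eps^2\sigma^{-7/2}\avrg{\sqrt{\lambda_1}}$ contributions to $R_0$ that you attribute to other sources. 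This mechanism is where most of the technical effort in Section~\ref{sec:capacity} resides, and your sketch does not indicate it.
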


As such, this result has two main limitations. First, it is not immediately 
apparent for which values of $\eps$ and $\sigma$ the remainder 
$R_0(\eps,\sigma)$ is actually small. And second, we do not know the 
equilibrium measure $\nu_{\AB}$. 

We will address both issues in the fast-forcing regime $\eps \gg 
\avrg{\lambda_1}$. In fact, the discussion of the two-state jump process in 
Section~\ref{ssec:jump} suggests that if $\eps < \avrg{\lambda_1}$, the 
expected first-hitting time depends strongly on the starting point, whereas 
it is almost constant if $\eps \gg \avrg{\lambda_1}$. 

Recalling the expressions~\eqref{eq:lambda1} for $\lambda_1(y)$ 
and~\eqref{eq:defA} for $A(y)$, we obtain 
\begin{align}
 \avrg{\lambda_1} &= \bigbrak{\avrg{r_-} + \avrg{r_+}} 
\bigbrak{1+\Order{\sigma^2}}\;, \\
 \avrg{\lambda_1 A} &= \bigbrak{\avrg{r_-} - \avrg{r_+}} 
\bigbrak{1+\Order{\sigma^2}}\;.
\label{eq:average_lambda1A} 
\end{align}
Furthermore, if $\eps \gg \avrg{\lambda_1}$, \eqref{eq:delta1ff} 
and~\eqref{eq:delta1bar} imply that $\delta_1(y)$ is close to 
\begin{equation}
 \frac{\avrg{\lambda_1 A}}{\avrg{\lambda_1}} 
 = \frac{\avrg{r_-} - \avrg{r_+}}{\avrg{r_-} + 
\avrg{r_+}}\bigbrak{1+\Order{\sigma^2}}\;.
\end{equation} 
It follows that the leading term in~\eqref{eq:main_general} is given by 
\begin{align}
 2\eps \frac{\avrg{\lambda_1} - \avrg{\lambda_1A}} 
 {\avrg{\lambda_1}^2 - \avrg{\lambda_1A}^2}
 &= \frac{\eps}{\avrg{r_-}} \bigbrak{1+\Order{\sigma^2}} \\
 &= \frac{2\pi\eps}{\displaystyle
 \int_0^1 \omega_-(y)\omega_0(y) \e^{-2h_-(y)/\sigma^2}\6y}
\bigbrak{1+\Order{\sigma^2}}\;,
\end{align} 
which agrees with~\eqref{eq:main_intro} (recall that we have scaled time by a 
factor $\eps$). 
In order to quantify error terms, we introduce minimal barrier heights
\begin{equation}
\label{eq:def_hmin} 
 h_\pm^{\min} = \min_{0\leqs y\leqs 1} h_\pm(y)\;,
\end{equation} 
and asymmetry factors 
\begin{equation}
\label{eq:def_HH} 
 H = \bigabs{h_-^{\min} - h_+^{\min}}\;, 
 \qquad 
 H_- = \bigbrak{h_-^{\min} - h_+^{\min}}_+\;,
\end{equation} 
where $\brak{\cdot}_+$ denotes the positive part. 

\begin{theorem}[Main result, fast-forcing regime]
\label{thm:main_ff} 
Assume $\eps \gg \avrg{\lambda_1}$. Then for any initial condition 
$(x,y)\in\partial\A$, we have 
\begin{equation}
\label{eq:main_ff} 
 \bigexpecin{(x,y)}{\tau_\B} 
 = \frac{\eps}{\avrg{r_-}} \bigbrak{1 + R_1(\eps,\sigma)}\;,
\end{equation} 
where 
\begin{equation}
\label{eq:R1_thm} 
 \bigabs{R_1(\eps,\sigma)}
 \lesssim \sigma^2 + \biggpar{\frac{\eps\lsig^3}{\sigma^2} + 
\frac{\eps^2\lsig}{\sigma^{7/2}\avrg{\lambda_1}^{1/2}} + 
\frac{\avrg{\lambda_1}^2}{\eps}} \e^{2H/\sigma^2} 
+ \frac{\avrg{\lambda_1}}{\eps} (1 + \e^{2H_-/\sigma^2})\;.
\end{equation} 
\end{theorem}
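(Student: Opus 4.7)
The plan is to derive Theorem~\ref{thm:main_ff} from Theorem~\ref{thm:main_general} by specialising to the fast-forcing regime $\eps\gg\avrg{\lambda_1}$ and then removing the average against the (unknown) equilibrium measure $\nu_{\AB}$. Two tasks must be carried out: (i) simplify the right-hand side of~\eqref{eq:main_general} to the claimed $\eps/\avrg{r_-}$, and (ii) show that $\bigexpecin{(x,y)}{\tau_\B}$ varies little over $\partial\A$, so that the $\nu_{\AB}$-averaged identity yields a pointwise one.

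For step (i), I would substitute the asymptotics~\eqref{eq:delta1ff}--\eqref{eq:delta1bar} into the leading factor of~\eqref{eq:main_general}. Since $\delta_1(y)=\bar\delta_1[1+\bigOrder{\avrg{\lambda_1}/\eps}]$ is essentially constant, the numerator $2\eps[1-\avrg{\delta_1}]$ reduces to $2\eps(1-\bar\delta_1)$ and the denominator $\avrg{\lambda_1[1-A\delta_1]}$ reduces to $\avrg{\lambda_1}-\bar\delta_1\avrg{\lambda_1 A}$. Plugging in $\bar\delta_1\approx\avrg{\lambda_1 A}/\avrg{\lambda_1}$ and using the identity $1-x^2=(1-x)(1+x)$ with $x=\avrg{\lambda_1 A}/\avrg{\lambda_1}$ collapses the ratio to $2\eps/(\avrg{\lambda_1}+\avrg{\lambda_1 A})$. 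Finally~\eqref{eq:average_lambda1A} gives $\avrg{\lambda_1}+\avrg{\lambda_1 A}=2\avrg{r_-}[1+\Order{\sigma^2}]$, yielding the stated $\eps/\avrg{r_-}$.

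For the error bookkeeping, I would combine the intrinsic $R_0$ of Theorem~\ref{thm:main_general} with three further contributions: the $\Order{\avrg{\lambda_1}/\eps}$ correction from $\delta_1\to\bar\delta_1$, the two explicit terms inside~\eqref{eq:delta1bar}, and the $\Order{\sigma^2}$ in~\eqref{eq:average_lambda1A}. Each such \emph{relative} error is measured against $\avrg{\lambda_1}$ or $\avrg{\lambda_1[1-A\delta_1]}\asymp\avrg{r_-}\avrg{r_+}/\avrg{\lambda_1}$ inside~\eqref{eq:main_general}, and must be recast as a relative error against the target denominator $\avrg{r_-}$. The conversion factors $\avrg{\lambda_1}/\avrg{r_\pm}\asymp 1+\e^{\pm 2(h_-^{\min}-h_+^{\min})/\sigma^2}$ account for the two exponentials $\e^{2H/\sigma^2}$ and $\e^{2H_-/\sigma^2}$ visible in~\eqref{eq:R1_thm}, once the notation \eqref{eq:def_hmin}--\eqref{eq:def_HH} is unfolded. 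Tracking each piece carefully should yield the three terms multiplied by $\e^{2H/\sigma^2}$ in~\eqref{eq:R1_thm}.

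Step (ii) is the main obstacle, since Theorem~\ref{thm:main_general} provides only the $\nu_{\AB}$-averaged hitting time. The heuristic is that in the regime $\eps\gg\avrg{\lambda_1}$, a round trip from $\partial\A$ back to $\partial\A$ without reaching $\B$ takes $\Order{1}$ rescaled time (one period of the slow variable $y$), whereas $\tau_\B$ itself is of order $\eps/\avrg{r_-}$, comprising $\Order{\eps/\avrg{\lambda_1}}\gg 1$ periods. Through the Markov property and the ellipticity granted by $\varrho>0$, the dependence of $\bigexpecin{(x,y)}{\tau_\B}$ on the initial phase $y$ is washed out at the level of relative error $\avrg{\lambda_1}/\eps$, up to the asymmetry factor $1+\e^{2H_-/\sigma^2}$ coming from the conversion from $\avrg{\lambda_1}$ to $\avrg{r_-}$; this is precisely the last term of~\eqref{eq:R1_thm}. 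Proving this uniform mixing quantitatively, while keeping constants sharp enough not to destroy the leading-order prefactor of Theorem~\ref{thm:main_general}, is where I expect the bulk of the technical effort to concentrate: I would envisage either a coupling argument over $\Order{1/\avrg{\lambda_1}}$ periods, or a direct analysis of the Poisson equation for $T(x,y)=\bigexpecin{(x,y)}{\tau_\B}$ on the region between $\A$ and $\B$, using the spectral-gap estimates of Section~\ref{ssec:static} to show that the $y$-dependence of $T$ is small in a suitable norm.
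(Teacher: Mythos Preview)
Your step (i) is correct and matches the paper's approach in Section~\ref{sec:proof_main}: the algebraic reduction of the leading term via~\eqref{eq:delta1ff}--\eqref{eq:delta1bar} and~\eqref{eq:average_lambda1A}, and the conversion factors $\avrg{\lambda_1}/\avrg{r_\pm}$ producing the $\e^{2H/\sigma^2}$ and $\e^{2H_-/\sigma^2}$ factors, are exactly what the paper does. One small misattribution: the term $\frac{\avrg{\lambda_1}}{\eps}(1+\e^{2H_-/\sigma^2})$ in~\eqref{eq:R1_thm} already arises in step (i), from the $\Order{\avrg{\lambda_1}/\eps}$ correction when replacing $\delta_1(y)$ by $\bar\delta_1$ in the numerator $1-\avrg{\delta_1}$, not from the passage to a pointwise statement.

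For step (ii) the paper takes a simpler and sharper route than either of your suggestions. Rather than coupling over $\Order{1/\avrg{\lambda_1}}$ periods or analysing the Poisson equation for $T$, it invokes the pathwise contraction result of Martinelli--Olivieri--Scoppola: with probability exponentially close to $1$, two sample paths started at nearby points satisfy $\norm{\bar z_t - z_t}\leqs c\e^{-mt}\norm{\bar z - z}$ for all $t\geqs0$. Combined with a Cauchy--Schwarz bound involving $\expec{\tau_\B^2}$ (controlled by a routine large-deviation estimate), this shows that $\expecin{\bar z}{\tau_\B}$ and $\expecin{z}{\tau_\B}$ differ by an exponentially small relative error for $\bar z$ in a small ball around $z$. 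To handle different $y$-phases on $\partial\A$, one waits at most one period until the slow variable reaches a reference value; the resulting relative error is $\Order{\avrg{r_-}/\eps}$, already dominated by terms in $R_1$. The advantage of this argument is that it requires only a \emph{single} period and cites an existing contraction estimate, avoiding the delicate spectral analysis of the non-reversible Poisson problem that your second option would entail.
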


In the symmetric case $h_-^{\min} = h_+^{\min}$, we have $H = H_- = 0$, and the 
error term takes the simpler form 
\begin{equation}
 \bigabs{R_1(\eps,\sigma)}
 \lesssim \sigma^2 + \frac{\eps\lsig^3}{\sigma^2} + 
\frac{\eps^2\lsig}{\sigma^{7/2}\avrg{\lambda_1}^{1/2}} +
\frac{\avrg{\lambda_1}}{\eps}\;.
\end{equation} 
Disregarding powers of $\sigma$ with respect to exponential terms, we see that 
Theorem~\ref{thm:main_ff} is applicable when  
\begin{equation}
\label{eq:eps_interval} 
 \avrg{\lambda_1} \ll \eps \ll \avrg{\lambda_1}^{1/4}\;.
\end{equation} 
In the asymmetric case $h_-^{\min} \neq h_+^{\min}$, the error term is larger, 
and results in stronger conditions on $\eps$. One can however check (see 
Section~\ref{sec:proof_main}) that there exists a non-empty interval of values 
of $\eps$ for which Theorem~\ref{thm:main_ff} is still meaningful as long as 
\begin{equation}
\label{eq:condition_hmin} 
 \frac12 h_-^{\min} < h_+^{\min} < 2h_-^{\min}\;,
\end{equation} 
that is, as long as the asymmetry between the potential wells is not too large.


\subsection{Discussion}
\label{ssec:discussion} 

Theorem~\ref{thm:main_ff} provides a generalisation of the static 
Eyring--Kramers law~\eqref{eq:EK} to slowly oscillating double-well potentials, 
when the forcing frequency $\eps$ lies in an interval given 
by~\eqref{eq:eps_interval} if the oscillation is symmetric, in the sense that 
$h_-^{\min} = h_+^{\min}$. We now provide some comments on what we expect to 
happen outside this domain of validity. This will also serve as a \lq\lq 
reality check\rq\rq\ of our main results. 

If $\eps \leqs \avrg{\lambda_1}$, Theorem~\ref{thm:main_general} is still 
valid, but perhaps not as useful. The main limitation of the result in that 
case is that $\expecin{(x,y)}{\tau_\B}$ is no longer expected to be 
almost constant, so that the equilibrium measure $\nu_{\AB}$ matters. In fact, 
we do have an explicit expression for $\nu_{\AB}$, which is given 
(cf.~\eqref{eq:nuAB}) by 
\begin{equation}
 \6\nu_{\AB} = \frac{\sigma^2}{2\eps\capacity(\A,\B)} (D\nabla h^*_{\AB} \cdot 
\nn)\pi\6\lambda\;,
\end{equation}
where all notations are defined in Section~\ref{sec:pot}. 
In particular, $\nn$ is the unit normal vector to $\partial\A$, and $\6\lambda$ 
is the Lebesgue measure on $\partial\A$, so that 
\begin{equation}
 \nn \6\lambda = \bigpar{\ex - a'(y)\ey} \6y\;.
\end{equation} 
Using the estimates on the capacity $\capacity(\A,\B)$ given in 
Theorem~\ref{thm:capacity}, the expression~\eqref{eq:def_D} of the diffusion 
matrix $D$, the estimate on the adjoint committor $h^*_{\AB}$ obtained in 
Proposition~\ref{prop:bounds_hAB}, and Theorem~\ref{thm:pi} on the invariant 
measure $\pi$, we obtain that to leading order, 
\begin{equation}
\label{eq:nAB_approx} 
 \6\nu_{\AB} \simeq \frac{\sigma^2}{2\eps} 
 \frac{1 + \alpha_1(y) \phi_1(a(y) \vert y)}
 {\tilde N(y)Z_0(y)\capacity(\cA,\cB)}\6y 
\simeq\frac{\lambda_1(y)[1+A(y)][1-\delta_1(y)]}{\avrg{\lambda_1[1-A\delta_1]}} 
\6y\;.
\end{equation} 
Substituting this in the result~\eqref{eq:main_general} of 
Theorem~\ref{thm:main_general}, and using the fact that $\lambda_1(y)[1+A(y)] = 
2r_-(y)$, we obtain that to leading order,
\begin{equation}
\label{eq:average_Er} 
 \bigavrg{\expec{\tau_\B}r_-(1-\delta_1)}
 \simeq \eps\avrg{1-\delta_1}\;.
\end{equation} 

\begin{itemize}
\item 	In the fast-forcing regime $\eps\gg\avrg{\lambda_1}$, the expectation 
$\expecin{(a(y_0),y_0)}{\tau_\B}$ being nearly constant, we recover 
indeed~\eqref{eq:main_ff}.

\item 	In the superadiabatic regime $\eps\ll\min_y\lambda_1(y)$, the 
discussion 
in Section~\ref{ssec:jump} on the two-state jump process suggests that we have 
\begin{equation}
\label{eq:expec_superadiabatic} 
 \expecin{(a(y_0),y_0)}{\tau_\B}
 \simeq \frac{\eps}{r_-(y_0)}\;,
\end{equation} 
which is indeed consistent with~\eqref{eq:average_Er}. 

\item 	In the intermediate regime $\min_y\lambda_1(y) \leqs \eps \leqs 
\max_y\lambda_1(y)$, the situation is more complicated owing to the phenomenon 
of stochastic resonance (see for instance the discussion 
in~\cite[Section~4.1.2]{Berglund_Gentz_book}). What we expect then is the 
following. If the process starts at a point $(a(y_0),y_0)\in\partial\A$ such 
that $\eps < r_-(y_0)$, the mean hitting time of $\B$ will still 
satisfy~\eqref{eq:expec_superadiabatic}. Otherwise, the transition to $\B$ will 
occur near the smallest $y > y_0$ such that $r_-(y) = \eps$, and thus the 
expectation of $\tau_\B$ is dominated by $y-y_0$. This picture is 
also consistent with large-deviation results obtained in~\cite{Freidlin2}. 
\end{itemize}

The other regime not covered by our results is when 
$\eps\geqs\avrg{\lambda_1}^{1/4}$. As noted above, this is mainly due to 
technical difficulties in controlling the part of the invariant measure $\pi$ 
which is orthogonal to the span of the first two eigenfunctions $\pi_0$ and 
$\pi_1$ of $\sL_x^\dagger$. In fact, it seems quite plausible that the 
expression~\eqref{eq:main_ff} for the mean transition time still holds as long 
as $\eps\ll\sigma^2$ (for larger $\eps$, the slow--fast structure of the 
equation for $\pi$ changes). To establish such a result, however, new ideas are 
needed to achieve a better control of the invariant measure. 


\subsection{Outline of the proof}
\label{ssec:proof} 

As already mentioned, the main ingredient of our proof is the 
potential-theoretic approach to metastability, which was developed 
in~\cite{BEGK,BGK} for reversible diffusions, and extended 
in~\cite{Landim_Mariani_Seo19} to general diffusions. We give a quick overview 
of this approach in Section~\ref{sec:pot}. Its key result relates the expected 
first-hitting time of a set $\B$, when starting in the equilibrium measure 
$\nu_{\AB}$ on the boundary of another set $\A$, with the invariant measure of 
the diffusion and the so-called~\emph{capacity} $\capacity(\A,\B)$. See 
Proposition~\ref{prop:magic} below. 

The main difficulty in our case is to determine the invariant measure $\pi$ of 
the system. While this measure is explicitly known for reversible systems, this 
is no longer the case here. As shown in~\cite{Landim_Mariani_Seo19}, $\pi$ is 
related to the solution of a \NB{Hamilton--Jacobi-type} equation, see 
Lemma~\ref{lem:pot_invariant}. However, obtaining an approximate solution of 
this equation with good enough control of error terms turns out to be 
difficult. Therefore we adopt another approach, which consists in expanding 
$\pi$ on a basis of eigenfunctions of the generator of the system with frozen 
$y$, and analysing the resulting system of ODEs. This is done in 
Section~\ref{sec:invariant}, which contains in particular the proof of 
Theorem~\ref{thm:pi}. 

In Section~\ref{sec:adjoint}, we investigate the adjoint system that enters 
the expression for the mean first-hitting time in Proposition~\ref{prop:magic}. 
In particular, we obtain approximate expressions for the committors 
$\prob{\tau_\A < \tau_\B}$ of the original and adjoint system in 
Proposition~\ref{prop:bounds_hAB}, using a perturbation theory argument around 
the committors of the frozen systems. The necessary estimate for 
Proposition~\ref{prop:magic} is then obtained in 
Corollary~\ref{cor:hstarAB_pi}. 

The other quantity that needs to be determined for the potential-theoretic 
approach to work is the capacity $\capacity(A,B)$. This is comparatively easy 
once the invariant measure is known, since the capacity obeys variational 
principles (the Dirichlet and Thomson principle) that give upper and lower 
bounds once one makes a sufficiently good guess of test functions to feed into 
them. It turns out that the system with frozen $y$ provides such sufficiently 
good guesses, the only difficulty being to account for the fact that these 
guesses are not strictly divergence-free. The main result is 
Theorem~\ref{thm:capacity}, which provides upper and lower bounds on the 
capacity. 

Section~\ref{sec:proof_main} contains the last steps of the proof of 
Theorems~\ref{thm:main_general} and~\ref{thm:main_ff}. While 
Theorem~\ref{thm:main_general} follows directly from the obtained bounds on the 
invariant measure, committors and capacity, Theorem~\ref{thm:main_ff} requires 
a little more work, which consists in simplifying the expressions for the 
dominant term and error terms, and getting rid of the equilibrium measure 
$\nu_{\AB}$. 

In order to increase readability, we have relegated some of the more technical 
proofs to the appendix. Appendix~\ref{app:jump} contains the proof of 
Proposition~\ref{prop:twostate} on the two-state jump process, 
Appendix~\ref{app:pot} contains the proofs of the potential-theoretic results 
in 
Section~\ref{sec:pot}, Appendix~\ref{app:proofs_static} contains the estimates 
on static eigenfunctions required for determining the invariant measure, and 
Appendix~\ref{app:Laplace} gathers a few auxiliary results involving Laplace 
asymptotics. 


\section{Non-reversible potential theory}
\label{sec:pot} 

In this section, we give a short overview of the potential-theoretic results 
contained in~\cite[Section~4]{Landim_Mariani_Seo19}, slightly adapted to our 
situation. All proofs are given in Appendix~\ref{app:pot}. 

The infinitesimal generator of the system~\eqref{eq:SDE_fast-slow} is given by 
\begin{equation}
\label{eq:defL} 
 \sL = \frac{\sigma^2}{2\eps} \bigpar{\partial_{xx} + \varrho^2\eps 
\partial_{yy}} + 
\frac{1}{\eps} b\, \partial_x + \partial_y\;.
\end{equation}
A key idea in~\cite{Landim_Mariani_Seo19} is to decompose $\sL$ into 
a symmetric and an antisymmetric part. This allows to define an adjoint 
stochastic process, and both the direct and adjoint process play a role in the 
expressions for mean first-passage times. 


\subsection{Invariant density}
\label{ssec:pot_invariant} 

\begin{lemma}
\label{lem:pot_invariant} 
The system~\eqref{eq:SDE_fast-slow} has an invariant measure with density 
$\pi(x,y) = Z^{-1}\e^{-2V(x,y)/\sigma^2}$, where $V$ solves the 
\NB{Hamilton--Jacobi-type} equation 
\begin{equation}
\label{eq:HJ} 
 (\partial_x V)^2 + b\,\partial_x V + \eps\varrho^2(\partial_y V)^2 + 
\eps \partial_y V 
 = \frac{\sigma^2}{2} \bigbrak{\partial_{xx}V + \partial_x b + \eps\varrho^2 
\partial_{yy}V}\;.
\end{equation} 
\end{lemma}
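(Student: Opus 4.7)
The plan is a direct Fokker--Planck computation. Since the system is elliptic ($\varrho > 0$) and $y$ lives on the torus $\R/\Z$ while the coefficient $b$ satisfies the dissipativity condition $xb(x,y)\leqs -Mx^2$ for $\abs{x}\geqs L$, standard existence results (e.g.\ via a Lyapunov function argument, using $x^2$ as a Lyapunov function in the $x$-variable) guarantee that \eqref{eq:SDE_fast-slow} admits an invariant probability measure. Ellipticity together with a Harnack-type / strong maximum principle argument forces its density to be strictly positive and smooth, so writing $\pi(x,y) = Z^{-1}\e^{-2V(x,y)/\sigma^2}$ for some smooth $V$ is legitimate.

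The first step is to compute the formal $L^2(\R\times[0,1],\6x\,\6y)$-adjoint of the generator \eqref{eq:defL}. Integration by parts yields
\begin{equation*}
 \sL^\dagger \pi
 = \frac{\sigma^2}{2\eps}\bigpar{\partial_{xx}\pi + \varrho^2\eps\partial_{yy}\pi}
 - \frac{1}{\eps}\partial_x(b\pi) - \partial_y \pi\;,
\end{equation*}
and the invariance condition for $\pi$ is the stationary Fokker--Planck equation $\sL^\dagger \pi = 0$.

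The second step is a straightforward substitution. Using
\begin{equation*}
 \frac{\partial_x \pi}{\pi} = -\frac{2}{\sigma^2}\partial_x V\;,
 \qquad
 \frac{\partial_{xx}\pi}{\pi} = \frac{4}{\sigma^4}(\partial_x V)^2 - \frac{2}{\sigma^2}\partial_{xx}V\;,
\end{equation*}
and the analogous identities in $y$, I would compute $\sL^\dagger\pi/\pi$ term by term. Expanding $\partial_x(b\pi) = (\partial_x b)\pi - \tfrac{2}{\sigma^2}\,b\,(\partial_x V)\pi$ and collecting, the condition $\sL^\dagger \pi = 0$ becomes, after multiplication by $\sigma^2\eps/(2\pi)$, exactly \eqref{eq:HJ}. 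That is, one obtains
\begin{equation*}
 (\partial_x V)^2 + b\,\partial_x V + \eps\varrho^2(\partial_y V)^2 + \eps\,\partial_y V
 - \frac{\sigma^2}{2}\bigbrak{\partial_{xx}V + \partial_x b + \eps\varrho^2\partial_{yy}V} = 0\;.
\end{equation*}

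The main obstacle is not the algebra, which is routine, but justifying the ansatz. The smoothness and strict positivity of the invariant density required to define $V$ globally must be argued from hypoellipticity and the maximum principle; once that is in place, the content of the lemma is precisely the Fokker--Planck equation rewritten in terms of $V$. No uniqueness claim for solutions of \eqref{eq:HJ} is being made here: the lemma only records that \emph{some} $V$ (the one built from the invariant density of the SDE) satisfies it, which will be exploited later together with the eigenfunction expansion of Section~\ref{sec:invariant} to analyse $\pi$.
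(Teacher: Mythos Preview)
Your proposal is correct and follows essentially the same route as the paper: compute the $L^2$-adjoint $\sL^\dagger$, substitute the ansatz $\pi = Z^{-1}\e^{-2V/\sigma^2}$, and observe that $\sL^\dagger\pi = 0$ is algebraically equivalent to~\eqref{eq:HJ}. The paper's proof records only this equivalence in two lines, deferring the existence and positivity of the invariant density to a separate Meyn--Tweedie argument elsewhere; your version folds that justification in, which is fine.
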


\begin{lemma}
\label{lem:Lf} 
The infinitesimal generator~\eqref{eq:defL} can be written as 
\begin{align}
 \sL f &= \frac{\sigma^2}{2\eps} \e^{2V/\sigma^2} \Bigset{\partial_x 
\bigbrak{\e^{-2V/\sigma^2}\partial_x f} + \eps\varrho^2 \partial_y 
\bigbrak{\e^{-2V/\sigma^2}\partial_y f}}
 + c \cdot \nabla f \\
 &=: \frac{\sigma^2}{2\eps} \e^{2V/\sigma^2} \nabla\cdot\bigbrak{D 
\e^{-2V/\sigma^2} \nabla f} + c \cdot \nabla f\; 
\label{eq:L_sym_asym} 
\end{align} 
where 
\begin{equation}
 D = 
 \begin{pmatrix}
  1 & 0 \\ 0 & \eps\varrho^2
 \end{pmatrix}
\end{equation}
is a diffusion matrix, and 
\begin{equation}
\label{eq:defc} 
 c = \frac{1}{\eps} (b + \partial_x V) \,\ex 
 + (1 + \varrho^2\partial_y V) \,\ey  
\end{equation} 
satisfies the vanishing divergence condition 
\begin{equation}
\label{eq:divergence} 
 \nabla \cdot (\e^{-2V/\sigma^2}c) = 0\;.
\end{equation} 
\end{lemma}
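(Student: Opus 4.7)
The proof is essentially a direct verification, so the plan is to expand both sides and reconcile them via the Hamilton--Jacobi equation from Lemma~\ref{lem:pot_invariant}.

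First, I would verify the decomposition formula by computing the symmetric part explicitly. Using the product rule,
\begin{equation*}
\e^{2V/\sigma^2}\partial_x\bigbrak{\e^{-2V/\sigma^2}\partial_x f}
= \partial_{xx}f - \frac{2}{\sigma^2}(\partial_x V)(\partial_x f)\;,
\end{equation*}
and similarly for the $y$-derivative term. Multiplying by $\sigma^2/(2\eps)$ and adding the drift $c\cdot\nabla f$ with $c$ as in~\eqref{eq:defc}, the $\partial_x V\,\partial_x f$ and $\partial_y V\,\partial_y f$ contributions from the symmetric part cancel exactly against the $\partial_x V$ and $\partial_y V$ terms coming from $c$. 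What remains is
\begin{equation*}
\frac{\sigma^2}{2\eps}\partial_{xx}f + \frac{\sigma^2\varrho^2}{2}\partial_{yy}f + \frac{1}{\eps}b\,\partial_x f + \partial_y f\;,
\end{equation*}
which matches~\eqref{eq:defL}. The rewriting using $D$ and $\nabla$ is then just notation.

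Next, I would establish the vanishing divergence condition~\eqref{eq:divergence} by computing
\begin{equation*}
\nabla\cdot(\e^{-2V/\sigma^2} c)
= \e^{-2V/\sigma^2}\Bigbrak{\nabla\cdot c - \tfrac{2}{\sigma^2}c\cdot\nabla V}\;.
\end{equation*}
Using~\eqref{eq:defc} one gets $\nabla\cdot c = \eps^{-1}(\partial_x b + \partial_{xx}V) + \varrho^2\partial_{yy}V$ and $c\cdot\nabla V = \eps^{-1}(\partial_x V)(b+\partial_x V) + (\partial_y V)(1+\varrho^2\partial_y V)$. Substituting and multiplying through by $\sigma^2\eps/2$, the condition $\nabla\cdot(\e^{-2V/\sigma^2}c)=0$ becomes
\begin{equation*}
\tfrac{\sigma^2}{2}\bigbrak{\partial_x b + \partial_{xx}V + \eps\varrho^2\partial_{yy}V}
= (\partial_x V)(b+\partial_x V) + \eps(\partial_y V) + \eps\varrho^2(\partial_y V)^2\;,
\end{equation*}
which is exactly the Hamilton--Jacobi equation~\eqref{eq:HJ} established in Lemma~\ref{lem:pot_invariant}. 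Hence~\eqref{eq:divergence} holds.

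There is no real obstacle here: the content is the identification of the correct symmetric/antisymmetric splitting with respect to the (unknown) invariant measure, and the observation that the compatibility constraint for $c$ is precisely~\eqref{eq:HJ}. The only mild care needed is to track factors of $\eps$ and $\varrho^2$ consistently, since the diffusion matrix $D$ is anisotropic and the drift is scaled by $1/\eps$ in the fast direction.
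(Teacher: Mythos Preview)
Your proof is correct and follows essentially the same approach as the paper's: both verify the decomposition~\eqref{eq:L_sym_asym} by direct expansion and then show that the vanishing divergence condition~\eqref{eq:divergence} reduces precisely to the Hamilton--Jacobi-type equation~\eqref{eq:HJ}. You spell out the first step in slightly more detail than the paper (which simply says it ``follows from a short computation''), but the argument is identical.
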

%


\subsection{Adjoint process}
\label{ssec:adjoint} 

We decompose $\sL$ into a symmetric and an antisymmetric part by writing $\sL = 
\sym{\sL} + \asym{\sL}$, where 
\begin{equation}
\sym{\sL}f = \frac{\sigma^2}{2\eps} \e^{2V/\sigma^2} \nabla\cdot\bigbrak{D 
\e^{-2V/\sigma^2} \nabla f}\;, 
\qquad 
\asym{\sL}f = c \cdot \nabla f\;. 
\end{equation} 
We write $\T = \R/\Z$ for the circle, and endow $L^2(\R\times\T)$ with the 
inner product 
\begin{equation}
 \pscal{f}{g}_\pi = \int_{\R\times\T} f(x,y) g(x,y) \6\pi\;,
\end{equation} 
where $\6\pi = \pi(x,y)\6x\6y$. 
Then one checks that $\sym{\sL}$ is self-adjoint with respect to this 
inner product, while~\eqref{eq:divergence} and the divergence theorem imply  
\begin{equation}
\label{eq:skew} 
 \int_{\R\times\T} f c\cdot \nabla g \, \6\pi 
 = - \int_{\R\times\T} g c\cdot \nabla f \, \6\pi\;,
\end{equation} 
showing that $\asym{\sL}$ is anti-self-adjoint (skew-symmetric), 
that is $\asym{\sL}^\dagger = - \asym{\sL}$. 
By definition, the adjoint process has the generator 
\begin{equation}
 \sL^* f 
 = \sym{\sL}f - \asym{\sL}f
 = \frac{\sigma^2}{2\eps} \e^{2V/\sigma^2} \nabla\cdot\bigbrak{D 
\e^{-2V/\sigma^2} \nabla f}
- c \cdot \nabla f\;. 
\end{equation} 
The corresponding SDE is given by 
\begin{align}
 \6x_t &= \frac{1}{\eps} b^*(x_t,y_t)\6t + \frac{\sigma}{\sqrt{\eps}} 
\6W^x_t\;, \\
 \6y_t &= -\bigbrak{1 + 2\varrho^2\partial_y V(x,y)}\6t + \sigma \varrho 
\6W^y_t\;,
\label{eq:SDE2_adjoint} 
\end{align}
where 
\begin{equation}
 b^* = -\partial_x V - \eps c_x 
 = -2\partial_x V - b\;.
\end{equation} 
We denote by $\sprobin{x,y}{\cdot}$ the law of the adjoint process starting in 
$(x,y)$, and by $\sexpecin{x,y}{\cdot}$ the corresponding expectations.


\subsection{Committor and capacity}
\label{ssec:capacity} 

Consider two sets $\A = \setsuch{(x,y)}{x\leqs a(y)}$ and $\B = 
\setsuch{(x,y)}{x\geqs b(y)}$, where $a(y) < b(y)$ are smooth periodic 
functions. The committors $h_{\AB}(x,y) = \probin{x,y}{\tau_\A < \tau_\B}$ 
and $h^*_{\AB}(x,y) = \sprobin{x,y}{\tau_\A < \tau_\B}$
satisfy the Dirichlet problems  
\begin{equation}
\begin{cases}
 (\sL h)(x,y) = 0 &\quad (x,y)\in \ABc\;, \\
 h(x,y) = 1 &\quad (x,y)\in \A\;, \\
 h(x,y) = 0 &\quad (x,y)\in \B\;, 
\end{cases}
\qquad 
\begin{cases}
 (\sL^* h^*)(x,y) = 0 &\quad (x,y)\in \ABc\;, \\
 h^*(x,y) = 1 &\quad (x,y)\in \A\;, \\
 h^*(x,y) = 0 &\quad (x,y)\in \B\;. 
\end{cases}
\end{equation} 
\NB{Indeed, this follows from Dynkin's formula, see~\eqref{eq:Dynkin} in 
Appendix~\ref{app:pot}.}
The capacities of the direct and adjoint process are defined via the Dirichlet 
form associated with $\sym{\sL}$, that is 
\begin{align}
 \capacity(\A,\B) &= \frac{\sigma^2}{2\eps} \int_{\ABc} \nabla 
h_{\AB}\cdot(D\nabla h_{\AB}) \6\pi\;, \\
\capacity^*(\A,\B) &= \frac{\sigma^2}{2\eps} \int_{\ABc} \nabla 
h^*_{\AB}\cdot(D\nabla h^*_{\AB}) \6\pi\;.
\end{align} 

\begin{lemma}
\label{lem:capacity} 
We have $\capacity(\A,\B) = \capacity(\B,\A)$, and 
\begin{equation}
\label{eq:cap01} 
 \capacity(\A,\B)
 = \frac{\sigma^2}{2\eps} \int_{\partial \A} (D\nabla h_{\AB} \cdot \nn) 
\pi\6\lambda 
 = \int_{\partial \A} \Bigpar{\frac{\sigma^2}{2\eps} D\nabla h_{\AB} + 
h_{\AB}\,c} 
\cdot \nn 
\, \pi\6\lambda \;,
\end{equation} 
where $\nn$ is the inward-pointing unit normal vector to $\partial \A$, and 
$\6\lambda$ is the arclength on $\partial \A$. An analogous relation, with 
$h_{\AB}$ replaced by $h^*_{\AB}$, holds for $\capacity^*(\A,\B)$. Furthermore, 
\begin{align}
\label{eq:cap02} 
 \capacity(\A,\B) 
 &= \frac{\sigma^2}{2\eps} \int_{\ABc} \bigbrak{\nabla h_{\AB}^* \cdot 
(D\nabla h_{\AB}) - \eps h^*_{\AB}(c\cdot \nabla h_{\AB})} \6\pi \\
 &= \frac{\sigma^2}{2\eps} \int_{\ABc} \bigbrak{\nabla h_{\AB} \cdot 
(D\nabla h^*_{\AB}) + \eps h_{\AB}(c\cdot \nabla h^*_{\AB})} \6\pi
= \capacity^*(\A,\B)\;.
\end{align}
\end{lemma}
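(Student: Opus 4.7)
The plan is to reduce every claim to integration by parts, applied to the divergence form of $\sym{\sL}$ and combined with the Dirichlet problems satisfied by the two committors, the skew-symmetry~\eqref{eq:skew}, and the vanishing divergence condition~\eqref{eq:divergence}. The symmetry $\capacity(\A,\B)=\capacity(\B,\A)$ will be immediate: the direct-process committor for $(\B,\A)$ is $h_{\B\A}=1-h_{\AB}$, so $\nabla h_{\B\A}=-\nabla h_{\AB}$ and the quadratic form $\nabla h\cdot(D\nabla h)$ is unchanged.

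For the boundary representation~\eqref{eq:cap01}, I will start from the defining bulk integral and decompose
\begin{equation*}
 \nabla h_{\AB}\cdot(D\nabla h_{\AB})\,\e^{-2V/\sigma^2}
 = \nabla\cdot\bigpar{h_{\AB} D \e^{-2V/\sigma^2}\nabla h_{\AB}}
 - h_{\AB}\,\nabla\cdot\bigpar{D\e^{-2V/\sigma^2}\nabla h_{\AB}}\;.
\end{equation*}
The last divergence equals $(2\eps/\sigma^2)\e^{-2V/\sigma^2}\sym{\sL}h_{\AB}$ by Lemma~\ref{lem:Lf}, and $\sL h_{\AB}=0$ on $\ABc$ then gives $\sym{\sL}h_{\AB}=-c\cdot\nabla h_{\AB}$. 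Applying the divergence theorem, the boundary integral will be supported on $\partial\A$ only (since $h_{\AB}$ vanishes on $\partial\B$), with outward normal to $\ABc$ equal to the inward $\nn$ to $\A$, producing the first form in~\eqref{eq:cap01}, provided that the residual bulk integral $\int_{\ABc} h_{\AB}(c\cdot\nabla h_{\AB})\,\6\pi = \frac12\int_{\ABc} c\cdot\nabla(h_{\AB}^2)\,\6\pi$ vanishes. To handle this I extend $h_{\AB}$ by $1$ on $\A$ and $0$ on $\B$, so the integral coincides with the corresponding one over $\R\times\T$, and apply~\eqref{eq:skew} with $f=g=h_{\AB}$ (after a standard mollification near $\partial\A\cup\partial\B$ to accommodate the mere Lipschitz regularity there). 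The second form in~\eqref{eq:cap01} then follows on adding $\int_{\partial\A}(c\cdot\nn)\pi\,\6\lambda$, which itself vanishes thanks to~\eqref{eq:divergence} and the divergence theorem applied to $\A$ (the contribution from infinity is killed by the quadratic growth of $V$).

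The bulk representations in~\eqref{eq:cap02} will follow from the same integration by parts, but with one factor of $h_{\AB}$ replaced by $h^*_{\AB}$ (respectively, with the roles of $h_{\AB}$ and $h^*_{\AB}$ swapped). Since $h^*_{\AB}=1$ on $\partial\A$ and $h^*_{\AB}=0$ on $\partial\B$, the boundary contribution is still $\frac{\sigma^2}{2\eps}\int_{\partial\A}(D\nabla h_{\AB}\cdot\nn)\pi\,\6\lambda$, which by the first part of~\eqref{eq:cap01} equals $\capacity(\A,\B)$, while the residual bulk term is a multiple of $\int_{\ABc} h^*_{\AB}(c\cdot\nabla h_{\AB})\,\6\pi$; rearranging yields the first line of~\eqref{eq:cap02}. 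For the second line the same computation is applied to the adjoint Dirichlet problem $\sL^* h^*_{\AB}=0$, which produces $\sym{\sL}h^*_{\AB}=+c\cdot\nabla h^*_{\AB}$ and explains the sign flip in the $c$-term; the identification with $\capacity^*(\A,\B)$ follows from the analogue of~\eqref{eq:cap01} for the adjoint process, obtained by exactly the same argument.

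The identity $\capacity(\A,\B)=\capacity^*(\A,\B)$ will finally be read off by subtracting the two lines of~\eqref{eq:cap02}: because $D$ is symmetric, the $D$-bilinear terms cancel, and the difference is proportional to $\int_{\ABc}[h^*_{\AB}(c\cdot\nabla h_{\AB})+h_{\AB}(c\cdot\nabla h^*_{\AB})]\,\6\pi = \int_{\ABc} c\cdot\nabla(h_{\AB}h^*_{\AB})\,\6\pi$. Integrating this by parts and using $\nabla\cdot(\e^{-2V/\sigma^2}c)=0$ reduces it to $\int_{\partial\A}(c\cdot\nn)\pi\,\6\lambda$, already shown to vanish. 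The main obstacle throughout will not be conceptual, but rather the careful bookkeeping of signs and boundary orientations on $\partial\A$ and $\partial\B$, together with the mollification required to apply the skew-symmetry identity~\eqref{eq:skew} to the merely Lipschitz committors.
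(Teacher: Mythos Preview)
Your proposal is correct and follows essentially the same route as the paper's proof: integration by parts against $\nabla\cdot(h\,D\e^{-2V/\sigma^2}\nabla h)$, the identification $\sym{\sL}h_{\AB}=-c\cdot\nabla h_{\AB}$ from the Dirichlet problem, the skew-symmetry~\eqref{eq:skew} to kill the residual bulk term, and the divergence-free condition~\eqref{eq:divergence} to kill $\int_{\partial\A}(c\cdot\nn)\pi\,\6\lambda$. The only cosmetic differences are that the paper applies the divergence theorem on $\A^c$ rather than on $\A$ for the latter identity, and obtains the equality of the two lines in~\eqref{eq:cap02} by invoking~\eqref{eq:skew} directly rather than by subtracting and reducing to the boundary; neither changes the substance of the argument.
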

%


\subsection{Equilibrium measure and mean hitting time}
\label{ssec:hitting} 

The $\AB$-equilibrium measure $\nu_{\AB}$ is the probability measure supported 
on 
$\partial \A$ defined by 
\begin{equation}
\label{eq:nuAB} 
 \6\nu_{\AB} = \frac{\sigma^2}{2\eps\capacity(\A,\B)} (D\nabla h^*_{\AB} \cdot 
\nn)\pi\6\lambda\;.
\end{equation}
Then we have the following fundamental relation. 

\begin{prop}
\label{prop:magic} 
Let $\tau_\B = \inf\setsuch{t>0}{(x_t,y_t)\in \B}$ denote the first-hitting 
time 
of $\B$. Then  
\begin{equation}
\label{eq:magic} 
 \bigexpecin{\nu_{\AB}}{\tau_\B} 
 := \int_{\partial\A}  \bigexpecin{x}{\tau_\B} \6\nu_{\AB} 
 = \frac{1}{\capacity(\A,\B)} \int_{\B^c} h^*_{\AB} \6\pi\;.
\end{equation} 
\end{prop}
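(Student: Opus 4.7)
My plan is to combine a non-reversible Green-type identity with the symmetric/antisymmetric splitting $\sL = \sym{\sL} + \asym{\sL}$ of Section~\ref{ssec:adjoint}, applied on two domains: once on $\ABc$ and once on $\A$. I start by introducing
\begin{equation}
 w(x,y) := \bigexpecin{x,y}{\tau_\B}\;,
\end{equation}
which by Dynkin's formula is the unique bounded solution of $\sL w = -1$ on the interior of $\B^c$ with boundary condition $w=0$ on $\partial\B$.

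The first ingredient is the Green identity: for any domain $\Omega \subset \R\times\T$ and smooth $f,g$,
\begin{equation}
 \int_{\Omega}[g\,\sL f - f\,\sL^* g]\6\pi
 = \frac{\sigma^2}{2\eps}\int_{\partial\Omega}\brak{g\,(D\nabla f\cdot\nn_{\text{out}}) - f\,(D\nabla g\cdot\nn_{\text{out}})}\pi\6\lambda
 + \int_{\partial\Omega} fg\,(c\cdot\nn_{\text{out}})\pi\6\lambda\;.
\end{equation}
Here the symmetric contribution comes from two integrations by parts applied to the formula $\sym{\sL} f = \frac{\sigma^2}{2\eps}\e^{2V/\sigma^2}\nabla\cdot(D\e^{-2V/\sigma^2}\nabla f)$ of Lemma~\ref{lem:Lf}, while the antisymmetric contribution uses the divergence-free condition~\eqref{eq:divergence} to rewrite $\int_{\Omega}(g\,c\cdot\nabla f + f\,c\cdot\nabla g)\6\pi = \int_{\Omega}c\cdot\nabla(fg)\6\pi$ as a pure boundary term. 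Taking $\Omega = \ABc$, $f=w$, $g=h^*_{\AB}$, using $\sL w = -1$ and $\sL^* h^*_{\AB}=0$ on $\ABc$, and noting that $w = h^*_{\AB} = 0$ on $\partial\B$ while $h^*_{\AB}\equiv 1$ on $\partial\A$, I obtain
\begin{equation}
\label{eq:plan_step1}
 -\int_{\ABc} h^*_{\AB}\6\pi
 = \frac{\sigma^2}{2\eps}\int_{\partial\A}(D\nabla w\cdot\nn)\pi\6\lambda
 - \capacity(\A,\B)\,\bigexpecin{\nu_{\AB}}{\tau_\B}
 + \int_{\partial\A} w\,(c\cdot\nn)\pi\6\lambda\;,
\end{equation}
after identifying the outward normal of $\ABc$ on $\partial\A$ with the inward normal $\nn$ of $\A$, and recognising the middle term via~\eqref{eq:nuAB} together with $\capacity = \capacity^*$ from Lemma~\ref{lem:capacity}.

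To dispose of the two ``spurious'' boundary terms in~\eqref{eq:plan_step1}, I apply the same Green identity on $\Omega=\A$, this time with $f=w$ and $g\equiv 1$. Since $\sL^* 1 = 0$ and $\sL w = -1$ on $\A\subset \B^c$, the left-hand side is $-\pi(\A)$. On the right, $\nabla g = 0$ and the outward normal to $\A$ is $-\nn$, so the identity collapses to
\begin{equation}
 \pi(\A)
 = \frac{\sigma^2}{2\eps}\int_{\partial\A}(D\nabla w\cdot\nn)\pi\6\lambda
 + \int_{\partial\A} w\,(c\cdot\nn)\pi\6\lambda\;,
\end{equation}
which is exactly the combination appearing in~\eqref{eq:plan_step1}. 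Substituting and using $\int_{\B^c}h^*_{\AB}\6\pi = \int_{\ABc} h^*_{\AB}\6\pi + \pi(\A)$ (because $h^*_{\AB}\equiv 1$ on $\A$) yields~\eqref{eq:magic}.

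The main difficulty is bookkeeping rather than analysis: one must carefully match the outward-normal conventions across the two domains $\ABc$ and $\A$ on their common boundary $\partial\A$, and verify enough regularity of $w$ and $h^*_{\AB}$ up to $\partial\A\cup\partial\B$ to justify the integrations by parts. The $\sC^4$ regularity of $b$, the smoothness of the periodic curves $a,b$ bounding $\A$ and $\B$, and the strict ellipticity of $D$ (ensured by $\varrho>0$) supply the standard elliptic regularity needed. Once the signs are handled correctly, the algebraic cancellation between the identities on $\ABc$ and on $\A$ is automatic, and it is precisely the $-\pi(\A)$ produced by the second identity that combines with $\int_{\ABc} h^*_{\AB}\6\pi$ to reconstitute $\int_{\B^c} h^*_{\AB}\6\pi$.
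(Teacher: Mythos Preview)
Your proof is correct. Both your argument and the paper's rest on the same ingredients---the symmetric/antisymmetric decomposition of $\sL$, the divergence theorem, and the divergence-free condition~\eqref{eq:divergence}---but they are organised differently. The paper never applies a Green identity on $\A$: instead it performs two one-sided integrations by parts directly on $\B^c$ (extending from $\ABc$ using $\nabla h^*_{\AB}=0$ on $\A$), obtaining a cross term $\int_{\B^c}[h^*_{\AB}(c\cdot\nabla w_\B)+w_\B(c\cdot\nabla h^*_{\AB})]\6\pi$ which is then killed by the skew-symmetry relation~\eqref{eq:skew}. Your route, by contrast, packages everything into a single two-sided Green identity applied twice, once on $\ABc$ and once on $\A$; the second application produces exactly $\pi(\A)$, which combines with $\int_{\ABc}h^*_{\AB}\6\pi$ to reconstitute $\int_{\B^c}h^*_{\AB}\6\pi$. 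Your approach is a bit more modular and makes the role of $\pi(\A)$ explicit; the paper's avoids integrating over the unbounded set $\A$ separately. Both arguments, however, ultimately need decay of $\pi$ and boundedness of $w$ as $x\to-\infty$ to discard boundary contributions at infinity---a point you should state explicitly alongside the regularity remarks in your write-up.
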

%


\subsection{Variational principles}
\label{ssec:variational} 

For $\ph, \psi$ two vector fields on $\ABc$, we define the bilinear form 
\begin{equation}
\label{eq:def_D} 
 \sD(\ph,\psi) 
 := \frac{2\eps}{\sigma^2} \int_{\ABc} \ph(x,y) \cdot 
\bigpar{D^{-1}\psi(x,y)} \frac{\6x\6y}{\pi(x,y)}\;,
\end{equation} 
and we denote $\sD(\ph,\ph)$ by $\sD(\ph)$. 
For $\gamma\in\R$, we write $\sF^\gamma_{\AB}$ for the closure with respect 
to the norm $\sD(\cdot)$ of the set of flows $\ph$ which are divergence-free, 
i.e. 
\begin{equation}
\label{eq:div_free} 
 \nabla \cdot \ph = 0 
 \qquad \text{in $\ABc$\;,}
\end{equation} 
and such that 
\begin{equation}
\label{eq:flow_gamma} 
 \int_{\partial \A} (\ph\cdot\nn) \6\lambda = -\gamma\;.
\end{equation}
We further denote by $\sH^{\alpha,\beta}_{\AB}$ the set of functions 
$f\in L^2(\6\pi)$ which have constant values $\alpha$ in $\A$, and $\beta$ in 
$\B$. 
For such an $f$, we use the notations 
\begin{equation}
 \Phi_f = \frac{\sigma^2}{2\eps} \pi D\nabla f - \pi f c\;, \qquad 
 \Psi_f = \frac{\sigma^2}{2\eps} \pi D\nabla f\;.
\end{equation} 
Note in particular that 
\begin{equation}
 \sD(\Psi_{h_{\AB}}) = \frac{\sigma^2}{2\eps} \int_{\ABc} \nabla h_{\AB} 
\cdot D\nabla h_{\AB} \6\pi 
= \capacity(\A,\B)\;. 
\end{equation} 
$-\Psi_{h_{\AB}}$ is called the \emph{harmonic flow} from $\A$ to $\B$. 

\begin{lemma}
\label{lem:DPhiPsi} 
For all $f\in\sH^{\alpha,0}_{\AB}$ and $\ph\in\sF^\gamma_{\AB}$, we have 
\begin{equation}
\label{eq:lemma_D} 
 \sD(\Phi_f - \ph, \Psi_{h_{\AB}}) 
 = \alpha\capacity(\A,\B) + \gamma\;.
\end{equation} 
\end{lemma}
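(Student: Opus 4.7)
The plan is to expand $\sD(\Phi_f - \ph, \Psi_{h_{\AB}})$ by bilinearity into two pieces $\sD(\Phi_f, \Psi_{h_{\AB}}) - \sD(\ph, \Psi_{h_{\AB}})$, and identify them with $\alpha\capacity(\A,\B)$ and $-\gamma$ respectively through two independent integrations by parts. The sign and boundary conventions matter, so at the outset I would fix that the outward normal to $\ABc$ on $\partial\A$ agrees with the inward-pointing normal $\nn$ to $\A$ used in \eqref{eq:cap01} (and analogously on $\partial\B$), and that periodicity in $y$ kills any boundary contribution from the $\T$-factor.

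For the first piece, substituting the definitions yields
\begin{equation*}
 \sD(\Phi_f, \Psi_{h_{\AB}})
 = \int_{\ABc} \frac{\sigma^2}{2\eps} (D\nabla f)\cdot \nabla h_{\AB} \,\6\pi
 - \int_{\ABc} f \, c\cdot\nabla h_{\AB} \,\6\pi\;.
\end{equation*}
Using symmetry of $D$, write $\pi(D\nabla f)\cdot\nabla h_{\AB} = \nabla\cdot(f\pi D\nabla h_{\AB}) - f\nabla\cdot(\pi D\nabla h_{\AB})$. Lemma~\ref{lem:Lf} combined with $\sL h_{\AB} = 0$ on $\ABc$ gives $\nabla\cdot(\pi D\nabla h_{\AB}) = -(2\eps/\sigma^2)\,\pi\, c\cdot\nabla h_{\AB}$, so the volume term produced by integration by parts exactly cancels the $-\int f c\cdot\nabla h_{\AB}\6\pi$ contribution from $\Phi_f$. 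The boundary term from the divergence theorem, taking into account that $f\equiv\alpha$ on $\partial\A$ and $f\equiv 0$ on $\partial\B$, collapses to $\alpha \int_{\partial\A} \pi(D\nabla h_{\AB})\cdot\nn\,\6\lambda$, which is $\alpha\cdot(2\eps/\sigma^2)\capacity(\A,\B)$ by \eqref{eq:cap01}. Combining gives $\sD(\Phi_f, \Psi_{h_{\AB}}) = \alpha\capacity(\A,\B)$.

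For the second piece, a direct computation reduces it to $\sD(\ph, \Psi_{h_{\AB}}) = \int_{\ABc} \ph\cdot\nabla h_{\AB}\6x\6y$. Since $\ph\in\sF^\gamma_{\AB}$ is divergence-free, $\ph\cdot\nabla h_{\AB} = \nabla\cdot(h_{\AB}\ph)$, and the divergence theorem together with the boundary values of $h_{\AB}$ reduces this to $\int_{\partial\A}(\ph\cdot\nn)\6\lambda = -\gamma$ by \eqref{eq:flow_gamma}. Subtracting the two contributions yields the claim.

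The computations are all essentially bookkeeping; the only genuine obstacle is keeping the sign conventions consistent (inward versus outward normals on $\partial\A$ and $\partial\B$ as viewed from $\ABc$), and checking that the identity extends by density from smooth test flows to all $\ph\in\sF^\gamma_{\AB}$, which follows since $\Psi_{h_{\AB}}$ has finite $\sD$-norm equal to $\capacity(\A,\B)$ and $\sD(\cdot,\Psi_{h_{\AB}})$ is therefore continuous with respect to the $\sD(\cdot)$-topology on flows.
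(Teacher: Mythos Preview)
Your proof is correct and follows essentially the same route as the paper: split by bilinearity, integrate by parts in the $\Phi_f$-term using $\sL h_{\AB}=0$ to cancel the drift contribution and produce the boundary term $\alpha\capacity(\A,\B)$, then apply the divergence theorem to the divergence-free flow $\ph$ to obtain $-\gamma$. Your explicit remark on the density argument for general $\ph\in\sF^\gamma_{\AB}$ is a nice touch that the paper leaves implicit.
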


Lemma~\ref{lem:DPhiPsi} is all we need to prove the Dirichlet and Thomson 
principles.

\begin{prop}[Dirichlet principle]
\label{prop:Dirichlet} 
We have 
\begin{equation}
 \capacity(\A,\B) = \inf_{f\in\sH^{1,0}_{\AB}} \inf_{\ph\in\sF^0_{\AB}}
 \sD(\Phi_f - \ph)\;,
\end{equation} 
where the infimum is reached for $f=\bar f:=\frac12(h_{\AB} + h^*_{\AB})$ and 
$\ph=\bar\ph:=\Phi_{\bar f} - \Psi_{h_{\AB}}$. Furthermore, the bound
\begin{equation}
\label{eq:Dirichlet_defective} 
 \capacity(\A,\B) \leqs \inf_{f\in\sH^{1,0}_{\AB}} 
 \sD(\Phi_f - \ph) 
 - 2 \int_{\ABc} (\nabla\cdot\ph) h_{\AB} \6x\6y\;,
\end{equation} 
holds for any $\ph$ satisfying~\eqref{eq:flow_gamma} with $\gamma=0$. 
\end{prop}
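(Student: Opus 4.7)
The approach is to exploit the fact that $\sD$ is a positive-definite bilinear form on flows and reduce the variational principle to a completion-of-squares identity, with Lemma~\ref{lem:DPhiPsi} supplying the crucial cross-term. Setting $X:=\Phi_f-\ph$ and $Y:=\Psi_{h_{\AB}}$ for $f\in\sH^{1,0}_{\AB}$ and $\ph\in\sF^0_{\AB}$, the lemma with $\alpha=1,\gamma=0$ gives $\sD(X,Y)=\capacity(\A,\B)$. Combining with $\sD(Y)=\capacity(\A,\B)$, the expansion
\[
 \sD(X) = \sD(X-Y) + 2\sD(X,Y) - \sD(Y)
\]
yields $\sD(\Phi_f-\ph)=\capacity(\A,\B)+\sD(\Phi_f-\ph-\Psi_{h_{\AB}})\geqs\capacity(\A,\B)$, with equality if and only if $\Phi_f-\ph=\Psi_{h_{\AB}}$. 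This proves the lower bound.

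For the upper bound I would verify that the candidates $\bar f=\tfrac12(h_{\AB}+h^*_{\AB})$ and $\bar\ph=\Phi_{\bar f}-\Psi_{h_{\AB}}$ are admissible, since then equality is achieved by construction. Membership of $\bar f$ in $\sH^{1,0}_{\AB}$ is immediate from the boundary conditions on $h_{\AB}$ and $h^*_{\AB}$. For $\bar\ph\in\sF^0_{\AB}$, I would first derive the two divergence identities
\[
 \nabla\cdot\Phi_g = \pi\sL^*g\;, \qquad \nabla\cdot\Psi_g = \pi\sL g - \pi c\cdot\nabla g\;,
\]
which both follow from Lemma~\ref{lem:Lf} together with~\eqref{eq:divergence}. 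A short computation then gives $\nabla\cdot\bar\ph = \tfrac12\pi\sL^*h_{\AB}+\pi c\cdot\nabla h_{\AB}$ on $\ABc$, which vanishes because $\sL^*h_{\AB}=(\sL-2c\cdot\nabla)h_{\AB}=-2c\cdot\nabla h_{\AB}$ on $\ABc$. The zero-flux condition $\int_{\partial\A}(\bar\ph\cdot\nn)\6\lambda=0$ follows from Lemma~\ref{lem:capacity} applied separately to $h_{\AB}$ and $h^*_{\AB}$, after noting that $\int_{\partial\A}(c\cdot\nn)\pi\6\lambda=0$ (which is~\eqref{eq:divergence} integrated over $\A$).

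For the defective bound~\eqref{eq:Dirichlet_defective}, the only change is that Lemma~\ref{lem:DPhiPsi} no longer applies directly to compute the cross term $\sD(\Phi_f-\ph,\Psi_{h_{\AB}})$. However, unwinding definitions gives $\sD(\ph,\Psi_{h_{\AB}})=\int_{\ABc}\ph\cdot\nabla h_{\AB}\6x\6y$, and integrating by parts using the zero-flux condition $\int_{\partial\A}(\ph\cdot\nn)\6\lambda=0$ plus $h_{\AB}=1$ on $\partial\A$, $h_{\AB}=0$ on $\partial\B$ yields $\sD(\ph,\Psi_{h_{\AB}})=-\int_{\ABc}(\nabla\cdot\ph)h_{\AB}\6x\6y$. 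Combining with $\sD(\Phi_f,\Psi_{h_{\AB}})=\capacity(\A,\B)$ (Lemma~\ref{lem:DPhiPsi} with $\ph=0$) and inserting into the same completion-of-square identity gives
\[
 \sD(\Phi_f-\ph) \geqs \capacity(\A,\B) + 2\int_{\ABc}(\nabla\cdot\ph)h_{\AB}\6x\6y\;,
\]
which rearranges to~\eqref{eq:Dirichlet_defective}.

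The main obstacle is bookkeeping rather than conceptual: one must handle sign conventions for the inward normal on $\partial\A$ correctly, and carefully exploit the algebraic interplay between the symmetric/antisymmetric decomposition of $\sL$ and the Dirichlet problems satisfied by $h_{\AB}$ and $h^*_{\AB}$ when verifying the divergence-free and flux conditions for $\bar\ph$.
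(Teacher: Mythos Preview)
Your proof is correct and follows essentially the same route as the paper's. The only cosmetic difference is that you obtain the lower bound via the completion-of-squares identity $\sD(X)=\sD(X-Y)+2\sD(X,Y)-\sD(Y)$, whereas the paper uses the Cauchy--Schwarz inequality $\sD(X,Y)^2\leqs\sD(X)\sD(Y)$; these are equivalent here since $\sD(X,Y)=\sD(Y)=\capacity(\A,\B)$, and your version has the minor advantage of yielding the defective bound~\eqref{eq:Dirichlet_defective} directly without an intermediate quadratic inequality. Your verification that $\bar\ph$ is divergence-free via the identities $\nabla\cdot\Phi_g=\pi\sL^*g$ and $\nabla\cdot\Psi_g=\pi\sL g-\pi c\cdot\nabla g$ is a clean repackaging of the paper's explicit computation.
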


\begin{prop}[Thomson principle]
\label{prop:Thomson} 
We have 
\begin{equation}
 \capacity(\A,\B) = \sup_{f\in\sH^{0,0}_{\AB}} \sup_{\ph\in\sF^1_{\AB}}
 \frac{1}{\sD(\Phi_f - \ph)}\;,
\end{equation} 
where the supremum is reached for $f=\bar f:=(h_{\AB} - 
h^*_{\AB})/(2\capacity(\A,\B))$ and $\ph=\bar\ph:=\Phi_{\bar f} - 
\Psi_{h_{\AB}}/\capacity(\A,\B)$. Furthermore, the bound 
\begin{equation}
\label{eq:Thomson_defective} 
 \capacity(\A,\B) \geqs \sup_{f\in\sH^{0,0}_{\AB}} 
 \frac{1}{\sD(\Phi_f - \ph)}
 \biggpar{1 + \int_{\ABc} (\nabla\cdot\ph) h_{\AB} \6x\6y}^2\;.
\end{equation} 
holds for any $\ph$ satisfying~\eqref{eq:flow_gamma} with $\gamma=1$.
\end{prop}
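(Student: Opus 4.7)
The plan is to parallel the proof of the Dirichlet principle in Proposition~\ref{prop:Dirichlet}, replacing the minimum argument by a Cauchy--Schwarz bound on the positive bilinear form $\sD(\cdot,\cdot)$ (positive since $D^{-1}\succ 0$ and $\pi>0$). For any $f\in\sH^{0,0}_{\AB}$ and $\ph\in\sF^1_{\AB}$, Lemma~\ref{lem:DPhiPsi} applied with $\alpha=0$ and $\gamma=1$ gives $\sD(\Phi_f-\ph,\Psi_{h_{\AB}})=1$, and combining Cauchy--Schwarz with $\sD(\Psi_{h_{\AB}})=\capacity(\A,\B)$ yields
\[
 1 \leqs \sD(\Phi_f-\ph)\,\capacity(\A,\B)\;,
\]
so that $\sD(\Phi_f-\ph)^{-1}\leqs\capacity(\A,\B)$, establishing the upper bound of the supremum.

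To see that this bound is attained at $(\bar f,\bar\ph)$, observe first that $\bar f=(h_{\AB}-h^*_{\AB})/(2\capacity(\A,\B))\in\sH^{0,0}_{\AB}$ since both committors equal $1$ on $\A$ and $0$ on $\B$. For $\bar\ph=\Phi_{\bar f}-\Psi_{h_{\AB}}/\capacity(\A,\B)$, the divergence-free condition in $\ABc$ will be checked by using $\sL h_{\AB}=0$ and $\sL^*h^*_{\AB}=0$ together with the decomposition~\eqref{eq:L_sym_asym} and the identity $\nabla\cdot(\pi c)=0$ implied by~\eqref{eq:divergence}: these yield $\nabla\cdot\Psi_{h_{\AB}}=-\pi c\cdot\nabla h_{\AB}$, $\nabla\cdot\Psi_{h^*_{\AB}}=+\pi c\cdot\nabla h^*_{\AB}$, and $\nabla\cdot\Phi_f=\nabla\cdot\Psi_f-\pi c\cdot\nabla f$ for any $f$, which together give $\nabla\cdot\Phi_{\bar f}=\nabla\cdot(\Psi_{h_{\AB}}/\capacity(\A,\B))$ and hence $\nabla\cdot\bar\ph=0$. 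For the flux condition, $\bar f=0$ on $\partial\A$ forces $\Phi_{\bar f}=\Psi_{\bar f}$ there; Lemma~\ref{lem:capacity} identifies $\int_{\partial\A}\Psi_{h_{\AB}}\cdot\nn\,\6\lambda=\capacity(\A,\B)$ and $\int_{\partial\A}\Psi_{h^*_{\AB}}\cdot\nn\,\6\lambda=\capacity^*(\A,\B)$, and the identity $\capacity(\A,\B)=\capacity^*(\A,\B)$ makes the flux of $\Psi_{\bar f}$ vanish, giving $\int_{\partial\A}\bar\ph\cdot\nn\,\6\lambda=-1$, so $\bar\ph\in\sF^1_{\AB}$. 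By construction $\Phi_{\bar f}-\bar\ph=\Psi_{h_{\AB}}/\capacity(\A,\B)$, hence $\sD(\Phi_{\bar f}-\bar\ph)=\capacity(\A,\B)^{-1}$, saturating the supremum.

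For the defective bound~\eqref{eq:Thomson_defective}, the same Cauchy--Schwarz argument is rerun with $\ph$ satisfying only the flux condition~\eqref{eq:flow_gamma} with $\gamma=1$, not the divergence-freeness~\eqref{eq:div_free}. Redoing the integration by parts $\int_{\ABc}\ph\cdot\nabla h_{\AB}\,\6x\6y=-\gamma-\int_{\ABc}h_{\AB}(\nabla\cdot\ph)\,\6x\6y$ (which is the only step in the derivation of Lemma~\ref{lem:DPhiPsi} sensitive to the divergence-free assumption) produces the modified identity
\[
 \sD(\Phi_f-\ph,\Psi_{h_{\AB}}) = 1 + \int_{\ABc}(\nabla\cdot\ph)\,h_{\AB}\,\6x\6y\;.
\]
Squaring and applying Cauchy--Schwarz with $\sD(\Psi_{h_{\AB}})=\capacity(\A,\B)$ then directly yields~\eqref{eq:Thomson_defective}.

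The main obstacle lies in the verification that $\bar\ph\in\sF^1_{\AB}$: the divergence computation requires carefully assembling the actions of the forward and adjoint generators on their respective committors with the divergence formulas for $\Psi_f$ and $\Phi_f$, and the flux computation relies crucially on the identity $\capacity(\A,\B)=\capacity^*(\A,\B)$ established in Lemma~\ref{lem:capacity}. Once these two properties are in place, the saturation step and the extension to the defective inequality reduce to bookkeeping.
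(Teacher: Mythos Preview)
Your proof is correct and follows essentially the same approach as the paper's: both use Lemma~\ref{lem:DPhiPsi} with $\alpha=0$, $\gamma=1$ together with Cauchy--Schwarz for the upper bound, verify $\bar\ph\in\sF^1_{\AB}$ via the generator identities $\sL h_{\AB}=\sL^*h^*_{\AB}=0$, the divergence-free condition~\eqref{eq:divergence}, and the equality $\capacity(\A,\B)=\capacity^*(\A,\B)$ from Lemma~\ref{lem:capacity}, and derive the defective bound from the modified identity~\eqref{eq:lemma_D_approx}. The only difference is organizational: the paper first writes out $\bar\ph$ explicitly as $-\tfrac{\pi}{2\capacity(\A,\B)}\bigl(\tfrac{\sigma^2}{2\eps}D(\nabla h_{\AB}+\nabla h^*_{\AB})-c(h_{\AB}-h^*_{\AB})\bigr)$ and computes its divergence and flux directly, whereas you work componentwise with the formulas for $\nabla\cdot\Psi_f$ and $\nabla\cdot\Phi_f$.
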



\section{The invariant measure}
\label{sec:invariant} 

This section is devoted to the proof of Theorem~\ref{thm:pi}. 
Lemma~\ref{lem:pot_invariant} shows that the invariant density $\pi(x,y)$ can be 
obtained by solving the \NB{Hamilton--Jacobi-type} equation~\eqref{eq:HJ}. 
However, it 
turns out to be difficult to obtain a good control of error terms when trying to 
do so. We thus use another approach instead, which consists in expanding 
$\pi(x,y)$ on the basis of eigenfunctions of $\sL_x^\dagger$, and analysing the 
resulting system of infinitely many coupled ODEs. In order to do so, we will 
need a number of bounds involving these eigenfunction, which we will derive in 
Section~\ref{ssec:invariant_static}. The actual proof of Theorem~\ref{thm:pi} 
will then be given in Section~\ref{ssec:invariant_proof}. 


\subsection{Eigenfunctions of the static system}
\label{ssec:invariant_static} 

The aim of this section is to obtain estimates on the eigenfunction $\phi_1$, 
and on the inner products 
\begin{align}
f_{nm}(y) &= \sigma^2 \pscal{\partial_y\pi_m}{\phi_n} \\
g_{nm}(y) &= \sigma^4 \pscal{\partial_{yy}\pi_m}{\phi_n}\;.
\label{eq:deg_fnm} 
\end{align}
Note that taking derivatives of the orthonormality relations 
$\pscal{\pi_m}{\phi_n} = \delta_{nm}$ yields 
\begin{align}
f_{nm}(y) &= -\sigma^2 \pscal{\pi_m}{\partial_y\phi_n} \\
g_{nm}(y) &= -\sigma^4 \pscal{\pi_m}{\partial_{yy}\phi_n}
-2\sigma^4 \pscal{\partial_y\pi_m}{\partial_y\phi_n}
=: -\ell_{nm}(y) - 2k_{nm}(y)\;.
\label{eq:orth_fnm} 
\end{align}
In particular, since $\phi_0$ is constant, $f_{0m}(y) = 0$ and $g_{0m}(y) = 0$ 
for all $m\in\N$. 
Using standard Laplace asymptotics (cf.~Appendix~\ref{app:Laplace}), it is 
rather easy to obtain estimates on integrals against $\pi_0$ up 
to multiplicative errors of the form $1+\Order{\sigma^2}$. In particular, the 
normalisation of $\pi_0(x\vert y)$ satisfies 
\begin{equation}
\label{eq:Z0_Laplace}
 Z_0(y) = \sqrt{\pi}\sigma 
 \biggbrak{\frac{1}{\omega_-(y)}\e^{2h_-(y)/\sigma^2} + 
\frac{1}{\omega_+(y)}\e^{2h_+(y)/\sigma^2}}
 \bigbrak{1+\Order{\sigma^2}}\;.
\end{equation}  
Similarly, the normalisation of the committor~\eqref{eq:h02} satisfies 
\begin{equation}
\label{eq:N_Laplace} 
 N(y) =  \frac{\sqrt{\pi}\sigma}{\omega_0(y)} \bigbrak{1+\Order{\sigma^2}}\;,
\end{equation} 
and bounds of the same type can be obtained for $f_{1i}$ and $g_{1i}$ for 
$i\in\set{0,1}$. We will, however, need much sharper estimates with 
exponentially small errors of order $\lambda_1(y)$, which requires more work.


\subsubsection{Eigenfunction $\phi_1$}
\label{sssec:phi1} 

We start by providing sharp estimates on the first eigenfunction $\phi_1$ of 
$\sL_x$ and its derivatives. 

Let $\tau_\pm = \tau_{x^*_{\pm}(y)}$ be the first-hitting times of $x^*_\pm(y)$ 
for the static SDE~\eqref{eq:static}, and let $\tau = \tau_-\wedge\tau_+$. The 
Feynman--Kac formula allows us to write 
\begin{align}
\phi_1(x\vert y) 
&= \bigexpecin{x}{\e^{\lambda_1(y) \tau} \phi_1(x_\tau)} \\
&= \phi_-(y) \bigexpecin{x}{\e^{\lambda_1(y) \tau} \indexfct{\tau_- < \tau_+}}
+ \phi_+(y) \bigexpecin{x}{\e^{\lambda_1(y) \tau} \indexfct{\tau_+ < \tau_-}} \\
&= \phi_-(y) \bigbrak{h_0(x\vert y) + h_1(x\vert y)} 
 + \phi_+(y) \bigbrak{1-h_0(x\vert y) + \bar h_1(x\vert y)}\;,
\label{eq:phi1_FK} 
\end{align}
where we use the shorthands $\phi_\pm(y) = \phi_1(x^*_{\pm}(y),y)$, 
while $h_0(x\vert y) = \probin{x}{\tau_- < \tau_+}$ is the committor and 
\begin{equation}
 h_1(x\vert y) = \bigexpecin{x}{(\e^{\lambda_1(y)\tau}-1) 
\indexfct{\tau_-<\tau_+}}\;, \qquad 
 \bar h_1(x\vert y) = \bigexpecin{x}{(\e^{\lambda_1(y)\tau}-1) 
\indexfct{\tau_+<\tau_-}}\;.
\end{equation} 
Recall that $h_0(x\vert y)$ is given by~\eqref{eq:h02} for 
$x\in(x^*_-(y),x^*_+(y))$. Furthermore, $h_0$ is constant equal to $1$ for 
$x<x^*_-(y)$, and constant equal to $0$ for $x>x^*_+(y)$. 

It will be convenient to \emph{define} $\Deltabar(y)$ by the relations 
\begin{equation}
\label{eq:defDeltabar} 
 \pscal{\pi_0}{h_0} = \frac{\e^{-\Deltabar(y)/\sigma^2}}
{\e^{-\Deltabar(y)/\sigma^2}+\e^{\Deltabar(y)/\sigma^2}}\;, 
\qquad 
 \pscal{\pi_0}{1-h_0} = \frac{\e^{\Deltabar(y)/\sigma^2}}
{\e^{-\Deltabar(y)/\sigma^2}+\e^{\Deltabar(y)/\sigma^2}}\;.
\end{equation} 
Indeed, standard Laplace asymptotics (see Lemma~\ref{lem:Laplace}) show 
that this definition is compatible to leading order with~\eqref{eq:Deltabar}. 
We further introduce 
\begin{equation}
 \label{eq:AB}
 A(y) = \tanh\biggpar{\frac{\Deltabar(y)}{\sigma^2}}\;, \qquad 
 B(y) = \frac{1}{\cosh(\Deltabar(y)/\sigma^2)}\;.  
\end{equation} 
Note carefully that $B(y) \in (0,1]$ may be exponentially small, and that we 
have the relations 
\begin{align}
 A(y)^2 + B(y)^2 &= 1\;, &
 \sigma^2 A'(y) &= \Deltabar'(y)B(y)^2\;, \\ 
 &&
 \sigma^2 B'(y) &= -\Deltabar'(y)A(y)B(y)\;. 
\end{align} 
Combining~\eqref{eq:N_Laplace} and~\eqref{eq:Z0_Laplace} with the 
expression~\eqref{eq:lambda1} of $\lambda_1(y)$, we 
obtain the very useful relation 
\begin{equation}
\label{eq:Z0Nlambda1} 
 Z_0(y) N(y) \lambda_1(y) = \frac{2\sigma^2}{B(y)^2} 
\bigbrak{1+\Order{\sigma^2}}\;.
\end{equation}
Finally, to lighten notations, we set 
\begin{equation}
 \ell(\sigma) = \lsig\;,
\end{equation}
and we will sometimes omit the argument $y$. 

The following results establish some properties of $h_0$, $h_1$ and $\phi_1$. 
Their proofs are postponed to Appendix~\ref{app:proof_phi1}.

\begin{prop}[Properties of $h_0$]
\label{prop:h0}
We have 
\begin{subequations}
\begin{align}
\label{eq:bound_dyh0} 
 \bigabs{\partial_y h_0(x\vert y)} \lesssim \frac{1}{\sigma^2}
 h_0(x\vert y) \bigpar{1-h_0(x\vert y)}\;, \\
 \label{eq:bound_dyyh0} 
 \bigabs{\partial_{yy} h_0(x\vert y)} \lesssim \frac{1}{\sigma^4}
 h_0(x\vert y) \bigpar{1-h_0(x\vert y)}\;.
\end{align} 
\end{subequations}
Furthermore, the inner product $\eta(y) = \pscal{\pi_0}{h_0(1-h_0)}$ satisfies 
\begin{equation}
\label{eq:bound_eta} 
0\leqs \eta(y) \lesssim \lambda_1(y) \ell(\sigma) B(y)^2\;.
\end{equation}
\end{prop}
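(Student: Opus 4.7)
The plan is to derive all three bounds from the explicit representation $h_0(x\vert y) = I_+(x,y)/N(y)$ of~\eqref{eq:h02}, where $I_+(x,y) = \int_x^{x^*_+(y)}\psi(\bar x,y)\,\6\bar x$, $I_-(x,y) = \int_{x^*_-(y)}^x\psi(\bar x,y)\,\6\bar x$, $N(y) = I_+ + I_-$, and $\psi(x,y) = \e^{2V_0(x,y)/\sigma^2}$. Throughout, the key identity to exploit is $h_0(1-h_0) = I_+ I_-/N^2$.

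For~\eqref{eq:bound_dyh0}, I would differentiate $h_0 = I_+/N$ directly in $y$. Using $\partial_y\psi = (2\partial_y V_0/\sigma^2)\psi$ (with $\partial_y V_0$ uniformly bounded thanks to the $\sC^4$ hypothesis on $V_0$), and keeping track of the contributions $(x^*_\pm)'(y)\psi(x^*_\pm,y)$ from differentiating the endpoints, a straightforward rearrangement using $N = I_+ + I_-$ yields
\begin{equation*}
 N^2\,\partial_y h_0 = (x^*_+)'\psi(x^*_+)\,I_- + (x^*_-)'\psi(x^*_-)\,I_+ + I_-\!\int_x^{x^*_+}\!\!\partial_y\psi\,\6\bar x - I_+\!\int_{x^*_-}^x\!\!\partial_y\psi\,\6\bar x\;.
\end{equation*}
The two integral terms are bounded by $C\sigma^{-2}\,I_+ I_-$ via $|\partial_y\psi| \lesssim \sigma^{-2}\psi$. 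For the boundary terms, the ratio $\psi(x^*_+)/I_+$ is exponentially small for $x$ in the left half of $[x^*_-,x^*_+]$ (since $I_+$ then captures the peak of $\psi$ near $x^*_0$), and bounded by $\omega_+/\sigma^2$ on the right half provided $x^*_+-x \gtrsim \sigma^2$; an analogous statement holds for $\psi(x^*_-)/I_-$. This yields the claimed bound $|\partial_y h_0| \lesssim \sigma^{-2} h_0(1-h_0)$. Iterating the same computation gives~\eqref{eq:bound_dyyh0}: each additional derivative of $\psi$ contributes one more factor $\sigma^{-2}$, while the structural proportionality to $I_+ I_-$ is preserved.

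For~\eqref{eq:bound_eta}, I would evaluate $\eta$ by Laplace asymptotics. Writing
\begin{equation*}
 \eta(y) = \frac{1}{Z_0(y) N(y)^2} \int_{x^*_-(y)}^{x^*_+(y)} \psi(x,y)^{-1} I_+(x,y) I_-(x,y) \,\6x\;,
\end{equation*}
the density $\pi_0 = \psi^{-1}/Z_0$ concentrates in Gaussian peaks of width $\sim\sigma/\omega_\pm$ around the well bottoms $x^*_\pm(y)$. Near $x^*_-(y)$ one has $I_+ \simeq N$ and $I_-(x) \simeq \psi(x^*_-)(x-x^*_-)$ (from the local Taylor expansion of $V_0$), so the first-moment Gaussian integral contributes $\sigma^2/(2\omega_-^2 N Z_0)$. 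Adding the symmetric contribution from $x^*_+(y)$ and substituting the identity $\sigma^2/(Z_0 N) = \tfrac12\lambda_1 B^2[1+\Order{\sigma^2}]$ from~\eqref{eq:Z0Nlambda1} produces $\eta \lesssim \lambda_1(y) B(y)^2$, which is \emph{a fortiori} bounded by $\lambda_1(y)\ell(\sigma) B(y)^2$. The nonnegativity $\eta\geqs 0$ is automatic since the integrand is a product of nonnegative factors.

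The main obstacle lies in the pointwise bounds~\eqref{eq:bound_dyh0}--\eqref{eq:bound_dyyh0} near $x = x^*_\pm(y)$: there $h_0(1-h_0)$ vanishes linearly with $x^*_\pm - x$, while the endpoint contribution to $\partial_y h_0$ need not vanish to the same order. This forces a careful splitting into a bulk region and a thin boundary layer of width $\sim\sigma^2$; inside the latter, the quantities $\psi(x^*_\pm)$ carry an exponential factor $\e^{-2h_\pm/\sigma^2}$ which is more than enough to absorb the potential blow-up when these bounds are eventually integrated against $\pi_0$. An alternative, which may be cleaner, is to work in a rescaled coordinate $u=(x-x^*_-(y))/(x^*_+(y)-x^*_-(y))$ so that the boundary data become $y$-independent, removing the endpoint differentiation altogether.
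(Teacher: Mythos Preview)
Your treatment of the derivative bounds~\eqref{eq:bound_dyh0}--\eqref{eq:bound_dyyh0} is essentially the same as the paper's: both differentiate the explicit formula for $h_0$ and group terms. The paper writes $\partial_y h_0 = \tfrac{2}{\sigma^2 N}\bigl[(1-h_0)I(x) - h_0(J-I(x))\bigr]$ with $I(x) = \int_x^{x^*_+}\partial_y V_0\,\psi + \tfrac{\sigma^2}{2}(x^*_+)'\psi(x^*_+)$, which is just a regrouping of your formula. You are right to flag the boundary layer near $x^*_\pm$: the paper's $\asymp$-bounds on $h_0$ and $1-h_0$ also break down within distance $\sim\sigma^2$ of the endpoints, and the paper glosses over this. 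In both arguments the pointwise statement is not literally true at the endpoints (where $h_0(1-h_0)=0$ but $\partial_y h_0 \neq 0$); what matters for all downstream applications are the integrated versions, which hold because the boundary-layer contributions carry an extra factor $\e^{-2h_\pm/\sigma^2}$.

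Your argument for the $\eta$ bound, however, has a genuine gap. You assert that $\pscal{\pi_0}{h_0(1-h_0)}$ localises at the Gaussian peaks of $\pi_0$ near $x^*_\pm$, and obtain a contribution $\sim\sigma^2/(Z_0 N)\sim\lambda_1 B^2$ there. But this is \emph{not} where the integral concentrates: at those peaks $h_0(1-h_0)$ vanishes, and the product $\pi_0\,h_0(1-h_0)$ is actually spread over the whole interval. In the intermediate region $\sigma\ll|x-x^*_0|\ll 1$ one has, by one-sided Laplace asymptotics, $\min(h_0,1-h_0)\sim \sigma^2\e^{2V_0(x)/\sigma^2}/(|b(x)|N)$, so that $\pi_0\,h_0(1-h_0)\sim \sigma/(|b(x)|Z_0)$, and integrating the factor $|b(x)|^{-1}$ (which behaves like $|x-x^*_0|^{-1}$ near the saddle) produces an unavoidable $\log(\sigma^{-1})$. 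Thus the sharp bound is $\eta\lesssim\sigma\ell(\sigma)/Z_0\sim\lambda_1\ell(\sigma)B^2$, exactly as in the statement; your claimed stronger bound $\eta\lesssim\lambda_1 B^2$ is false. The paper obtains this via the pointwise estimate $h_0(1-h_0)\asymp \sigma\e^{2V_0/\sigma^2}/(\sigma+|x-x^*_0|)$, which makes the logarithmic integral explicit.
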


\begin{prop}[Bounds on $h_1$]
\label{prop:h1} 
The remainder $h_1(x\vert \NB{y})$ satisfies the bounds 
\begin{align}
\label{eq:bound_h1} 
 \bigabs{h_1(x\vert y)} &\lesssim \lambda_1(y) \ell(\sigma) h_0(x\vert y) \\
\label{eq:bound_dyh1} 
 \bigabs{\partial_yh_1(x\vert y)} &\lesssim \frac{1}{\sigma^2}\lambda_1(y) 
\ell(\sigma)^2 h_0(x\vert y) \\
\label{eq:bound_dyyh1} 
 \bigabs{\partial_{yy}h_1(x\vert y)} &\lesssim \frac{1}{\sigma^4}\lambda_1(y) 
\ell(\sigma)^3 h_0(x\vert y)\;.
\end{align} 
Similar bounds hold for $\bar h_1(x\vert y)$, with $h_0$ replaced by 
$1-h_0$. 
\end{prop}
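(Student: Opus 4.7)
My strategy is to convert the probabilistic definition of $h_1$ into a deterministic Poisson problem, invert it using the explicit one-dimensional Green kernel associated with $\sL_x$, and then apply Laplace asymptotics together with the estimates on $h_0$ from Proposition~\ref{prop:h0}. Observe that the three displayed bounds follow the pattern $(1/\sigma^2)^k \ell(\sigma)^{k+1}$ for $k=0,1,2$: each $y$-derivative should cost one factor of $1/\sigma^2$ (from differentiating the Gibbs weights $\e^{\pm 2V_0/\sigma^2}$) and one additional factor of $\ell(\sigma)$ (from integration across the saddle region, whose effective width is $\Order{\sigma\sqrt{\ell(\sigma)}}$).

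Setting $v(x\vert y) = h_0(x\vert y) + h_1(x\vert y) = \bigexpecin{x}{\e^{\lambda_1(y)\tau}\indexfct{\tau_- < \tau_+}}$, the Feynman--Kac formula gives $\sL_x v = -\lambda_1(y) v$ with $v(x^*_-(y)\vert y) = 1$ and $v(x^*_+(y)\vert y) = 0$. Subtracting $\sL_x h_0 = 0$, we obtain
\begin{equation*}
 \sL_x h_1 = -\lambda_1(y) v\;, \qquad h_1(x^*_\pm(y)\vert y) = 0\;.
\end{equation*}
Because $\sL_x$ is in divergence form on $[x^*_-(y),x^*_+(y)]$, this can be solved explicitly in the form
\begin{equation*}
 h_1(x\vert y) = \frac{2\lambda_1(y)}{\sigma^2} \int_{x^*_-(y)}^{x^*_+(y)} G(x,\xi\vert y)\, v(\xi\vert y)\, \e^{-2V_0(\xi,y)/\sigma^2}\6\xi\;,
\end{equation*}
where the symmetric Green kernel $G(x,\xi\vert y)$ is expressible through $h_0$, $1-h_0$ and $N(y)$. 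A preliminary bound $v(x\vert y) \lesssim 1$ follows from the fact that $\lambda_1(y)$ is exponentially small while $\tau$ has uniformly bounded exponential moments. Laplace asymptotics around $x^*_-(y)$, combined with~\eqref{eq:Z0Nlambda1} and~\eqref{eq:N_Laplace}, then produce the first bound $|h_1(x\vert y)|\lesssim \lambda_1(y)\ell(\sigma) h_0(x\vert y)$; the factor $\ell(\sigma)$ emerges from integrating $G$ through the saddle region, where $h_0$ transitions smoothly from near $1$ to near $0$ on a scale of order $\sigma\sqrt{\ell(\sigma)}$.

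For the first derivative, differentiating the Poisson equation yields
\begin{equation*}
 \sL_x(\partial_y h_1) = -(\partial_y\lambda_1)v - \lambda_1(y)(\partial_y v) - (\partial_y b)(\partial_x h_1)\;,
\end{equation*}
still with zero Dirichlet conditions once the contribution from the moving endpoints $x^*_\pm(y)$ is accounted for (this correction is harmless since $h_1$ vanishes there). Inverting $\sL_x$ via the same Green kernel, each source term is estimated: $\partial_y\lambda_1/\lambda_1 = \Order{1/\sigma^2}$ from~\eqref{eq:lambda1}; $|\partial_y h_0|\lesssim h_0(1-h_0)/\sigma^2$ from~\eqref{eq:bound_dyh0}; the self-consistent term $\lambda_1 \partial_y h_1$ is absorbed by the left-hand side using the exponential smallness of $\lambda_1$; and $\partial_x h_1$ is bounded directly from the explicit Green-kernel formula. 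A further integration across the saddle region produces an extra factor of $\ell(\sigma)$, yielding the claimed $\lambda_1 \ell(\sigma)^2/\sigma^2$ scaling. The bound on $\partial_{yy}h_1$ follows by iterating the procedure once more, using in addition~\eqref{eq:bound_dyyh0}. The corresponding statements for $\bar h_1$ are obtained by interchanging the roles of $x^*_-(y)$ and $x^*_+(y)$, so that $h_0$ is replaced by $1-h_0$ throughout.

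The main obstacle is extracting exactly the right logarithmic powers at each differentiation step, rather than extra factors of $1/\sigma^2$. The logarithmic improvement relies on the observation that $h_0$ and hence $v$ transition smoothly on a scale of $\Order{\sigma\sqrt{\ell(\sigma)}}$ across the saddle, so that Laplace asymptotics yield a $\log$ factor rather than a polynomial one. A secondary technical difficulty is controlling the cross-term $(\partial_y b)\partial_x h_1$ in the $\partial_y$ equation: this requires a sharp pointwise estimate on the $x$-derivative of $h_1$, which is extracted directly from the explicit Green-kernel representation by differentiating in $x$ under the integral.
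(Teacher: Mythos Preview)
Your approach is essentially the same as the paper's: reduce to a Poisson problem for $\sL_x$, invert via the explicit one-dimensional Green kernel, and iterate after differentiating in $y$. The paper packages the key step into Lemma~\ref{lem:Poisson}, which says that if $|\psi|\leqs c\,h_0$ then $|(\sL_x+\lambda_1)^{-1}\psi|\lesssim c\,\ell(\sigma)\,h_0$ and $|\partial_x(\sL_x+\lambda_1)^{-1}\psi|\lesssim c\,\ell(\sigma)\,h_0/\sigma^2$; Proposition~\ref{prop:h1} is then three applications of this lemma.

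There is one genuine slip in your write-up. You invert $\sL_x$ against $-\lambda_1 v$ and invoke only the a~priori bound $v\lesssim 1$. That bound is too weak: for $x$ near $x^*_+(y)$ the target estimate $|h_1|\lesssim\lambda_1\ell\,h_0$ forces $h_1$ to be exponentially small, and feeding a source of size $\lambda_1$ (rather than $\lambda_1 h_0$) into the Green kernel does not produce that. The paper sidesteps this by rewriting the equation as $(\sL_x+\lambda_1)h_1=-\lambda_1 h_0$, so the source is \emph{exactly} $\lambda_1 h_0$ and the Neumann series $\sum_{k\geqs0}(-\lambda_1)^k(\sL_x^{-1})^k$ absorbs the shift. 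In your formulation, you would need either $v\lesssim h_0$ (which is what you are proving) or an explicit bootstrap. A minor second point: the $\ell(\sigma)$ factor does not come from a transition width of order $\sigma\sqrt{\ell(\sigma)}$; it comes from integrating $1/(\sigma+|x-x^*_0|)$ over an interval of order~$1$, via the bound $h_0(1-h_0)\lesssim\sigma\e^{2V_0/\sigma^2}/(\sigma+|x-x^*_0|)$ in~\eqref{eq:bound_h0}.
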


\begin{prop}[First eigenfunction]
\label{prop:phi_1} 
The coefficients $\phi_\pm(y)$ of $\phi_1(x\vert y)$ satisfy 
\begin{subequations}
\begin{align}
\label{eq:phipm} 
 \phi_\pm(y) &= \mp \e^{\mp\Deltabar(y)/\sigma^2}
 \bigbrak{1 + \bigOrder{\lambda_1(y) \ell(\sigma)}} \\
\label{eq:dyphipm} 
 \phi_\pm'(y) &= \mp \frac{1}{\sigma^2}\phi_\pm(y)
 \bigbrak{\Deltabar'(y) + \bigOrder{\lambda_1(y) \ell(\sigma)^2}} \\
\label{eq:dyyphipm} 
 \phi_\pm''(y) &= \mp \frac{1}{\sigma^4}\phi_\pm(y)
 \bigbrak{\Deltabar'(y)^2 \mp \sigma^2\Deltabar''(y) + \bigOrder{\lambda_1(y) 
\ell(\sigma)^3}}\;.
\end{align}
\end{subequations}
\end{prop}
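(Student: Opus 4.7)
The plan is to pin down $(\phi_-(y),\phi_+(y))$ from the two constraints that $\phi_1$ must satisfy as a normalized eigenfunction orthogonal to $\phi_0\equiv\text{const}$, namely
\begin{equation*}
 \pscal{\pi_0}{\phi_1}=0, \qquad \pscal{\pi_0}{\phi_1^2}=1.
\end{equation*}
Substituting the Feynman--Kac decomposition~\eqref{eq:phi1_FK}, $\phi_1 = \phi_- u_- + \phi_+ u_+$ with $u_-=h_0+h_1$ and $u_+=1-h_0+\bar h_1$, converts these into a pair of scalar algebraic equations in $(\phi_-,\phi_+)$. The derivative bounds~\eqref{eq:dyphipm}--\eqref{eq:dyyphipm} then follow by differentiating those two identities once and twice in $y$ and solving the resulting $2\times 2$ linear systems.

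For~\eqref{eq:phipm}, the orthogonality relation reads $\phi_+/\phi_- = -\pscal{\pi_0}{u_-}/\pscal{\pi_0}{u_+}$. By the very definition~\eqref{eq:defDeltabar} of $\Deltabar$, $\pscal{\pi_0}{h_0}=(1-A)/2$ and $\pscal{\pi_0}{1-h_0}=(1+A)/2$, while~\eqref{eq:bound_h1} yields $\pscal{\pi_0}{h_1}, \pscal{\pi_0}{\bar h_1} = \bigOrder{\lambda_1\ell(\sigma)}$; since $(1-A)/(1+A)=\e^{-2\Deltabar/\sigma^2}$, this gives $\phi_+/\phi_- = -\e^{-2\Deltabar/\sigma^2}[1+\bigOrder{\lambda_1\ell}]$. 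Plugging into the normalization, and using $\pscal{\pi_0}{h_0^2} = (1-A)/2 - \eta$, $\pscal{\pi_0}{(1-h_0)^2}=(1+A)/2 - \eta$, $\pscal{\pi_0}{h_0(1-h_0)}=\eta$ with $\eta\lesssim\lambda_1\ell B^2$ from~\eqref{eq:bound_eta} (plus $\bigOrder{\lambda_1\ell}$ contributions from cross-terms involving $h_1,\bar h_1$), the combination telescopes via $(1-A)/(1+A)=\e^{-2\Deltabar/\sigma^2}$ to $\phi_-^2 = \e^{2\Deltabar/\sigma^2}[1+\bigOrder{\lambda_1\ell}]$. Fixing the sign convention via Figure~\ref{fig:eigenfunctions} then produces~\eqref{eq:phipm}.

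For~\eqref{eq:dyphipm} and~\eqref{eq:dyyphipm}, I differentiate the two scalar identities in $y$. Derivatives of $\phi_1$ produce terms in $\phi_\pm', \phi_\pm''$ together with $\partial_y^j u_\pm$, controlled by~\eqref{eq:bound_dyh0}--\eqref{eq:bound_dyyh0} and~\eqref{eq:bound_dyh1}--\eqref{eq:bound_dyyh1}. Derivatives of $\pi_0$ yield $\partial_y\pi_0 = -(2/\sigma^2)[\partial_y V_0 - \pscal{\pi_0}{\partial_y V_0}]\pi_0$, and the derivative of the explicit identity $\pscal{\pi_0}{h_0}=(1-A)/2$ is computable in closed form: $-\Deltabar' B^2/(2\sigma^2)$ at first order, with an analogous formula at second order. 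At each order this gives a $2\times 2$ linear system in the new unknowns with essentially the same leading matrix as the orthonormality system (whose determinant is of order $\asymp 1$ by the computation above), and a right-hand side whose dominant contribution comes from the $(1-A)/2$-derivative. Solving and substituting the value of $\phi_\pm$ from~\eqref{eq:phipm} produces the prefactors $\mp\Deltabar'/\sigma^2$ and $\mp[(\Deltabar')^2\mp\sigma^2\Deltabar'']/\sigma^4$, which are precisely the contributions from differentiating the leading exponential $\mp\e^{\mp\Deltabar/\sigma^2}$.

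The main obstacle is the bookkeeping of error terms: to keep errors at $\bigOrder{\lambda_1\ell^k}$ after $k$ differentiations, one must exploit the localization factor $h_0(1-h_0)$ in~\eqref{eq:bound_dyh0}--\eqref{eq:bound_dyyh0} (which compensates for the exponentially small $\pi_0$-mass near $x^*_0$) and the factor $B^2$ in~\eqref{eq:bound_eta} to absorb the $\sigma^{-2}$ losses coming from $\partial_y\pi_0$. The accumulation of exactly one extra $\ell(\sigma)$ per derivative then matches the $\ell^{j+1}$ already built into Proposition~\ref{prop:h1}, so that no unexpected loss of precision occurs.
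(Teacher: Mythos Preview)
Your proposal is correct and follows essentially the same approach as the paper: determine $\phi_\pm$ from the two constraints $\pscal{\pi_0}{\phi_1}=0$ and $\pscal{\pi_0}{\phi_1^2}=1$, then differentiate those identities in $y$ and control the remainders via Propositions~\ref{prop:h0} and~\ref{prop:h1}. The only cosmetic difference is that the paper introduces the auxiliary variables $u=-\phi_+/\phi_-$ and $v=-\phi_+\phi_-$, so that orthogonality fixes $u$ and normalization fixes $v$ separately, which slightly decouples the algebra compared to your direct $2\times2$ system, but the content is the same.
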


Combining the last two propositions with~\eqref{eq:phi1_FK}, we obtain the 
following representations of $\phi_1$ and its derivatives:
\begin{subequations}
\begin{align}
\label{eq:phi1_rep} 
\phi_1 ={}& \phi_- h_0 \bigbrak{1 + \Order{\lambda_1\ell}}
+ \phi_+ (1-h_0) \bigbrak{1 + \Order{\lambda_1\ell}}\;, \\
\label{eq:dyphi1_rep} 
\partial_y \phi_1 ={}& 
\frac{\phi_-}{\sigma^2} h_0 \bigbrak{\Deltabar' + \Order{\lambda_1\ell^2}}
- \frac{\phi_+}{\sigma^2} (1-h_0) \bigbrak{\Deltabar' + \Order{\lambda_1\ell^2}}
+ (\phi_- - \phi_+) \partial_y h_0\;, \\
\partial_{yy} \phi_1 ={}& \frac{\phi_-}{\sigma^4} h_0 \bigbrak{(\Deltabar')^2 + 
\sigma^2 \Deltabar'' + \Order{\lambda_1\ell^3}}
- \frac{\phi_+}{\sigma^4} (1-h_0) \bigbrak{(\Deltabar')^2 
- \sigma^2 \Deltabar'' + \Order{\lambda_1\ell^3}} \\
&{}+ 2(\phi_-' - \phi_+') \partial_y h_0 + (\phi_- - \phi_+) \partial_{yy} 
h_0\;.
\label{eq:dyyphi1_rep} 
\end{align}
\end{subequations}
It is then straightforward to obtain the following expressions for inner 
products involving derivatives of the first two eigenfunctions.

\begin{prop}[Matrix elements involving $\phi_1$]
\label{prop:f10} 
We have 
\begin{align}
 f_{10}(y) &= -B(y) \bigbrak{\Deltabar'(y)  + 
\bigOrder{\lambda_1(y)\ell(\sigma)^2}}\;, \\
 f_{11}(y) &= -A(y)\Deltabar'(y) + \bigOrder{\lambda_1(y)\ell(\sigma)^2}\;, \\
 g_{10}(y) &= B(y) \bigbrak{2 A(y)\Deltabar'(y)^2 - \sigma^2 \Deltabar''(y) 
 + \bigOrder{\lambda_1(y)\ell(\sigma)^3}}\;, \\
 g_{11}(y) &= \bigpar{2A(y)^2-1} \Deltabar'(y)^2 - \sigma^2 A(y)\Deltabar''(y) 
 + \bigOrder{\lambda_1(y)\ell(\sigma)^3}\;. 
\end{align}
\end{prop}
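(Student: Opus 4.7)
The plan is to compute each of the four inner products by combining the explicit representations~\eqref{eq:phi1_rep}--\eqref{eq:dyyphi1_rep} of $\phi_1$, $\partial_y\phi_1$, and $\partial_{yy}\phi_1$ with either the direct definitions of $f_{nm}$, $g_{nm}$ or their dual forms in~\eqref{eq:orth_fnm}. For $f_{10}$ and $g_{10}$, an efficient route uses $\partial_y\pi_0 = -\frac{2}{\sigma^2}\pi_0\bigbrak{\partial_yV_0 - \pscal{\pi_0}{\partial_yV_0}}$, together with the corresponding identity for $\partial_{yy}\pi_0$, reducing matters to inner products of $\phi_1$ against $\partial_yV_0$, $\partial_{yy}V_0$, and $(\partial_yV_0 - \pscal{\pi_0}{\partial_yV_0})^2$. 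For $f_{11}$ and $g_{11}$, one uses $\pi_1 = \pi_0\phi_1$ and the fact that $\phi_1^2$ admits the same piecewise approximation $\phi_-^2h_0 + \phi_+^2(1-h_0)$ as $\phi_1$ itself, with errors controlled via Propositions~\ref{prop:h0} and~\ref{prop:h1}.

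All the resulting integrals then split into pieces of the form $\pscal{\pi_0}{F(x,y)h_0}$ and $\pscal{\pi_0}{F(x,y)(1-h_0)}$, where $F$ is a smooth combination of derivatives of $V_0$, plus remainders involving $\partial_yh_0$ and $\partial_{yy}h_0$. The first type is evaluated by Laplace asymptotics around $x^*_\pm(y)$, using the exact identities $\pscal{\pi_0}{h_0} = (1-A)/2$ and $\pscal{\pi_0}{1-h_0} = (1+A)/2$ from~\eqref{eq:defDeltabar}, together with the implicit-differentiation formulas $\partial_yV_0(x^*_\pm(y),y) = -h_\pm'(y)$ and the analogous second-derivative identity at the stationary points (which produces an extra $(\partial_{xy}V_0(x^*_\pm))^2/\omega_\pm^2$ term absorbed into $\sigma^2\Deltabar''$ via the $\frac{\sigma^2}{2}\log(\omega_-/\omega_+)$ correction in~\eqref{eq:Deltabar}). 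The remainders against $\partial_yh_0$ and $\partial_{yy}h_0$ are controlled through~\eqref{eq:bound_dyh0}--\eqref{eq:bound_dyyh0} and the estimate~\eqref{eq:bound_eta} on $\eta(y)$. Finally, the algebraic identities $\phi_-(1-A) = B$, $\phi_+(1+A) = -B$, $\phi_-(1+A) + \phi_+(1-A) = 4A/B$, together with $\sigma^2A' = \Deltabar'B^2$ and $\sigma^2B' = -\Deltabar'AB$, collapse the resulting expressions into the compact $A,B,\Deltabar'$-forms stated in the proposition.

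The main obstacle is that the most naive leading-order contributions cancel, essentially because the definition~\eqref{eq:defDeltabar} of $\Deltabar$ is tuned precisely so that $\pscal{\pi_0}{\phi_1}=0$. For instance, in $f_{10}$ the piece $-\phi_-\Deltabar'\pscal{\pi_0}{h_0} + \phi_+\Deltabar'\pscal{\pi_0}{1-h_0}$ vanishes to leading order, so the genuine answer $-B\Deltabar'$ arises only from the cross-term $\sigma^2(\phi_--\phi_+)\pscal{\pi_0}{\partial_yh_0}$ combined with the subleading correction in~\eqref{eq:dyphipm}. Because the individual $\phi_\pm$ are of size $\e^{\pm\Deltabar/\sigma^2}$ and hence possibly exponentially large in $\sigma^{-2}$ when one well is much deeper than the other, the cancellations must be organised so that the remainder is ultimately multiplied by the small factor $B=1/\cosh(\Deltabar/\sigma^2)$ times $\lambda_1\ell^2$ for $f_{10}$, $f_{11}$ and $\lambda_1\ell^3$ for $g_{10}$, $g_{11}$; otherwise the error bounds would blow up in the strongly asymmetric regime. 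Careful bookkeeping of these cancellations, rather than the Laplace asymptotics or the static-spectral bounds of Propositions~\ref{prop:h0}--\ref{prop:phi_1}, is the delicate step.
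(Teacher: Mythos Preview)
Your overall strategy in the first two paragraphs is essentially the paper's: compute $f_{10}=-\sigma^2\pscal{\pi_0}{\partial_y\phi_1}$ and $f_{11}=-\sigma^2\pscal{\pi_0}{\phi_1\partial_y\phi_1}$ directly from the representation~\eqref{eq:dyphi1_rep}, and for $g_{10},g_{11}$ split via~\eqref{eq:orth_fnm} into the two pieces $\ell_{1i}=\sigma^4\pscal{\pi_0}{\partial_{yy}\phi_1\,\phi_i}$ and $k_{1i}=\sigma^4\pscal{\partial_y\pi_i}{\partial_y\phi_1}$. For $k_{10}$ the paper uses a clean shortcut rather than Laplace asymptotics on $\partial_yV_0$: it computes $\partial_y\pscal{\pi_0}{h_0}$ in two ways, once from the exact formula~\eqref{eq:defDeltabar} and once by the chain rule, and subtracts to isolate $\pscal{\partial_y\pi_0}{h_0}$.

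Your third paragraph, however, misreads the mechanism. The piece
\[
-\phi_-\Deltabar'\pscal{\pi_0}{h_0}+\phi_+\Deltabar'\pscal{\pi_0}{1-h_0}
\]
does \emph{not} vanish to leading order. With $\phi_\pm=\mp\e^{\mp\Deltabar/\sigma^2}[1+\Order{\lambda_1\ell}]$ and $\pscal{\pi_0}{h_0}=(1-A)/2$, $\pscal{\pi_0}{1-h_0}=(1+A)/2$, one gets $\phi_-\pscal{\pi_0}{h_0}=B/2$ and $\phi_+\pscal{\pi_0}{1-h_0}=-B/2$, so the expression equals $-B\Deltabar'$ directly: the two contributions \emph{add}. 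You are conflating this with the orthogonality $\phi_-\pscal{\pi_0}{h_0}+\phi_+\pscal{\pi_0}{1-h_0}=0$, but the minus sign in front of $\phi_+$ in~\eqref{eq:dyphi1_rep}, inherited from~\eqref{eq:dyphipm}, flips the relative sign. Correspondingly, the cross-term $\sigma^2(\phi_--\phi_+)\pscal{\pi_0}{\partial_yh_0}$ is the \emph{error}, not the main term: since $\phi_--\phi_+\asymp 2/B$ while $\sigma^2\pscal{\pi_0}{\abs{\partial_yh_0}}\lesssim\eta(y)\lesssim\lambda_1\ell B^2$ by~\eqref{eq:bound_dyh0} and~\eqref{eq:bound_eta}, it is $\Order{\lambda_1\ell B}$. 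There is therefore no delicate cancellation to organise here; once you read the sign in~\eqref{eq:dyphi1_rep} correctly, the computation of $f_{10}$ (and similarly $f_{11}$, $g_{10}$, $g_{11}$) is straightforward.
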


A consequence of these estimates is that we have, for instance,
\begin{equation}
 \sigma^2\partial_y \phi_1
 = \bigpar{A(y)\phi_1 + B(y)}\Deltabar'(y) + R_1(x)\;,
\label{eq:proj_dyphi1} 
\end{equation} 
where $R_1$ is a remainder, dominated by the term in $\partial_y h_0$ 
in~\eqref{eq:dyphi1_rep}. One checks that it satisfies  
\begin{equation}
\label{eq:boundsR} 
 \pscal{\pi_0}{R_1} = 
\bigOrder{\lambda_1\ell^2B}\;, \qquad 
\pscal{\pi_0}{R_1^2} = 
\bigOrder{\lambda_1\ell^2}\;.
\end{equation}
In other words, $\partial_y\phi_1$ lies almost in the space spanned by $\phi_0$ 
and $\phi_1$. 

\begin{remark}
\label{rem:pi0absphi1} 
A useful observation is that the expression~\eqref{eq:phi1_rep} for $\phi_1$ 
implies
\begin{align}
 \pscal{\pi_0}{\abs{\phi_1}}
 &= \bigbrak{\phi_-(y)\pscal{\pi_0}{h_0} + \abs{\phi_+(y)}\pscal{\pi_0}{1-h_0}}
 \bigbrak{1+\Order{\lambda_1\ell}} \\
 &\leqs B(y)\bigbrak{1+\Order{\lambda_1\ell}}\;,
\label{eq:pi0absphi1} 
\end{align}
where we have used the definition~\eqref{eq:defDeltabar} of $\Deltabar$. 
This is often better than the bound $\pscal{\pi_0}{\abs{\phi_1}} \leqs 1$ 
provided by the Cauchy--Schwarz inequality. 
\end{remark}


\subsubsection{Bounds involving other eigenfunctions}
\label{sssec:phin} 

When analysing the system of ODEs giving the invariant density, we will also 
need a number of bounds involving other eigenfunctions than $\phi_0$ and 
$\phi_1$. All proofs of these bounds are postponed to 
Appendix~\ref{app:proof_phin}. We start with some simple $\ell^2$ estimates. 

\begin{prop}
\label{prop:sum_fnm}  
There exists a constant $M_0$, uniform in $\sigma$ and $n\geqs1$, such that
\begin{gather}
\label{eq:sum_fni} 
 \sum_{n\geqs0} f_{ni}(y)^2 \leqs M_0\;, \qquad 
 \sum_{n\geqs0} g_{ni}(y)^2 \leqs M_0 \qquad 
 \qquad \forall i\in\set{0,1}\;,\\
 \sum_{m\geqs0} f_{nm}(y)^2 \leqs M_0\;, \qquad 
 \sum_{m\geqs0} g_{nm}(y)^2 \leqs M_0 \qquad 
 \qquad \forall n\geqs1\;.
 \label{eq:sum_fnm2} 
\end{gather} 
\end{prop}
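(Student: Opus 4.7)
The proof rests on a Parseval identity. Since $\pscal{\pi_m}{\phi_n}=\delta_{nm}$ and $\pi_n=\pi_0\phi_n$, the family $\set{\phi_n(\cdot\vert y)}_{n\geqs 0}$ is an orthonormal basis of $L^2(\pi_0\6x)$, while $\set{\pi_n(\cdot\vert y)}_{n\geqs 0}$ is an orthonormal basis of $L^2(\pi_0^{-1}\6x)$. Rewriting $f_{nm}$ in two dual ways,
\begin{equation*}
f_{nm}(y) = \sigma^2\pscal{\partial_y\pi_m}{\phi_n} = \sigma^2\pscal{\partial_y\pi_m}{\pi_n}_{1/\pi_0} = -\sigma^2\pscal{\partial_y\phi_n}{\phi_m}_{\pi_0}\;,
\end{equation*}
Parseval immediately yields the two representations
\begin{equation*}
\sum_{n\geqs 0} f_{nm}(y)^2 = \sigma^4\int_\R \frac{(\partial_y\pi_m)^2}{\pi_0}\6x\;,\qquad
\sum_{m\geqs 0} f_{nm}(y)^2 = \sigma^4\int_\R (\partial_y\phi_n)^2\pi_0\6x\;,
\end{equation*}
and analogous identities for $g_{nm}$, obtained by expanding via the second equality in~\eqref{eq:orth_fnm}. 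Thus all four statements reduce to pointwise-in-$y$, $L^2$-type estimates on derivatives of $\pi_i$ (resp.\ $\phi_n$), uniform in $\sigma$ and $n$.

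For the first pair of bounds (fixed $i\in\set{0,1}$), I would argue by direct computation. From $\pi_0 = Z_0^{-1}\e^{-2V_0/\sigma^2}$ one gets $\sigma^2\partial_y\pi_0/\pi_0 = -2\partial_y V_0 - \sigma^2 Z_0'/Z_0$; the growth assumption on $b$ ensures $\int(\partial_y V_0)^2\pi_0\6x$ is uniformly bounded, and Laplace asymptotics (Appendix~\ref{app:Laplace}) give $\sigma^2 Z_0'/Z_0 = \Order{1}$, handling $i=0$. For $i=1$, writing $\partial_y\pi_1/\pi_0 = (\partial_y\pi_0/\pi_0)\phi_1 + \partial_y\phi_1$, the first term is controlled by the previous computation combined with Remark~\ref{rem:pi0absphi1}, and the second by the representation~\eqref{eq:proj_dyphi1} together with the bounds~\eqref{eq:boundsR}. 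The $g$-bounds follow analogously, invoking in addition the second-derivative estimates~\eqref{eq:bound_dyyh0}, \eqref{eq:bound_dyyh1}, \eqref{eq:dyyphipm} and the representation~\eqref{eq:dyyphi1_rep}.

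For the second pair (fixed $n\geqs 1$), the case $n=1$ is again dispatched by plugging~\eqref{eq:proj_dyphi1} and its $\partial_{yy}$-analogue into $\sigma^4\int(\partial_y\phi_1)^2\pi_0\6x$. For $n\geqs 2$, my plan is to differentiate the eigenvalue equation $\sL_x\phi_n=-\lambda_n\phi_n$ in $y$, obtaining $(\sL_x+\lambda_n)\partial_y\phi_n = -(\partial_y b)\partial_x\phi_n - \lambda_n'\phi_n$. Projecting onto each $\phi_m$ in the $\pi_0$-inner product (in which $\sL_x$ is self-adjoint), one finds
\begin{equation*}
\pscal{\partial_y\phi_n}{\phi_m}_{\pi_0} = \frac{\pscal{(\partial_y b)\,\partial_x\phi_n}{\phi_m}_{\pi_0}}{\lambda_n-\lambda_m}\qquad(m\neq n)\;,
\end{equation*}
while $\pscal{\partial_y\phi_n}{\phi_n}_{\pi_0}$ is determined by differentiating the normalisation $\pscal{\pi_0\phi_n}{\phi_n}=1$. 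Combined with the Dirichlet-form identity $(\sigma^2/2)\int(\partial_x\phi_n)^2\pi_0\6x = \lambda_n$ and the order-one spectral gap above $\lambda_1$, Bessel's inequality then yields $\sigma^4\|\partial_y\phi_n\|_{L^2(\pi_0)}^2 \lesssim \sigma^2\lambda_n$, which is $\Order{1}$ uniformly for $n\geqs 2$. The bounds for $g_{nm}$ with $n\geqs 1$ require the same computation one order higher, producing a second commutator with $\sL_x$.

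The main technical obstacle is obtaining the second pair of bounds \emph{uniformly in $n\geqs 1$}: the case $n=1$ must be handled separately because the relevant spectral gap around $\lambda_1$ is exponentially small, so that the abstract gap argument above degrades and one is forced to rely on the explicit control of $\phi_1$ developed in Section~\ref{sssec:phi1}. For $n\geqs 2$, the gaps $\abs{\lambda_m-\lambda_n}$ must be bounded away from zero uniformly; this follows from the confining growth of $V_0$ (which produces Weyl-type spacing of eigenvalues), but verifying the relevant estimate in the non-compact setting of $\R$ is the delicate point of the argument.
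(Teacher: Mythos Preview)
Your treatment of the first pair of bounds (fixed $i\in\set{0,1}$) is essentially the paper's: Parseval reduces the sums to $\sigma^4\pscal{\pi_0^{-1}}{(\partial_y\pi_i)^2}$, and the paper evaluates this via $\sigma^2\partial_y\pi_0 = 2W\pi_0$ with $W=\pscal{\pi_0}{\partial_yV_0}-\partial_yV_0$, which is your computation in different notation.

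For the second pair, however, there is a genuine gap. Your perturbation formula
\[
\pscal{\partial_y\phi_n}{\phi_m}_{\pi_0}
=\frac{\pscal{(\partial_yb)\,\partial_x\phi_n}{\phi_m}_{\pi_0}}{\lambda_n-\lambda_m}
\]
is correct, but the conclusion you draw from it is not: summing the squares and using Bessel gives a numerator $\sigma^4\norm{(\partial_yb)\partial_x\phi_n}_{L^2(\pi_0)}^2\lesssim\sigma^2\lambda_n$, and you then assert that $\sigma^2\lambda_n$ is $\Order{1}$ uniformly in $n\geqs2$. This is false. For fixed $\sigma$ the eigenvalues $\lambda_n$ diverge (already for a harmonic well $\lambda_n\asymp n$), so your bound blows up as $n\to\infty$. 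The trouble is structural: $\partial_y\sL_x=(\partial_yb)\partial_x$ is a \emph{first-order} differential operator, and it is precisely the $\partial_x$ that injects the factor $\sqrt{\lambda_n/\sigma^2}$.

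The paper circumvents this by conjugating to the Schr\"odinger operator $\tilde\sL_x=\e^{-V_0/\sigma^2}\sL_x\e^{V_0/\sigma^2}=\tfrac{\sigma^2}{2}\partial_{xx}-\tfrac{1}{2\sigma^2}U_0$ with $U_0=(\partial_xV_0)^2-\sigma^2\partial_{xx}V_0$. In that picture the $y$-derivative of the operator is pure multiplication by $-\tfrac{1}{2\sigma^2}\partial_yU_0$, so the perturbation identity reads
\[
2\sigma^2\pscal{\psi_k}{\partial_y\psi_m}
=\frac{\pscal{\psi_k}{\partial_yU_0\,\psi_m}}{\lambda_m-\lambda_k}\;,
\]
with no derivative of the eigenfunction in the numerator. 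The paper packages this as Lemma~\ref{lem:sum_dypin}, which for $f=\phi_n$ gives
\[
\sum_{m\geqs0}f_{nm}(y)^2
\;\lesssim\;\pscal{\pi_0}{(1+W^2)\phi_n^2}\;,
\]
a bound with no $\lambda_n$ on the right. The uniformity in $n$ then follows from $\pscal{\pi_0}{\phi_n^2}=1$ together with the growth of $W$ being controlled by the confining potential. Your last paragraph correctly senses that something delicate is happening for large $n$, but misidentifies it as a spectral-gap issue; the real obstacle is that your commutator is one derivative too strong, and the cure is the Schr\"odinger conjugation.
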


The following result shows that $h_0$ is almost orthogonal to the span of 
$\pi_0$ and $\pi_1$. 

\begin{prop}
\label{prop:pin_h0}
We have 
\begin{equation}
 \sum_{n\geqs2} \pscal{\pi_n}{h_0}^2 \lesssim \lambda_1(y)\ell(\sigma) B(y)^2\;.
\end{equation} 
\end{prop}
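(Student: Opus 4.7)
The plan is to combine Parseval's identity on the orthonormal basis $(\phi_n(\cdot\vert y))_{n\geqs 0}$ of $L^2(\R,\pi_0(x\vert y)\6x)$ with the representation~\eqref{eq:phi1_rep} of the first eigenfunction, reducing the infinite sum to three explicit quantities whose leading orders will cancel. Since $\pi_n = \pi_0 \phi_n$, we have $\pscal{\pi_n}{h_0} = \pscal{\pi_0}{h_0\phi_n}$, and Parseval yields
\[
 \sum_{n\geqs 2}\pscal{\pi_n}{h_0}^2
 = \pscal{\pi_0}{h_0^2} - \pscal{\pi_0}{h_0}^2 - \pscal{\pi_1}{h_0}^2\;.
\]
The first two terms combine in closed form: writing $h_0^2 = h_0 - h_0(1-h_0)$ and using the definition~\eqref{eq:defDeltabar} together with the elementary identity $\pscal{\pi_0}{h_0}\pscal{\pi_0}{1-h_0} = B(y)^2/4$, one gets the exact equality $\pscal{\pi_0}{h_0^2} - \pscal{\pi_0}{h_0}^2 = B(y)^2/4 - \eta(y)$.

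The main computation is to show $\pscal{\pi_1}{h_0}^2 = B(y)^2/4 + \bigOrder{\lambda_1(y)\ell(\sigma)B(y)^2}$. To this end I substitute~\eqref{eq:phi1_rep} into $\pscal{\pi_0}{h_0\phi_1}$, obtaining $\pscal{\pi_1}{h_0} = \phi_-(y)\pscal{\pi_0}{h_0^2}\bigbrak{1 + \Order{\lambda_1\ell}} + \phi_+(y)\eta(y)\bigbrak{1 + \Order{\lambda_1\ell}}$. Combining~\eqref{eq:phipm} with~\eqref{eq:defDeltabar} identifies the dominant term as $\phi_-(y)\pscal{\pi_0}{h_0} = B(y)/2 + \Order{\lambda_1\ell B}$. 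All the remaining terms carry a factor of $\eta$ or of $\lambda_1\ell\,\pscal{\pi_0}{h_0}$ multiplied by $\phi_\pm$; the potentially exponentially large factors $\e^{\pm\Deltabar/\sigma^2}$ hidden in $\phi_\pm$ are tamed by the uniform bound $\e^{\pm\Deltabar/\sigma^2}B(y) \leqs 2$, which together with~\eqref{eq:bound_eta} makes each of them of order $\lambda_1\ell B$. This gives $\pscal{\pi_1}{h_0} = B(y)/2 + \Order{\lambda_1\ell B}$, and squaring yields the required estimate.

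Putting the three estimates together, the right-hand side of the Parseval identity becomes $-\eta(y) + \Order{\lambda_1\ell B^2}$, which by~\eqref{eq:bound_eta} has absolute value $\lesssim \lambda_1(y)\ell(\sigma)B(y)^2$. Since the left-hand side is manifestly nonnegative, this produces the claimed bound. The only delicate point in the argument is to ensure that the $B^2$ factor survives in the final estimate; this is guaranteed throughout by the uniform bound $\e^{\pm\Deltabar/\sigma^2}B\leqs 2$, which converts every occurrence of an exponentially large $\phi_\pm$ into a harmless multiple of $B^{-1}$ acting on $\eta \lesssim \lambda_1\ell B^2$.
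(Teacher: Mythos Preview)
Your proof is correct and follows essentially the same approach as the paper: both use Parseval on the orthonormal basis $(\phi_n)$, subtract the $n=0$ and $n=1$ contributions, and compute $\pscal{\pi_1}{h_0}$ via the representation~\eqref{eq:phi1_rep}. Your presentation is slightly more explicit in tracking the $B(y)^2$ factor through the identity $\pscal{\pi_0}{h_0}\pscal{\pi_0}{1-h_0}=B(y)^2/4$, whereas the paper keeps $\pscal{\pi_0}{h_0}^2\bigl[1+\e^{2\Deltabar/\sigma^2}\bigr]$ and invokes the definition~\eqref{eq:defDeltabar} at the end, but these are the same computation organised differently.
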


We can also get exponentially small bounds for a number of sums involving 
$f_{nm}$ and $g_{nm}$. 

\begin{prop}
\label{prop:f1mm1} 
The following sums are all of order $\lambda_1(y)\ell(\sigma)^a$ for some 
$a\leqs3$, where $i\in\set{0,1}$:  
\begin{gather}
 \sum_{m\geqs 2} f_{1m}(y)^2\;, \qquad
 \sum_{m\geqs 2} f_{1m}(y)f_{mi}(y)\;, \qquad
 \sum_{m\geqs 2} g_{1m}(y)^2\;, \\
 \sum_{m\geqs 2} f_{1m}(y)g_{mi}(y)\;, \qquad
 \sum_{m\geqs 2} g_{1m}(y)f_{mi}(y)\;, \qquad
 \sum_{m\geqs 2} g_{1m}(y)g_{mi}(y)\;.
\end{gather} 
\end{prop}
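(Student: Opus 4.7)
The key structural observation is that $\sigma^2 \partial_y \phi_1$ and $\sigma^4 \partial_{yy}\phi_1$ both lie, modulo a small remainder in $L^2(\pi_0)$, in the two-dimensional subspace $\vspan\{\phi_0,\phi_1\}$. Since $\{\phi_m\}_{m\geqs0}$ is an orthonormal basis of $L^2(\pi_0)$ and $\sigma^2 f_{1m} = -\sigma^2 \pscal{\partial_y\phi_1}{\phi_m}_{\pi_0}$ by \eqref{eq:orth_fnm}, Bessel's inequality applied to the projection onto $\vspan\{\phi_m : m\geqs 2\}$ will produce exponentially small tail sums. Concretely, using the representation \eqref{eq:proj_dyphi1} one has $f_{1m} = -\pscal{R_1}{\phi_m}_{\pi_0}$ for every $m\geqs 2$ (the main term $(A\phi_1+B)\Delta'$ contributes nothing there), so
\[
 \sum_{m\geqs 2} f_{1m}^2 \leqs \pscal{\pi_0}{R_1^2} \lesssim \lambda_1(y)\ell(\sigma)^2
\]
by \eqref{eq:boundsR}. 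An analogous decomposition of $\sigma^4\partial_{yy}\phi_1$ obtained by grouping the $\phi_0$- and $\phi_1$-components visible in \eqref{eq:dyyphi1_rep} and lumping the remaining contributions---namely $(\phi_-' - \phi_+')\partial_y h_0$, $(\phi_- - \phi_+)\partial_{yy}h_0$, and the $O(\lambda_1\ell^3)$ remainders from $\phi_\pm''$---into a single remainder $R_2$ gives $\sum_{m\geqs 2} g_{1m}^2 \leqs \pscal{\pi_0}{R_2^2}$, which is bounded by $\lambda_1(y)\ell(\sigma)^b$ for some $b\leqs 3$ using the derivative bounds of Proposition~\ref{prop:h0} on $h_0$ together with the identity $(\phi_- - \phi_+)^2 B^2 \asymp 1$.

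For the four cross sums, Parseval rewrites each one as an $L^2(\pi_0)$-inner product projected onto the high-frequency subspace. For example, using $f_{mi} = \sigma^2\pscal{\partial_y\pi_i}{\phi_m}$ and introducing $T_i\in L^2(\pi_0)$ characterised by $\pscal{T_i}{\phi_m}_{\pi_0}=f_{mi}$ (explicitly $T_0 = u := \sigma^2\partial_y\pi_0/\pi_0$ and $T_1 = -\sigma^2\partial_y\phi_1 + u\phi_1$), one gets
\[
 \sum_{m\geqs 2} f_{1m} f_{mi}
 = -\pscal{R_1}{T_i}_{\pi_0} + \text{(explicit low-mode corrections involving } f_{10}, f_{11}\text{)}.
\]
A plain $L^2$-$L^2$ Cauchy--Schwarz bound would give only $O(\sqrt{\lambda_1}\ell)$, which is insufficient. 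Instead, I would use an $L^\infty$--$L^1$ duality: writing $R_1 = \sigma^2(\phi_- - \phi_+)\partial_y h_0 + (\text{lower order})$ and invoking \eqref{eq:bound_dyh0} together with the crucial estimate $\eta = \pscal{\pi_0}{h_0(1-h_0)} \lesssim \lambda_1\ell B^2$ from Proposition~\ref{prop:h0} yields $\|R_1\|_{L^1(\pi_0)} \lesssim \lambda_1\ell B$; meanwhile $T_i$ is uniformly bounded by $O(1/B)$ because the exponential growth of $\phi_1$ is exactly compensated by $B$. Multiplying the two factors, the exponentially large factor $1/B$ cancels against $B$, leaving $|\pscal{R_1}{T_i}_{\pi_0}| \lesssim \lambda_1\ell$. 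The low-mode corrections are computed explicitly from Proposition~\ref{prop:f10}, and the cancellation $\pscal{u}{\phi_1}_{\pi_0} = f_{10} = -B\Delta'+O(\lambda_1\ell^2)$ produces an $O(\lambda_1\ell^2)$ remainder. The sums $\sum_{m\geqs 2} g_{1m}g_{mi}$, $\sum_{m\geqs 2} f_{1m}g_{mi}$, and $\sum_{m\geqs 2} g_{1m}f_{mi}$ are treated identically, substituting $R_2$ for $R_1$ where appropriate, and invoking the bounds \eqref{eq:bound_dyyh0} and Proposition~\ref{prop:h1}.

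The main obstacle is ensuring the $L^\infty$--$L^1$ estimate for the second-derivative remainder $R_2$. Unlike $R_1$, which has essentially one structural component proportional to $\partial_y h_0$, the remainder $R_2$ contains several terms of different natures (second derivatives of $h_0$, products of first derivatives of $\phi_\pm$ with $\partial_y h_0$, and the $O(\lambda_1\ell^3)$ contributions from $\phi_\pm''$), each of which must individually satisfy a localisation estimate of the type $\|\,\cdot\,\|_{L^1(\pi_0)} \lesssim \lambda_1\ell^a B$ in order for the $1/B$ factor in $T_i$ to be absorbed. This is where the highest power $\ell^3$ enters, propagated from the bound on $\partial_{yy}h_1$ in Proposition~\ref{prop:h1}.
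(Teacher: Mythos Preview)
Your treatment of $\sum_{m\geqs 2}f_{1m}^2$ via Bessel applied to $R_1$ is correct and essentially equivalent to the paper's: the paper computes $\sum_{m\geqs 0}f_{1m}^2=\sigma^4\pscal{\pi_0}{(\partial_y\phi_1)^2}$ using \eqref{eq:pi0dyphi12} and subtracts the explicit $f_{10}^2+f_{11}^2$ from Proposition~\ref{prop:f10}; the cancellation of the $(\Deltabar')^2$ terms is precisely your observation that $\sigma^2\partial_y\phi_1$ lies in $\vspan\{\phi_0,\phi_1\}$ up to $R_1$.

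There is, however, a genuine gap in your handling of every sum containing $g_{1m}$. You claim that $\sum_{m\geqs 2}g_{1m}^2\leqs\pscal{\pi_0}{R_2^2}$ with $R_2$ the high-mode part of $\sigma^4\partial_{yy}\phi_1$, but this identification is wrong: by \eqref{eq:orth_fnm} one has $g_{1m}=-\ell_{1m}-2k_{1m}$ with $\ell_{1m}=\sigma^4\pscal{\pi_m}{\partial_{yy}\phi_1}$ and $k_{1m}=\sigma^4\pscal{\partial_y\pi_m}{\partial_y\phi_1}$. Your $R_2$ argument controls only $\sum_{m\geqs 2}\ell_{1m}^2$. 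The piece $k_{1m}$ involves $\partial_y\pi_m$ and is \emph{not} a Fourier coefficient in the $\{\phi_m\}$ basis, so no Bessel-type bound applies to it directly. The paper treats $k_{1m}$ separately: substituting \eqref{eq:proj_dyphi1} yields $k_{1m}=\Deltabar' A\,f_{1m}+\sigma^2\pscal{\partial_y\pi_m}{R_1}$, which reduces $\sum_{m\geqs 2}k_{1m}^2$ to the already-controlled $\sum_{m\geqs 2}f_{1m}^2$ plus a term bounded through Lemma~\ref{lem:sum_dypin} and \eqref{eq:boundsR}. The same omission affects $\sum_{m\geqs 2} g_{1m}f_{mi}$ and $\sum_{m\geqs 2} g_{1m}g_{mi}$.

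As a side remark, for the cross sums involving only $f$'s the paper avoids your $L^\infty$--$L^1$ duality altogether: completeness gives $\sum_{m\geqs 0}f_{1m}f_{m1}=-\sigma^4\pscal{\partial_y\pi_1}{\partial_y\phi_1}$, which is evaluated via the precomputed formula \eqref{eq:dypi1dyphi1} and then one subtracts $f_{11}^2$ (recall $f_{01}=0$). This is shorter and does not require checking that $T_i\in L^\infty$, a point on which your argument is a bit delicate since $W$ grows polynomially at infinity.
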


Finally, the following result provides exponentially small bounds on similar 
sums, but with all terms divided by $\lambda_n$. These bounds are \emph{not} 
consequences of the previous ones, since the terms of these sums do not have 
the same sign, so that their smallness is due to cancellations between terms. 

\begin{prop}
\label{prop:f1mm1lambdam} 
The following sums are all of order $\lambda_1(y)\ell(\sigma)^a$ for some 
$a\leqs3$, where $i\in\set{0,1}$:  
\begin{gather}
 \sum_{m\geqs 2} \frac{1}{\lambda_m(y)}f_{1m}(y)f_{mi}(y)\;, \qquad
 \sum_{m\geqs 2} \frac{1}{\lambda_m(y)}f_{1m}(y)g_{mi}(y)\;, \\
 \sum_{m\geqs 2} \frac{1}{\lambda_m(y)}g_{1m}(y)f_{mi}(y)\;, \qquad
 \sum_{m\geqs 2} \frac{1}{\lambda_m(y)}g_{1m}(y)g_{mi}(y)\;. 
\end{gather} 
\end{prop}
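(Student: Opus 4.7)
The plan is to treat all four sums by the same mechanism; for definiteness I focus on $T_i := \sum_{m\geq 2} \lambda_m^{-1} f_{1m} f_{mi}$, as the three sums involving $g_{1m}$ or $g_{mi}$ reduce to the same scheme once one has differentiated the eigenvalue equation $\sL_x\phi_1=-\lambda_1\phi_1$ an extra time in $y$, at the cost of at most one additional factor of $\ell(\sigma)=\lsig$ per derivative.

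The first step would be to recast $T_i$ as an inner product in $L^2(\pi_0)$. The biorthogonality $\pscal{\pi_n}{\phi_m}=\delta_{nm}$ and the expansions $\sigma^2\partial_y\pi_i=\sum_m f_{mi}\pi_m$, $\sigma^2\partial_y\phi_1=-\sum_m f_{1m}\phi_m$ yield
\begin{equation*}
 T_i \;=\; -\bigpscal{u_i}{\Psi_1^\perp}_{\pi_0}\,,
\end{equation*}
where $\Psi_1^\perp := Q^\perp(\sigma^2\partial_y\phi_1)$, $Q^\perp$ denotes the $L^2(\pi_0)$-orthogonal projection onto the complement of $\vspan\{\phi_0,\phi_1\}$, and $u_i := \sum_{m\geq 2}(f_{mi}/\lambda_m)\phi_m$ is the unique solution in that complement of the Poisson equation $\sL_x u_i = -Q^\perp(\sigma^2\partial_y\pi_i/\pi_0)$.

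The crucial point is that both factors in this inner product must contribute a $\sqrt{\lambda_1}$: pairing the bound $\|\Psi_1^\perp\|_{\pi_0}\lesssim\sqrt{\lambda_1}\,\ell$ implied by~\eqref{eq:boundsR} with the $\Order{1}$-bound on $\|u_i\|_{\pi_0}$ coming from the spectral gap yields only $\Order{\sqrt{\lambda_1}\,\ell}$, which is exponentially too large. To obtain the missing $\sqrt{\lambda_1}$ on the $\Psi_1^\perp$-side, I would differentiate $\sL_x\phi_1 = -\lambda_1\phi_1$ in $y$, project onto $Q^\perp$, and use the spectral gap (so that $(\sL_x+\lambda_1)^{-1}$ is $\Order{1}$-bounded on $Q^\perp L^2(\pi_0)$) to write
\begin{equation*}
 \Psi_1^\perp \;=\; -\sigma^2(\sL_x+\lambda_1)^{-1}\,Q^\perp\bigbrak{(\partial_y b)\,\partial_x \phi_1}\,.
\end{equation*}
The Dirichlet-form identity $\bignorm{\partial_x\phi_1}_{\pi_0}^2 = (2/\sigma^2)\pscal{-\sL_x\phi_1}{\phi_1}_{\pi_0} = 2\lambda_1/\sigma^2$ and the boundedness of $\partial_y b$ then yield $\|\Psi_1^\perp\|_{\pi_0}\lesssim\sigma\sqrt{\lambda_1}$. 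On the $u_i$-side I would run a dual resolvent-plus-Dirichlet argument: for $i=1$ by differentiating $\sL_x^\dagger\pi_1 = -\lambda_1\pi_1$ in $y$ and mimicking the above manipulation, and for $i=0$ by exploiting the identity $f_{m0} = -2\pscal{\partial_yV_0}{\phi_m}_{\pi_0}$ (valid for all $m\geq 1$), passing $\sL_x$ from $\phi_m$ onto $\partial_yV_0$ by self-adjointness, and reducing the resulting matrix elements to integrals of $\partial_x\phi_m$ that can again be bounded using the Dirichlet form. Each such manipulation costs at most a bounded power of $\ell(\sigma)$.

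Combining the two estimates by Cauchy--Schwarz gives $|T_i|\lesssim \lambda_1\,\ell^a$ with $a\leq 3$, as claimed; the three remaining sums follow by the same scheme applied to the twice-differentiated eigenvalue equations. The main obstacle I anticipate is the case $i=0$: unlike $\partial_y\pi_1$, the quantity $\sigma^2\partial_y\pi_0/\pi_0 = -\sigma^2 Z_0'/Z_0 - 2\partial_yV_0$ is not intrinsically small in the perpendicular direction, so the compensating $\sqrt{\lambda_1}$ must come from a careful analysis of how the smooth function $\partial_yV_0$ couples to the well-localised excited modes $\phi_m$ with $m\geq 2$. This is presumably where the logarithmic losses $\ell^a$ are actually incurred, and is the most technical part of the argument.
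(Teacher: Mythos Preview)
Your $L^2$/Cauchy--Schwarz strategy has a genuine gap that cannot be repaired within the framework you outline. You correctly obtain $\|\Psi_1^\perp\|_{\pi_0}\lesssim\sqrt{\lambda_1}\,\ell$ (either via \eqref{eq:boundsR} or your resolvent-plus-Dirichlet trick). The problem is the claim that $\|u_i\|_{\pi_0}\lesssim\sqrt{\lambda_1}$ as well. This is false: one has only $\|u_i\|_{\pi_0}\lesssim\sigma$, which is exponentially larger. To see this for $i=0$, your own integration-by-parts identity gives $f_{m0}=-\sigma^2\lambda_m^{-1}\pscal{\partial_{xy}V_0}{\partial_x\phi_m}_{\pi_0}$, and Cauchy--Schwarz together with $\|\partial_x\phi_m\|_{\pi_0}^2=2\lambda_m/\sigma^2$ yields $|f_{m0}|\lesssim\sigma/\sqrt{\lambda_m}$; hence $\|u_0\|_{\pi_0}^2=\sum_{m\geqs2}f_{m0}^2/\lambda_m^2\lesssim\sigma^2$, and no amount of iterating the resolvent improves this. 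The case $i=1$ is the same: when you differentiate $\sL_x^\dagger\pi_1=-\lambda_1\pi_1$ and divide by $\pi_0$, the right-hand side contains the term $2b(\partial_y b)\phi_1$, whose $Q^\perp$-part has $L^2(\pi_0)$-norm of order $\sigma$ (the function $b\,\partial_yb$ vanishes at $x^*_\pm$, so its contribution near each well is $\Order{\sigma}$, not exponentially small). Commuting once more with $\sL_x$ only reproduces a term of the same type. Thus Cauchy--Schwarz yields at best $|T_i|\lesssim\sigma\sqrt{\lambda_1}\,\ell$, missing the target by the exponentially large factor $\sigma/\sqrt{\lambda_1}$.

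The point is that the smallness of $T_i$ is \emph{not} due to both factors being individually small in $L^2$; it comes from the fact that $\sL_\perp^{-1}\Pi_\perp\partial_y\phi_1$ is pointwise bounded by $h_0(1-h_0)/(\sigma^2 B)$ (up to negligible corrections), and the crucial estimate is the $L^1(\pi_0)$ bound $\pscal{\pi_0}{h_0(1-h_0)}=\eta(y)\lesssim\lambda_1\ell B^2$, which is much smaller than the square of the $L^2(\pi_0)$-norm. The paper accesses this by writing an explicit Green-function representation of $\sL_\perp^{-1}$ on the interval $(x^*_-,x^*_+)$ (their Lemma~\ref{lem:Lperp}): $\sL_\perp^{-1}f=f_-h_0+f_+(1-h_0)+\sG_0\Pi_\perp f$, where $\sG_0$ is the Dirichlet Green operator. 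Lemma~\ref{lem:Poisson} then gives pointwise bounds of the form $|\sG_0\Pi_\perp(\sigma^2\partial_yh_0)|\lesssim h_0(1-h_0)+\text{(exponentially small)}$, and pairing this against $\partial_y\pi_i$ and using $\eta(y)\lesssim\lambda_1\ell B^2$ recovers the full $\lambda_1$. Any successful proof must exploit this $L^1$-type concentration near the saddle; a purely spectral $L^2$ argument is too coarse.
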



\subsection{Proof of Theorem~\ref{thm:pi}}
\label{ssec:invariant_proof} 

\NB{
We start by showing that the system of SDEs~\eqref{eq:SDE} admits a unique 
invariant measure.
\begin{prop}
For any $\sigma>0$, the system~\eqref{eq:SDE} admits a unique invariant 
probability measure $\pi$. Furthermore, the expectation of $x^2$ under 
$\pi$ is finite.  
\end{prop}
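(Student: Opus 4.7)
The plan is to apply standard Foster--Lyapunov methodology on the product state space $\R\times\T$ with $\T=\R/\Z$; since $\T$ is compact, tightness issues reduce to the $x$-direction, which is controlled by the dissipativity assumption $xb(x,y)\leqs -Mx^2$ for $\abs{x}\geqs L$ made in Section~\ref{ssec:setup}.

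For existence and the second-moment bound, I would take the Lyapunov function $V(x,y)=1+x^2$, for which a direct computation using the generator~\eqref{eq:defL} gives
\begin{equation*}
 \sL V(x,y) = \frac{\sigma^2}{\eps} + \frac{2}{\eps}\,x\,b(x,y)\;.
\end{equation*}
Splitting according to whether $\abs{x}\geqs L$ or not, the dissipativity assumption together with the continuity of $b$ on the compact set $\set{\abs{x}\leqs L}\times\T$ yields a drift inequality $\sL V\leqs -\alpha V+\beta$ with positive constants. Dynkin's formula and Gronwall's inequality then provide a uniform-in-$t$ bound $\sup_{t\geqs 0}\expecin{x,y}{V(x_t,y_t)}<\infty$, from which Krylov--Bogolyubov yields the existence of at least one invariant probability measure; passing the Lyapunov bound to the limit along the time-averaged laws delivers $\int x^2\6\pi<\infty$.

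For uniqueness, I would exploit the fact that whenever $\sigma,\varrho>0$ the diffusion matrix $\diag(\sigma^2/\eps,\varrho^2\sigma^2)$ is strictly positive definite, so that $\sL$ is uniformly elliptic on $\R\times\T$ with smooth periodic coefficients. Standard parabolic regularity then furnishes a smooth, strictly positive transition density $p_t((x,y),(x',y'))$ for every $t>0$, which delivers simultaneously the strong Feller property and the irreducibility of the Markov semigroup. By the classical Doob-type dichotomy, any two distinct invariant measures would have to be mutually singular, which contradicts strong Feller plus irreducibility, so $\pi$ is unique.

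The only real point of care is to work on $\R\times\T$ from the outset rather than on $\R^2$: the $y$-equation provides no confining mechanism, and the reduction to the circle is what makes the Lyapunov argument in $x$ alone sufficient for tightness. Apart from this bookkeeping step, no technical obstacle is anticipated, and the proposition is essentially a routine consequence of the standing assumptions on $b$.
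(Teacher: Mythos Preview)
Your proposal is correct and follows essentially the same approach as the paper: the same Lyapunov function $x^2$ (up to the harmless additive constant), the same drift computation $\sL V = \sigma^2/\eps + (2/\eps)\,x\,b(x,y)$, and the same dissipativity assumption to obtain a drift inequality of the form $\sL V \leqs -\alpha V + \beta$. The only cosmetic difference is in the packaging of the classical results invoked: the paper cites the Meyn--Tweedie Harris-recurrence framework directly, whereas you spell out the Krylov--Bogolyubov construction for existence and the strong Feller/irreducibility route for uniqueness; both are standard and equivalent for this purpose.
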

\begin{proof}
We use a probabilistic argument, following the technique developed by Meyn and 
Tweedie in~\cite{Meyn_Tweedie_1993b}. Consider the Lyapunov function 
\begin{equation}
 U(x,y) = x^2\;.
\end{equation} 
Applying the generator $\sL$ to $U$ yields 
\begin{equation}
 (\sL U)(x,y) = \frac{1}{\eps} \bigbrak{\sigma^2 + 2 x b(x,y)}\;. 
\end{equation} 
By the assumptions on $b$ given at the beginning of Section~\ref{ssec:setup}, 
there exist constants $M_1, M_2 > 0$ such that 
\begin{equation}
 (\sL U)(x,y) \leqs \frac{1}{\eps} \bigbrak{\sigma^2 + M_1 - M_2 U(x,y)}\;. 
\end{equation} 
In particular, $\sL U$ is bounded above, and strictly negative outside a 
compact region in $\T\times\R$. It thus follows 
from~\cite[Theorem~3.3]{Meyn_Tweedie_1993b} that the stochastic process is 
Harris recurrent, meaning that it will visit any set with positive 
measure infinitely often. This in turn implies the existence of a unique 
invariant measure~\cite{Azema_Duflo_Revuz69,Getoor_79}. 
The claim on $x^2 = U(x,y)$ having finite expectation follows 
from~\cite[Theorem~4.2]{Meyn_Tweedie_1993b} with the test function $f$ equal to 
the Lyapunov function $U$. In particular, this shows that the measure $\pi$ can 
be normalised to a probability measure. 
\end{proof}
}

\NB{It thus remains to obtain the precise asymptotics of $\pi$ stated 
in Theorem~\ref{thm:pi}.}
Since for each $y$, the eigenfunctions $\pi_n(\cdot\vert y)$ form a complete 
orthonormal basis of $L^2(\R,\pi_0\6x)$, we can decompose the density $\pi$ of 
the invariant measure as 
\begin{equation}
\label{eq:pi} 
 \pi(x,y) = \sum_{n\geqs 0} \alpha_n(y) \pi_n(x\vert y)
 = \pi_0(x\vert y) \sum_{n\geqs 0} \alpha_n(y) \phi_n(x\vert y)\;.
\end{equation} 
We write the adjoint generator as $\sL^\dagger = \frac1\eps\sL^\dagger_x + 
\sL^\dagger_y$, where $\sL^\dagger_x$ has been defined in~\eqref{eq:Lx}, and 
\begin{equation}
 \sL^\dagger_y \mu = -\partial_y\mu + \rsii \partial_{yy}\mu\;.
\end{equation} 
The stationarity condition $\sL^\dagger \pi = 0$ becomes 
\begin{equation}
\label{eq:Lpieq0} 
 \sum_{n\geqs1} \lambda_n(y)\alpha_n(y)\pi_n(x\vert y) 
 = \eps\sum_{n\geqs0} \sL^\dagger_y \bigpar{\alpha_n(y)\pi_n(x\vert y)}\;,
\end{equation} 
where the right-hand side can be evaluated using 
\begin{align}
 \sL^\dagger_y (\alpha_n\pi_n) 
 &= \Bigpar{-\alpha_n' + \rsii \alpha_n''} \pi_n 
 + \Bigpar{-\alpha_n + \rsi \alpha_n'} \partial_y\pi_n
 + \rsii \alpha_n \partial_{yy} \pi_n\;.
\end{align}
We now project~\eqref{eq:Lpieq0} on each eigenfunction $\phi_n$. 
Since $\pscal{\partial_y\pi_n}{\phi_0} = \partial_y\pscal{\pi_n}{\phi_0} = 0$, 
and similarly for the second derivative, the projection on $\phi_0$ yields 
\begin{equation}
 -\alpha_0'(y) + \rsii \alpha_0''(y) = 0\;.
\end{equation} 
Using periodicity in $y$ and the fact that $\pi$ is normalised, one easily gets 
\begin{equation}
 \alpha_0(y) = 1\;.
\end{equation} 
The projections on the remaining $\phi_n$ result in the following statement, 
whose proof is a simple computation. 

\begin{lemma}
The stationary distribution $\pi$ is given by~\eqref{eq:pi} with $\alpha_0(y) 
= 1$ and $\set{\alpha_n(y)}_{n\in\N}$ given by the first component of the 
unique periodic solution of
\begin{align}
\label{eq:alphan_betan} 
 \rsi \alpha_n' &= 2\alpha_n - 2\beta_n \\
 \sigma^2 \beta_n' &= -\frac{\sigma^2}{\eps}\lambda_n(y)\alpha_n 
 + \sum_{m\geqs1} \bigbrak{c_{nm}(y)\alpha_m + d_{nm}(y)\beta_m}
 + c_{n0}(y)\;,
\end{align}
where
\begin{align}
 c_{n0}(y) &= - f_{n0}(y) + \frac{\varrho^2}{2} g_{n0}(y)\;, \\
 c_{nm}(y) &= f_{nm}(y) + \frac{\varrho^2}{2} g_{nm}(y)\;,
 & m&\geqs1\;,\\
 d_{nm}(y) &= -2f_{nm}(y)\;.
\end{align}
\end{lemma}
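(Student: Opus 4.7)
The plan is a direct computation. Substitute~\eqref{eq:pi} into $\sL^\dagger\pi=0$, project on each $\phi_n$ with $n\geqs 1$, and convert the resulting second-order ODE for $\alpha_n$ into a first-order system via the auxiliary variable $\beta_n := \alpha_n - \rsii\alpha_n'$. Uniqueness of the periodic solution will be inherited from the uniqueness of the invariant measure established in the preceding proposition.

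First I take the $L^2(\R,\6x)$-inner product of the stationarity identity~\eqref{eq:Lpieq0} with $\phi_n(\cdot\vert y)$ for a fixed $n\geqs 1$. Orthonormality $\pscal{\pi_m}{\phi_n} = \delta_{mn}$ collapses the left-hand side to $\lambda_n\alpha_n$. On the right, the same relation selects the $m=n$ contribution from the $\pi_m$-part of $\sL_y^\dagger(\alpha_m\pi_m)$, producing $\eps(-\alpha_n' + \rsii\alpha_n'')$, while the inner products with $\partial_y\pi_m$ and $\partial_{yy}\pi_m$ are replaced by $\sigma^{-2}f_{nm}$ and $\sigma^{-4}g_{nm}$ respectively by~\eqref{eq:deg_fnm}. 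Isolating the $m=0$ contribution, for which $\alpha_0\equiv 1$ and so only the $\partial_y\pi_0,\partial_{yy}\pi_0$ pieces survive, one arrives at
\begin{equation*}
\lambda_n \alpha_n
= \eps\bigpar{-\alpha_n' + \rsii\alpha_n''}
+ \frac{\eps}{\sigma^2}\Bigbrak{-f_{n0} + \frac{\varrho^2}{2}g_{n0}}
+ \frac{\eps}{\sigma^2}\sum_{m\geqs 1}\Bigbrak{-f_{nm}\alpha_m + \rsi f_{nm}\alpha_m' + \frac{\varrho^2}{2}g_{nm}\alpha_m}\;.
\end{equation*}

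Second, the change of variable $\beta_n := \alpha_n - \rsii\alpha_n'$ yields directly the first equation $\rsi\alpha_n' = 2\alpha_n - 2\beta_n$, and differentiation gives the identity $-\alpha_n' + \rsii\alpha_n'' = -\beta_n'$, so the first $\eps$-bracket on the right-hand side above collapses to $-\eps\beta_n'$. Multiplying through by $\sigma^2/\eps$ and substituting $\rsi\alpha_m' = 2(\alpha_m - \beta_m)$ inside the sum reduces each summand to $[f_{nm}+(\varrho^2/2)g_{nm}]\alpha_m - 2f_{nm}\beta_m = c_{nm}\alpha_m + d_{nm}\beta_m$, while the extracted $m=0$ bracket is exactly $c_{n0}$. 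This produces the announced second equation.

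Uniqueness of the periodic solution is then immediate: two distinct periodic solutions $(\alpha_n,\beta_n)_{n\in\N}$ would generate, via~\eqref{eq:pi}, two distinct probability densities stationary for $\sL^\dagger$ (both normalised thanks to $\alpha_0\equiv 1$), contradicting the uniqueness statement of the preceding proposition. No step of this argument is genuinely delicate — which is why the author calls it a simple computation — and the only piece of bookkeeping that warrants a second look is the elimination of the first-derivative terms inside the sum via the defining relation of $\beta_m$.
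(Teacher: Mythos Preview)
Your proposal is correct and follows exactly the approach the paper indicates (``a simple computation''): project~\eqref{eq:Lpieq0} onto $\phi_n$, then reduce to first order via $\beta_n=\alpha_n-\rsii\alpha_n'$, with the substitution $\rsi\alpha_m'=2(\alpha_m-\beta_m)$ producing the coefficients $c_{nm},d_{nm}$. The only cosmetic point is that in your uniqueness argument the two candidate $\pi$'s need not be nonnegative, so ``probability densities'' should read ``periodic $L^1$ solutions of $\sL^\dagger\pi=0$ with unit mass''; one-dimensionality of that kernel (equivalent to uniqueness of the invariant measure) then forces them to coincide.
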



\subsubsection{The first-order case}
\label{sssec:invariant_order1} 

It is instructive to consider first the case $\varrho^2 = 0$. Then $\beta_n(y) 
= \alpha_n(y)$, and $\alpha_n(y)$ satisfies the linear inhomogeneous system 
\begin{equation}
\label{eq:alphan} 
 \eps \alpha_n' 
 = -\lambda_n(y) \alpha_n - \frac{\eps}{\sigma^2} f_{n0}(y) 
 - \frac{\eps}{\sigma^2} \sum_{m\geqs1} f_{nm}(y) 
\alpha_m\;.
\end{equation} 
Note that for $n\geqs2$, $\alpha_n(y)$ is a fast variable, which, by the 
general theory of singularly perturbed ordinary differential equations, is 
expected to remain $\eps$-close to a value $\alpha_n^*(y)$ such that the 
right-hand side of the system vanishes. 

The case $n=1$, however, is special since $\lambda_1(y)$ is exponentially 
small. This makes the system hard to study in the form~\eqref{eq:alphan}, 
because $\alpha_1(y)$ can become exponentially large. 
The solution is to observe that, disregarding for a moment the terms $\alpha_n$ 
with $n\geqs2$ we have by Lemma~\ref{lem:Laplace}
\begin{align}
 p_-(y)
 := \bigprob{x(y) < x^*_0(y)} 
 &\simeq \int_{-\infty}^{x^*_0(y)} \pi_0(x\vert y) \bigbrak{1 + \alpha_1(y) 
\phi_1(x\vert y)}\6x \\
 &= \frac{1}{2} B(y) \bigpar{\e^{-\Deltabar/\sigma^2} + \alpha_1(y)}
 \bigbrak{1+\Order{\sigma^2}}\;.
 \label{eq:pminus} 
\end{align}
This suggests setting 
\begin{equation}
\label{eq:def_alpha1} 
 \alpha_1(y) = \frac{A(y) - \delta_1(y)}{B(y)}\;,
\end{equation} 
so that $p_-(y) \simeq \frac12(1-\delta_1(y))$, and therefore $\delta_1(y)$ 
remains of order $1$. Then a computation shows that 
\begin{equation}
 \label{eq:delta1} 
 \eps \delta_1' 
 = \Bigbrak{-\lambda_1(y) + \frac{\eps}{\sigma^2}p_1(y)} 
\bigpar{\delta_1-A(y)} + \frac{\eps}{\sigma^2}w_1(y) 
 + \frac{\eps}{\sigma^2} B(y)\sum_{m\geqs2} f_{1m}(y) 
\alpha_m\;,
\end{equation}
where
\begin{align}
p_1(y) &= -f_{11}(y) - \Deltabar'(y)A(y)
= \Order{\lambda_1(y)\ell(\sigma)^2}\;, \\
w_1(y) &= \Deltabar'(y)B(y)^2 + B(y)f_{10}(y)
= \Order{\lambda_1(y)\ell(\sigma)^2B(y)^2}\;.
\label{eq:p1w1} 
\end{align}
The unique periodic solution of this equation is given by 
\begin{equation}
 \delta_1(y) 
 = \frac{1}{\eps(1-\e^{-\bar\Lambda_1(1,0)/\eps})}
 \int_y^{y+1} \e^{-\bar\Lambda_1(y+1,\bar y)/\eps}
 \biggbrak{\bar\lambda_1(\bar y)A(\bar y) 
 + \frac{\eps}{\sigma^2}w_1(\bar y)
 + \frac{\eps}{\sigma^2}\tilde w_1(\bar y)} \6\bar y\;, 
\end{equation} 
where we have set  $\bar\lambda_1(y) = \lambda_1(y) \NB{{}-{}} 
\frac{\eps}{\sigma^2}p_1(y)$ and 
\begin{align}
\label{eq:def_w1tilde} 
 \tilde w_1(y) &
= B(y) \sum_{m\geqs2} f_{1m}(y) \alpha_m(y)\;, \\
 \bar\Lambda_1(y_2,y_1) &= \int_{y_1}^{y_2} \bar\lambda_1(y)\6y\;.
\end{align} 
In particular, for $\eps\gg\bar\Lambda_1(1,0) = \avrg{\bar\lambda_1}$, 
$\delta_1(y)$ is almost constant, that is, we have 
\begin{align}
 \delta_1(y) &= \bar{\delta}_1
\biggbrak{1+\biggOrder{\frac{\avrg{\bar\lambda_1}}{\eps}}}\;, \\
\bar{\delta}_1 &= 
\frac{1}{\avrg{\bar\lambda_1}}
\int_0^1 \biggbrak{\bar\lambda_1(y)A(y) 
 + \frac{\eps}{\sigma^2}w_1(\bar y)
 + \frac{\eps}{\sigma^2}\tilde w_1(y)} \6y\;.
\end{align} 
To analyse the dynamics of the remaining coefficients $\alpha_n(y)$ with 
$n\geqs2$, we introduce a vector $\alpha_\perp^*(y)$ with components
\begin{equation}
\label{eq:def_alphaperp} 
 \alpha_n^*(y) = -\frac{\eps}{\sigma^2} \frac{1}{\lambda_n(y)}
 \Bigbrak{f_{n0}(y) + \frac{A(y)-\delta_1(y)}{B(y)} f_{n1}(y)}\;,
\end{equation} 
and examine in particular the behaviour of $\alpha_\perp^1(y) = \alpha_\perp(y) 
- \alpha_\perp^*(y)$. 

\begin{prop}
\label{prop:alphastar_alpha1} 
The unique periodic solution of the system~\eqref{eq:alphan} satisfies 
\begin{equation}
\label{eq:alphan_alphastar_alpha1} 
 \alpha_n(y) = \alpha_n^*(y) + \alpha_n^1(y)\;,
\end{equation} 
where 
\begin{align}
\label{eq:sup_alphastar} 
 \sup_{n\geqs2} \lambda_n(y) \bigabs{\alpha_n^*(y)} 
 \lesssim \frac{\eps}{\sigma^2 B(y)}
 &&\forall y\in[0,1]\;, \\
  \sup_{n\geqs2} \lambda_n(y) \bigabs{\alpha_n^1(y)} 
 \lesssim \frac{\eps^2}{\sigma^4 B(y)}
 &&\forall y\in[0,1]\;.
 \label{eq:sup_alpha1} 
\end{align} 
\end{prop}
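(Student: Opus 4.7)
The first bound~\eqref{eq:sup_alphastar} is an immediate corollary of the definition~\eqref{eq:def_alphaperp}: since $|A|\leqs 1$ and $|\delta_1|\lesssim 1$ (the latter following from the explicit integral representation of $\delta_1$ in terms of $A$ and $\bar\lambda_1$), we have $|\alpha_1| = |A-\delta_1|/B \lesssim 1/B$. Combined with the pointwise bound $|f_{ni}|\leqs\sqrt{M_0}$ extracted from Proposition~\ref{prop:sum_fnm}, this gives $\lambda_n|\alpha_n^*| \lesssim \eps/(\sigma^2 B)$ uniformly in $n\geqs 2$.

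For~\eqref{eq:sup_alpha1} I would substitute $\alpha_n = \alpha_n^* + \alpha_n^1$ into~\eqref{eq:alphan} and exploit the defining identity $\lambda_n\alpha_n^* = -(\eps/\sigma^2)(f_{n0}+\alpha_1 f_{n1})$ to cancel the low-mode source terms, arriving at
\begin{equation*}
\eps(\alpha_n^1)' + \lambda_n\alpha_n^1 = -\eps(\alpha_n^*)' - \frac{\eps}{\sigma^2}\sum_{m\geqs 2} f_{nm}(\alpha_m^* + \alpha_m^1)\;.
\end{equation*}
Since $\lambda_n(y)\geqs\lambda_2 > 0$ uniformly in $y$ for $n\geqs 2$, a maximum-principle (or quasi-stationary) argument applied to this scalar periodic ODE gives $\sup_y\lambda_n(y)|\alpha_n^1(y)|\lesssim\sup_y|g_n(y)|$, where $g_n$ denotes the right-hand side. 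It therefore suffices to bound each of the three contributions to $g_n$ uniformly in $n$ by $\Order{\eps^2/(\sigma^4 B)}$.

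Three estimates remain. For $\eps(\alpha_n^*)'$, differentiating~\eqref{eq:def_alphaperp} one gains an additional factor $\eps/\sigma^2$ relative to $\alpha_n^*$ itself: the derivatives $f_{ni}'$ reduce via~\eqref{eq:orth_fnm} to $g_{ni}$ and $k_{ni}$ (both with uniform $\ell^2$ control), $\lambda_n'/\lambda_n$ is $\Order{1}$, and $\alpha_1'$ can be extracted from the ODE~\eqref{eq:delta1} for $\delta_1$. For the cross-coupling $\frac{\eps}{\sigma^2}\sum_{m\geqs 2}f_{nm}\alpha_m^*$, Cauchy--Schwarz combined with the row bound $\sum_{m\geqs 2}f_{nm}^2\leqs M_0$ and the summability $\sum_m 1/\lambda_m^2<\infty$ (which follows from the standard growth of the Fokker--Planck spectrum $\lambda_m\to\infty$), together with~\eqref{eq:sup_alphastar}, yields a bound of order $\eps^2/(\sigma^4 B)$. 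For the genuine self-coupling, setting $M_1 := \sup_{n\geqs 2,\, y}\lambda_n(y)|\alpha_n^1(y)|$, the identical Cauchy--Schwarz manoeuvre yields $\bigabs{\frac{\eps}{\sigma^2}\sum_m f_{nm}\alpha_m^1} \lesssim (\eps/\sigma^2) M_1$.

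Assembling these estimates produces the self-consistent inequality $M_1 \lesssim \eps^2/(\sigma^4 B) + (\eps/\sigma^2) M_1$, which for $\eps/\sigma^2$ sufficiently small can be rearranged to give~\eqref{eq:sup_alpha1}. The main obstacle is this closure argument combined with the bookkeeping in the first term—in particular verifying that the $\alpha_1'$-contribution does not produce a hidden $1/B$ beyond what is already recorded in~\eqref{eq:sup_alpha1}; this requires tracking $\delta_1'$ through~\eqref{eq:delta1} and noting that the term $\tilde w_1$ from~\eqref{eq:def_w1tilde}, which involves the unknown $\alpha_\perp^*$, is itself $\Order{\eps/(\sigma^2 B)}$ by the first step of the argument, thus closing the logical loop consistently.
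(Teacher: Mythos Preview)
Your overall strategy matches the paper's: you derive the same ODE for $\alpha_n^1$, bound $(\alpha_n^*)'$ by differentiating~\eqref{eq:def_alphaperp}, and control the coupling sums $\sum_{m\geqs2}f_{nm}(\alpha_m^*+\alpha_m^1)$ by Cauchy--Schwarz using $\sum_m f_{nm}^2\leqs M_0$ and the summability of $1/\lambda_m^2$. The paper carries out exactly these steps.

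There is, however, a genuine gap in how you close. Your maximum-principle step reads $\sup_y\lambda_n(y)|\alpha_n^1(y)|\lesssim\sup_y|g_n(y)|$, i.e.\ a bound uniform in $y$ on both sides. Feeding in $|g_n(y)|\lesssim\eps^2/(\sigma^4 B(y))$ and taking the sup over $y$ then yields only
\[
\sup_y \lambda_n(y)|\alpha_n^1(y)| \;\lesssim\; \frac{\eps^2}{\sigma^4\min_y B(y)}\,,
\]
which is strictly weaker than the pointwise claim~\eqref{eq:sup_alpha1} since $B(y)$ can be exponentially small at isolated $y$. The paper avoids this loss by proving invariance of a $y$-\emph{dependent} box
\[
H=\Bigl\{(\alpha_\perp^1,y):\ |\alpha_m^1|\leqs \tfrac{\eps^2 C_0}{\sigma^4 B(y)\lambda_m(y)}\ \forall m\geqs2\Bigr\},
\]
checking that on $\partial H$ the vector field points inward. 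This works because the dominant restoring term $-\lambda_n\alpha_n^1$ is of size $C_0\eps^2/(\sigma^4 B)$, which after division by $\eps$ beats both the source terms and the drift of the moving boundary $(C/\lambda_n)'=\Order{C/(\sigma^2\lambda_n)}$ (using $\eps\ll\sigma^2$). Your bootstrap can be repaired in the same spirit: either adopt the invariant-region argument, or use the variation-of-constants formula and observe that the kernel $e^{-\Lambda_n(y,\bar y)/\eps}$ localises to $|y-\bar y|\lesssim\eps/\lambda_2$, an interval over which $B(\bar y)/B(y)=\exp\{\Order{\eps/\sigma^2}\}$ stays bounded. A second, minor point: $\lambda_n'/\lambda_n$ is $\Order{\sigma^{-2}}$, not $\Order{1}$ (Hellmann--Feynman gives $\sigma^2\lambda_n'=\Order{1}$); this does not change your final estimate for $\eps(\alpha_n^*)'$, but the bookkeeping should reflect it.
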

\begin{proof}
The bound~\eqref{eq:sup_alphastar} is a direct consequence of the 
bound~\eqref{eq:sum_fni} on the sum of $f_{ni}^2$. 
In order to establish~\eqref{eq:sup_alpha1}, we first note that the 
$\alpha_n^1$ satisfy the equation
\begin{equation}
\label{eq:alpha1prime} 
 \eps(\alpha_n^1)' 
 = -\lambda_n(y) \alpha_n^1 
 - \frac{\eps}{\sigma^2} \sum_{m\geqs2} f_{nm}(y) \bigpar{\alpha_m^*(y) + 
\alpha_m^1} - \eps\alpha_n^*(y)'\;.
\end{equation} 
We will show that the set 
\begin{equation}
 H = \Bigsetsuch{(\alpha_\perp^1,y)}{\bigabs{\alpha_m^1} \leqs 
\frac{\eps^2C_0}{\sigma^4B(y)\lambda_m(y)} \; \forall m\geqs2}
\end{equation} 
is invariant under the flow of~\eqref{eq:alpha1prime} for sufficiently large 
$C_0$. Assume $\alpha_\perp^1$ belongs to $\partial H$, and pick $n$ such that 
$\alpha_n^1 = \pm (\eps^2C_0)/(\sigma^4\lambda_nB)$. The Cauchy--Schwarz 
inequality yields 
\begin{equation}
 \biggpar{\sum_{m\geqs2} f_{nm}(\alpha_m^* + \alpha_m^1)}^2 
 \leqs \sum_{m\geqs2} f_{nm}^2 \sum_{m\geqs2} (\alpha_m^* + \alpha_m^1)^2
 \leqs \frac{\eps^2C_1}{B^2\sigma^4} \biggpar{1 + \frac{\eps^2C_0^2}{\sigma^4}}
 \;,
\end{equation}
for a constant $C_1$, where we have used~\eqref{eq:sum_fnm2} to bound the first 
sum, and~\eqref{eq:sup_alphastar} and the definition of $H$ to bound the second 
one. 
The derivative of $\alpha_n^*(y)$ can be bounded using the relations 
\begin{equation}
 \sigma^2 f_{ni}'(y) = g_{ni}(y) + k_{ni}(y)\;, \qquad 
 \sigma^2 \biggpar{\frac{A-\delta_1}{B}}' 
 = \frac{\Deltabar'(1-A\delta_1)-\sigma^2\delta_1'}{B}
 = \bigOrder{B^{-1}}
\end{equation} 
and the Hellmann--Feynman theorem (cf.~\eqref{eq:Hellmann-Feynman}), which 
shows that $\sigma^2\lambda_n'(y)$ has order $1$. The result is that 
\begin{equation}
 \bigabs{(\alpha_n^*)'(y)} \leqs \frac{\eps C_2}{\sigma^4B(y)\lambda_n(y)}
\end{equation} 
for a constant $C_2$. Plugging these bounds into~\eqref{eq:alpha1prime} shows 
that for $C_0$ large enough, the sign of $\eps(\alpha_n^1)'$ is the opposite of 
the sign of $\alpha_n^1$. This shows the invariance of $H$, and therefore the 
bound~\eqref{eq:sup_alpha1}.  
\end{proof}

\begin{cor}
\label{cor:w1tilde} 
The error term $\tilde w_1(y)$ introduced in~\eqref{eq:def_w1tilde} satisfies 
\begin{equation}
\label{eq:w1tilde} 
 \bigabs{\tilde w_1(y)} 
 \lesssim \frac{\eps}{\sigma^2} \lambda_1(y) \ell(\sigma)^3
 + \frac{\eps^2}{\sigma^4} \sqrt{\lambda_1(y) \ell(\sigma)^3}
\end{equation} 
uniformly in $y\in[0,1]$. 
\end{cor}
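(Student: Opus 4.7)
The plan is to split $\tilde w_1(y)=B\sum_{m\geqs 2}f_{1m}\alpha_m$ into two pieces using the decomposition $\alpha_m=\alpha_m^*+\alpha_m^1$ from Proposition~\ref{prop:alphastar_alpha1}, and bound each piece separately. The first piece
\begin{equation}
B(y)\sum_{m\geqs 2}f_{1m}(y)\alpha_m^*(y)
=-\frac{\eps}{\sigma^2}B(y)\sum_{m\geqs 2}\frac{f_{1m}(y)f_{m0}(y)}{\lambda_m(y)}
-\frac{\eps}{\sigma^2}\bigpar{A(y)-\delta_1(y)}\sum_{m\geqs 2}\frac{f_{1m}(y)f_{m1}(y)}{\lambda_m(y)}
\end{equation}
is handled directly by Proposition~\ref{prop:f1mm1lambdam}, which tells us that the two series on the right are each of order $\lambda_1(y)\ell(\sigma)^a$ with $a\leqs 3$. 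Since $B\leqs 1$ and $A-\delta_1=\Order{1}$, this contribution is bounded by $(\eps/\sigma^2)\lambda_1(y)\ell(\sigma)^3$, which gives the first term of the claimed bound.

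For the second piece, I will use Cauchy--Schwarz:
\begin{equation}
\biggabs{B(y)\sum_{m\geqs 2}f_{1m}(y)\alpha_m^1(y)}^2
\leqs B(y)^2\biggpar{\sum_{m\geqs 2}f_{1m}(y)^2}\biggpar{\sum_{m\geqs 2}\alpha_m^1(y)^2}\;.
\end{equation}
By Proposition~\ref{prop:f1mm1} the first sum is $\Order{\lambda_1(y)\ell(\sigma)^3}$. For the second, I invoke the uniform bound \eqref{eq:sup_alpha1}, which gives $\abs{\alpha_m^1(y)}\lesssim \eps^2/(\sigma^4 B(y)\lambda_m(y))$, and write
\begin{equation}
\sum_{m\geqs 2}\alpha_m^1(y)^2
\leqs \sup_{m\geqs 2}\bigpar{\lambda_m(y)^2\alpha_m^1(y)^2}\sum_{m\geqs 2}\frac{1}{\lambda_m(y)^2}
\lesssim \frac{\eps^4}{\sigma^8 B(y)^2}\;,
\end{equation}
using the spectral asymptotics for $\sL_x$ (standard Weyl-type bounds for the 1D generator with confining potential imply $\lambda_m(y)\gtrsim m^2$, uniformly in $y$, so $\sum 1/\lambda_m^2$ converges). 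Combining the two factors produces a bound of order $(\eps^2/\sigma^4)\sqrt{\lambda_1(y)\ell(\sigma)^3}$, yielding the second term of the claim.

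The main point requiring care is the tracking of the exponents of $\ell(\sigma)$ across Propositions~\ref{prop:f1mm1} and~\ref{prop:f1mm1lambdam}, so that the worst-case value $a=3$ is attained where stated. Once this bookkeeping is carried out, adding the two contributions gives
\begin{equation}
\bigabs{\tilde w_1(y)}\lesssim \frac{\eps}{\sigma^2}\lambda_1(y)\ell(\sigma)^3+\frac{\eps^2}{\sigma^4}\sqrt{\lambda_1(y)\ell(\sigma)^3}\;,
\end{equation}
uniformly in $y\in[0,1]$, as claimed. No delicate cancellation is needed beyond what is already packaged in Propositions~\ref{prop:f1mm1} and~\ref{prop:f1mm1lambdam}; the corollary is essentially a bookkeeping exercise combining those estimates with the $\sup$-bounds of Proposition~\ref{prop:alphastar_alpha1}.
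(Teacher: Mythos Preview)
Your proof is correct and follows essentially the same approach as the paper's own proof: split via $\alpha_m=\alpha_m^*+\alpha_m^1$, handle the $\alpha_m^*$ contribution with Proposition~\ref{prop:f1mm1lambdam}, and handle the $\alpha_m^1$ contribution by Cauchy--Schwarz combined with Proposition~\ref{prop:f1mm1} and the sup-bound~\eqref{eq:sup_alpha1}. You have simply been more explicit about the substitution of~\eqref{eq:def_alphaperp} and about the convergence of $\sum_{m\geqs2}\lambda_m^{-2}$ via Weyl asymptotics, which the paper leaves implicit.
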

\begin{proof}
This follows directly from the 
decomposition~\eqref{eq:alphan_alphastar_alpha1}. 
Indeed, the contribution of the $\alpha_m^*$ can be bounded via 
Proposition~\ref{prop:f1mm1lambdam}, and yields the first term on the 
right-hand 
side. The contribution of the $\alpha_m^1$ can be bounded via the 
Cauchy--Schwarz inequality, using Proposition~\ref{prop:f1mm1} 
and~\eqref{eq:sup_alpha1}. 
\end{proof}

One consequence of this result that will be useful when estimating capacities 
is the following. Integrating the ODE~\eqref{eq:delta1} satisfied by 
$\delta_1(y)$ over one period, and using~\eqref{eq:p1w1} 
and~\eqref{eq:w1tilde}, we obtain
\begin{equation}
\label{eq:avrg_ODE} 
 \bigabs{\avrg{\lambda_1[\delta_1-A]}}
 \lesssim \frac{\eps}{\sigma^2} \avrg{\lambda_1}\ell^2 
 + \frac{\eps^2}{\sigma^4} \avrg{\lambda_1}\ell^3 
 + \frac{\eps^3}{\sigma^6} \avrg{\sqrt{\lambda_1}}\ell^{3/2}\;.
\end{equation}
Proposition~\ref{prop:alphastar_alpha1} also allows us to control the remainder 
$\Phi_\perp$ of the invariant measure. For $\sharp \in \set{\;,*,1}$, let us 
write 
\begin{equation}
 \Phi_\perp^{\sharp}(x,y) 
 = \sum_{n\geqs 2} \alpha_n^{\sharp}(y) \phi_n(x\vert y)\;.
\end{equation} 

\begin{cor}
\label{cor:bound_Phiperp} 
Let $\cD = (x^*_-(y),x^*_+(y))$. The $L^2$-bounds 
\begin{subequations}
\begin{align}
\label{eq:Phiperp_L2} 
 \pscal{\pi_0}{(\Phi_\perp^*)^2}^{1/2}
 &\lesssim \frac{\eps}{\sigma^2B(y)}\;, 
 &\pscal{\pi_0}{(\Phi_\perp^1)^2}^{1/2}
 &\lesssim \frac{\eps^2}{\sigma^4B(y)}\;, \\
 \pscal{\pi_0}{(\partial_x\Phi_\perp)^2\indicator{\cD}}^{1/2}
 &\lesssim \frac{\eps}{\sigma^4B(y)}\;, 
 &\pscal{\pi_0}{(\partial_x\Phi_\perp^1)^2\indicator{\cD}}^{1/2}
 &\lesssim \frac{\eps^2}{\sigma^6B(y)}
\label{eq:dxPhiperp_L2} 
\end{align}
\end{subequations}
hold for all $y\in[0,1]$. 
Furthermore, the bounds 
\begin{equation}
\label{eq:Phiperp_sup} 
 \bigabs{\Phi_\perp^*(x,y)} 
 \lesssim 
\frac{\eps\e^{V_0(x,y)/\sigma^2}}{\sigma^{3/2}B(y)^2\sqrt{\lambda_1(y)}}\;, 
\qquad 
 \bigabs{\Phi_\perp^1(x,y)} 
 \lesssim 
\frac{\eps^2\e^{V_0(x,y)/\sigma^2}}{\sigma^{7/2}B(y)^2\sqrt{\lambda_1(y)}}
\end{equation} 
hold for all $x\in\R$ and all $y\in[0,1]$.
\end{cor}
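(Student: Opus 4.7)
\medskip

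\textbf{Proof plan.} The starting point is that for each fixed $y$, the family $\{\phi_n(\cdot\vert y)\}_{n\geqs0}$ is orthonormal in $L^2(\R,\pi_0\,\6x)$, since $\pscal{\pi_m}{\phi_n} = \int \pi_0 \phi_m \phi_n \6x = \delta_{nm}$. Thus Parseval gives
\begin{equation}
 \pscal{\pi_0}{(\Phi_\perp^\sharp)^2} = \sum_{n\geqs 2} \bigpar{\alpha_n^\sharp(y)}^2\;,
\end{equation}
so the two bounds in~\eqref{eq:Phiperp_L2} reduce to $\ell^2$-estimates on the sequences $\set{\alpha_n^\sharp(y)}_{n\geqs2}$. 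For $\Phi_\perp^*$, I would plug in the explicit definition~\eqref{eq:def_alphaperp}, use the uniform spectral gap $\lambda_n(y) \geqs c > 0$ for $n\geqs2$ (which makes the factor $1/\lambda_n^2$ bounded), the rough bound $\abs{A-\delta_1}\leqs 2$, and invoke Proposition~\ref{prop:sum_fnm} to bound $\sum_{n\geqs2} f_{n0}^2$ and $\sum_{n\geqs2} f_{n1}^2$ by $M_0$. The only $y$-dependent blow-up is the factor $B(y)^{-2}$ arising from $(A-\delta_1)/B$, yielding the claimed $\eps/(\sigma^2 B)$ bound.

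For $\Phi_\perp^1$, I would combine the sup estimate $\abs{\alpha_n^1(y)}\leqs C\eps^2/(\sigma^4 B(y) \lambda_n(y))$ from~\eqref{eq:sup_alpha1} with the fact that $\sum_{n\geqs 2}\lambda_n(y)^{-2}<\infty$; this summability holds because the conjugated Schr\"odinger operator associated with $\sL_x$ has confining potential, so $\lambda_n \gtrsim n$ by standard Weyl asymptotics. This gives $\sum(\alpha_n^1)^2 \lesssim \eps^4/(\sigma^8 B^2)$, as required. The derivative estimates~\eqref{eq:dxPhiperp_L2} rest on the key identity
\begin{equation}
 \pscal{\pi_0}{\partial_x\phi_n\,\partial_x\phi_m} = \frac{2\lambda_n(y)}{\sigma^2}\,\delta_{nm}\;,
\end{equation}
obtained by multiplying the eigenvalue equation $\sL_x\phi_n = -\lambda_n\phi_n$ by $\phi_m\pi_0$ and integrating by parts (the boundary terms vanish because of the Gaussian tail of $\pi_0$). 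Parseval then yields $\pscal{\pi_0}{(\partial_x\Phi_\perp^\sharp)^2} = \frac{2}{\sigma^2}\sum_{n\geqs 2}\lambda_n(\alpha_n^\sharp)^2$. Since $\indicator{\cD}\leqs 1$, the restricted $L^2$-norms are dominated by the full ones; substituting the sup estimates on $\alpha_n^\sharp$ and using $\sum 1/\lambda_n^a < \infty$ for suitable $a$ (an extra factor $\sigma^{-1}$ appears because $\lambda_n\gtrsim c\sigma^2 n$ in the harmonic approximation near the wells) gives the claimed $\eps/(\sigma^4 B)$ and $\eps^2/(\sigma^6 B)$ bounds.

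The sup estimates~\eqref{eq:Phiperp_sup} are the main obstacle, because a naive pointwise expansion $\abs{\Phi_\perp^*(x,y)}\leqs\sum\abs{\alpha_n^*}\,\abs{\phi_n(x\vert y)}$ is useless: the eigenfunctions $\phi_n$ are only $L^2(\pi_0)$-normalised, so individual sup norms can be large. My plan is to exploit the defining relation $\sL_x\Phi_\perp^* = -\sL_y^\dagger[\pi_0(1+\alpha_1\phi_1)]/\pi_0 + \dots$, i.e.\ that $\Phi_\perp^*$ solves a one-dimensional elliptic equation (in $x$, for fixed $y$) in the orthogonal complement of $\vspan\{\phi_0,\phi_1\}$, with right-hand side controlled in $L^2(\pi_0)$ by the previous step. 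I would then use the Green's function of this restricted operator, which in 1D can be written explicitly in terms of $\e^{\pm 2V_0/\sigma^2}$, to convert the $L^2$ control on the right-hand side into the pointwise weighted bound with factor $\e^{V_0(x,y)/\sigma^2}$. Equivalently, after conjugation $\psi \mapsto \psi \pi_0^{1/2}$, $\sL_x$ becomes a symmetric Schr\"odinger operator, and standard pointwise resolvent/heat-kernel estimates in the symmetric frame translate back to the claimed weighted sup bound once we unravel the conjugation. The factors $1/\sqrt{\lambda_1}$ and $1/B^2$ track the rescaling of the source induced by the factor $(A-\delta_1)/B$ and the lowest eigenvalue appearing in the Green's function on the complement of $\phi_0,\phi_1$; the treatment of $\Phi_\perp^1$ is identical, only with the improved bound~\eqref{eq:sup_alpha1} in place of~\eqref{eq:sup_alphastar}.
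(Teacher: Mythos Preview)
Your treatment of the $L^2$ bounds in~\eqref{eq:Phiperp_L2} is correct and essentially identical to the paper's: both use Parseval to reduce to $\ell^2$ estimates on the coefficients $\alpha_n^\sharp$, then invoke Proposition~\ref{prop:sum_fnm} and the sup bounds~\eqref{eq:sup_alphastar}--\eqref{eq:sup_alpha1}. Your identity $\pscal{\pi_0}{\partial_x\phi_n\,\partial_x\phi_m} = \tfrac{2\lambda_n}{\sigma^2}\delta_{nm}$ for the derivative bounds is in fact correct (it is the Dirichlet-form computation underlying Lemma~\ref{lem_dxPhi}) and gives a slightly cleaner route than the paper's explicit Schr\"odinger-picture decomposition. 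One small slip: the eigenvalues satisfy $\lambda_n\asymp cn$ with $c$ of order $1$, not $c\sigma^2$; your bound on $\tfrac{2}{\sigma^2}\sum\lambda_n(\alpha_n^*)^2$ therefore comes out as $\eps^2/(\sigma^6 B^2)$, which is actually \emph{stronger} than the stated $\eps^2/(\sigma^8 B^2)$, so no harm done.

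The real gap is in the sup bounds~\eqref{eq:Phiperp_sup}. Your Green's-function/resolvent plan is vague, and your explanation of the factor $1/\sqrt{\lambda_1}$ is wrong: the Green's function of $\sL_x$ restricted to $\vspan\{\phi_0,\phi_1\}^\perp$ has operator norm $\sim 1/\lambda_2 = \Order{1}$, so $\lambda_1$ cannot enter that way. The paper's argument is much simpler and you should adopt it. Using the conjugation $\phi_n = \psi_n/\sqrt{\pi_0}$ with $\psi_n$ the Schr\"odinger eigenfunctions~\eqref{eq:psin}, and the classical fact that one-dimensional Schr\"odinger eigenfunctions with confining potential are uniformly bounded in sup norm, one gets directly
\[
 \bigabs{\Phi_\perp^\sharp(x,y)}
 = \frac{1}{\sqrt{\pi_0(x\vert y)}}\,
   \biggabs{\sum_{n\geqs 2}\alpha_n^\sharp(y)\,\psi_n(x\vert y)}
 \lesssim \sqrt{Z_0(y)}\,\e^{V_0(x,y)/\sigma^2}\,\norm{\alpha_\perp^\sharp}_{\ell^1}\;.
\]
The factor $\e^{V_0/\sigma^2}$ and the troublesome $1/\sqrt{\lambda_1}$ both come from $\pi_0^{-1/2}=\sqrt{Z_0}\,\e^{V_0/\sigma^2}$ together with the asymptotics $Z_0\asymp \sigma/(B^2\lambda_1)$ implied by~\eqref{eq:Z0Nlambda1} and~\eqref{eq:N_Laplace}. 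The $\ell^1$ norm is then controlled from the $\ell^2$ bounds via Cauchy--Schwarz, using $\sum_{n\geqs2}\lambda_n^{-2}<\infty$; this yields $\norm{\alpha_\perp^*}_{\ell^1}\lesssim \eps/(\sigma^2 B)$ and $\norm{\alpha_\perp^1}_{\ell^1}\lesssim \eps^2/(\sigma^4 B)$, and the stated bounds follow.
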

\begin{proof}
The first two $L^2$-bounds follow directly from the fact that 
\begin{equation}
 \pscal{\pi_0}{(\Phi_\perp^*)^2}
 = \norm{\alpha_\perp^*}_{\ell^2}^2
 = \sum_{n\geqs2} (\alpha_m^*)^2\;,
\end{equation} 
while the $L^2$-bound on the derivative is a consequence of 
Lemma~\ref{lem_dxPhi}.  As for the $L^\infty$-bounds~\eqref{eq:Phiperp_sup}, 
they follow from the fact that $\sL_x$ is conjugated to a Schr\"odinger 
operator (cf.~\eqref{eq:Schrodinger} in Appendix~\ref{app:proof_phin}), whose 
eigenfunctions $\psi_n$ are bounded by a constant of order $1$, so that 
\begin{equation}
 \bigabs{\Phi_\perp^*(x,y)}
 = \frac{1}{\sqrt{\pi_0(x\vert y)}}
 \biggabs{\sum_{n\geqs 2} \alpha_n^*(y) \psi_n(x\vert y)}
 \lesssim \frac{1}{\sqrt{\pi_0(x\vert y)}}
 \norm{\alpha_\perp^*}_{\ell^1}\;.
\end{equation} 
The $\ell^1$-norm of $\alpha_\perp^*$ can be bounded using the previous 
proposition. 
\end{proof}

Part of the importance of $\Phi_\perp^*$ lies in the following estimate, which 
shows that functions bounded by $h_0(1-h_0)$ are almost orthogonal to 
$\Phi_\perp^*$, and thus allows to improve a certain number of error bounds 
when 
estimating the capacity. Its proof is close in spirit to the proof of 
Proposition~\ref{prop:sum_fnm}, so we also give it in 
Appendix~\ref{app:proof_phin}.

\begin{prop}
\label{prop:Phistar}
Let $f$ be supported on $\cD=(x^*_-(y),x^*_+(y))$, and satisfy either one of 
the bounds 
\begin{equation}
 \bigabs{f(x)} \leqs M h_0(x\vert y) \bigpar{1 - h_0(x\vert y)}
 \qquad \text{or} \qquad 
  \bigabs{f(x)} \leqs M \e^{2V_0(x,y)/\sigma^2}
\end{equation} 
for all $x\in \cD$, and for some constant $M > 0$. Then 
\begin{equation}
 \bigabs{\pscal{\pi_0}{\Phi_\perp^* f}} \lesssim \frac{\eps}{\sigma^2} 
\lambda_1(y)\ell(\sigma) M\;.
\end{equation} 
\end{prop}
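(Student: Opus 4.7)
The plan is to represent $\pscal{\pi_0}{\Phi_\perp^* f}$ via the resolvent of $\sL_x$ acting on the orthogonal complement of $\mathrm{span}\set{\phi_0,\phi_1}$ in $L^2(\pi_0\6x)$, and to exploit cancellations arising from the near-orthogonality of the relevant test functions to that span.

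First I would plug in the formula~\eqref{eq:def_alphaperp} for $\alpha_n^*$ and write
\begin{equation*}
 \pscal{\pi_0}{\Phi_\perp^* f}
 = -\frac{\eps}{\sigma^2} \sum_{n\geqs 2}
 \frac{f_{n0}(y) + \beta(y) f_{n1}(y)}{\lambda_n(y)}\, \pscal{\pi_n}{f}\;,
 \qquad \beta(y) = \frac{A(y)-\delta_1(y)}{B(y)}\;.
\end{equation*}
Defining $\hat\psi(\cdot\vert y)$ as the unique function orthogonal to $\phi_0$ and $\phi_1$ in $L^2(\pi_0\6x)$ solving the Poisson-type equation $-\sL_x \hat\psi = f - \pscal{\pi_0}{f} - \pscal{\pi_1}{f}\,\phi_1$, a Parseval identity combined with $f_{ni} = \sigma^2\pscal{\partial_y\pi_i}{\phi_n}$ yields $\sum_{n\geqs2}\lambda_n^{-1}f_{ni}\pscal{\pi_n}{f} = \sigma^2\pscal{\partial_y\pi_i}{\hat\psi}$. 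Differentiating the identities $\pscal{\pi_i}{\hat\psi}=0$ in $y$ (which hold for all $y$) gives $\pscal{\partial_y\pi_i}{\hat\psi} = -\pscal{\pi_i}{\partial_y\hat\psi}$, and therefore the compact representation
\begin{equation*}
 \pscal{\pi_0}{\Phi_\perp^* f}
 = \eps\,\pscal{1+\beta(y)\phi_1}{\partial_y\hat\psi}_{\pi_0}\;,
\end{equation*}
so the task reduces to estimating the right-hand side by $\lambda_1\ell M/\sigma^2$.

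The next step is to decompose $\partial_y\hat\psi$ using the $y$-differentiated Poisson equation $-\sL_x(\partial_y\hat\psi) = \partial_y(f - P_{<2}f) - (\partial_y\sL_x)\hat\psi$, and to expand the right-hand side on the eigenbasis $(\phi_n)_{n\geqs0}$. The $\phi_0$ and $\phi_1$ components are handled using the representation~\eqref{eq:proj_dyphi1} of $\sigma^2\partial_y\phi_1$ and the fact that $(1+\beta\phi_1)\pi_0$ lies in the span of $\pi_0$ and $\pi_1$: these bulk contributions cancel against the orthogonality relations $\pscal{\pi_i}{\hat\psi}=0$. The surviving terms involve the localized remainder $R_1$ from~\eqref{eq:proj_dyphi1}, which satisfies the bounds~\eqref{eq:boundsR}, and are then estimated using the hypothesis on $f$. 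In the first case $|f|\leqs Mh_0(1-h_0)$, the identity $\sL_x\bigbrak{h_0(1-h_0)} = -\sigma^2(\partial_xh_0)^2$ (valid in the interior of $\cD$) shows that $\hat\psi$ is, up to the $\phi_0,\phi_1$ projection, of size $\Order{Mh_0(1-h_0)}$, and pairing with $R_1$ together with Proposition~\ref{prop:h0} produces the factor $\eta\lesssim\lambda_1\ell B^2$. In the second case $|f|\leqs Me^{2V_0/\sigma^2}$, the bound $\pi_0f\leqs M/Z_0$ together with explicit integration of the Poisson ODE by variation of constants produces the same type of localized contribution.

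The main obstacle is the bookkeeping in the middle step. A direct Cauchy--Schwarz using the $L^2$-bounds of Corollary~\ref{cor:bound_Phiperp} on $\Phi_\perp^*$ and $\|f\|_{L^2(\pi_0)}\lesssim MB\sqrt{\eta}$ yields only $\Order{\eps M\sqrt{\lambda_1\ell}/\sigma^2}$, which is weaker than the claim by a factor $\sqrt{\lambda_1\ell}$. The gain must therefore come from systematically using both orthogonality relations $\pscal{\pi_0}{\hat\psi}=\pscal{\pi_1}{\hat\psi}=0$ to eliminate the non-exponentially-small contributions of $\partial_y\log\pi_0$ and of the span-$\set{\phi_0,\phi_1}$ part of $\partial_y\phi_1$, leaving only the localized remainders near $x^*_0(y)$ whose $\pi_0$-measure is controlled by $\eta$. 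This cancellation mechanism is analogous in spirit to the one exploited in the proof of Proposition~\ref{prop:sum_fnm}.
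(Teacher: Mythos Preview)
Your starting point is exactly the paper's: after inserting~\eqref{eq:def_alphaperp} and summing, one obtains
\[
 \pscal{\pi_0}{\Phi_\perp^* f}
 = -\eps\,\bigpscal{\partial_y\pi_0 + \alpha_1\partial_y\pi_1}{\sL_\perp^{-1}f}\;,
\]
which is precisely what the paper writes. Your reformulation via $\partial_y\hat\psi$, however, is a detour that loops back on itself. The identity $\pscal{\pi_i}{\partial_y\hat\psi}=-\pscal{\partial_y\pi_i}{\hat\psi}$ just rewrites the displayed quantity; and the $y$-differentiated Poisson equation $-\sL_x(\partial_y\hat\psi)=\partial_y(\Pi_\perp f)-(\partial_y\sL_x)\hat\psi$ determines the $\phi_n$-components of $\partial_y\hat\psi$ only for $n\geqs 2$ (for $n=0$ one gets a constraint, for $n=1$ one gets $\lambda_1$ times the unknown). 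The $\phi_0$- and $\phi_1$-components you need are fixed \emph{only} by the orthogonality relations --- i.e.\ by the very inner products $\pscal{\partial_y\pi_i}{\hat\psi}$ you are trying to bound. So the differentiated Poisson equation gives no new information on the target quantity.

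What actually produces the gain is what you sketch in your last paragraph, applied directly to $\pscal{\partial_y\pi_0+\alpha_1\partial_y\pi_1}{\hat\psi}$: write $\partial_y\pi_0=\tfrac{2}{\sigma^2}W\pi_0$ and $\partial_y\pi_1=\tfrac{2}{\sigma^2}W\pi_1+\pi_0\partial_y\phi_1$, insert~\eqref{eq:proj_dyphi1}, and observe that the $(A\phi_1+B)\Deltabar'$ piece pairs against a linear combination of $\pi_0,\pi_1$, hence vanishes by $\pscal{\pi_i}{\hat\psi}=0$. The paper does exactly this, but without the $\partial_y\hat\psi$ detour: it invokes the explicit representation of $\sL_\perp^{-1}$ given in Lemma~\ref{lem:Lperp},
\[
 \sL_\perp^{-1}f = f_-\,h_0 + f_+(1-h_0) + \sG_0\Pi_\perp f\;,
\]
with $f_\pm$ computable from $\pscal{\pi_i}{\sG_0\Pi_\perp f}$, and then estimates each piece against $\partial_y\pi_0+\alpha_1\partial_y\pi_1$ following the template of the $S_1$ computation in the proof of Proposition~\ref{prop:f1mm1lambdam}. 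Your heuristic ``$\hat\psi$ is of size $\Order{Mh_0(1-h_0)}$'' captures only the $\sG_0\Pi_\perp f$ part; the boundary corrections $f_\pm$ require separate control (they are not small, but their pairing with $\partial_y\pi_i$ is, via the same mechanism as in the $S_1$ argument). This is the missing ingredient in your outline.
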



\subsubsection{The second-order case}
\label{sssec:invariant_order2} 

Consider now the case $\varrho^2 > 0$. We again carry out the change of 
variables~\eqref{eq:def_alpha1}, and in addition set 
\begin{equation}
 \beta_1(y) = \frac{A(y) - \delta_1(y)-\hat\gamma_1(y)}{B(y)}\;, 
 \qquad 
 \hat\gamma_1(y) = \frac{\sigma}{\sqrt{\eps}}\gamma_1(y)
 + \frac{\varrho^2}{2} \Deltabar'(y) \bigbrak{1-A(y)\delta_1(y)}\;.
\end{equation} 
The resulting system for $(\delta_1,\gamma_1)$ is given by 
\begin{align}
\label{eq:delta1_gamma1} 
 \sqrt{\eps}\sigma \delta_1'
 &= -\frac{2}{\varrho^2} \gamma_1 \\
 \sqrt{\eps}\sigma \gamma_1'
 &= \Bigbrak{-\lambda_1(y) + \frac{\eps}{\sigma^2}p_1(y)} 
 \bigpar{\delta_1-A(y)} 
 + \frac{\varrho^2}{2}\frac{\sqrt{\eps}}{\sigma} q_1(y)\gamma_1 
 + \frac{\eps}{\sigma^2} \bigbrak{w_1(y) + \tilde w_1(y)}\;,
\end{align}
where 
\begin{align}
p_1(y) ={}& -f_{11}(y) + \frac{\varrho^2}{2}g_{11}(y) 
- \Deltabar'(y)A(y) \\
&{}+ \frac{\varrho^2}{2} \Bigbrak{\Deltabar'(y)^2 + 2 
\Deltabar'(y)A(y)f_{11}(y) + \sigma^2 \Deltabar''(y)A(y)}\;, \\
q_1(y) ={}& 1 - \varrho^2 \Bigbrak{f_{11}(y) + \Deltabar'(y)A(y)}\;, \\
w_1(y) ={}& B(y)^2 \Bigbrak{\Deltabar'(y)(1-\varrho^2 f_{11}(y)) - 
\frac{\varrho^2\sigma^2}{2}\Deltabar''(y)}
+ B(y)\Bigbrak{f_{10}(y) - \frac{\varrho^2}{2}g_{10}(y)}\;,
\end{align}
and the contribution of the other variables is contained in the term 
\begin{equation}
 \tilde w_1(y) 
 = B(y) \sum_{m\geqs2} \Bigbrak{f_{1m}(y) (2\beta_m(y) - \alpha_m(y)) 
 - \frac{\varrho^2}{2}g_{1m}(y) \alpha_m(y)}\;.
\end{equation} 
It follows from Proposition~\ref{prop:f10} that 
\begin{align}
p_1(y) &= \Order{\lambda_1(y)\ell(\sigma)^3}\;, \\
q_1(y) &= 1 + \Order{\lambda_1(y)\ell(\sigma)^2}\;, \\
w_1(y) &= \Order{\lambda_1(y)\ell(\sigma)^3B(y)^2}\;.
\end{align}
It is straightforward to check that the system~\eqref{eq:delta1_gamma1} is 
equivalent to the second-order equation 
\begin{equation}
 \frac{\varrho^2}{2} \eps\sigma^2\delta_1''
 - \eps q_1(y)\delta_1' 
 + \Bigbrak{-\lambda_1(y) + \frac{\eps}{\sigma^2}p_1} 
\bigpar{\delta_1-A(y)} + \frac{\eps}{\sigma^2}\bigbrak{w_1(y) + \tilde w_1(y)}
= 0\;. 
\end{equation} 
By a standard argument of singular perturbation theory (see for 
instance~\cite[Example~2.1.3]{Berglund_Gentz_book}), the solutions of this 
equation are close, up to multiplicative errors $1+\Order{\varrho^4\sigma^4}$, 
to those of the first-order equation~\eqref{eq:delta1}. 

In order to analyse the behaviour of the remaining coefficients $\alpha_n(y)$ 
and $\beta_n(y)$ with $n\geqs2$, we introduce, analogously 
to~\eqref{eq:def_alphaperp}, 
\begin{equation}
 \alpha^*_n(y)
 = \frac{\eps}{\sigma^2} \frac{1}{\lambda_n(y)} 
 \Bigbrak{c_{n0}(y) + c_{n1}(y)\alpha_1(y) + d_{n1}(y)\beta_1(y)}\;.
\end{equation} 

\begin{figure}
\begin{center}
\begin{tikzpicture}[>=stealth',point/.style={circle,inner 
sep=0.035cm,fill=white,draw},x=1.5cm,y=1.5cm, 
declare function={u(\x,\y) = 0.08*(\x - \y + 0.2);
v(\x,\y) = 0.2*(-2*\x + 0.1);}]

\draw[thick,blue,fill=blue!20] (-0.7,-1.5) rectangle (0.7,1.5);
\draw[thin,dashed,blue] (-1.5,-1.5) -- (1.5,1.5);

\draw[->,semithick] (-2,0) -> (2.0,0);
\draw[->,semithick] (0,-2.0) -> (0,2.2);

\foreach \x in {-0.7,-0.6,...,0.7} 
  \draw[->,semithick,purple] ({\x}, 1.5) 
  -- ({\x + u(\x, 1.5)}, {1.5 + v(\x,1.5)});

\foreach \x in {-0.7,-0.6,...,0.7} 
  \draw[->,semithick,purple] ({\x}, -1.5) 
  -- ({\x + u(\x, -1.5)}, {-1.5 + v(\x,-1.5)});

\foreach \y in {-1.5,-1.3,...,1.5} 
  \draw[->,semithick,purple] (0.7, {\y}) 
  -- ({0.7 + u(0.7, {\y})}, {\y + v(0.7, {\y})});

\foreach \y in {-1.5,-1.3,...,1.5} 
  \draw[->,semithick,purple] (-0.7, {\y}) 
  -- ({-0.7 + u(-0.7, {\y})}, {\y + v(-0.7, {\y})});

\node[] at (1.7,0.2) {$\alpha_n^1$};
\node[] at (0.2,1.95) {$\beta_n^1$};
\node[blue] at (-0.5,1.3) {$H$};

\end{tikzpicture}
\vspace{-5mm}
\end{center}
\caption[]{Vector field~\eqref{eq:vector_field_alpha_beta} on the boundary of 
the set $H$, shown for a fixed component $n$ and fixed $y$. The broken line 
shows the approximate location of the points where $(\alpha_n^1)'$ changes sign.
}
\label{fig:alpha_beta}
\end{figure}
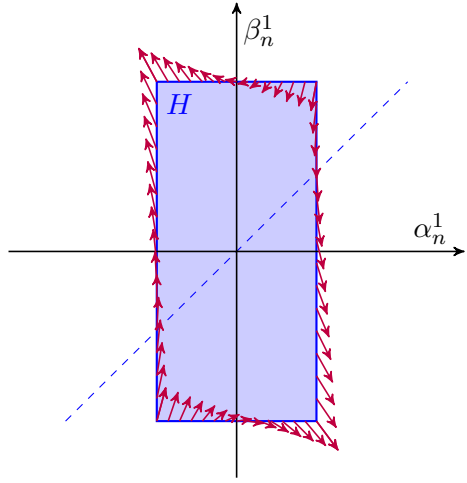

\begin{prop}
The unique periodic solution of the system~\eqref{eq:alphan_betan} satisfies 
\begin{align}
 \alpha_n(y) &= \alpha_n^*(y) + \alpha_n^1(y)\;, \\
 \beta_n(y) &= \alpha_n^*(y) + \beta_n^1(y)\;,
\end{align} 
where 
\begin{subequations}
\begin{align}
\label{eq:sup_alphabetastar} 
 \sup_{n\geqs2} \lambda_n(y) \bigabs{\alpha_n^*(y)} 
 \lesssim \frac{\eps}{\sigma^2 B(y)}
 &&\forall y\in[0,1]\;, \\
\label{eq:sup_alpha1b} 
  \sup_{n\geqs2} \lambda_n(y) \bigabs{\alpha_n^1(y)} 
 \lesssim \frac{\eps^2}{\sigma^4 B(y)}
 &&\forall y\in[0,1]\;, \\
  \sup_{n\geqs2} \lambda_n(y) \bigabs{\beta_n^1(y)} 
 \lesssim \frac{\eps}{\sigma^2 B(y)}
 &&\forall y\in[0,1]\;.
\label{eq:sup_beta1b} 
\end{align} 
\end{subequations}
\end{prop}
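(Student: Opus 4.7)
The proof extends the single-variable argument of Proposition~\ref{prop:alphastar_alpha1} to the coupled two-variable system~\eqref{eq:alphan_betan}. Bound~\eqref{eq:sup_alphabetastar} on $\alpha_n^*$ is the easy part: the explicit formula for $\alpha_n^*$, Cauchy--Schwarz on the inner index, the $\ell^2$-estimates $\sum_n f_{ni}^2,\sum_n g_{ni}^2\leqs M_0$ of Proposition~\ref{prop:sum_fnm}, and the elementary bounds $|\alpha_1|,|\beta_1|\lesssim 1/B(y)$ that follow from the definitions in Section~\ref{sssec:invariant_order2}, together yield the claimed estimate after division by $\lambda_n$.

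For~\eqref{eq:sup_alpha1b}--\eqref{eq:sup_beta1b} I would substitute $\alpha_n=\alpha_n^*+\alpha_n^1$ and $\beta_n=\alpha_n^*+\beta_n^1$ into~\eqref{eq:alphan_betan}. The defining relation of $\alpha_n^*$ cancels the inhomogeneity $c_{n0}+c_{n1}\alpha_1+d_{n1}\beta_1$, leaving a linear system for $(\alpha_n^1,\beta_n^1)$ whose fast part $\rsi(\alpha_n^1)'=2(\alpha_n^1-\beta_n^1)+\dots$ enforces relaxation to the slow manifold $\alpha_n\approx\beta_n$ on the time scale $\rsi$, while its slow part $\sigma^2(\beta_n^1)'=-\sigma^2\lambda_n\alpha_n^1/\eps+\dots$ governs motion along that manifold on the slower time scale $\eps/\lambda_n$. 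The source terms $\sigma^2(\alpha_n^*)'$ are controlled as in the first-order proof via the Hellmann--Feynman relation for $\lambda_n'$ and the ODE for $\delta_1$, and the coupling sums $\sum_{m\geqs 2}[c_{nm}(\alpha_m^*+\alpha_m^1)+d_{nm}(\alpha_m^*+\beta_m^1)]$ by Cauchy--Schwarz in $m$ combined with the $\ell^2$-bounds of Proposition~\ref{prop:sum_fnm}.

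The core of the proof is to show that the anisotropic box $H$ of Figure~\ref{fig:alpha_beta}---narrow in $\alpha_n^1$, wide in $\beta_n^1$, with aspect ratio $\sim\eps/\sigma^2$---is positively invariant under the flow for appropriately large constants $C_0,C_1$. The aspect ratio is precisely matched to the slow manifold $\alpha_n^1\approx\beta_n^1$, which crosses $H$ diagonally but, due to the anisotropy, touches $\partial H$ only in small regions near the top-right and bottom-left corners. On the bulk of the right edge of $H$, where $\beta_n^1>\alpha_n^1$, the fast term $(\alpha_n^1-\beta_n^1)/\rsi$ pushes $\alpha_n^1$ inward; the left edge is handled symmetrically. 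On the top and bottom edges, the dominant restoring contribution $-\sigma^2\lambda_n\alpha_n^1/\eps$ to $(\beta_n^1)'$ points inward when $\alpha_n^1$ has the same sign as $\beta_n^1$, which is exactly the situation enforced by the fast relaxation onto the slow manifold.

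The main obstacle, absent from the first-order case, is that strict pointwise inward-flux on all of $\partial H$ fails near the corners: the fast dynamics can briefly eject trajectories there. I would handle this by mimicking the scheme of Proposition~\ref{prop:alphastar_alpha1}: on $\partial H$, select the extremal coordinate and verify that, provided $C_0$ and $C_1$ are large enough and in the right ratio, the sign of the dominant term in the relevant ODE is correct, while the remaining terms---each carrying an explicit small factor $\eps/\sigma^2$ or $\rsi=\varrho^2\sigma^2$---cannot overturn it. Combining this with the fast relaxation $\alpha_n\to\beta_n$ and with the periodicity in $y$, any periodic solution is confined inside $H$, yielding~\eqref{eq:sup_alpha1b}--\eqref{eq:sup_beta1b}; existence and uniqueness follow as in the first-order case.
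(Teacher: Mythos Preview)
Your treatment of~\eqref{eq:sup_alphabetastar} is fine and matches the paper. The gap is in the remainder argument.

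You assert that the fast equation $\rsi(\alpha_n^1)'=2(\alpha_n^1-\beta_n^1)+\dots$ ``enforces relaxation to the slow manifold $\alpha_n\approx\beta_n$''. The sign is the wrong way around: the coefficient of $\alpha_n^1$ is $+2/\rsi>0$, so the slow manifold is \emph{repelling} in forward $y$. In fact the linear part of the $(\alpha_n^1,\beta_n^1)$ system has positive trace and negative determinant, hence is a saddle in each fibre. Because of this, no box of the shape you (and the paper) consider is positively invariant: on the right face of $H$ one has $(\alpha_n^1)'\propto \alpha_n^1-\beta_n^1$, which is strictly positive on the lower half of that face regardless of how you tune $C_0,C_1$. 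So the corner difficulty you flag is not a perturbation to be absorbed into constants, and mimicking the one-dimensional argument of Proposition~\ref{prop:alphastar_alpha1} cannot work.

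The paper does not claim invariance. It observes that on $\partial H$ the vector field has a rotational pattern (left on the top face, down on the right face, etc.), and that along each face the normal component changes sign exactly once, so that each face splits into an ``exit'' half and an ``entry'' half in a consistent cyclic order (this is what \figref{fig:alpha_beta} depicts). Combined with linearity of the system, this implies that any trajectory that leaves $H$ can never re-enter it. A periodic solution that left $H$ would therefore have to stay outside forever, contradicting periodicity; hence the unique periodic solution lies in $H$, giving~\eqref{eq:sup_alpha1b}--\eqref{eq:sup_beta1b}. This no-re-entry step, rather than positive invariance, is the missing idea in your proposal.
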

\begin{proof}
The bound~\eqref{eq:sup_alphabetastar} is again a direct consequence 
of~\eqref{eq:sum_fni}, noting that
\begin{equation}
  \alpha^*_n(y)
 = -\frac{\eps}{\sigma^2} \frac{1}{\lambda_n} 
 \biggbrak{f_{n0} - \frac{\varrho^2}{2} g_{n0} 
 + \frac{A-\delta_1}{B} \Bigpar{f_{n1} - \frac{\varrho^2}{2}g_{n1}} 
 - \frac{2}{B} f_{n1}\hat\gamma_1}\;.
\end{equation} 
To show~\eqref{eq:sup_alpha1b} and~\eqref{eq:sup_beta1b}, we will use the fact 
that the pairs 
$(\alpha_n^1,\beta_n^1)$ satisfy the system
\begin{align}
\label{eq:vector_field_alpha_beta} 
\eps(\alpha_n^1)'
&= \frac{2\eps}{\varrho^2\sigma^2}
\bigbrak{\alpha_n^1 - \beta_n^1 - \sigma^2(\alpha_n^*)'}
\\
\eps(\beta_n^1)'
&= -\lambda_n\alpha_n^1 
+ \frac{\eps}{\sigma^2} \sum_{m\geqs2} 
\bigbrak{c_{nm}(\alpha_m^*+\alpha_m^1) + d_{nm}(\alpha_m^*+\beta_m^1)}
- \eps(\alpha_n^*)'\;.
\end{align}
We will argue that the unique periodic solution of this equation has to be 
entirely contained in the set
\begin{equation}
 H = \biggsetsuch{(\alpha_\perp^1,\beta_\perp^1,y)}
 {\bigabs{\alpha_m^1} \leqs \frac{\eps^2C_0}{\sigma^4B(y)\lambda_m(y)}, 
 \bigabs{\beta_m^1} \leqs \frac{\eps C_0}{\sigma^2B(y)\lambda_m(y)} 
\; \forall m\geqs2}\;,
\end{equation} 
provided $C_0$ is a sufficiently large constant of order $1$. 
Indeed, similar estimates as in the proof of 
Proposition~\ref{prop:alphastar_alpha1} show that whenever 
$(\alpha_n^1,\beta_n^1,y)$ lies in $H$, one has 
\begin{align}
\eps(\alpha_n^1)'
&= \frac{2\eps}{\varrho^2\sigma^2}
\biggbrak{\alpha_n^1 - \beta_n^1 + \biggOrder{\frac{\eps}{\sigma^2 
B(y)\lambda_n(y)}}}
\\
\eps(\beta_n^1)'
&= -\lambda_n\alpha_n^1 
+ \biggOrder{\frac{\eps^2(1+C_0)}{\sigma^4 B(y)}}\;.
\end{align}
For sufficiently large $C_0$, this vector field has the following properties on 
the boundary $\partial H$ (\figref{fig:alpha_beta}): 
\begin{itemize}
\item 	on the upper boundary of $H$, it points to the left, and 
changes from pointing outward $H$ to pointing inward as $\alpha_n^1$ increases;
\item 	on the right boundary of $H$, it points downward, and 
changes from pointing outward $H$ to pointing inward as $\beta_n^1$ increases;
\item 	the situation is reversed on the lower and left boundaries of $H$. 
\end{itemize}
Combined with the fact that the equation is linear, these properties imply that 
a solution leaving $H$ cannot enter it again. Therefore, the unique periodic 
solution has to lie within $H$.
\end{proof}

As $\alpha_\perp^*$ and $\alpha_\perp^1$ satisfy the same bounds as for 
$\varrho^2 = 0$, it is straightforward to check that 
Corollary~\ref{cor:w1tilde}, Corollary~\ref{cor:bound_Phiperp} and 
Proposition~\ref{prop:Phistar} still hold in the present case. 


\section{Adjoint process and committors}
\label{sec:adjoint} 

Recall from Lemma~\ref{lem:pot_invariant} that the invariant density can be 
written as $\pi(x,y) = Z^{-1} \e^{-2V(x,y)/\sigma^2}$. Since we also have 
\begin{equation}
\label{eq:def_Phi} 
 \pi(x,y) = \pi_0(x\vert y) \Phi(x,y)\;,
 \qquad
  \Phi(x,y) = 1 + \sum_{n\geqs1} \alpha_n(y)\phi_n(x\vert y)\;,
\end{equation} 
solving for $V$ gives the expression 
\begin{equation}
\label{eq:V} 
 V(x,y) = V_0(x,y) - \frac{\sigma^2}{2} \log\Phi(x,y)
 + \frac{\sigma^2}{2} \log\frac{Z_0(y)}{Z}\;.
\end{equation} 
One can get a better idea of the difference between $V$ and $V_0$ by writing 
\begin{equation}
 \Phi(x,y) = \Phi_0(x,y) + \Phi_\perp(x,y)\;, 
\end{equation} 
where
\begin{align}
 \Phi_0(x,y)
 &= 1 + \alpha_1(y) \phi_1(x\vert y) \\
 &= 1 + \frac{A(y)-\delta_1(y)}{B(y)} 
 \Bigbrak{\phi_+(y) + \bigpar{\phi_-(y)-\phi_+(y)}h_0(x\vert y)}
 \bigbrak{1 + \Order{\lambda_1\ell}}\;.
\end{align}
Note that by~\eqref{eq:phipm} we have 
\begin{equation}
\label{eq:B2Phi0} 
 B(y)^2 \Phi_0(x,y) 
 = \bigbrak{1 - A(y)\delta_1(y) + (2h_0(x\vert y)-1)(A(y)-\delta_1(y))}
 \bigbrak{1 + \Order{\lambda_1\ell}}\;.
\end{equation}
The $x$-component of the vector field $c$, defined in~\eqref{eq:defc}, is thus 
given by 
\begin{equation}
\label{eq:cx} 
 c_x = \frac{1}{\eps} \bigpar{\partial_x V - \partial_x V_0}
 = -\frac{\sigma^2}{2\eps} \frac{\partial_x\Phi(x,y)}{\Phi(x,y)}\;,
\end{equation} 
and the adjoint SDE has the form~\eqref{eq:SDE2_adjoint} with 
\begin{align}
 b^*(x,y) &= -\partial_x V_0(x,y) 
+ \sigma^2\frac{\partial_x\Phi(x,y)}{\Phi(x,y)} \\
&= -\partial_x V_0^*(x,y) \;,
\end{align} 
where we have defined the adjoint potential by 
\begin{equation}
\label{eq:V0star} 
 V_0^*(x,y) = V_0(x,y) - \sigma^2 \log\Phi(x,y)\;. 
\end{equation} 
Using the expression~\eqref{eq:phi1_rep} for $\phi_1$, and approximating 
$\Phi(x,y)$ by $\Phi_0(x,y)$, one can deduce from~\eqref{eq:V} 
and~\eqref{eq:V0star} that 
\begin{align}
 V(x^*_0(y),y) - V(x^*_\pm(y),y)
 &\simeq \frac{h_-(y) + h_+(y)}{2}\;,\\
 V_0^*(x^*_0(y),y) - V_0^*(x^*_\pm(y),y)
 &\simeq h_\mp(y)\;.
\end{align} 
In other words, the potential well depths are symmetrised for $V$, and inverted 
for the adjoint potential $V_0^*$ with respect to the initial potential $V_0$ 
(see also~\figref{fig:potential}). 
%

We will need some a priori estimates on the committors $h_{\AB}$ and 
$h^*_{\AB}$. \NB{Recall from~\eqref{eq:def_ab} that the boundaries of $\A$ and 
$\B$ are given by functions $a(y) = x^*_-(y) + \hat\rho$ and $b(y) = x^*_+(y) - 
\hat\rho$ for a small constant $\hat\rho > 0$.}
We expect $h_{\AB}$ to be close to the static committor $\smash{\tilde h_0}$ 
given for $a(y) < x < b(y)$ by 
\begin{equation}
\label{eq:def_htilde0} 
 \tilde h_0(x\vert y) = \frac{1}{\tilde N(y)} 
\int_x^{b(y)}\e^{2V_0(\bar x,y)/\sigma^2}\6\bar x\;, 
 \qquad 
 \tilde N(y) = \int_{a(y)}^{b(y)} \e^{2V_0(x,y)/\sigma^2}\6x\;.
\end{equation} 
Note that $\tilde h_0$ only slightly differs from $h_0$ owing to the different 
boundary conditions. The difference is however exponentially small in 
$\sigma^2$, with an exponent that can be made large by taking $\hat\rho$ 
in~\eqref{eq:def_ab} small. Similarly, $h^*_{\AB}$ should be close to 
\begin{align}
\label{eq:def_htilde0_star} 
 \tilde h_0^*(x\vert y) &= \frac{1}{\tilde N^*(y)} 
\int_x^{b(y)}\e^{2V_0^*(\bar x,y)/\sigma^2} \6\bar x
 &
 \tilde N^*(y) &= \int_{a(y)}^{b(y)} \e^{2V_0^*(x,y)/\sigma^2}\6x \\
 &= \frac{1}{\tilde N^*(y)} 
\int_x^{b(y)}\frac{\e^{2V_0(\bar x,y)/\sigma^2}}{\Phi(\bar x,y)^2} \6\bar x\;,
 &
 &= \int_{a(y)}^{b(y)} \frac{\e^{2V_0(x,y)/\sigma^2}}{\Phi(x,y)^2}\6x\;.
\end{align} 

\begin{prop}
\label{prop:bounds_hAB} 
We have 
\begin{equation}
 h_{\AB}(x,y) = \tilde h_0(x\vert y) + g(x,y)\;, \qquad 
 h^*_{\AB}(x,y) = \tilde h_0^*(x\vert y) + g^*(x,y)\;, 
\end{equation} 
where 
\begin{equation}
\label{eq:bound_g2} 
 \pscal{\pi_0}{g^2}^{1/2} 
 \lesssim \frac{\eps}{\sigma^{3/2}\sqrt{Z_0(y)}}
 \lesssim \frac{\eps\sqrt{\lambda_1(y)}B(y)}{\sigma^2}\;,
\end{equation} 
and similarly for $\pscal{\pi_0}{(g^*)^2}^{1/2}$.
\end{prop}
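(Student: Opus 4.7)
My approach would reduce the statement to a linear PDE estimate. Since $\tilde h_0(\cdot\vert y)$ satisfies $\sL_x\tilde h_0 = 0$ on $(a(y),b(y))$ by construction and attains the correct values at the two boundaries, the difference $g = h_{\AB} - \tilde h_0$ vanishes on $\partial\A\cup\partial\B$, is periodic in $y$, and solves
\[
 \sL g = -\biggpar{\partial_y + \frac{\varrho^2\sigma^2}{2}\partial_{yy}}\tilde h_0 =: f\;.
\]
An identical reduction, with $\sL^*$ and $V_0^*$ from~\eqref{eq:V0star} in place of $\sL$ and $V_0$, produces the analogous equation for $g^*$.

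To control $f$ in $L^2(\pi_0)$, I would differentiate~\eqref{eq:def_htilde0} in $y$ and obtain bounds of the same type as in Proposition~\ref{prop:h0}: namely $|\partial_y\tilde h_0| \lesssim \sigma^{-2}\tilde h_0(1-\tilde h_0)$, $|\partial_{yy}\tilde h_0| \lesssim \sigma^{-4}\tilde h_0(1-\tilde h_0)$, together with a mass estimate $\pscal{\pi_0}{\tilde h_0(1-\tilde h_0)} \lesssim \lambda_1(y)\ell(\sigma)B(y)^2$ analogous to~\eqref{eq:bound_eta}. Since $\tilde h_0$ differs from $h_0$ only through an exponentially small boundary correction when $\hat\rho$ is small, these follow essentially by repeating the proof of Proposition~\ref{prop:h0}. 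This yields the pointwise-in-$y$ bound $\pscal{\pi_0}{f^2}^{1/2} \lesssim \sqrt{\lambda_1(y)}\,B(y)/\sigma^2$, up to logarithmic factors; the analogous bound for the adjoint right-hand side follows from Theorem~\ref{thm:pi} and Corollary~\ref{cor:bound_Phiperp}, which quantify the deviation of $V_0^*$ from $V_0$.

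The core of the proof is an energy estimate. Multiplying $\sL g = f$ by $g$ and integrating against $\pi$ over $\ABc$, the antisymmetric part of $\sL$ contributes nothing thanks to~\eqref{eq:skew}, the vanishing of $g$ on $\partial\A \cup \partial\B$, and the periodicity in $y$. The symmetric part, after one integration by parts, produces the Dirichlet form, leading to
\[
 \frac{\sigma^2}{2\eps}\int_{\ABc}\nabla g\cdot D\nabla g \,\6\pi = -\int_{\ABc} gf \,\6\pi\;.
\]
I would then combine this with a one-dimensional Dirichlet--Poincar\'e inequality in $x$ on $(a(y),b(y))$. Since this interval contains only the saddle $x^*_0(y)$ and neither of the wells, the frozen dynamics exits it in $\Order{1}$ time driven by the deterministic flow, so the first Dirichlet eigenvalue of $-\sL_x$ there is of order unity and the corresponding Poincar\'e constant is $\Order{\sigma^2}$. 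Cauchy--Schwarz then gives $\pscal{\pi}{g^2}^{1/2} \lesssim \eps\,\pscal{\pi}{f^2}^{1/2}$, and transferring from $\pi$ to $\pi_0$ via Theorem~\ref{thm:pi} yields~\eqref{eq:bound_g2}.

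The main obstacle I foresee is upgrading the energy identity, which is naturally global in $y$, into the pointwise-in-$y$ estimate claimed by the proposition. The cleanest remedy would be to work fiber-by-fiber: expand $g(\cdot,y)$ in the Dirichlet eigenbasis $\set{e_n(\cdot\vert y)}$ of $\sL_x$ on $(a(y),b(y))$ and observe that, since the eigenvalues $\mu_n(y) \gtrsim 1$ uniformly in $y$, the diagonal term $-\mu_n(y)/\eps$ in the resulting ODE system for the Fourier coefficients $d_n(y)$ dominates for small $\eps$. A perturbation argument analogous to the one carried out for $\alpha_n^1$ in Section~\ref{ssec:invariant_proof} would then give $|d_n(y)| \lesssim \eps\,|\pscal{e_n}{f}|/\mu_n(y)$, from which the pointwise $L^2(\pi_0)$ bound follows by Parseval. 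A secondary difficulty is that in the adjoint case the Poincar\'e inequality concerns the operator with modified potential $V_0^* = V_0 - \sigma^2\log\Phi$, but since $\Phi$ is of order unity on $(a(y),b(y))$ by Theorem~\ref{thm:pi}, this only affects constants and not the structure of the estimate.
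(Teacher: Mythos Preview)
Your reduction to the linear problem $\sL g = -\sL_y\tilde h_0$ with zero Dirichlet data, and your estimate of the source term in $L^2(\pi_0)$, are both correct and agree with the paper. You also correctly identify that the global energy identity only yields an $y$-averaged bound (and, in addition, is weighted by $\pi$ rather than $\pi_0$, which is not harmless since $\pi/\pi_0 = \Phi$ can be of order $B(y)^{-2}$); your proposed fix via a fibrewise Dirichlet-eigenbasis expansion is a valid route to the pointwise-in-$y$ statement.

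The paper reaches the same conclusion by a closely related but more streamlined argument: instead of expanding $g$ in the Dirichlet eigenbasis of $\sL_x$, it works directly with the Lyapunov function $\sV(y) = \tfrac12\pscal{\pi_0}{g^2}$ and derives an ordinary differential inequality for it. Differentiating in $y$ and using only the spectral gap (your $\mu_1(y)\gtrsim 1$) together with Cauchy--Schwarz gives $\eps\,\partial_y\sV \leqs -c_1\sV + \eps\,\sqrt{\sV}\,\pscal{\pi_0}{(\partial_y\tilde h_0)^2}^{1/2}$, and Gronwall on $\sqrt{\sV}$ produces the pointwise bound without any mode-by-mode bookkeeping. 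This is effectively the ``summed'' version of your coefficient estimate $|d_n(y)|\lesssim\eps|\pscal{e_n}{f}|/\mu_n(y)$, and it sidesteps both the transfer from $\pi$ to $\pi_0$ and the control of the off-diagonal terms $\pscal{\partial_y e_m}{e_n}$ that your ODE system would generate. For $\varrho^2>0$ the paper introduces an auxiliary variable $k \propto g + \tfrac{\varrho^2\sigma^2}{2}\partial_y g$, rewrites the equation as a first-order hyperbolic system, symmetrises it via $(-\sL_x)^{1/2}$, and runs the same Lyapunov argument on $\tfrac12\pscal{\pi_0}{(g\pm\bar k)^2}$; this is the analogue of what you would have to do for the pair $(d_n,d_n')$ in your eigenbasis picture.
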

\begin{proof}
Since $\sL_x\tilde h_0 = 0$, the function $g$ satisfies the equation 
\begin{equation}
 \eps\sL_y g = -\sL_x g - \eps\sL_y \tilde h_0
\end{equation} 
with Dirichlet boundary conditions. Consider first the case $\varrho^2=0$, in 
which $\sL_y = \partial_y$, and define the Lyapunov function 
\begin{equation}
 \sV(g) = \frac12 \pscal{\pi_0}{g^2}\;.
\end{equation} 
Changing $y$ into $-y$, we obtain 
\begin{align}
 \eps\partial_y \sV
 &= \pscal{\pi_0g}{\eps\partial_y g} + \frac12\eps\pscal{\partial_y\pi_0}{g^2} 
\\
 &= \pscal{\pi_0g}{\sL_x g} + \eps\pscal{\pi_0g}{\partial_y\tilde h_0}
 + \frac{\eps}{\sigma^2} \pscal{\pi_0g}{Wg} \\
 &\leqs -c_1 \sV + \eps\sqrt{\sV} \pscal{\pi_0}{(\partial_y\tilde h_0)^2}^{1/2} 
 + \frac{\eps}{\sigma^2} c_2 \sV\;,
\end{align}
where $c_1>0$ is a constant of order $1$ related to the spectral gap of $\sL_x$ 
with Dirichlet boundary conditions on $\tilde\cD=(a(y),b(y))$. Using the 
bounds~\eqref{eq:proof_h0} and~\eqref{eq:bound_h0} on $\partial_y \tilde h_0$ 
obtained in the proof of Proposition~\ref{prop:h0}, we get 
\begin{equation}
 \pscal{\pi_0}{(\partial_y \tilde h_0)^2} 
 \lesssim \frac{1}{\sigma^2Z_0(y)} \int_{a(y)}^{b(y)} 
\frac{\e^{2V_0(x,y)/\sigma^2}}{(\sigma+\abs{x-x^*_0(y)})^2} \6x 
 \lesssim \frac{1}{\sigma^3 Z_0(y)}\;.
\end{equation} 
It follows that 
\begin{equation}
 \eps\partial_y \sV \leqs -\bar{c_1} \sV + \frac{\eps\bar 
c_2}{\sigma^{3/2}\sqrt{Z_0(y)}} \sqrt{\sV}
\end{equation} 
for some constant $\bar c_1, \bar c_2 > 0$. The result then follows by applying 
Gronwall's inequality to $\sW = \sqrt{\sV}$. 

It thus remains to deal with the case $\varrho^2>0$. To this end, we introduce 
\begin{equation}
 k(x,y) = \frac{\sqrt{\eps}}{\sigma} \biggpar{g(x,y) + 
\frac{\varrho^2\sigma^2}{2}\partial_y g(x,y)}\;. 
\end{equation} 
Then the pair $(g,k)$ satisfies the system of hyperbolic type 
\begin{align}
\frac{\varrho^2}{2} \sqrt{\eps}\sigma \partial_y g 
&= k - \frac{\sqrt{\eps}}{\sigma} g \\
\sqrt{\eps}\sigma  \partial_y k
&= -\sL_x g - \eps \partial_y \tilde h_0\;.
\end{align}
This system can be made isotropic via a shearing transformation  
\begin{equation}
 k(x,y) = \frac{\varrho}{\sqrt2} (-\sL_x)^{1/2} \bar k(x,y)\;,
\end{equation} 
where for any $\gamma\in\R$, we set, in terms of eigenvalues $\bar\lambda_n$ 
and 
eigenfunctions $\bar\pi_n$ and $\bar\phi_n$ of $\sL_x$ with Dirichlet boundary 
conditions on $\tilde\cD$,
\begin{equation}
 (-\sL_x)^\gamma f 
 = \sum_{n\geqs1} (-\bar\lambda_n)^\gamma \pscal{\bar\pi_n}{f} \bar\phi_n\;.
\end{equation}
This results in the system 
\begin{align}
 \frac{\sqrt{\eps}\varrho\sigma}{\sqrt2} \partial_y g 
&= (-\sL_x)^{1/2}\bar k - \frac{\sqrt{2\eps}}{\varrho\sigma} g \\
 \frac{\sqrt{\eps}\varrho\sigma}{\sqrt2} \partial_y \bar k
&= (-\sL_x)^{1/2} g - \eps (-\sL_x)^{1/2} \partial_y \tilde h_0
- \frac{\sqrt{\eps}\varrho\sigma}{\sqrt2}(-\sL_x)^{-1/2} \partial_y 
(-\sL_x)^{1/2}\bar k\;.
\end{align}
The result then follows in a similar way as above, by working with the Lyapunov 
functions 
\begin{equation}
 \sV_\pm(g,\bar k) = \frac12\pscal{\pi_0}{(g \pm \bar k)^2}\;, 
\end{equation} 
and showing that for a periodic solution of the system, both $\sV_+$ and 
$\sV_-$ have to remain small. 
\end{proof}

\begin{cor}
\label{cor:hstarAB_pi} 
We have 
\begin{equation}
 \int_{\B^c} h^*_{\AB} \6\pi 
 = \frac{1}{2} \int_0^1 \biggbrak{1-\delta_1(y) 
 + \BigOrder{\frac{\eps}{\sigma^2}\ell(\sigma)\sqrt{\lambda_1(y)}}} 
\6y \,\bigbrak{1+\Order{\sigma^2}}\;.
\end{equation} 
\end{cor}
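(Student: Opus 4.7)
The plan is to decompose $\pi$ via Theorem~\ref{thm:pi}, namely $\pi = \pi_0[1 + \alpha_1\phi_1 + \Phi_\perp]$ with $\alpha_1(y) = (A(y)-\delta_1(y))/B(y)$, and simultaneously $h^*_{\AB} = \tilde h_0^* + g^*$ via Proposition~\ref{prop:bounds_hAB} (extending $\tilde h_0^*$ and $g^*$ to take values $1,0$ on $\A$ and $0,0$ on $\B$). Since $h^*_{\AB}$ vanishes on $\B$,
\begin{equation*}
 \int_{\B^c} h^*_{\AB}\6\pi
 = \int_0^1 \Bigbrak{\pscal{\pi_0}{h^*_{\AB}}
 + \alpha_1(y)\pscal{\pi_0}{h^*_{\AB}\phi_1}
 + \pscal{\pi_0}{h^*_{\AB}\Phi_\perp}}\6y\;,
\end{equation*}
and the leading contributions will come from the first two inner products with $h^*_{\AB}$ replaced by its static approximation $h_0$.

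For those leading pieces I would first argue that $\tilde h_0^*\simeq h_0$: the factor $1/\Phi^2$ in~\eqref{eq:def_htilde0_star} is asymptotically constant near the saddle $x^*_0(y)$ and therefore cancels between numerator and denominator of $\tilde h_0^*$ up to relative Laplace errors of order $\sigma^2$. The definition~\eqref{eq:defDeltabar} of $\Deltabar$ then yields $\pscal{\pi_0}{\tilde h_0^*} = \tfrac12(1-A(y))[1+\Order{\sigma^2}]$. For the $\phi_1$-weighted piece I would insert the representation~\eqref{eq:phi1_rep} of $\phi_1$, use $\phi_\pm = \mp \e^{\mp\Deltabar/\sigma^2}[1+\Order{\lambda_1\ell}]$ from Proposition~\ref{prop:phi_1} and the bound~\eqref{eq:bound_eta} on $\eta(y)=\pscal{\pi_0}{h_0(1-h_0)}$, obtaining $\pscal{\pi_0}{\tilde h_0^*\phi_1} = B(y)/2 + \Order{\lambda_1\ell B}$. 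Multiplying by $\alpha_1(y)$ and summing, the $A(y)$ terms cancel, yielding $\tfrac12(1-\delta_1(y))$ up to the outer factor $[1+\Order{\sigma^2}]$.

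It then remains to control the corrections. The pieces involving $g^*$ would be handled by Cauchy--Schwarz together with $\pscal{\pi_0}{(g^*)^2}^{1/2}\lesssim \eps\sqrt{\lambda_1}B/\sigma^2$ from Proposition~\ref{prop:bounds_hAB}, using $\pscal{\pi_0}{\phi_1^2}=1$ and $\abs{\alpha_1}\leqs 1/B$, producing an error of size $\eps\sqrt{\lambda_1}/\sigma^2$. The delicate term is $\pscal{\pi_0}{h^*_{\AB}\Phi_\perp}$: a direct Cauchy--Schwarz with $\norm{\Phi_\perp}_{L^2(\pi_0)}\lesssim \eps/(\sigma^2 B)$ from Corollary~\ref{cor:bound_Phiperp} is too crude because of the troublesome $1/B$ factor. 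Instead I would exploit the orthogonality $\Phi_\perp\perp\vspan\set{\phi_0,\phi_1}$ in $L^2(\pi_0)$ and rewrite the inner product as $\pscal{\pi_0}{(h^*_{\AB}-Ph^*_{\AB})\Phi_\perp}$, where $P$ denotes the projection onto that span. Proposition~\ref{prop:pin_h0} supplies $\norm{h_0-Ph_0}_{L^2(\pi_0)}\lesssim B\sqrt{\lambda_1\ell}$, and $\norm{h^*_{\AB}-h_0}_{L^2(\pi_0)}$ is controlled by $\norm{g^*}_{L^2(\pi_0)}$ up to exponentially small boundary corrections, so the same bound persists. Cauchy--Schwarz then gives $\abs{\pscal{\pi_0}{h^*_{\AB}\Phi_\perp}}\lesssim (\eps/\sigma^2)\sqrt{\lambda_1\ell}$, comfortably inside the announced $\BigOrder{(\eps/\sigma^2)\ell\sqrt{\lambda_1}}$. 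The main obstacle is precisely this last estimate: trading the factor $1/B$ against the extra $B$ supplied by Proposition~\ref{prop:pin_h0} is what makes the argument close, and would not work without the orthogonality of $\Phi_\perp$ to $\vspan\set{\phi_0,\phi_1}$.
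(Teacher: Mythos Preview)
Your approach is essentially the same as the paper's: decompose $\pi = \pi_0(\Phi_0 + \Phi_\perp)$ and $h^*_{\AB} = \tilde h_0^* + g^*$, extract the leading term $\tfrac12(1-\delta_1)$ from $\pscal{\pi_0}{\Phi_0 \tilde h_0^*}$, and bound the three cross terms by Cauchy--Schwarz. Your treatment of $\pscal{\pi_0}{\Phi_\perp h^*_{\AB}}$ via the orthogonality $\Phi_\perp \perp \vspan\set{\phi_0,\phi_1}$ together with Proposition~\ref{prop:pin_h0} is in fact more transparent than the paper's shorthand (which writes a straight Cauchy--Schwarz with $\pscal{\pi_0}{(\tilde h_0^*)^2}^{1/2}$ but really means the projected norm). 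One small caveat: your claim that $1/\Phi^2$ is ``asymptotically constant near the saddle'' is not correct in general, since $\Phi_0$ varies by an order-one factor across the transition layer (cf.\ \eqref{eq:B2Phi0} with $h_0$ going from $0$ to $1$); what makes the leading-term computation work is rather that $\tilde h_0^*$ and $h_0$ share the same boundary values on $\A$ and $\B$, and the $\pi_0$-weighted integrals are dominated by the wells.
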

\begin{proof}
The definition~\eqref{eq:def_Phi} of $\Phi$ implies 
\begin{equation}
 \int_{\B^c} h^*_{\AB} \6\pi 
 = \int_0^1 \pscal{\pi_0}{\Phi h^*_{\AB}} \6y\;.
\end{equation} 
We decompose 
\begin{equation}
 \pscal{\pi_0}{\Phi h^*_{\AB}}
 = \pscal{\pi_0}{\Phi_0 \tilde h_0^*}
 + \pscal{\pi_0}{\Phi_0 g^*}
 + \pscal{\pi_0}{\Phi_\perp \tilde h_0^*} 
 + \pscal{\pi_0}{\Phi_\perp g^*}\;,
\end{equation} 
and estimate the contribution of each term separately. A similar argument as 
in~\eqref{eq:pminus} shows that
\begin{equation}
 \pscal{\pi_0}{\Phi_0 \tilde h_0^*} 
 = \frac12 \bigbrak{1-\delta_1(y)} \bigbrak{1 + \Order{\sigma^2}}\;.
\end{equation} 
The other terms can be bounded via the Cauchy--Schwarz inequality. Namely, we 
obtain 
\begin{align}
\bigabs{\pscal{\pi_0}{\Phi_0 g^*}} &\leqs 
\pscal{\pi_0}{\Phi_0^2}^{1/2} \pscal{\pi_0}{(g^*)^2}^{1/2}
\lesssim\frac{1}{B(y)} \frac{\eps}{\sigma^2} \sqrt{\lambda_1(y)}B(y)\;,\\
\bigabs{\pscal{\pi_0}{\Phi_\perp \tilde h_0^*}} &\leqs 
\pscal{\pi_0}{\Phi_\perp^2}^{1/2} \pscal{\pi_0}{(\tilde h_0^*)^2}^{1/2}
\lesssim\frac{\eps}{\sigma^2B(y)} \sqrt{\lambda_1(y)\ell(\sigma)B(y)^2}\;,\\ 
\bigabs{\pscal{\pi_0}{\Phi_\perp g^*}} &\leqs 
\pscal{\pi_0}{\Phi_\perp^2}^{1/2} \pscal{\pi_0}{(g^*)^2}^{1/2}
\lesssim\frac{\eps}{\sigma^2B(y)} \frac{\eps}{\sigma^2} \sqrt{\lambda_1(y)}B(y) 
\;,
\end{align}
where we have used Proposition~\ref{prop:pin_h0} (which applies to $\tilde h_0$ 
as well), Corollary~\ref{cor:bound_Phiperp} and 
Proposition~\ref{prop:bounds_hAB}. 
\end{proof}


\section{Estimating the capacity}
\label{sec:capacity} 

Recall from~\eqref{eq:def_avrg} the notation 
\begin{equation}
 \avrg{f} = \int_0^1 f(y)\6y\;,
\end{equation}
and define 
\begin{equation}
 C_0 = \frac{1}{4\eps} \avrg{\lambda_1 \brak{1 - A\delta_1}}\;.
\end{equation} 
The purpose of this section is to establish the following estimates on the 
capacity $\capacity(\A,\B)$. 

\begin{theorem}[Estimate of the capacity]
\label{thm:capacity} 
There exist constants $M_\pm$ such that 
\begin{align}
\frac{\capacity(\A,\B)}{C_0} 
&\leqs 1 + M_+ \biggbrak{\sigma^2 + 
\frac{\eps\ell(\sigma)\sigma^{-2}\avrg{\lambda_1} + 
\eps^2\sigma^{-3}\avrg{\sqrt{\lambda_1}}}
{\avrg{\lambda_1[1-A\delta_1]}}}\;, \\
\frac{\capacity(\A,\B)}{C_0} 
&\geqs 1 - M_- \biggbrak{\sigma^2 + 
\frac{\eps\ell(\sigma)^2\sigma^{-2}\avrg{\lambda_1} + 
\eps^2\sigma^{-{7/2}}\avrg{\sqrt{\lambda_1}}}
{\avrg{\lambda_1[1-A\delta_1]}}}\;.
\end{align}
\end{theorem}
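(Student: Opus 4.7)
The plan is to apply the defective Dirichlet and Thomson principles of Propositions~\ref{prop:Dirichlet} and~\ref{prop:Thomson} with a test function built from the static committor $\tilde h_0(x\vert y)$ of~\eqref{eq:def_htilde0}, together with a judiciously chosen test flow $\ph$. Since $\tilde h_0$ already belongs to $\sH^{1,0}_{\AB}$, the natural starting point is $f = \tilde h_0$; the flow $\ph$ is then used both to approximately cancel the contribution of the antisymmetric drift $c$ to $\sD(\Phi_f)$ and, in the Thomson case, to carry the required unit flux through $\partial\A$ prescribed by~\eqref{eq:flow_gamma}.

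At leading order, the Dirichlet form $\sD(\Phi_{\tilde h_0})$ reduces to
\begin{equation*}
 \frac{\sigma^2}{2\eps} \int_0^1 \pscal{\pi}{(\partial_x\tilde h_0)^2}\6y\;.
\end{equation*}
Since $\partial_x\tilde h_0(\cdot\vert y) = -\e^{2V_0(\cdot,y)/\sigma^2}/\tilde N(y)$ is sharply concentrated near $x^*_0(y)$, Laplace asymptotics give $\pscal{\pi_0}{(\partial_x\tilde h_0)^2} = (Z_0(y)\tilde N(y))^{-1}[1+\Order{\sigma^2}]$, which by the identity~\eqref{eq:Z0Nlambda1} equals $\lambda_1(y)B(y)^2/(2\sigma^2)[1+\Order{\sigma^2}]$. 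Inserting the decomposition $\pi = \pi_0[1+\alpha_1\phi_1+\Phi_\perp]$ from Theorem~\ref{thm:pi} and using the representation~\eqref{eq:B2Phi0} of $B^2\Phi_0$---which evaluates to $1-A\delta_1$ once integrated against a near delta-function at $x^*_0(y)$, since $h_0(x^*_0(y)\vert y)\simeq 1/2$---produces the principal term $\frac{1}{4\eps}\avrg{\lambda_1[1-A\delta_1]} = C_0$.

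The error terms come from several sources. The difference $h_{\AB} - \tilde h_0$ is controlled in $L^2$ by Proposition~\ref{prop:bounds_hAB}. The $\Phi_\perp^*$-contribution is handled via the near-orthogonality statement of Proposition~\ref{prop:Phistar}, which is essential in order to save a factor $\ell(\sigma)$ over a crude Cauchy--Schwarz bound, since $(\partial_x\tilde h_0)^2$ is dominated by a multiple of $\e^{2V_0/\sigma^2}$. The residual $\Phi_\perp^1$ is estimated either via the $L^2$ bound~\eqref{eq:Phiperp_L2} for the upper bound, or via the $L^\infty$ bound~\eqref{eq:Phiperp_sup} for the lower bound. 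Finally, the failure of $\Psi_{\tilde h_0}$ to be exactly divergence-free, owing to its $y$-dependence, contributes the boundary integral $\int_{\ABc}(\nabla\cdot\ph)h_{\AB}\,\6x\6y$ appearing in~\eqref{eq:Dirichlet_defective} and~\eqref{eq:Thomson_defective}; this will be tamed by again substituting $\tilde h_0$ for $h_{\AB}$ and invoking the averaging identity~\eqref{eq:avrg_ODE} derived from the ODE for $\delta_1$ to extract cancellations over one period in $y$.

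The hardest step, and the source of the asymmetry between the upper and lower bounds in the power of $\sigma$, is the Thomson side: since the bound is on $1/\sD(\Phi_f-\ph)$, errors must be controlled \emph{multiplicatively} against $C_0$, and the freedom to cancel the cross-terms with $c$ by choosing $\ph$ is reduced by the flux constraint $\gamma = 1$. There one is forced to use the pointwise bound~\eqref{eq:Phiperp_sup}, which carries an extra $\sigma^{-1/2}$, whereas the upper bound will make do with the $L^2$ bound~\eqref{eq:Phiperp_L2}. Organising these error terms so that the $\avrg{\sqrt{\lambda_1}}$ contribution does not dominate, by careful use of~\eqref{eq:avrg_ODE} together with the near-orthogonality result of Proposition~\ref{prop:Phistar}, will be the technical core of the argument.
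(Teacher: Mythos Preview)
Your overall strategy is right, and you correctly identify the key analytic inputs (Proposition~\ref{prop:Phistar}, the $L^2$/$L^\infty$ bounds on $\Phi_\perp$, the averaging identity~\eqref{eq:avrg_ODE}). But the concrete choices of test function and test flow differ from what the paper does, and this matters.

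\textbf{Upper bound.} The paper does not take $f = \tilde h_0$ alone; it takes the symmetrised $f = \tfrac12(\tilde h_0 + \tilde h_0^*)$, with $\tilde h_0^*$ the static committor for the \emph{adjoint} potential $V_0^*$ of~\eqref{eq:V0star}, mimicking the exact minimiser in Proposition~\ref{prop:Dirichlet}. With $\ph = \Phi_f - \Psi_{\tilde h_0}$, the divergence $-2\nabla\cdot\ph$ then splits into two brackets, one involving $\tilde h_0$ and one involving $\tilde h_0^*$. In each bracket the $x$-derivative contributions cancel \emph{exactly}, because $\sL_x\tilde h_0 = 0$ and $\sL_x^*\tilde h_0^* = 0$; only the harmless $y$-derivative piece $\pi[\partial_y\tilde h_0 + \tfrac{\varrho^2\sigma^2}{2}\partial_{yy}\tilde h_0]$ (and its adjoint analogue) survives. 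With your choice $f = \tilde h_0$ and $\ph = -\pi\tilde h_0 c$, the divergence is $2\pi c\cdot\nabla\tilde h_0$, and the term $2\pi c_x\partial_x\tilde h_0 = -\tfrac{\sigma^2}{\eps}\pi_0\,\partial_x\Phi\,\partial_x\tilde h_0$ does not cancel. Its leading part, coming from $\partial_x\Phi_0$, integrates against $h_{\AB}$ to order $\tfrac{1}{\eps}\avrg{\lambda_1(A-\delta_1)}$; bounding this via~\eqref{eq:avrg_ODE} yields $\eps\ell(\sigma)^2\sigma^{-2}\avrg{\lambda_1}$, not the $\eps\ell(\sigma)\sigma^{-2}\avrg{\lambda_1}$ claimed in the statement. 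So you would prove a weaker upper bound.

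\textbf{Lower bound.} For the Thomson principle the test function must lie in $\sH^{0,0}_{\AB}$, so $f = \tilde h_0\in\sH^{1,0}_{\AB}$ is inadmissible. The paper takes $f = 0$ and builds the test flow directly from the invariant-measure factor $\Phi$: namely $\ph = \tfrac{1}{4\eps C}\lambda_1(y)B(y)^2\Phi(x,y)\,\ex$, a purely horizontal flow normalised to unit flux. This is \emph{not} $\Psi_{\tilde h_0}$ (which has a $y$-component and the wrong normalisation), though its $x$-component is close. The point of inserting $\Phi$ is that $\sD(-\ph)$ then reduces to an integral of $\lambda_1 B^2\pscal{\mu}{\Phi}$, whose leading piece $\pscal{\mu}{\Phi_0}$ reproduces $1-A\delta_1$ via~\eqref{eq:B2Phi0} and $\pscal{\mu}{h_0}\simeq\tfrac12$, while the divergence is simply $\tfrac{1}{4\eps C}\lambda_1 B^2\,\partial_x\Phi$. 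It is in bounding $\pscal{\mu}{\Phi_\perp^1}$ that the pointwise estimate~\eqref{eq:Phiperp_sup} enters and produces the $\sigma^{-7/2}$ --- your diagnosis of this asymmetry is correct, but the flow it applies to is not the one you describe.

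In short: the adjoint static committor $\tilde h_0^*$ is an essential missing ingredient on the Dirichlet side, and on the Thomson side the working flow is built from $\Phi$ itself rather than from $\Psi_{\tilde h_0}$.
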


The proof of this result is naturally divided into two parts. We will prove the 
upper bound in Section~\ref{ssec:capacity_upper}, and the lower bound in 
Section~\ref{ssec:capacity_lower}. 


\subsection{Upper bound on the capacity}
\label{ssec:capacity_upper} 

The upper bound on the capacity will follow from the defective-flow Dirichlet 
principle~\eqref{eq:Dirichlet_defective}. The expressions for the minimisers 
given in Proposition~\ref{prop:Dirichlet} suggest taking as test functions 
\begin{align}
 f &= \frac{1}{2} (\tilde h_0 + \tilde h_0^*)\;, \\
 \ph &= \Phi_f - \Psi_{\tilde h_0}
 = \Psi_{(\tilde h_0^* - \tilde h_0)/2} - \pi f c\;.
\end{align}
The defective-flow Dirichlet principle then reads  
\begin{equation}
 \capacity(\A,\B) \leqs \sD(\Psi_{\tilde h_0})
 - 2 \int_{\ABc} (\nabla\cdot\ph) h_{\AB} \6x\6y\;.
\end{equation} 

\begin{prop}
There exists a constant $M_+$ such that 
\begin{equation}
 \sD(\Psi_{\tilde h_0}) \leqs 
 \frac{1}{4\eps} 
\biggbrak{ \avrg{\lambda_1\brak{1-A\delta_1}} 
+ \frac{\eps\ell(\sigma) M_+}{\sigma^2} \avrg{\lambda_1} 
+ \frac{\eps^2 \sqrt{\ell(\sigma)}M_+}{\sigma^4} \avrg{\sqrt{\lambda_1}}} 
\bigbrak{1+\Order{\sigma^2}}\;.
\end{equation} 
\end{prop}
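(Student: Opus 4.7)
The plan is to expand $D=\diag(1,\eps\varrho^2)$ and write
\[
 \sD(\Psi_{\tilde h_0})
 = \frac{\sigma^2}{2\eps} \int_0^1 \bigpscal{\pi_0 \Phi}{(\partial_x \tilde h_0)^2} \6y + \frac{\sigma^2\varrho^2}{2} \int_0^1 \bigpscal{\pi_0\Phi}{(\partial_y \tilde h_0)^2} \6y\;,
\]
with $\Phi = \Phi_0 + \Phi_\perp^* + \Phi_\perp^1$ and $\Phi_0 = 1+\alpha_1\phi_1$, then to extract the dominant contribution from the $\partial_x$-piece with $\Phi$ replaced by $\Phi_0$ and to show that the three remaining pieces are absorbed into the claimed error terms.

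For the dominant term, I would exploit $\partial_x\tilde h_0=-e^{2V_0/\sigma^2}/\tilde N(y)$ to reduce the matter to evaluating $\int e^{2V_0(x,y)/\sigma^2}\Phi_0(x,y)\6x$. The local change of variable $x=x^*_0(y)+\sigma u/\omega_0(y)$ gives $e^{2V_0/\sigma^2}\approx e^{-u^2}$ and $h_0(x\vert y)-\tfrac12\approx-\tfrac12\mathrm{erf}(u)$; inserting these into the representation~\eqref{eq:B2Phi0} of $B^2\Phi_0$, the $(A-\delta_1)(2h_0-1)$-piece integrates to $\Order{\sigma^2}\tilde N(y)$ by the odd--even cancellation $\int e^{-u^2}\mathrm{erf}(u)\6u=0$, while the constant piece gives $(1-A\delta_1)\tilde N/B^2\cdot[1+\Order{\sigma^2}]$. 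Combining with the exponentially small replacement of $\tilde N$ by $N$ (ensured by taking $\hat\rho$ small) and the identity~\eqref{eq:Z0Nlambda1} yields $\bigpscal{\pi_0\Phi_0}{(\partial_x\tilde h_0)^2} = \lambda_1(y)[1-A(y)\delta_1(y)]/(2\sigma^2)\cdot[1+\Order{\sigma^2}]$, whose $y$-integral produces the leading term $\tfrac{1}{4\eps}\avrg{\lambda_1[1-A\delta_1]}[1+\Order{\sigma^2}]$.

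For the error terms, I would observe that $V_0\leqs 0$ on $[x^*_-(y),x^*_+(y)]$ implies $(\partial_x\tilde h_0)^2\leqs e^{2V_0/\sigma^2}/\tilde N(y)^2$, so Proposition~\ref{prop:Phistar} applies with $M\asymp\sigma^{-2}$ and gives $\bigabs{\bigpscal{\pi_0}{\Phi_\perp^*(\partial_x\tilde h_0)^2}}\lesssim\eps\lambda_1(y)\ell(\sigma)/\sigma^4$, producing the first advertised error after multiplication by $\sigma^2/(2\eps)$ and integration. The $\Phi_\perp^1$ contribution is handled by Cauchy--Schwarz, combining the $L^2$-bound~\eqref{eq:Phiperp_L2} with the Laplace estimate $\pscal{\pi_0}{(\partial_x\tilde h_0)^4}^{1/2}\lesssim B(y)\sqrt{\lambda_1(y)}/\sigma^2$, yielding the second error. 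The $\partial_y$-piece is controlled by combining the pointwise bound~\eqref{eq:bound_dyh0} applied to $\tilde h_0$ with~\eqref{eq:bound_eta} and the pointwise estimate $\Phi_0\lesssim 1/B^2$, giving $\pscal{\pi_0\Phi_0}{(\partial_y\tilde h_0)^2}\lesssim\lambda_1(y)\ell(\sigma)/\sigma^4$; the $\Phi_\perp$-pieces are bounded analogously, and the prefactor $\eps\varrho^2$ in the $y$-part ensures that all resulting contributions fall into the same error classes as the $\partial_x$-error terms.

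The main obstacle lies in the parity cancellation inside the dominant term: one has to verify rigorously that $(A(y)-\delta_1(y))\int e^{2V_0/\sigma^2}(2h_0(x\vert y)-1)\6x$ is of order $\sigma^2\tilde N(y)$ and not of order $\tilde N(y)$. The leading Gaussian calculation gives exactly zero by the odd symmetry of $\mathrm{erf}$, so the first nonvanishing contribution must be extracted from the next-order corrections in the Taylor expansions of $V_0$, $h_0$ and $\tilde N$ around $x^*_0(y)$; these corrections must be tracked uniformly in $y\in[0,1]$ in order to secure the $\Order{\sigma^2}$ remainder appearing in the final statement.
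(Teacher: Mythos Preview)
Your overall decomposition and your treatment of all the error terms ($\Phi_\perp^*$ via Proposition~\ref{prop:Phistar}, $\Phi_\perp^1$ via Cauchy--Schwarz and~\eqref{eq:Phiperp_L2}, the $\partial_y$-piece via~\eqref{eq:bound_dyh0} and~\eqref{eq:bound_eta}) match the paper's proof essentially line by line.

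The one place where you and the paper diverge is the leading term, and specifically what you flag as the ``main obstacle''. You propose to show that
\[
(A-\delta_1)\int_{a(y)}^{b(y)} \e^{2V_0/\sigma^2}\bigl(2h_0(x\vert y)-1\bigr)\6x = \Order{\sigma^2}\,\tilde N(y)
\]
by Laplace asymptotics: substitute $x=x^*_0+\sigma u/\omega_0$, use $2h_0-1\approx -\mathrm{erf}(u)$, exploit the odd parity of $\mathrm{erf}$, and then carefully track the next-order Taylor corrections. This would work, but it is unnecessarily laborious. The paper bypasses the whole issue with the exact identity
\[
\pscal{\mu}{h_0}
= -\frac{N(y)}{\tilde N(y)}\int_{a(y)}^{b(y)} h_0\,\partial_x h_0\,\6x
= \frac{N(y)}{2\tilde N(y)}\Bigl[h_0(a(y))^2 - h_0(b(y))^2\Bigr]
= \frac12\bigl[1+\Order{\e^{-\kappa/\sigma^2}}\bigr],
\]
where $\mu=-\partial_x\tilde h_0$ and the last step uses only that $h_0(a(y))$, $1-h_0(b(y))$ and $1-N/\tilde N$ are all exponentially small when $\hat\rho$ is small. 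No Laplace expansion, no parity argument, no tracking of higher-order Taylor terms --- and the error obtained is exponentially small rather than merely $\Order{\sigma^2}$. Once $\pscal{\mu}{\phi_1}$ is evaluated from this via~\eqref{eq:phi1_rep}, the identity $1+\alpha_1\pscal{\mu}{\phi_1}=(1-A\delta_1)/B^2\,[1+\Order{\e^{-\kappa/\sigma^2}}]$ falls out directly, and~\eqref{eq:Z0Nlambda1} finishes the computation exactly as you indicate. So your ``main obstacle'' is an artifact of the method you chose for this step; the integration-by-parts trick dissolves it.
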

\begin{proof}
We introduce a probability measure $\mu$ on $\tilde\cD = (a(y),b(y))$ with 
density (see~\eqref{eq:def_htilde0})
\begin{equation}
 \mu(x\vert y) = \frac{1}{\tilde N(y)} \e^{2V_0(x,y)/\sigma^2}
 = -\partial_x \tilde h_0(x)\;.
\label{eq:def_mu} 
\end{equation} 
Note that 
\begin{equation}
 \pscal{\mu}{h_0} 
 = -\frac{N(y)}{\tilde N(y)} \int_{a(y)}^{b(y)} h_0 \partial_x h_0 \6x 
 = \frac{1}{2} \bigbrak{1 + \Order{\e^{-\kappa/\sigma^2}}}\;,
\label{eq:muh0} 
\end{equation} 
where $\kappa$ can be made large by taking $\hat\rho$ in~\eqref{eq:def_ab} 
large. 
Applying the definition~\eqref{eq:def_D} of $\sD$ to $\Psi_{\tilde h_0}$, we 
obtain 
\begin{equation}
 \sD(\Psi_{\tilde h_0}) 
 = \frac{\sigma^2}{2\eps} 
 \int_0^1 \bigbrak{\pscal{\pi}{(\partial_x \tilde h_0)^2} 
 + \eps\varrho^2 \pscal{\pi}{(\partial_y \tilde h_0)^2}}\6y\;.
\end{equation} 
In order to estimate the first inner product, we use the decomposition $\pi = 
\pi_0(\Phi_0 + \Phi_\perp^* + \Phi_\perp^1)$, expand, and consider the resulting 
terms separately. For the first term,  using~\eqref{eq:phi1_rep} to compute 
$\pscal{\mu}{\phi_1}$, we find 
\begin{align}
\pscal{\pi_0}{\Phi_0(\partial_x\tilde h_0)^2}
&= \frac{1}{\tilde N(y)Z_0(y)} \bigbrak{1 + \alpha_1(y)\pscal{\mu}{\phi_1}} \\
&= \frac{1}{\tilde N(y)Z_0(y)} \frac{1-A(y)\delta_1(y)}{B(y)^2}
\bigbrak{1 + \Order{\e^{-\kappa/\sigma^2}}} \\
&= \frac{1}{2\sigma^2} \lambda_1(y)\bigbrak{1-A(y)\delta_1(y)}
\bigbrak{1 + \Order{\sigma^2}}\;.
\end{align}
The second term can be directly bounded via Proposition~\ref{prop:Phistar} by 
\begin{equation}
 \bigabs{\pscal{\pi_0}{\Phi_\perp^*(\partial_x \tilde h_0)^2}} 
 \lesssim \frac{\eps}{\sigma^4} \lambda_1(y) \ell\;.
\end{equation} 
As for the third term, it satisfies 
\begin{align}
 \bigabs{\pscal{\pi_0}{\Phi_\perp^1(\partial_x \tilde h_0)^2}} 
 &\leqs \pscal{\pi_0}{(\Phi_\perp^1)^2}^{1/2} \pscal{\pi_0}{(\partial_x \tilde 
h_0)^4}^{1/2} \\
 &\lesssim \frac{\eps^2}{\sigma^4 B(y)} \frac{1}{Z_0(y)^{1/2}\tilde N(y)^{3/2}}
 \lesssim \frac{\eps^2}{\sigma^6} \sqrt{\lambda_1(y)}\;.
\end{align} 
It remains to estimate the contribution of $\pscal{\pi}{(\partial_y 
\tilde h_0)^2}$. We split it into two parts, which satisfy
\begin{equation}
 \bigabs{\pscal{\pi_0}{\Phi_0(\partial_y\tilde h_0)^2}}
 \lesssim \frac{1}{B(y)^2} \pscal{\pi_0}{(\partial_y\tilde h_0)^2}
 \lesssim \frac{\eta(y)}{\sigma^4 B(y)^2}
 \lesssim \frac{\lambda_1(y)\ell}{\sigma^2}\;,
\end{equation} 
(where we used the sharper estimate~\eqref{eq:IJ_sharper} to bound 
$\partial_y\tilde h_0$), and 
\begin{equation}
 \bigabs{\pscal{\pi_0}{\Phi_\perp(\partial_y\tilde h_0)^2}}
 \leqs \pscal{\pi_0}{\Phi_\perp^2}^{1/2} 
 \pscal{\pi_0}{(\partial_y\tilde h_0)^4}^{1/2} 
 \lesssim \frac{\eps}{\sigma^5} \sqrt{\lambda_1(y)\ell}\;.
\end{equation} 
Collecting all terms gives the claimed result. 
\end{proof}

To complete the proof of the upper bound on the capacity, it remains to control 
the error due to the fact that $\ph$ is not exactly divergence-free. Note that 
in view of~\eqref{eq:divergence}, we have 
\begin{equation}
 \nabla \cdot (\pi f c) = \pi (\nabla f \cdot c)\;, 
\end{equation} 
This yields 
\begin{equation}
\label{eq:nabla_ph} 
 -2 \nabla \cdot \ph 
 = \biggbrak{\frac{\sigma^2}{2\eps} \nabla\cdot (\pi D\nabla \tilde h_0) 
 + \pi \nabla\tilde h_0 \cdot c}
 - \biggbrak{\frac{\sigma^2}{2\eps} \nabla\cdot (\pi D\nabla \tilde h_0^*) 
 - \pi \nabla\tilde h_0^* \cdot c}\;.
\end{equation}
The contributions of the two brackets to the error term can be estimated 
separately. They are small because $\tilde h_0$ and $\tilde h_0^*$ are both 
approximately harmonic with respect to $\sL$ and $\sL^*$. 

\begin{prop}
We have the bound 
\begin{equation}
 \biggabs{-2\int_{\ABc} (\nabla\cdot\ph) h_{\AB} \6x\6y} 
 \lesssim \frac{\ell(\sigma)}{\sigma^2} \avrg{\lambda_1}
 + \frac{\eps}{\sigma^3} \avrg{\sqrt{\lambda_1}}\;.
\label{eq:defective2} 
\end{equation} 
\end{prop}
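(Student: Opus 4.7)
The plan is to recognise the two square brackets in~\eqref{eq:nabla_ph} as $\pi\sL\tilde h_0$ and $\pi\sL^*\tilde h_0^*$ respectively, exploit the fact that the frozen generators annihilate $\tilde h_0$ and $\tilde h_0^*$ on $\ABc$ to reduce the integrand to a pure $y$-derivative object, and then estimate the resulting expression by splitting $\pi = \pi_0(\Phi_0+\Phi_\perp^*+\Phi_\perp^1)$.

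Combining Lemma~\ref{lem:Lf} with $\pi = Z^{-1}\e^{-2V/\sigma^2}$ yields the pointwise identity $\pi\sL f = \frac{\sigma^2}{2\eps}\nabla\cdot(\pi D\nabla f) + \pi c\cdot\nabla f$, together with its sign-flipped analogue for $\sL^*$. Hence~\eqref{eq:nabla_ph} rewrites as $-2\nabla\cdot\ph = \pi\bigbrak{\sL\tilde h_0 - \sL^*\tilde h_0^*}$. On $\ABc = \setsuch{(x,y)}{a(y)<x<b(y)}$, the defining relations~\eqref{eq:def_htilde0} and~\eqref{eq:def_htilde0_star} imply that the frozen generator $\sL_x = \frac{\sigma^2}{2}\partial_{xx}+b\,\partial_x$ and its frozen adjoint (with $b$ replaced by $b^* = -\partial_xV_0^*$) annihilate $\tilde h_0$ and $\tilde h_0^*$ respectively, so only the $y$-parts of $\sL$ and $\sL^*$ survive, giving
$$
\sL\tilde h_0 = \partial_y\tilde h_0 + \tfrac{\varrho^2\sigma^2}{2}\partial_{yy}\tilde h_0\;,
$$
and an analogous expression for $\sL^*\tilde h_0^*$ differing by a sign on the drift and by an extra term $-2\varrho^2(\partial_y V)\,\partial_y\tilde h_0^*$, which is of lower order since $\partial_y V = \partial_y V_0 + \Order{\sigma^2}$ by~\eqref{eq:V}.

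Using $\abs{h_{\AB}}\leqs 1$, it now suffices to control $\int_0^1 \pscal{\pi_0\Phi}{\abs{\sL\tilde h_0}+\abs{\sL^*\tilde h_0^*}}\6y$. The $\Phi_0$ contribution is bounded using $\Phi_0\lesssim B(y)^{-1}$ together with Proposition~\ref{prop:h0} and the sharp estimate $\pscal{\pi_0}{\abs{\partial_y\tilde h_0}}\lesssim \eta(y)/\sigma^2 \lesssim \lambda_1(y)\ell(\sigma)B(y)^2/\sigma^2$ (and its $\sigma^{-4}$ analogue for $\partial_{yy}$, compensated by the $\varrho^2\sigma^2/2$ prefactor); after multiplication by $B^{-1}$ this produces a contribution of order $\lambda_1\ell/\sigma^2$, whose $y$-integral yields the first term of~\eqref{eq:defective2}. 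The $\Phi_\perp^*$ contribution is handled by Proposition~\ref{prop:Phistar} (applied with $M\asymp\sigma^{-2}$, resp.\ $\sigma^{-4}$), producing a strictly smaller contribution that is absorbed into the first term. Finally, the $\Phi_\perp^1$ contribution is estimated by Cauchy--Schwarz, combining the $L^2$-bound~\eqref{eq:Phiperp_L2} on $\Phi_\perp^1$ with a refined $L^2$-bound on $\partial_y\tilde h_0$ obtained along the lines of the Lyapunov-function estimate in the proof of Proposition~\ref{prop:bounds_hAB}; this produces the second term $\eps\avrg{\sqrt{\lambda_1}}/\sigma^3$.

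The main obstacle will be to transfer all these derivative bounds to the adjoint committor $\tilde h_0^*$, whose definition~\eqref{eq:def_htilde0_star} involves the full invariant-density correction $\Phi$ through a factor $\Phi^{-2}$ in the integrand, and which is additionally exposed to the extra $y$-drift from $\sL^*$. The saddle-concentration arguments underlying Proposition~\ref{prop:h0} do carry over, since $\Phi$ is bounded above and below on $\tilde\cD$ by quantities of order $B(y)^{-1}$ (up to $O(1)$ multiplicative constants), but verifying this cleanly, and keeping track of the correction coming from $\partial_y V - \partial_y V_0$, requires some additional bookkeeping.
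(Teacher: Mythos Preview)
Your approach is essentially the paper's: rewrite $-2\nabla\cdot\ph=\pi[\sL\tilde h_0-\sL^*\tilde h_0^*]$, observe that the frozen $x$-generators kill $\tilde h_0,\tilde h_0^*$ on $\ABc$ so only $y$-derivative terms survive, bound $h_{\AB}$ by $1$, and split $\Phi$. The paper's execution is simpler than yours in two places, and you have two small slips.

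First, the paper splits only $\Phi=\Phi_0+\Phi_\perp$. For the $\Phi_\perp$-piece it uses Cauchy--Schwarz directly with the full bound $\pscal{\pi_0}{\Phi_\perp^2}^{1/2}\lesssim\eps/(\sigma^2 B)$ from~\eqref{eq:bound_Phiperp_main} and the direct Laplace estimate $\pscal{\pi_0}{(\partial_y\tilde h_0)^2}^{1/2}\lesssim B\sqrt{\lambda_1}/\sigma$ (this is where $\eps\avrg{\sqrt{\lambda_1}}/\sigma^3$ comes from). Your three-piece decomposition $\Phi_0+\Phi_\perp^*+\Phi_\perp^1$ and appeal to Proposition~\ref{prop:Phistar} are not needed; indeed, if carried out correctly they would give a strictly smaller contribution than the stated second term, so your claim that the $\Phi_\perp^1$ piece ``produces the second term'' is an overestimate.

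Second, two minor corrections. The pointwise bound is $\abs{\Phi_0}\lesssim B^{-2}$, not $B^{-1}$ (since $\abs{\alpha_1}\lesssim B^{-1}$ and $\abs{\phi_1}$ can be of order $B^{-1}$ near $x^*_0$); this is harmless because the extra $B^{-1}$ is absorbed by the $B^2$ in $\eta$. And the $L^2$-bound on $\partial_y\tilde h_0$ is obtained by direct computation (see the proof of Proposition~\ref{prop:bounds_hAB}, the display $\pscal{\pi_0}{(\partial_y\tilde h_0)^2}\lesssim 1/(\sigma^3 Z_0)$), not via a Lyapunov argument.
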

\begin{proof}
We will consider the contribution of the first bracket in~\eqref{eq:nabla_ph}.
The expression~\eqref{eq:cx} for $c_x$ shows that the derivatives with respect 
to $x$ cancel exactly, while the expression~\eqref{eq:defc} for $c_y$ shows 
that the remaining part is equal to 
\begin{equation}
 \frac12\varrho^2\sigma^2 \partial_y(\pi\partial_y\tilde h_0)
 + \pi \partial_y \tilde h_0 c_y
 = \pi \Bigbrak{\partial_y \tilde h_0 + \frac{\varrho^2\sigma^2}{2} 
\partial_{yy}\tilde h_0}\;.
\end{equation} 
The first error term is thus given by 
\begin{equation}
 \int_0^1 \pscal{\pi_0}{\Phi\Bigbrak{\partial_y \tilde h_0 + 
\frac{\varrho^2\sigma^2}{2} 
\partial_{yy}\tilde h_0}h_{\AB}}\6y\;.
\end{equation} 
Bounding $h_{\AB}$ by $1$, and using
\begin{align}
\pscal{\pi_0}{\abs{\Phi_0} \abs{\partial_y\tilde h_0}} 
&\lesssim \frac{\eta(y)}{\sigma^2 B(y)^2}
\lesssim \frac{1}{\sigma^2} \lambda_1(y)\ell\;, \\
\pscal{\pi_0}{\abs{\Phi_\perp} \abs{\partial_y\tilde h_0}}
&\leqs \pscal{\pi_0}{(\Phi_\perp)^2}^{1/2} 
\pscal{\pi_0}{(\partial_y\tilde h_0)^2}^{1/2} 
\lesssim \frac{\eps}{\sigma^3} \sqrt{\lambda_1(y)}\;,
\end{align}
we find that the contribution of $\partial_y \tilde h_0$ satisfies the claimed 
bound. The contribution of $\partial_{yy}\tilde h_0$ is bounded similarly. The 
proves the result for the first bracket in~\eqref{eq:nabla_ph}, and the proof 
for the second bracket is similar. 
\end{proof}


\subsection{Lower bound on the capacity}
\label{ssec:capacity_lower} 

To obtain a lower bound on the capacity, we will apply the defective Thomson 
principle~\eqref{eq:Thomson_defective}. Since~\eqref{eq:cx} implies that the 
drift terms $b$ and $b^*$ are close to each other for $x$ near $x^*_0(y)$, 
Proposition~\ref{prop:Thomson} suggests taking $f=0$ and a test flow $\ph$ 
approximately proportional to $-\smash{\Psi_{\tilde h_0}}$, where $\tilde h_0$ 
is the static committor~\eqref{eq:def_htilde0}. In fact, 
by~\eqref{eq:Z0Nlambda1} the $\ex$-component of $\Psi_{\tilde h_0}$ is close to 
$-\lambda_1(y) B(y)^2 \Phi(x,y)$. We thus choose as test flow 
\begin{equation}
\label{eq:test_flow} 
 \ph(x,y) 
 = \frac{1}{4\eps C} \lambda_1(y) B(y)^2 \Phi(x,y) \ex\;,
\end{equation} 
where the constant $C$ is chosen in such a way that the unit flux 
condition
\begin{equation}
 \int_{\partial \A} (\ph\cdot\nn) \6\lambda = -1
\end{equation}
is met. This amounts to requiring
\begin{align}
 4\eps C 
 ={}& \int_0^1 \lambda_1(y) B(y)^2 \Phi(a(y),y) \6y \\ 
 ={}& \int_0^1 \lambda_1(y) B(y)^2 \bigbrak{\Phi_0(a(y),y) + 
\Phi_\perp(a(y),y)} 
\6y \\ 
 ={}& \int_0^1 \lambda_1(y) \biggbrak{1 - A(y)\delta_1(y)  
+ \biggOrder{\frac{\eps\ell}{\sigma^{2}}}
+ \biggOrder{\frac{\eps^3\ell^{3/2}}{\sigma^{6}\sqrt{\lambda_1(y)}}}} \6y 
\\
&{}+ \biggOrder{\frac{\eps}{\sigma^{3/2}} \int_0^1 \sqrt{\lambda_1(y)} 
\e^{-h_-(y)/\sigma^2}\6y}\\
 ={}& \avrg{\lambda_1 \brak{1 - A\delta_1}}  
+ \biggOrder{\frac{\eps\ell}{\sigma^{2}}\avrg{\lambda_1}}
+ \biggOrder{\frac{\eps^3\ell^{3/2}}{\sigma^{6}}\avrg{\sqrt{\lambda_1}}}\;,
\end{align}
where we have used the expression~\eqref{eq:B2Phi0} for $B^2\Phi_0$, the 
expression~\eqref{eq:avrg_ODE} for $\avrg{\lambda_1 [A - \delta_1]}$, 
Corollary~\ref{cor:bound_Phiperp} to estimate the contribution of $\Phi_\perp$, 
as well as \eqref{eq:lambda1}. 

\begin{prop}
There exists a constant $M_-$ such that 
\begin{equation}
 \sD(-\ph) \leqs 
 \frac{1}{4\eps C^2} \biggbrak{\avrg{\lambda_1\brak{1-A\delta_1}} + 
\frac{\eps\ell(\sigma)^2 M_-}{\sigma^2} \avrg{\lambda_1} 
+ \frac{\eps^2 M_-}{\sigma^{7/2}} \avrg{\sqrt{\lambda_1}}} 
\bigbrak{1+\Order{\sigma^2}}\;.
\end{equation} 
\end{prop}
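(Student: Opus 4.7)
The plan is to compute $\sD(-\ph) = \sD(\ph)$ directly from its definition \eqref{eq:def_D}. Since $\ph = \ph_x \ex$ has no $\ey$-component and $D^{-1}_{xx} = 1$, and since $\pi = \pi_0 \Phi$ with $1/\pi_0(x\vert y) = Z_0(y) \e^{2V_0(x,y)/\sigma^2}$, a direct substitution of $\ph_x = (4\eps C)^{-1}\lambda_1(y) B(y)^2 \Phi(x,y)$ yields
\begin{equation}
\sD(\ph) = \frac{1}{8\eps\sigma^2 C^2} \int_0^1 \lambda_1(y)^2 B(y)^4 Z_0(y) \int_{a(y)}^{b(y)} \Phi(x,y) \e^{2V_0(x,y)/\sigma^2} \6x \6y\;.
\end{equation}
This puts the problem in the form of estimating one-dimensional integrals, for each fixed $y$, of $\Phi(x,y)$ against the fast-growing weight $\e^{2V_0(\cdot,y)/\sigma^2}$ on the saddle region $(a(y),b(y))$.

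The next step is to split $\Phi = \Phi_0 + \Phi_\perp^* + \Phi_\perp^1$ and treat the three contributions separately. For the main term $\Phi_0$, I will insert the explicit formula \eqref{eq:B2Phi0} for $B^2\Phi_0$, yielding two pieces: one proportional to $(1 - A\delta_1)$ which multiplies $\tilde N(y) = \int_{a(y)}^{b(y)} \e^{2V_0/\sigma^2}\6x$, and one proportional to $(A - \delta_1) \int (2h_0 - 1) \e^{2V_0/\sigma^2}\6x$. Laplace asymptotics concentrate the integral near $x^*_0(y)$ where $h_0(x^*_0\vert y) = \tfrac12$ by symmetry of the committor around the saddle; hence the second piece is smaller than the first by a factor $\Order{\sigma^2}$. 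Combining what remains with the identity $Z_0(y) \tilde N(y) \lambda_1(y) \asymp 2\sigma^2/B(y)^2$ from \eqref{eq:Z0Nlambda1} reduces the main term to $(4\eps C^2)^{-1}\avrg{\lambda_1[1-A\delta_1]}[1+\Order{\sigma^2}]$, as required.

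For the $\Phi_\perp^*$ contribution, I will apply Proposition~\ref{prop:Phistar} with the test function $f(x) = \indicator{(a(y),b(y))}(x)\e^{4V_0(x,y)/\sigma^2}$; since $V_0(x^*_0(y),y) = 0$ and $V_0 \leqs 0$ on $\cD$, this $f$ satisfies $|f| \leqs \e^{2V_0/\sigma^2}$ with constant $M=1$, so one obtains $|\int \Phi_\perp^* \e^{2V_0/\sigma^2}\6x| \lesssim Z_0(y)(\eps/\sigma^2)\lambda_1(y)\ell(\sigma)^2$ (the second power of $\ell$ arising from combining the bound of Proposition~\ref{prop:Phistar} with a careful Laplace expansion of the weight). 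Using once more $Z_0 \lambda_1 B^2 \asymp \sigma$ to simplify $\lambda_1^2 B^4 Z_0^2$, this contribution is of order $\eps\ell^2 \sigma^{-2}\avrg{\lambda_1}/C^2$. For the remainder $\Phi_\perp^1$, I will use the pointwise bound $|\Phi_\perp^1(x,y)| \lesssim \eps^2 \e^{V_0/\sigma^2}/(\sigma^{7/2} B(y)^2\sqrt{\lambda_1(y)})$ from Corollary~\ref{cor:bound_Phiperp}; this reduces the $x$-integral to $\int \e^{3V_0/\sigma^2}\6x$, which is $\Order{\sigma}$ by Laplace, and then $Z_0\lambda_1 B^2 \asymp \sigma$ yields the error bound $\eps^2 \sigma^{-7/2} \avrg{\sqrt{\lambda_1}}/C^2$.

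The principal obstacle is tracking the sharp Laplace asymptotics: the cancellation in the $(2h_0-1)$ term for the main piece and the correct power of $\ell(\sigma)$ in the $\Phi_\perp^*$ contribution both hinge on combining the exponentially small estimates of Section~\ref{ssec:invariant_static} with the saddle-point structure of $\e^{2V_0/\sigma^2}$ near $x^*_0(y)$. Once these pieces are in place, summing the three contributions and absorbing lower-order terms into multiplicative factors of the form $1+\Order{\sigma^2}$ gives the claimed bound.
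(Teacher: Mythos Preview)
Your overall strategy matches the paper's: compute $\sD(\ph)$ directly, split $\Phi = \Phi_0 + \Phi_\perp^* + \Phi_\perp^1$, and bound the pieces using Proposition~\ref{prop:Phistar} and the pointwise estimate~\eqref{eq:Phiperp_sup}. However, your treatment of the $(2h_0-1)$ contribution in the $\Phi_0$ piece has a gap. The assertion ``$h_0(x^*_0\vert y) = \tfrac12$ by symmetry of the committor'' is not correct in general: the potential $V_0(\cdot,y)$ need not be symmetric around the saddle, and the cubic term in its Taylor expansion there gives $h_0(x^*_0\vert y) = \tfrac12 + \Order{\sigma}$. A Laplace argument then only yields $\pscal{\mu}{2h_0-1} = \Order{\sigma}$, which translates into a multiplicative error $1+\Order{\sigma}$ on the leading term rather than the required $1+\Order{\sigma^2}$.

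The paper sidesteps Laplace altogether for this term by noting that $\mu = -\partial_x\tilde h_0$ is, up to the factor $N/\tilde N$, equal to $-\partial_x h_0$, so that
\[
\pscal{\mu}{h_0} = -\frac{N}{\tilde N}\int_{a(y)}^{b(y)} h_0\,\partial_x h_0\,\6x
= \frac{N}{2\tilde N}\bigl[h_0(a(y))^2 - h_0(b(y))^2\bigr]
= \frac12\bigl[1+\Order{\e^{-\kappa/\sigma^2}}\bigr]\;,
\]
cf.~\eqref{eq:muh0}. This is an exact integration-by-parts identity, and it makes $\pscal{\mu}{2h_0-1}$ exponentially small rather than merely $\Order{\sigma}$. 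Combined with~\eqref{eq:avrg_ODE} for the averaged factor $\avrg{\lambda_1(A-\delta_1)}$, this controls the $(A-\delta_1)$ contribution within the stated error budget. As a minor aside, your attribution of the $\ell(\sigma)^2$ to the $\Phi_\perp^*$ bound is off: Proposition~\ref{prop:Phistar} produces only one power of $\ell$; the second power in the proposition's error term enters through~\eqref{eq:avrg_ODE} in the main-term analysis.
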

\begin{proof}
Substituting the expression~\eqref{eq:test_flow} of the test flow in the 
definition~\eqref{eq:def_D} of $\sD$ and using~\eqref{eq:Z0Nlambda1}, we 
obtain 
\begin{align}
\sD(-\ph) 
&= \frac{1}{8\eps\sigma^2C^2} \int_0^1 \lambda_1(y)^2 B(y)^4 
\int_{a(y)}^{b(y)}  \frac{\Phi(x,y)^2}{\pi(x,y)} \6x\6y \\
&= \frac{1}{4\eps C} 
\int_0^1 \lambda_1(y) B(y)^2 
\pscal{\mu}{\Phi} \6y \bigbrak{1+\Order{\sigma^2}}\;, 
\end{align}
where $\mu$ is the probability density introduced in~\eqref{eq:def_mu}.  
The leading contribution comes from the term (cf.~\eqref{eq:B2Phi0})
\begin{equation}
 \pscal{\mu}{\Phi_0}
 = \frac{1}{B(y)^2} \Bigbrak{1 - A(y)\delta_1(y) + (A(y) - \delta_1(y)) 
\pscal{\mu}{2h_0-1}} \bigbrak{1+\Order{\lambda_1\ell}}\;.
\end{equation}
Therefore, when integrating against $\lambda_1(y)B(y)^2$, \eqref{eq:muh0} 
and~\eqref{eq:avrg_ODE} imply that the contribution of the term in $(A(y) - 
\delta_1(y))$ satisfies the claimed bound. Furthermore, it follows from 
Proposition~\ref{prop:Phistar} that 
\begin{equation}
\label{eq:bound_muPhiperp} 
 \bigabs{\pscal{\mu}{\Phi_\perp^*}} 
 = \frac{Z_0(y)}{\tilde N(y)} 
 \bigabs{\pscal{\pi_0}{\e^{4V_0/\sigma^2}\Phi_\perp^*}}
 \lesssim \frac{\eps}{\sigma^2} \frac{Z_0(y)\lambda_1(y)\ell}{\tilde N(y)}
 \lesssim \frac{\eps\ell}{\sigma^2 B(y)^2}\;.
\end{equation}
Combining this with the bound 
\begin{equation}
 \bigabs{\pscal{\mu}{\Phi_\perp^1}} \lesssim 
\frac{\eps^2}{\sigma^{7/2}B(y)^2\sqrt{\lambda_1(y)}}\;,
\end{equation} 
which follows from~\eqref{eq:Phiperp_sup}, yields the result. 
\end{proof}

Since the test flow $\ph$ is not exactly divergence-free, to complete the 
proof of the lower bound it remains to control the error term 
in~\eqref{eq:Thomson_defective}. 

\begin{prop}
The error term satisfies 
\begin{equation}
 \biggabs{\int_{\ABc} (\nabla\cdot\ph) h_{\AB} \6x\6y} 
 \lesssim \frac{1}{4\eps C} 
 \biggbrak{\frac{\eps\ell(\sigma)^2}{\sigma^2} \avrg{\lambda_1}
 + \frac{\eps^2}{\sigma^5} \avrg{\sqrt{\lambda_1}}}\;.
\label{eq:defective1} 
\end{equation}
\end{prop}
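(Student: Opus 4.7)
Since the test flow $\ph = \ph_x \ex$ is purely horizontal, $\nabla\cdot\ph = \partial_x\ph_x$. The plan is to integrate by parts in $x$, exploit the unit-flux normalisation of $\ph$, and then recycle the identities behind the computation of $4\eps C$ to obtain a cancellation of the leading-order contributions. Using the Dirichlet conditions $h_{\AB}(a(y),y)=1$ and $h_{\AB}(b(y),y)=0$, integration by parts yields
\begin{equation}
 \int_{\ABc}(\nabla\cdot\ph) h_{\AB} \6x\6y
 = -\avrg{\ph_x(a(\cdot),\cdot)}
 - \avrg{\int_{a(y)}^{b(y)} \ph_x \partial_x h_{\AB} \6x}\;.
\end{equation}
The first term equals $\mp 1$ by the unit-flux condition used to fix $C$. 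Substituting $h_{\AB} = \tilde h_0 + g$ from Proposition~\ref{prop:bounds_hAB} and using $\partial_x\tilde h_0 = -\mu$ from~\eqref{eq:def_mu}, the integrand in the second term becomes
\begin{equation}
 \ph_x \partial_x h_{\AB}
 = -\frac{\lambda_1(y) B(y)^2}{4\eps C} \bigbrak{\Phi(x,y)\mu(x\vert y) - \Phi(x,y)\partial_x g(x,y)}\;.
\end{equation}

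The first step is to identify the cancellation between the boundary contribution and $-\avrg{\lambda_1 B^2\pscal{\mu}{\Phi}}/(4\eps C)$. Writing $\Phi = \Phi_0 + \Phi_\perp$, one has on the one hand $\lambda_1 B^2 \Phi_0(a(y),y) \simeq \lambda_1(1+A)(1-\delta_1)$ and on the other hand $\lambda_1 B^2\pscal{\mu}{\Phi_0} \simeq \lambda_1(1-A\delta_1)$, since $\pscal{\mu}{2h_0-1}$ is exponentially small by~\eqref{eq:muh0}. These two expressions coincide after averaging in $y$, up to $\avrg{\lambda_1(A-\delta_1)}$, which is controlled by the already established identity~\eqref{eq:avrg_ODE}. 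The remaining contributions from $\Phi_\perp^*$ and $\Phi_\perp^1$ are estimated exactly as in the previous proposition: $\pscal{\mu}{\Phi_\perp^*}$ is bounded by~\eqref{eq:bound_muPhiperp} and the pointwise bound $\abs{\Phi_\perp^*(a(y),y)}$ by~\eqref{eq:Phiperp_sup}, using that $\e^{V_0(a(y),y)/\sigma^2} \lesssim \sqrt{\lambda_1(y)}$, and similarly for $\Phi_\perp^1$. After multiplication by $\lambda_1 B^2$ and integration over $y$, these yield error terms of order $\eps\ell/\sigma^2 \cdot \avrg{\lambda_1}$ and $\eps^2/\sigma^{7/2}\cdot\avrg{\sqrt{\lambda_1}}$.

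It remains to control the term $\avrg{\lambda_1 B^2 \int_a^b \Phi\partial_x g\6x}$. Since $g$ vanishes on $\partial\A\cup\partial\B$, one more integration by parts in $x$ transforms it into $-\avrg{\lambda_1 B^2 \int_a^b \partial_x\Phi\cdot g\6x}$. The contribution of $\partial_x\Phi_\perp$ is bounded by Cauchy--Schwarz against $\pi_0$, using the derivative bound~\eqref{eq:dxPhiperp_L2} together with the $L^2(\pi_0)$-bound~\eqref{eq:bound_g2} on $g$; this produces the two types of contributions appearing on the right-hand side of~\eqref{eq:defective1}. The main obstacle, and the place where the argument has to be slightly more delicate, is the term $\int \partial_x\Phi_0\cdot g\6x$, because $\partial_x\phi_1$ is sharply peaked around $x^*_0(y)$ and the direct $L^2(\pi_0)$ Cauchy--Schwarz bound is useless. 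I would exploit the identity $\partial_x\phi_1 \simeq -(\phi_--\phi_+)\mu$, which follows from the representation~\eqref{eq:phi1_rep} of $\phi_1$ together with $\partial_xh_0 = -\e^{2V_0/\sigma^2}/N$, thereby turning the integral into $-\alpha_1(\phi_--\phi_+)\pscal{\mu}{g}$ plus an exponentially small remainder; the factor $\pscal{\mu}{g}$ is then estimated in the same way as $\pscal{\mu}{\Phi_\perp}$ was in the previous proposition, using the representation of $g$ as a correction to the static committor and the Lyapunov argument from Proposition~\ref{prop:bounds_hAB}. Collecting all contributions produces exactly the claimed bound.
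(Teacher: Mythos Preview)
Your approach is correct in spirit but takes an unnecessary detour compared to the paper's proof. You integrate by parts at the very start, which produces a boundary term $-\avrg{\ph_x(a(\cdot),\cdot)}=\mp1$ that you then must cancel against the leading part of $\avrg{\lambda_1 B^2\pscal{\mu}{\Phi}}/(4\eps C)$. This cancellation does work, via the definition of $C$ and the identity~\eqref{eq:avrg_ODE}, exactly as you say. But the paper avoids this bookkeeping entirely: it keeps the integral in the form $\frac{1}{4\eps C}\int_0^1\lambda_1 B^2\pscal{\partial_x\Phi}{h_{\AB}}\6y$ and decomposes directly into the five pieces $\pscal{\partial_x\Phi_0}{\tilde h_0}$, $\pscal{\partial_x\Phi_0}{g}$, $\pscal{\partial_x\Phi_\perp^*}{\tilde h_0}$, $\pscal{\partial_x\Phi_\perp^1}{\tilde h_0}$, $\pscal{\partial_x\Phi_\perp}{g}$. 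The first of these already isolates the $\lambda_1(A-\delta_1)$ contribution without any boundary term, and your second integration by parts on the $g$-piece brings you back precisely to the paper's remaining terms.

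Two places in your write-up are a bit loose. For $\pscal{\partial_x\Phi_0}{g}\sim\alpha_1(\phi_--\phi_+)\pscal{\mu}{g}$, the correct way to bound $\pscal{\mu}{g}$ is not ``the same way as $\pscal{\mu}{\Phi_\perp}$'' (which used Proposition~\ref{prop:Phistar} and the pointwise bound~\eqref{eq:Phiperp_sup}), but simply the weighted Cauchy--Schwarz $\pscal{\mu}{g}^2\leqs\pscal{\pi_0^{-1}}{\mu^2}\pscal{\pi_0}{g^2}=(Z_0/\tilde N)\pscal{\pi_0}{g^2}$, which gives $\pscal{\mu}{g}\lesssim\eps/\sigma^2$; this is exactly what the paper does. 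For $\pscal{\partial_x\Phi_\perp}{g}$, ``Cauchy--Schwarz against $\pi_0$'' is incomplete as stated, because the integral is unweighted; the paper inserts the crude bound $1\leqs Z_0\pi_0$ on $\tilde\cD$ to obtain $\pscal{\partial_x\Phi_\perp}{g}^2\leqs Z_0^2\pscal{\pi_0}{(\partial_x\Phi_\perp)^2}\pscal{\pi_0}{g^2}$, and it is this extra $Z_0$ factor that produces the $\sigma^{-5}$ in the final estimate.
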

\begin{proof}
The definition~\eqref{eq:test_flow} of the test flow implies
\begin{equation}
 \int_{\ABc} (\nabla\cdot\ph) h_{\AB} \6x\6y
 = \frac{1}{4\eps C} \int_0^1 \lambda_1(y) B(y)^2 
\pscal{\partial_x\Phi}{h_{\AB}} \6y\;.
\end{equation} 
We decompose the inner product as
\begin{equation}
\label{eq:sum_pscal} 
\pscal{\partial_x\Phi}{h_{\AB}}
= \pscal{\partial_x\Phi_0}{\tilde h_0}
+ \pscal{\partial_x\Phi_0}{g}
+ \pscal{\partial_x\Phi_\perp^*}{\tilde h_0}
+ \pscal{\partial_x\Phi_\perp^1}{\tilde h_0}
+ \pscal{\partial_x\Phi_\perp}{g}
\end{equation} 
and estimate the resulting terms separately. For the first term, we note that 
\begin{equation}
 \pscal{\partial_x\Phi_0}{\tilde h_0}
 = 2\frac{A(y)-\delta_1(y)}{B(y)^2} \int_{a(y)}^{b(y)} \tilde h_0 \partial_x 
h_0\6x \bigbrak{1+\Order{\lambda_1\ell}}\;.
\end{equation} 
As in~\eqref{eq:muh0} above, the integral is exponentially close to $\frac12$, 
which results in a negligible term when integrating against 
$\lambda_1(y)B(y)^2$, owing to~\eqref{eq:avrg_ODE}. 

Regarding the second term, we observe that 
\begin{equation}
 \bigabs{\pscal{\partial_x\Phi_0}{g}} 
 \lesssim \frac{\abs{A(y)-\delta_1(y)}}{B(y)^2} \bigabs{\pscal{\partial_x 
h_0}{g}}\;, 
\end{equation}
where 
\begin{equation}
 \pscal{\partial_x h_0}{g}^2 
 \leqs \pscal{\pi_0^{-1}}{(\partial_x h_0)^2} 
 \pscal{\pi_0}{g^2}
 \leqs \frac{Z_0(y)}{N(y)} \frac{\eps^2}{\sigma^3 Z_0(y)}
 \lesssim \frac{\eps^2}{\sigma^4}
\end{equation} 
by~\eqref{eq:bound_g2}. The contribution of this term is thus of the order of 
$(\eps/\sigma^2)\avrg{\lambda_1}$. 

The third term in~\eqref{eq:sum_pscal} satisfies 
(cf.~\eqref{eq:bound_muPhiperp})
\begin{equation}
 \pscal{\partial_x\Phi_\perp^*}{\tilde h_0}^2
 = \pscal{\Phi_\perp^*}{\partial_x\tilde h_0}^2
 = \pscal{\mu}{\Phi_\perp^*}^2
\lesssim \frac{\eps^2\ell^2}{\sigma^4B(y)^4}\;,
\end{equation} 
which results in  a contribution of order 
$(\eps\ell/\sigma^2)\avrg{\lambda_1}$. 
The fourth term in~\eqref{eq:sum_pscal} can be 
bounded using~\eqref{eq:Phiperp_L2} by 
\begin{equation}
 \pscal{\partial_x\Phi_\perp^1}{\tilde h_0}^2
 = \pscal{\Phi_\perp^1}{\partial_x\tilde h_0}^2
 \leqs \pscal{\pi_0}{(\Phi_\perp^1)^2} \pscal{\pi_0^{-1}}{(\partial_x\tilde 
h_0)^2} 
\lesssim \frac{\eps^4}{\sigma^8B(y)^2} \frac{Z_0(y)}{N(y)}\;
\end{equation}
Using~\eqref{eq:Z0Nlambda1} and integrating against $\lambda_1 B^2$, we obtain 
indeed a quantity bounded by the second term on the right-hand side 
of~\eqref{eq:defective1}, though with a slightly better power of $\sigma$. For 
the last term in~\eqref{eq:sum_pscal}, we use the quick-and-dirty bound 
\begin{equation}
 \pscal{\partial_x\Phi_\perp}{g}^2
 \leqs Z_0(y)^2 \pscal{\pi_0}{(\partial_x\Phi_\perp)^2} \pscal{\pi_0}{g^2}
 \lesssim Z_0(y)^2 \frac{\eps^2}{\sigma^8B(y)^2} \frac{\eps^2}{\sigma^3 Z_0(y)}
 = \frac{\eps^4Z_0(y)}{\sigma^{11}B(y)^2}
\end{equation} 
implied by~\eqref{eq:dxPhiperp_L2}, which accounts for the second summand 
in~\eqref{eq:defective1}. 
\end{proof}


\section{Proof of the main result}
\label{sec:proof_main} 

Theorem~\ref{thm:main_general} follows immediately from 
Proposition~\ref{prop:magic}, Corollary~\ref{cor:hstarAB_pi} and the 
estimate of the capacity given in Theorem~\ref{thm:capacity}. 

We thus proceed with the proof of Theorem~\ref{thm:main_ff}. To this end, we 
start by evaluating more precisely the expression~\eqref{eq:main_general} of 
the expected transition time integrated with respect to the equilibrium 
measure. 

Using~\eqref{eq:delta1ff} and~\eqref{eq:delta1bar}, we obtain 
\begin{equation}
 \delta_1(y) = \frac{1}{\avrg{\lambda_1}} 
\biggbrak{\avrg{\lambda_1A}\biggpar{1 + 
\biggOrder{\frac{\avrg{\lambda_1}}{\eps}}} + R_\delta}\;,
\end{equation} 
where 
\begin{equation}
 R_\delta = 
 \biggOrder{\frac{\eps\ell^3}{\sigma^2} \avrg{\lambda_1}}
 + \biggOrder{\frac{\eps^3\ell^{3/2}}{\sigma^6}\avrg{\sqrt{\lambda_1}}}\;.
\end{equation} 
Using the expressions~\eqref{eq:average_lambda1A} for $\avrg{\lambda_1}$ and 
$\avrg{\lambda_1A}$, we obtain 
\begin{equation}
\label{eq:avrg_1delta} 
 1 - \avrg{\delta_1} 
 = \frac{2\avrg{r_+}}{\avrg{\lambda_1}}
 \biggbrak{1 + 
\biggOrder{\frac{\avrg{\lambda_1}\avrg{\lambda_1A}}{\eps\avrg{r_+}}} + 
\biggOrder{\frac{R_\delta}{\avrg{r_+}}}} 
 \bigbrak{1 + \Order{\sigma^2}}\;.
\end{equation} 
In a similar way, we get
\begin{equation}
\label{eq:avrg_l1Adelta} 
 \avrg{\lambda_1[1-A\delta_1]}
 = \frac{4\avrg{r_-}\avrg{r_+}}{\avrg{\lambda_1}} 
 \biggbrak{1 + 
\biggOrder{\frac{\avrg{\lambda_1}\avrg{\lambda_1A}^2}{\eps\avrg{r_-}\avrg{r_+}}}
 + \bigOrder{\avrg{\lambda_1A}R_\delta}}
 \bigbrak{1 + \Order{\sigma^2}}\;.
\end{equation} 
Substituting in~\eqref{eq:main_general} yields
\begin{equation}
 \int_{\partial\A} \bigexpecin{x}{\tau_\B} \6\nu_{\AB}
 = \frac{\eps}{\avrg{r_-}} 
 \bigbrak{1 + R_1(\eps,\sigma)}\;,
\end{equation}
where
\begin{equation}
\bigabs{R_1(\eps,\sigma)} \lesssim   
 \frac{\avrg{\lambda_1}\avrg{\lambda_1A}}{\eps\avrg{r_+}}
+\frac{\avrg{\lambda_1}\avrg{\lambda_1A}^2}{\eps\avrg{r_-}\avrg{r_+}}
+ \frac{R_\delta}{\avrg{r_+}}
+ \bigabs{R_0}\;,
\end{equation} 
and $R_0$ is defined in~\eqref{eq:def_R0}. Discarding error terms already 
accounted for in the previous estimate, and using~\eqref{eq:avrg_1delta} 
and~\eqref{eq:avrg_l1Adelta}, we obtain 
\begin{equation}
 \bigabs{R_0} 
 \lesssim \sigma^2 
 + \frac{\eps\ell}{\sigma^2}
 \biggbrak{\frac{\avrg{\lambda_1}\avrg{\sqrt{\lambda_1}}}{\avrg{r_+}}
 + \frac{\ell\avrg{\lambda_1}^2}{\avrg{r_-}\avrg{r_+}}}
 + \frac{\eps^2\ell}{\sigma^{7/2}}
 \frac{\avrg{\lambda_1}\avrg{\sqrt{\lambda_1}}}{\avrg{r_-}\avrg{r_+}}\;.
\end{equation} 
In order to simplify the expression of $R_1$, we start by noting than Jensen's 
inequality implies 
\begin{equation}
 \avrg{\sqrt{\lambda_1}}^2 \leqs \avrg{\lambda_1}\;.
\end{equation} 
Then we define 
\begin{equation}
 \sR_- = \frac{\avrg{r_-}}{\avrg{r_+}}\;, \qquad 
 \sR = \frac{\avrg{r_-}}{\avrg{r_+}} + \frac{\avrg{r_+}}{\avrg{r_-}}\;. 
\end{equation} 
A short computation shows that 
\begin{equation}
 \bigabs{R_1(\eps,\sigma)}
 \lesssim \sigma^2 + \biggpar{\frac{\eps\ell^3}{\sigma^2} + 
\frac{\eps^2\ell}{\sigma^{7/2}\avrg{\lambda_1}^{1/2}} + 
\frac{\avrg{\lambda_1}^2}{\eps}} \sR 
+ \frac{\avrg{\lambda_1}}{\eps} (1 + \sR_-)\;.
\end{equation} 
This expression is indeed equivalent to~\eqref{eq:R1_thm}, since we have 
\begin{equation}
 \sR \lesssim \e^{2H/\sigma^2}\;, 
 \qquad 
 \sR_- \lesssim \e^{2H_-/\sigma^2}
\end{equation} 
for the constants $H$ and $H_-$ introduced in~\eqref{eq:def_HH}. 

For $R_1$ to be small, $\eps$ has to satisfy the condition
\begin{equation}
 \avrg{\lambda_1}^2 \sR \vee \avrg{\lambda_1} \sR_- 
 \ll \eps \ll 
 \frac{1}{\sR} \wedge \frac{\avrg{\lambda_1}^{1/4}}{\sR^{1/2}}\;.
\end{equation} 
Since $\avrg{\lambda_1}$ has order $\e^{-2(h_-^{\min}\vee 
h_+^{\min})/\sigma^2}$, where $h_\pm^{\min}$ have been defined 
in~\eqref{eq:def_hmin}, by treating separately the cases $h_+^{\min} > 
h_-^{\min}$ and $h_+^{\min} < h_-^{\min}$, one readily obtains that this 
condition can be satisfied for a non-empty interval of values of $\eps$ if and 
only if Condition~\eqref{eq:condition_hmin} is met. 

It remains to show that we can replace the expectation when starting in the 
equilibrium measure $\nu_{\AB}$ by the expectation when starting in a single 
point on $\partial\A$. This will follow if we can show that 
$\expecin{z}{\tau_\B}$ depends little on the starting point $z\in\partial\A$. 
We will do this by adapting an argument used in the proof 
of~\cite[Proposition~3.6]{BG12a}. 

We first fix a point $z\in\partial\A$, and show that $\expecin{\bar 
z}{\tau_\B}$ is close to $\expecin{z}{\tau_\B}$ for all $\bar z$ in a ball of 
small radius of order $1$ centred in $z$. 
Let $\Omega$ be an event of probability close to $1$, on which 
$\smash{\tau_{\B^+}^z \leqs \tau_\B^{\bar z} \leqs \tau_{\B^-}^z}$, where 
the sets $\B^-\subset \B \subset \B^+$ have boundaries close to each other. 
Using the Cauchy--Schwarz inequality, we obtain 
\begin{equation}
 \expecin{z}{\tau_{\B^+}} - \sqrt{\expecin{z}{\tau_{\B^+}^2}} 
\sqrt{\fP(\Omega^c)}
 \leqs \expecin{\bar z}{\tau_\B} \leqs
 \expecin{z}{\tau_{\B^-}} + \sqrt{\expecin{\bar z}{\tau_{\B}^2}} 
\sqrt{\fP(\Omega^c)}\;.
\end{equation} 
Using a standard large-deviation estimate on $\probin{\bar z}{\tau_\B > T}$ for 
a fixed $T>0$, one easily obtains a bound of the form 
\begin{equation}
\label{eq:proofmain10} 
 \expecin{\bar z}{\tau_{\B}^2} \leqs T_1(\eta) \e^{\eta/\sigma^2} 
\expecin{z}{\tau_\B}^2 
\end{equation} 
with $T_1(\eta) < \infty$ for all $\eta>0$ (see for instance~\cite[Sections 5.2 
to 5.4]{BG12a}, which applies to a much harder infinite-dimensional setting). 
It thus suffices to show that $\fP(\Omega^c) \leqs \e^{-\kappa/\sigma^2}$ for 
some $\kappa>0$ and to apply~\eqref{eq:proofmain10} with $\eta < \kappa/2$ to 
show that $\expecin{\bar z}{\tau_\B}$ and $\expecin{z}{\tau_\B}$ are 
exponentially close to each other. We do this by choosing 
\begin{equation}
 \Omega = \biggset{\frac{\norm{\bar z_t - z_t}}{\norm{\bar z - z}} \leqs 
c\e^{-mt} \; \forall t\geqs 0}\;.
\end{equation} 
Indeed, in~\cite{Martinelli_Olivieri_Scoppola_89} it is shown that this event 
has a probability exponentially close to $1$ for appropriate values of $c, 
m>0$ (see also~\cite{Tsatsoulis_Weber_18} for a more streamlined version of the 
proof of~\cite{Martinelli_Olivieri_Scoppola_89} in a more general setting). 

It remains to show that $\expecin{z}{\tau_\B}$ changes little when $z$ moves 
along the boundary $\partial\A$. To do this, we fix, say, $\bar z = (y,a(y))$ 
with $0 < y < 1$ and $z = (1, a(1))$ on $\partial\A$. Let 
\begin{equation}
 \tau_1(\bar z) = \inf\setsuch{t>1}{y_t = 1}
\end{equation} 
be the first time at which the sample path starting in $\bar z$ hits the line 
$\set{y=1}$. Using for instance~\cite[Proposition~6.3]{BG14}, one easily obtains 
that with probability exponentially close to $1$, $\tau_1(\bar z)$ is bounded by 
a constant of order $1$, and $\norm{\bar z_{\tau_1} - z}$ is smaller than an 
arbitrary constant of order $1$. From that we deduce as above that 
\begin{equation}
 \bigexpecin{\bar z}{\tau_\B}
 = \bigexpecin{z}{\tau_\B} 
 \biggbrak{1 + \biggOrder{\frac{\avrg{r_-}}{\eps}}}\;,
\end{equation} 
which concludes the proof of Theorem~\ref{thm:main_ff}. 


\appendix


\section{The two-state jump process}
\label{app:jump} 

\begin{figure}
\begin{center}
\begin{tikzpicture}[->,>=stealth',shorten >=2pt,shorten 
<=2pt,auto,node
distance=4.0cm, thick,main node/.style={circle,scale=0.7,minimum size=1.1cm,
fill=blue!20,draw,font=\sffamily\Large}]

  \node[main node] (1) {$-$};
  \node[main node] (2) [right of=1] {$+$};

  \path[every node/.style={font=\sffamily\small}]
    (1) edge [above] node {$r_-(y)/\eps$} (2)
    ;
\end{tikzpicture}
\vspace{-5mm}
\end{center}
\caption[]{Absorbing variant of the two-state markovian jump process.
}
\label{fig:jump2}
\end{figure}
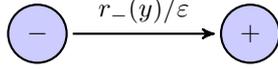

\begin{proof}[{\sc Proof of Proposition~\ref{prop:twostate}}] Consider a 
modified two-state process in which the $+$ state has been made absorbing 
(\figref{fig:jump2}). Its first-hitting time $\tau_+$, starting from the $-$ 
state at a fixed time $y_0$, agrees with the corresponding first-hitting time 
of the original process. The occupation probability $p_-(y)$ of the $-$ state 
satisfies
\begin{equation}
 \eps p_-'(y) = - r_-(y)p_-(y)
\end{equation}
with initial value $p_-(y_0) = 1$, and is thus given by  
\begin{equation}
 p_-(y) 
 = \bigprobin{y_0}{\tau_+ > y} 
 = \e^{-R_-(y,y_0)/\eps}\;.
\end{equation} 
The expectation of $\tau_+$ is thus given by 
\begin{align}
\bigexpecin{y_0}{\tau_+} 
&= \int_0^\infty \bigprobin{y_0}{\tau_+ > y_0 + y} \6y \\
&= \int_{y_0}^\infty \e^{-R_-(y,y_0)/\eps} \6y\;.
\end{align}
Noting that by periodicity, 
\begin{equation}
 R_-(y_0+n+\bar y, y_0) 
 = n R_-(1,0) + R_-(y_0+\bar y,y_0)\;,
\end{equation} 
we obtain 
\begin{equation}
 \bigexpecin{y_0}{\tau_+} 
 = \sum_{n=0}^\infty \e^{-n R_-(1,0)/\eps} \int_0^1 \e^{-R_-(y_0+\bar 
y,y_0)/\eps} \6\bar y\;.
\end{equation} 
Summing the geometric series yields the claimed result. 
\end{proof}


\section{Proofs of the potential-theoretic results}
\label{app:pot} 

In this section, we provide quick proofs of the potential-theoretic results 
stated in Section~\ref{sec:pot}. Except for a small addition in the case of 
test flows which are not divergence-free, all theses proofs are contained 
in~\cite{Landim_Mariani_Seo19}. We include them here for convenience, as we use 
slightly different notations and scalings. 
\NB{The main ingredient for these results are well-known relations between SDEs 
and PDEs. In particular, Dynkin's formula (or Ito's formula for stopping times) 
states that if $z_t$ satisfies a $d$-dimensional SDE with infinitesimal 
generator $\sL$, then 
\begin{equation}
\label{eq:Dynkin} 
 \bigexpecin{z}{\ph(z_\tau)} 
 = \ph(z) + \biggexpecin{z}{\int_0^\tau (\sL\ph)(z_s) \6s}
\end{equation} 
holds for any stopping time $\tau$, and sufficiently regular functions $\ph$, 
see for instance~\cite[Theorem~7.4.1]{Oeksendal}.}

\NB{
\begin{remark}
Another useful link between SDEs and PDEs is the Feynman--Kac formula, which 
states that 
\begin{equation}
 u(t,z) = \biggexpecin{z}{\exp\biggset{-\int_0^t q(z_s)\6s} \ph(z_t)}
\end{equation} 
satisfies the initial value problem 
\begin{equation}
 \begin{cases}
 \dfrac{\partial u}{\partial t}(t,z) 
 = (\sL u)(t,z) - q(z) u(t,z)\;, \\[10pt]
 u(0,z) = \ph(z) 
 \label{eq:PDE_FK} 
\end{cases}
\end{equation}
under appropriate conditions on $\ph$ and $q$, 
see e.g.~\cite[Theorem~8.2.1]{Oeksendal}. Combining this with Dynkin's formula, 
and letting $t$ go to infinity, we obtain that if $\tau$ denotes the first-exit 
time from a sufficiently regular domain $\cD$, then 
\begin{equation}
\label{eq:DFK} 
 u(z) = \biggexpecin{z}{\exp\biggset{-\int_0^\tau q(z_s)\6s} \ph(z_\tau) 
 - \int_0^\tau \exp\biggset{-\int_0^s q(z_v)\6v} \theta(z_s)\6s}
\end{equation} 
satisfies the boundary value problem 
\begin{equation}
 \begin{cases}
 (\sL u)(z) = q(z)u(z) + \theta(z) &\quad z\in \cD\;, \\
 u(z) = \ph(z) &\quad z\in \partial\cD\;,
 \label{eq:PDE_DFK} 
\end{cases}
\end{equation}
provided the functions $q$ and $\ph$ are sufficiently smooth. 
\end{remark}
}


\subsection{Invariant density}
\label{ssec:proof_invariant} 

\begin{proof}[{\sc Proof of Lemma~\ref{lem:pot_invariant}}]
The adjoint in $L^2(\R\times(\R/\Z))$ of $\sL$ is given by 
\begin{equation}
 \sL^\dagger \mu 
 = \frac{\sigma^2}{2\eps} \bigpar{\partial_{xx}\mu + \eps\varrho^2 
\partial_{yy}\mu} - \frac{1}{\eps}\partial_x \bigbrak{b\mu} - 
\partial_y\mu\;.
\end{equation}  
The condition $\sL^\dagger \e^{-2V/\sigma^2} = 0$ is equivalent 
to~\eqref{eq:HJ}. 
\end{proof}

\begin{proof}[{\sc Proof of Lemma~\ref{lem:Lf}}]
Relation~\eqref{eq:L_sym_asym} follows from a short computation. Using the 
explicit form \eqref{eq:defc} of $c$, one obtains 
\begin{align}
 \frac{\sigma^2}{2}\e^{2V/\sigma^2} \nabla \cdot (\e^{-2V/\sigma^2}c) 
 ={}& -\nabla V \cdot c + \frac{\sigma^2}{2} \nabla\cdot c\\
 ={}& -\frac{1}{\eps} (\partial_x V)^2 - \varrho^2(\partial_y V)^2 
 -\frac{1}{\eps}\partial_xV b - \partial_y V \\
 &{}+ \frac{\sigma^2}{2\eps} \bigpar{\partial_{xx}V + 
\eps\varrho^2\partial_{yy}V + \partial_x b}\;, 
\end{align} 
which vanishes by~\eqref{eq:HJ}. 
\end{proof}


\subsection{Capacity}
\label{ssec:proof_capacity} 

\begin{proof}[{\sc Proof of Lemma~\ref{lem:capacity}}]
The fact that $\capacity(\A,\B) = \capacity(\B,\A)$ follows from the relation 
\begin{equation}
 h_{\AB}(x,y) = 1 - h_{\B\A}(x,y)\;.
\end{equation}
To prove the first equality 
in~\eqref{eq:cap01}, we use integration by parts, that is, 
\begin{align}
\label{eq:cap_proof01} 
 \int_{\ABc} \nabla \cdot (h_{\AB}\e^{-2V/\sigma^2}D\nabla 
h_{\AB}) \frac{\6x\6y}{Z}
={}& \int_{\ABc} \nabla h_{\AB}\cdot (D\nabla 
h_{\AB}) \6\pi \\
&{}+ \int_{\ABc} h_{\AB} \nabla\cdot(\e^{-2V/\sigma^2}D\nabla 
h_{\AB}) \frac{\6x\6y}{Z}\;.
\end{align} 
By the divergence theorem and the boundary conditions for $h_{\AB}$, 
\begin{equation}
 \int_{\ABc} \nabla \cdot (h_{\AB}\e^{-2V/\sigma^2}D\nabla 
h_{\AB}) \frac{\6x\6y}{Z} = \int_{\partial \A} (D\nabla h_{\AB} \cdot \nn) 
\pi\6\lambda\;.
\end{equation} 
Since $\sL h_{\AB}$ vanishes on $\ABc$, the second term on the right-hand 
side of~\eqref{eq:cap_proof01} is equal to $2\eps/\sigma^2$ times
\begin{equation}
 \int_{\ABc} h_{\AB} (\sym{\sL} h_{\AB}) \6\pi 
 = - \int_{\ABc} h_{\AB} (\asym{\sL} h_{\AB}) \6\pi 
 = - \int_{\ABc} h_{\AB} (c\cdot\nabla h_{\AB}) \6\pi 
\end{equation} 
The same skew-symmetry argument 
as in~\eqref{eq:skew} implies that the last integral vanishes. Since the first 
term on the right-hand side of~\eqref{eq:cap_proof01} is proportional to the 
capacity, the first equality in~\eqref{eq:cap01} follows. To prove the second 
equality, we use the fact that owing to the vanishing divergence 
condition~\eqref{eq:divergence}, we have 
\begin{equation}
 0 = \int_{\A^c} \e^{2V/\sigma^2} \nabla\cdot(\e^{-2V/\sigma^2}c) \6\pi 
 = \int_{\A^c} \nabla\cdot(\e^{-2V/\sigma^2}c) \frac{\6x\6y}{Z}
 = \int_{\partial \A} (c\cdot\nn) \pi\6\lambda\;.
\end{equation} 
Since $h_{\AB}=1$ on $\partial \A$, the integral of $h_{\AB}(c\cdot\nn) 
\pi\6\lambda$ indeed vanishes. 
To prove the first equality in~\eqref{eq:cap02}, we use a similar computation 
as in~\eqref{eq:cap_proof01} to obtain 
\begin{equation}
 \int_{\ABc} \nabla h^*_{\AB} \cdot (D\nabla h_{\AB}) \6\pi 
 = \int_{\partial \A} (D\nabla h_{\AB})\cdot \nn \, \pi\6\lambda 
 + \eps\int_{\ABc} h^*_{\AB} (c\cdot\nabla h_{\AB}) \6\pi\;.
\end{equation} 
The first term on the right-hand side is proportional to the capacity, yielding 
the claimed result. The second equality in~\eqref{eq:cap02} then follows 
from~\eqref{eq:skew}, while the last equality is obtained by exchanging the 
roles of $h_{\AB}$ and $h^*_{\AB}$ in the above computation. 
\end{proof}


\subsection{Equilibrium measure and mean hitting time}
\label{ssec:proof_hitting} 

\begin{proof}[{\sc Proof of Proposition~\ref{prop:magic}}]
\NB{It follows from Dynkin's formula~\eqref{eq:Dynkin} that the}
function $w_\B(x) = \expecin{x}{\tau_\B}$ satisfies the Poisson problem
\begin{equation}
 \begin{cases}
 (\sL w_\B)(x,y) = -1 &\quad (x,y)\in \B^c\;, \\
 w_\B(x,y) = 0 &\quad (x,y)\in \B\;. 
 \label{eq:Poisson} 
\end{cases}
\end{equation}
By the divergence theorem, we have 
\begin{align}
\frac{\sigma^2}{2\eps} \int_{\partial \A} w_\B (D\nabla 
h^*_{\AB}\cdot\nn)\pi\6\lambda
&= \frac{\sigma^2}{2\eps} \int_{\ABc} 
\nabla\cdot\bigpar{w_\B\e^{-2V/\sigma^2} D\nabla h^*_{\AB}} \frac{\6x\6y}{Z} \\
&= \int_{\ABc} \Bigbrak{\frac{\sigma^2}{2\eps}\nabla w_\B \cdot D\nabla 
h^*_{\AB} + w_\B \sym{\sL}h^*_{\AB}} \6\pi
 \\
&= \int_{\B^c} \Bigbrak{\frac{\sigma^2}{2\eps}\nabla w_\B \cdot D\nabla 
h^*_{\AB} + w_\B (c\cdot \nabla h^*_{\AB})} \6\pi\;,
\label{eq:magic_proof01} 
\end{align}
where we have used the facts that $(\sym{\sL}-\asym{\sL})h^*_{\AB}$ vanishes on 
$\ABc$, while $\nabla h^*_{\AB}=0$ on $\A$. Furthermore, since $h^*_{\AB}$ 
vanishes on $\B$, we have 
\begin{align}
0 &= \frac{\sigma^2}{2\eps}\int_{\B^c} 
\nabla\cdot\bigpar{h^*_{\AB}\e^{-2V/\sigma^2} 
D\nabla w_\B} \frac{\6x\6y}{Z} \\
&= \frac{\sigma^2}{2\eps}\int_{\B^c} \bigbrak{\nabla h^*_{\AB}\cdot D\nabla 
w_\B} 
\6\pi 
+ \int_{\B^c}  h^*_{\AB} \sym{\sL}w_\B \6\pi\;.
\end{align}
Since $w_\B$ solves the Poisson problem~\eqref{eq:Poisson}, we have 
$\sym{\sL}w_\B = -1 - c\cdot\nabla w_\B$, so that substitution 
in~\eqref{eq:magic_proof01} yields 
\begin{equation}
 \frac{\sigma^2}{2\eps} \int_{\partial \A} w_\B (D\nabla 
h^*_{\AB}\cdot\nn)\NB{\pi \6\lambda} 
 = \int_{\B^c} \bigbrak{h^*_{\AB} + h^*_{\AB}(c\cdot\nabla w_\B) + 
w_\B(c\cdot\nabla h^*_{\AB})} \6\pi\;.
\end{equation} 
By the skew-symmetry property~\eqref{eq:skew} and the boundary conditions, the 
contribution of the last two summands in the integral on the right-hand side 
vanishes. 
\end{proof}


\subsection{Variational principles}
\label{ssec:proof_variational} 

\begin{proof}[{\sc Proof of Lemma~\ref{lem:DPhiPsi}}]
We start by noting that 
\begin{equation}
\label{eq:proof_lemmaD01} 
 \sD(\Phi_f, \Psi_{h_{\AB}}) 
 = \int_{\ABc} \Bigpar{\frac{\sigma^2}{2\eps} D\nabla f - fc}\cdot 
\nabla h_{\AB} \6\pi\;.
\end{equation} 
Integrating by parts with respect to $\nabla f$, we obtain 
\begin{align}
\frac{\sigma^2}{2\eps} \int_{\ABc} D\nabla f \cdot \nabla h_{\AB} \6\pi
&= \frac{\sigma^2}{2\eps} \int_{\partial \A} \alpha (D\nabla h_{\AB}\cdot\nn) 
\6\pi - \int_{\ABc} f (\sym{\sL}h_{\AB}) \6\pi \\
&= \alpha\capacity(\A,\B) + \int_{\ABc} f (c\cdot\nabla h_{\AB}) \6\pi\;.
\end{align}
The second term on the right-hand side cancels the $c$-dependent term 
in~\eqref{eq:proof_lemmaD01}. Furthermore, we have 
\begin{equation}
  \sD(\ph, \Psi_{h_{\AB}}) 
 = \int_{\ABc} \ph\cdot\nabla h_{\AB} \6x\6y 
 = \int_{\partial \A} (\ph\cdot\nn)\6\lambda 
 - \int_{\ABc} (\nabla\cdot\ph) h_{\AB} \6x\6y\;.
\end{equation} 
The first term on the right-hand side is equal to $-\gamma$ 
by~\eqref{eq:flow_gamma}, while the second one vanishes since $\ph$ is 
divergence-free. 
\end{proof}

\begin{remark}
If $\ph$ is only approximately divergence-free, the above proof yields 
\begin{equation}
 \label{eq:lemma_D_approx} 
 \sD(\Phi_f - \ph, \Psi_{h_{\AB}}) 
 = \alpha\capacity(\A,\B) + \gamma
 + \int_{\ABc} (\nabla\cdot\ph) h_{\AB} \6x\6y\;.
\end{equation}
This can be used to obtain bounds from flows that are not exactly 
divergence-free.
\end{remark}

\begin{proof}[{\sc Proof of Proposition~\ref{prop:Dirichlet}}]
Pick $f\in\sH^{1,0}_{\AB}$ and $\ph\in\sF^0_{\AB}$. By~\eqref{eq:lemma_D} with 
$\alpha=1$ and $\gamma=0$ and the Cauchy--Schwarz inequality, we have 
\begin{equation}
 \capacity(\A,\B)^2 = \sD(\Phi_f - \ph,\Psi_{h_{\AB}})^2
 \leqs \sD(\Phi_f - \ph)\sD(\Psi_{h_{\AB}}) 
 = \sD(\Phi_f - \ph)\capacity(\A,\B)\;,
\end{equation} 
showing that $\capacity(\A,\B) \leqs \sD(\Phi_f - \ph)$. Furthermore, 
$\sD(\Phi_{\bar f} - \bar\ph) = \sD(\Psi_{h_{\AB}}) = \capacity(\A,\B)$. Since 
clearly $\bar f \in \smash{\sH^{1,0}_{\AB}}$, it remains to show that $\bar\ph 
\in \smash{\sF^0_{\AB}}$. Noting that 
\begin{equation}
 \bar\ph = \frac{\sigma^2}{4\eps} \pi D \bigbrak{\nabla h^*_{\AB} - \nabla 
h_{\AB}} - \frac12 \pi c \bigbrak{h_{\AB} + h^*_{\AB}}\;, 
\end{equation} 
we obtain 
\begin{align}
\nabla\cdot\bar\ph
&= \frac{\sigma^2}{4\eps Z} \nabla \cdot \bigbrak{\e^{-2V/\sigma^2} D (\nabla 
h^*_{\AB} - \nabla h_{\AB})}
 - \frac{1}{2Z} \nabla \cdot \bigbrak{\e^{-2V/\sigma^2} c (h_{\AB} + 
h^*_{\AB})} 
\\
&= \frac12 \pi \bigbrak{\sym{\sL} h^*_{\AB} - \sym{\sL} h_{\AB}}
- \frac12 \pi c\cdot(\nabla h_{\AB} + \nabla h^*_{\AB}) \\
&= \frac12 \pi \bigbrak{\sL^* h^*_{\AB} - \sL h_{\AB}} = 0\;,
\end{align}
where we have used the fact that $\nabla \cdot (\e^{-2V/\sigma^2}c) = 0$ in the 
second line. Furthermore, 
\begin{align}
\int_{\partial \A} \bar\ph \cdot \nn \6\lambda
&= \frac12 \int_{\partial \A} \Bigbrak{\frac{\sigma^2}{2\eps} D\nabla h^*_{\AB} 
- 
c h^*_{\AB}} \cdot \nn \6\lambda
- \frac12 \int_{\partial \A} \Bigbrak{\frac{\sigma^2}{2\eps} D\nabla h_{\AB} + 
c h_{\AB}} \cdot \nn \6\lambda \\
&= \frac12\capacity^*(\A,\B) - \frac12\capacity(\A,\B)\;,
\end{align}
which vanishes by Lemma~\ref{lem:capacity}. The 
bound~\eqref{eq:Dirichlet_defective} is obtained by 
using~\eqref{eq:lemma_D_approx} instead of ~\eqref{eq:lemma_D}. 
\end{proof}

\begin{proof}[{\sc Proof of Proposition~\ref{prop:Thomson}}]
Pick $f\in\sH^{0,0}_{\AB}$ and $\ph\in\sF^1_{\AB}$. By~\eqref{eq:lemma_D} with 
$\alpha=0$ and $\gamma=1$ and the Cauchy--Schwarz inequality, we have 
\begin{equation}
 1 = \sD(\Phi_f - \ph,\Psi_{h_{\AB}})^2
 \leqs \sD(\Phi_f - \ph)\sD(\Psi_{h_{\AB}}) 
 = \sD(\Phi_f - \ph)\capacity(\A,\B)\;,
\end{equation} 
showing that $\capacity(\A,\B) \geqs 1/\sD(\Phi_f - \ph)$. By bilinearity of 
$\sD$, we have $\sD(\Phi_{\bar f} - \bar\ph) = 1/\capacity(\A,\B)$. Since $\bar 
f\in \smash{\sH^{0,0}_{\AB}}$, it remains to show that 
$\bar\ph\in\smash{\sF^1_{\AB}}$. This time, we have 
\begin{equation}
\bar\ph = -\frac{\pi}{2\capacity(\A,\B)} \Bigpar{\frac{\sigma^2}{2\eps} D 
\bigbrak{\nabla h_{\AB} + \nabla h^*_{\AB}} - c \bigbrak{h_{\AB} - 
h^*_{\AB}}}\;.
\end{equation} 
Using the fact that $\nabla \cdot (\e^{-2V/\sigma^2}c) = 0$, we obtain 
\begin{equation}
 \nabla \cdot \bar\ph
 = -\frac{\pi}{2\capacity(\A,\B)} \Bigpar{\sym{\sL} \bigbrak{h_{\AB} + 
h^*_{\AB}} + 
c\nabla\cdot\bigbrak{h_{\AB} - h^*_{\AB}}} = 0\;,
\end{equation} 
and 
\begin{align}
\capacity(\A,\B) \int_{\partial \A} \bar\ph \cdot \nn \6\lambda
={}& - \frac12 \int_{\partial \A} \Bigbrak{\frac{\sigma^2}{2\eps} D\nabla 
h_{\AB} 
+ c h_{\AB}} \cdot \nn \6\lambda \\
&{}- 
\frac12 \int_{\partial \A} \Bigbrak{\frac{\sigma^2}{2\eps} D\nabla h^*_{\AB} - 
c h^*_{\AB}} \cdot \nn \6\lambda
 \\
={}& -\frac12\capacity(\A,\B) - \frac12\capacity^*(\A,\B) 
= -\capacity(\A,\B)\;,
\end{align}
showing that $\bar\ph$ has flux $-1$ as required. The 
bound~\eqref{eq:Thomson_defective} is again obtained by 
using~\eqref{eq:lemma_D_approx} instead of ~\eqref{eq:lemma_D}. 
\end{proof}



\section{Estimates on static eigenfunctions}
\label{app:proofs_static} 


\subsection{Bounds on $h_0$, $h_1$ and $\phi_1$}
\label{app:proof_phi1} 

In this section, to lighten notations, we will often drop the dependence of the 
functions on $y$. 

\begin{proof}[{\sc Proof of Proposition~\ref{prop:h0}}]
First note that $\partial_y h_0(x)$ vanishes for $x\not\in(x^*_-,x^*_+)$. We 
thus assume henceforth that $x\in(x^*_-,x^*_+)$. Taking the derivative with 
respect to $y$ of the expression~\eqref{eq:h02} for the committor, we obtain 
\begin{equation}
\label{eq:proof_h0} 
 \partial_y h_0(x) 
 = \frac{2}{\sigma^2} \frac{1}{N} 
 \bigbrak{(1-h_0(x))I(x) - h_0(x) (J-I(x))}\;,
\end{equation} 
where 
\begin{align}
 I(x) &= \int_x^{x^*_+} \partial_y V_0(\bar x) \e^{2V_0(\bar x)/\sigma^2} 
\6\bar x + \frac{\sigma^2}{2} \dtot{x^*_+}{y} \e^{-2h_+/\sigma^2} \\
 J &= \frac{\sigma^2}{2} N'(y) 
 = I(x^*_-) - \frac{\sigma^2}{2} \dtot{x^*_-}{y} \e^{-2h_-/\sigma^2}\;.
\end{align}
By standard Laplace asymptotics (see Appendix~\ref{app:Laplace}), we obtain 
\begin{equation}
\label{eq:bound_h0} 
 h_0(x) \asymp
 \begin{cases}
  1 & \text{if $x\leqs x^*_0\;,$} \\
  \dfrac{\sigma\e^{2V_0(x)/\sigma^2}}{\sigma+\abs{x-x^*_0}}
  & \text{if $x > x^*_0\;,$}
 \end{cases}
 \qquad 
 1 - h_0(x) \asymp
 \begin{cases}
  \dfrac{\sigma\e^{2V_0(x)/\sigma^2}}{\sigma+\abs{x-x^*_0}} 
  & \text{if $x\leqs x^*_0\;,$} \\ 
  1 & \text{if $x > x^*_0\;.$}
 \end{cases} 
\end{equation} 
Similarly, using the fact that $x\mapsto V_0(x)$ is increasing on 
$(x^*_-,x^*_0)$ and decreasing on $(x^*_0,x^*_-)$, we get 
\begin{equation}
 \abs{I(x)} \lesssim 
 \begin{cases}
  \sigma^3 & \text{if $x\leqs x^*_0\;,$} \\
  \sigma^2\e^{2V_0(x)/\sigma^2}
  & \text{if $x > x^*_0\;,$}
 \end{cases}
 \qquad 
 \abs{J-I(x)} \lesssim 
 \begin{cases}
  \sigma^2\e^{2V_0(x)/\sigma^2} 
  & \text{if $x\leqs x^*_0\;,$} \\ 
  \sigma^3 & \text{if $x > x^*_0\;.$}
 \end{cases} 
\end{equation} 
Substituting in~\eqref{eq:proof_h0} yields 
\begin{equation}
 \bigabs{\partial_y h_0(x)} \lesssim \frac{1}{\sigma} \e^{2V_0(x)/\sigma^2}\;,
\end{equation} 
which implies~\eqref{eq:bound_dyh0}. The bound~\eqref{eq:bound_dyyh0} follows 
in an analogous way from the fact that 
\begin{equation}
 \partial_{yy} h_0(x)
 = \frac{2}{\sigma^2} \frac{1}{N} 
 \bigbrak{-2J\partial_y h_0(x) + (1-h_0(x))\partial_y I(x) - h_0(x) 
\partial_y(J-I(x))}\;.
\end{equation}
The bound~\eqref{eq:bound_eta} on $\eta(y)$ is a consequence of the fact 
that~\eqref{eq:bound_h0} yields 
\begin{equation}
 \eta(y) \lesssim \frac{1}{Z_0} \int_{x^*_-}^{x^*_+} 
 \e^{-2V_0(x)/\sigma^2}
 \frac{\sigma\e^{2V_0(x)/\sigma^2}}{\sigma+\abs{x-x^*_0}}\6x 
 \lesssim \frac{\sigma\log(\sigma^{-1})}{Z_0}
\end{equation} 
combined with~\eqref{eq:N_Laplace} and~\eqref{eq:Z0Nlambda1}.
\end{proof}

\begin{remark}
Actually, Lemma~\ref{lem:Laplace3} provides the sharper bound
\begin{equation}
\label{eq:IJ_sharper} 
 \abs{I(x)} \lesssim 
 \begin{cases}
  \sigma^3 & \text{for $x\leqs x^*_0\;,$} \\
  \sigma^2(\sigma+\abs{x-x^*_0})\e^{2V_0(x)/\sigma^2}
  & \text{for $x > x^*_0\;,$}
 \end{cases}
\end{equation}
and similarly for $J-I(x)$.
\end{remark}

In order to prove Proposition~\ref{prop:h1}, we will use the fact that $h_1$ 
and its derivatives satisfy certain Poisson boundary value problems. For this 
purpose, we will repeatedly use the following lemma.

\begin{lemma}
\label{lem:Poisson} 
Let $\cD=(x^*_-,x^*_+)$, and let $\ph$ satisfy the Poisson problem 
\begin{equation}
\label{eq:Poisson_problem} 
 \begin{cases}
  \bigpar{(\sL_x + \lambda_1) \ph}(x) = \psi(x) \quad & x\in \cD\;, \\
  \ph(x) = 0 & x\in\partial \cD\;.
 \end{cases}
\end{equation} 
Assume that there exists a constant $c$ such that $\abs{\psi(x)} \leqs c 
h_0(x)$ for all $x\in \cD$. Then 
\begin{align}
 \abs{\ph(x)} &\lesssim c\ell(\sigma) h_0(x) \\
 \abs{\partial_x\ph(x)} &\lesssim \frac{1}{\sigma^2} c\ell(\sigma) h_0(x)
\end{align}
holds for all $x\in \cD$. Analogous bounds hold with $h_0(x)$ replaced by 
$1-h_0(x)$ throughout. 
\end{lemma}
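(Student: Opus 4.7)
The approach is to use the Green's function representation of the solution on $\cD=(x^*_-(y),x^*_+(y))$ with Dirichlet boundary conditions. The functions $h_0(\cdot\vert y)$ and $1-h_0(\cdot\vert y)$ form a fundamental system of solutions of $\sL_x u=0$ on $\cD$, with a constant Sturm--Liouville Wronskian of $-1/N(y)$ that follows from the identity $\partial_x h_0=-N(y)^{-1}\e^{2V_0/\sigma^2}$. Since $\lambda_1(y)$ is exponentially small in $\sigma^2$ while the first Dirichlet eigenvalue of $-\sL_x$ on $\cD$ is of order one (the potential $V_0$ attains its interior maximum at $x^*_0$ and decreases toward both well bottoms, so exit times from $\cD$ are at most $\Order{\ell(\sigma)}$), the $\lambda_1$-perturbation can be absorbed via a Neumann series into a negligible multiplicative factor $1+\bigOrder{\lambda_1\ell(\sigma)}$.

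Concretely, I would first derive the representation
\begin{equation*}
 \ph(x) = -\frac{2N(y)}{\sigma^2}\int_{x^*_-}^{x^*_+}(1-h_0)(x\wedge z\vert y)\,h_0(x\vee z\vert y)\,\e^{-2V_0(z,y)/\sigma^2}\psi(z)\,\6z\,\bigbrak{1+\bigOrder{\lambda_1\ell}}\;,
\end{equation*}
split the integral at $z=x$, and use the hypothesis $|\psi|\leqs c h_0$. The $z<x$ piece factors as $h_0(x)\int_{x^*_-}^x(1-h_0)h_0\,\e^{-2V_0/\sigma^2}\6z=h_0(x)Z_0(y)\eta(y)$, so by the bound $\eta\lesssim\lambda_1\ell B^2$ from~\eqref{eq:bound_eta} combined with $Z_0 N\lambda_1\asymp\sigma^2/B^2$ from~\eqref{eq:Z0Nlambda1}, multiplication by the prefactor $2N/\sigma^2$ produces exactly the target bound $\lesssim c\ell(\sigma)h_0(x)$.

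For the $z>x$ piece $(1-h_0)(x)\int_x^{x^*_+}h_0(z)^2\e^{-2V_0(z)/\sigma^2}\6z$, I would insert the estimate $h_0(z)\lesssim\sigma\e^{2V_0(z)/\sigma^2}/(\sigma+|z-x^*_0|)$ from~\eqref{eq:bound_h0}, reducing the integrand to $\sigma^2\e^{2V_0(z)/\sigma^2}/(\sigma+|z-x^*_0|)^2$. A case analysis on whether $x\leqs x^*_0$ or $x>x^*_0$, together with the matching lower bound $h_0(x)\gtrsim\sigma\e^{2V_0(x)/\sigma^2}/(\sigma+|x-x^*_0|)$ valid for $x$ bounded away from $x^*_+$, the change of variables $v=(z-x^*_0)/\sigma$, and standard incomplete-Gaussian estimates, shows that this full piece is likewise $\lesssim c\ell(\sigma)h_0(x)$. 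For the derivative bound, I would differentiate the Green's function formula: the two boundary contributions at $z=x$ cancel by continuity, leaving an overall factor $\e^{2V_0(x)/\sigma^2}/N$ from $\partial_x h_0$ and $\partial_x(1-h_0)$. Since $\e^{2V_0(x)/\sigma^2}/N\asymp h_0(x)(\sigma+|x-x^*_0|)/\sigma^2\lesssim h_0(x)/\sigma^2$ for $x$ in the bulk of $\cD$, iterating the previous estimates yields the $\sigma^{-2}$ enhancement.

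The main technical difficulty lies in the Laplace asymptotic of the $z>x$ integral when $x$ lies slightly to the right of the saddle $x^*_0$: one must carefully track the interplay between the Gaussian tail $\e^{-\omega_0^2(x-x^*_0)^2/\sigma^2}$ inherited by both $h_0(x)$ and the integral, the inverse-quadratic weight $1/(\sigma+|z-x^*_0|)^2$, and the effective integration length of order $(x^*_+-x^*_0)/\sigma$, to verify that no parasitic factors accumulate beyond the claimed $\ell(\sigma)$. The analogous bounds with $h_0$ replaced by $1-h_0$ throughout follow from the obvious symmetry of the Green's function formula under reflection through $x^*_0$ combined with the reflected hypothesis on $\psi$.
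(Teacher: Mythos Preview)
Your approach is essentially the same as the paper's: both invert $\sL_x$ explicitly via its Green's function, bound the result using the pointwise estimates~\eqref{eq:bound_h0} on $h_0$, and absorb the $\lambda_1$-shift through a Neumann series. The only cosmetic difference is that the paper writes $\sL_x^{-1}\psi$ as a double integral and uses two equivalent representations (one integrated from $x^*_-$ for $x\leqs x^*_0$, one from $x^*_+$ for $x\geqs x^*_0$), which slightly streamlines the near-saddle case analysis you flag as the main technical point.
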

\begin{proof}
Consider first the simpler Poisson problem $\sL_x\ph = \psi$, with zero 
boundary conditions as in~\eqref{eq:Poisson_problem}. Using the second 
expression for $\sL_x$ in~\eqref{eq:Lx}, it is easy to check that its solution 
is given by 
\begin{align}
 (\sL_x^{-1}\psi)(x)
 = \frac{2}{\sigma^2} \int_{x^*_-}^x \e^{2V_0(x_1)/\sigma^2} 
 \biggl[ & \int_{x^*_-}^{x_1} 
\e^{-2V_0(x_2)/\sigma^2}(1-h_0(x_2))\psi(x_2)\6x_2 \\
 &{}- \int_{x_1}^{x^*_+} \e^{-2V_0(x_2)/\sigma^2}h_0(x_2)\psi(x_2)\6x_2 
\biggr]\6x_1\;.
\label{eq:Lx_inverse} 
\end{align}
Using the assumption on $\psi$ and the bounds~\eqref{eq:bound_h0} on $h_0$, one 
obtains that for $x \leqs x^*_0$, 
\begin{equation}
 \bigabs{(\sL_x^{-1}\psi)(x)} \lesssim c \ell(\sigma) 
 \lesssim c \ell(\sigma) h_0(x)\;.
\end{equation} 
A similar conclusion is obtained for $x \geqs x^*_0$ using the 
equivalent expression 
\begin{align}
 (\sL_x^{-1}\psi)(x)
 = \frac{2}{\sigma^2} \int_x^{x^*_+} \e^{2V_0(x_1)/\sigma^2} 
 \biggl[ & \int_{x_1}^{x^*_+} 
\e^{-2V_0(x_2)/\sigma^2}h_0(x_2)\psi(x_2)\6x_2 \\
 &{}- \int_{x^*_-}^{x_1} \e^{-2V_0(x_2)/\sigma^2}(1-h_0(x_2))\psi(x_2)\6x_2 
\biggr]\6x_1\;.
\end{align}
Corresponding bounds on $\partial_x(\sL_x^{-1}\psi)$ are obtained in a similar 
way, using the derivative with respect to $x$ of~\eqref{eq:Lx_inverse}. It thus 
remains to extend the bounds to $(\sL_x + \lambda_1)^{-1}\psi$. This follows 
readily from the Neumann-type series 
\begin{equation}
 (\sL_x + \lambda_1)^{-1}\psi 
 = \sum_{k\geqs0} (-\lambda_1)^k \bigpar{(\sL_x)^{-1}}^k \psi\;,
\end{equation} 
bounding each term by repeatedly applying the bounds on $\sL_x^{-1}$ and 
summing the resulting geometric series. 
\end{proof}

\begin{proof}[{\sc Proof of Proposition~\ref{prop:h1}}]
Taking the difference of the equations $(\sL_x+\lambda_1)(h_0+h_1)=0$ and $\sL_x 
h_0 = 0$, we find that $h_1$ satisfies the Poisson 
problem~\eqref{eq:Poisson_problem} with 
\begin{equation}
 \psi(x) = -\lambda_1 h_0(x)\;.
\end{equation} 
Lemma~\ref{lem:Poisson} thus immediately yields 
\begin{equation}
\label{eq:proof_dxh1}
 \bigabs{h_1(x)} \lesssim \lambda_1 \ell(\sigma) h_0(x)\;, \qquad 
 \bigabs{\partial_xh_1(x)} 
 \lesssim \frac{1}{\sigma^2}\lambda_1 \ell(\sigma)h_0(x)\;. 
\end{equation} 
Taking the derivative with respect to $y$ of the equation for $h_1$, we obtain 
that $\partial_y h_1$ satisfies~\eqref{eq:Poisson_problem} with
\begin{equation}
 \psi(x) = -\lambda_1'(h_0(x)+h_1(x)) - \lambda_1 \partial_yh_0(x) + 
\partial_{xy}V_0(x)\partial_x h_1(x)\;.
\end{equation} 
The bounds on $h_0$, $\partial_y h_0$ and~\eqref{eq:proof_dxh1} imply that 
$\psi(x)$ has order $\ell\lambda_1h_0(x)/\sigma^2$, so that 
Lemma~\ref{lem:Poisson} yields 
\begin{equation}
\label{eq:proof_dxyh1}
 \bigabs{\partial_yh_1(x)} \lesssim \frac{1}{\sigma^2}
 \lambda_1 \ell(\sigma)h_0(x)\;, \qquad 
 \bigabs{\partial_{xy}h_1(x)} 
 \lesssim \frac{1}{\sigma^4}\lambda_1 \ell(\sigma)h_0(x)\;. 
\end{equation} 
The bound on $\partial_{yy}h_1$ is obtained in an analogous way, by taking one 
more derivative with respect to $y$. 
\end{proof}

\begin{proof}[{\sc Proof of Proposition~\ref{prop:phi_1}}]
We introduce the variables 
\begin{equation}
\label{eq:defuv} 
 u(y) = -\frac{\phi_+(y)}{\phi_-(y)}\;, \qquad 
 v(y) = -\phi_+(y)\phi_-(y)\;.
\end{equation} 
The orthogonality condition
\begin{equation}
 0 = \pscal{\pi_0}{\phi_1} 
 = \phi_-(y)\pscal{\pi_0}{h_0+h_1} + \phi_+(y)\pscal{\pi_0}{1-h_0+\bar h_1}
\end{equation} 
yields 
\begin{equation}
\label{eq:u_value} 
 u(y) 
 =\frac{\pscal{\pi_0}{h_0+h_1}}{\pscal{\pi_0}{1-h_0+\bar h_1}}
 = \e^{-2\Deltabar(y)/\sigma^2} \bigbrak{1 + \Order{\lambda_1\ell}}\;,
\end{equation} 
where we have used~\eqref{eq:defDeltabar} and Proposition~\ref{prop:h1} to 
obtain the last equality. The function $v(y)$ is then determined via the 
normalisation condition
\begin{align}
1 &= \pscal{\pi_1}{\phi_1} = \pscal{\pi_0}{\phi_1^2} \\
&= \frac{v(y)}{u(y)} X(y) + u(y)v(y) Y(y) - 2v(y) Z(y)\;,
\label{eq:proof_uv1} 
\end{align}
where 
\begin{align}
X(y) &:= \pscal{\pi_0}{[h_0+h_1]^2}
= \pscal{\pi_0}{h_0}\bigbrak{1+\Order{\lambda_1\ell}} + \Order{\eta}\;, \\
Y(y) &:= \pscal{\pi_0}{[1-h_0+\bar h_1]^2}
= \pscal{\pi_0}{1-h_0}\bigbrak{1+\Order{\lambda_1\ell}} + \Order{\eta}\;, \\
Z(y) &:= \pscal{\pi_0}{[h_0+h_1][1-h_0+\bar h_1]}
= \Order{\eta}\;,
\end{align}
owing to Propositions~\ref{prop:h0} and~\ref{prop:h1}. Substituting 
in~\eqref{eq:proof_uv1}, using the expressions~\eqref{eq:defDeltabar} of 
$\pscal{\pi_0}{h_0}$ and $\pscal{\pi_0}{1-h_0}$ and solving for $v(y)$ yields 
\begin{equation}
 v(y) = 1+\Order{\lambda_1(y)\ell}
\end{equation} 
thanks in particular to the bound~\eqref{eq:bound_eta} on $\eta(y)$. Expressing 
$\phi_\pm(y)$ in terms of $u(y)$ and $v(y)$ yields~\eqref{eq:phipm}. 

The other relations then follow essentially by taking derivatives with respect 
to $y$ of the above expressions. Differentiating~\eqref{eq:defuv},  
we obtain 
\begin{equation}
\label{eq:proof_phiprime} 
 \phi_+'(y) = -\frac12 \biggpar{\frac{v'(y)}{\phi_-(y)} + \phi_-(y)u'(y)}\;, 
\qquad 
 \phi_-'(y) = -\frac{v'(y)-\phi_-(y)^2u'(y)}{2\phi_+(y)}\;.
\end{equation}
Differentiating~\eqref{eq:u_value} yields 
\begin{equation}
 u'(y) = \partial_y 
 \frac{\pscal{\pi_0}{h_0} + \pscal{\pi_0}{h_1}}
 {\pscal{\pi_0}{1-h_0} + \pscal{\pi_0}{\bar h_1}}
 = -2\frac{\Deltabar'(y)}{\sigma^2} \e^{-2\Deltabar(y)/\sigma^2}
 \bigbrak{1+\Order{\lambda_1(y)\ell}}\;, 
\end{equation} 
while the derivative of~\eqref{eq:proof_uv1} gives 
\begin{equation}
 v'(y) 
 = \frac{u^{-2}u'X-u'Y-u^{-1}X'-uY'+2Z'}
 {u^{-1}X+uY'-2Z}v(y)
 = \biggOrder{\frac{\lambda_1(y)\ell}{\sigma^2}}\;.
\end{equation} 
Substituting in~\eqref{eq:proof_phiprime} yields~\eqref{eq:dyphipm}. 
In the same spirit, one obtains 
\begin{equation}
 u''(y) = \frac{4}{\sigma^4}
 \biggbrak{\Deltabar'(y)^2 - \frac12\sigma^2\Delta''(y) + 
\Order{\lambda_1(y)\ell^3}} u(y)\;, \qquad
 v''(y) = \biggOrder{\frac{\lambda_1(y)\ell^3}{\sigma^2}}\;,
\end{equation} 
and plugging this into the derivative of~\eqref{eq:proof_phiprime} 
yields~\eqref{eq:dyyphipm}. 
\end{proof}

\begin{proof}[{\sc Proof of Proposition~\ref{prop:f10}}]
The expression for $f_{10} = -\sigma^2 \pscal{\pi_0}{\partial_y\phi_1}$ follows 
from the expression~\eqref{eq:dyphi1_rep} for $\partial_y\phi_1$, the 
definition~\eqref{eq:defDeltabar} of $\Deltabar(y)$, and the fact that 
$\sigma^2\pscal{\pi_0}{\abs{\partial_y h_0}}$ has order $\eta(y)$ by 
Proposition~\ref{prop:h0}. A similar argument applies to $f_{11} = -\sigma^2 
\pscal{\pi_0}{\phi_1\partial_y\phi_1}$.   

The expression for $g_{10}$ is obtained by evaluating separately the two 
summands on the right-hand side of~\eqref{eq:orth_fnm}. Proceeding as for 
$f_{10}$, we obtain
\begin{equation}
 \sigma^4 \pscal{\pi_0}{\partial_{yy}\phi_1}
 = B \bigbrak{\sigma^2 \Deltabar'' + \bigOrder{\lambda_1 \ell^3}}\;.
\end{equation} 
In order to determine $\pscal{\partial_y\pi_0}{\partial_y\phi_1}$, we note that 
on one hand, 
\begin{equation}
 \partial_y \pscal{\pi_0}{h_0}
 = \pscal{\partial_y\pi_0}{h_0} + \pscal{\pi_0}{\partial_y h_0}
 = \pscal{\partial_y\pi_0}{h_0} + \biggOrder{\frac{\lambda_1B^2}{\sigma^2}}\;,
\end{equation} 
while on the other hand, \eqref{eq:defDeltabar} implies 
\begin{equation}
 \partial_y \pscal{\pi_0}{h_0}
 = - \frac{2\Deltabar'}{\sigma^2(\e^{-\Deltabar/\sigma^2} + 
\e^{\Deltabar/\sigma^2})^2}\;.
\end{equation} 
This yields 
\begin{equation}
 \sigma^4 \pscal{\partial_y\pi_0}{\partial_y\phi_1}
 = B \bigbrak{- 2 A(\Deltabar')^2 + \bigOrder{\lambda_1 \ell^2}}\;,
\end{equation} 
and implies the stated expression for $g_{10}$. The computation of $g_{11}$ is 
similar.
\end{proof}

For further reference, we list here a few more expressions of particular inner 
products, which can be derived in the same way as in the above proof:
\begin{subequations}
\begin{align}
\label{eq:pi0dyphi12} 
  \sigma^4 \pscal{\pi_0}{(\partial_y\phi_1)^2}
 &= (\Deltabar')^2 + \Order{\lambda_1 \ell^2}\;, \\
\label{eq:dypi1dyphi1} 
 \sigma^4 \pscal{\partial_y\pi_1}{\partial_y\phi_1}
 &= - A^2 (\Deltabar')^2 + \bigOrder{\lambda_1 \ell^2}\;, \\
\label{eq:pi0dyyphi12} 
 \sigma^8 \pscal{\pi_0}{(\partial_{yy}\phi_1)^2}
 &= (\Deltabar')^4 + 2 \sigma^2 A (\Deltabar')^2 \Deltabar'' 
 + \sigma^4 (\Deltabar'')^2 + \bigOrder{\lambda_1 \ell^3}\;.
\end{align}
\end{subequations}


\subsection{Bounds on other eigenfunctions}
\label{app:proof_phin} 

To obtain estimates involving other eigenfunctions than $\phi_1$, it will 
sometimes be useful to take advantage of the fact that $\sL_x$ is 
conjugated to the Schr\"odinger operator
\begin{equation}
\label{eq:Schrodinger} 
 \tilde \sL_x = \e^{-V_0/\sigma^2}\sL_x\e^{V_0/\sigma^2}
 = \frac{\sigma^2}{2} \partial_{xx} - \frac{1}{2\sigma^2}U_0\;,
\end{equation} 
where $U_0$ is the three-well potential 
\begin{equation}
 U_0(x,y) = \bigl(\partial_x V_0(x,y)\bigr)^2 - \sigma^2\partial_{xx} 
V_0(x,y)\;.
\end{equation} 
In particular, $\tilde\sL_x$ has the same eigenvalues $-\lambda_n$ as $\sL_x$, 
and its eigenfunctions $\psi_n$ satisfy 
\begin{equation}
\label{eq:psin} 
 \psi_n(x) = \frac{1}{\sqrt{Z_0}} \e^{-V_0/\sigma^2} \phi_n(x) 
 = \sqrt{Z_0} \e^{V_0/\sigma^2} \pi_n(x)\;.
\end{equation} 
In particular, we have the relations 
\begin{equation}
\label{eq:dypsin} 
 \partial_y \psi_n 
 = \frac{1}{\sqrt{\pi_0}} \biggbrak{\partial_y\pi_n - \frac{1}{\sigma^2}W\pi_n}
 = \sqrt{\pi_0} \biggbrak{\partial_y\phi_n + \frac{1}{\sigma^2}W\phi_n}
\end{equation} 
between derivatives of eigenfunctions, where we have used 
\begin{equation}
\label{eq:defW} 
 \partial_y \pi_0 = \frac{2}{\sigma^2}  W \pi_0\;,
\qquad 
W = \pscal{\pi_0}{\partial_y V_0} - \partial_y V_0\;.
\end{equation} 
Note that by the Feynman--Hellmann 
theorem, we have 
\begin{equation}
\label{eq:Hellmann-Feynman} 
 \lambda_n'(y) 
 = - \pscal{\psi_n}{\partial_y\tilde\sL_x\psi_n} 
 = \frac{1}{2\sigma^2} \pscal{\psi_n}{\partial_y U_0\psi_n}\;,
\end{equation} 
while first-order perturbation theory shows that if $\lambda_n \neq \lambda_m$, 
then
\begin{equation}
\label{eq:ev_perturbation} 
 2\sigma^2 \pscal{\psi_n}{\partial_y\psi_m}
 = \frac{1}{\lambda_m - \lambda_n} \pscal{\psi_n}{\partial_y U_0\psi_m}\;.
\end{equation} 
This entails in particular the following useful estimate.

\begin{lemma}
\label{lem:sum_dypin} 
For any function $f\in L^2(\pi_0[1+W^2+\sigma^2\partial_yW]^2\6x)$, 
\begin{align}
 \sigma^4 \sum_{n\geqs 1} \pscal{\partial_y\pi_n}{f}^2
 &\lesssim \pscal{\pi_0}{[1+W^2]f^2}\;, \\
 \sigma^8 \sum_{n\geqs 1} \pscal{\partial_{yy}\pi_n}{f}^2
 &\lesssim \pscal{\pi_0}{[1+W^2+\sigma^2\partial_yW]^2f^2}\;.
\end{align} 
\end{lemma}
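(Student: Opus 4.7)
The strategy is to decompose $\partial_y\pi_n$ using $\pi_n=\pi_0\phi_n$ together with the identity $\partial_y\pi_0=\tfrac{2W}{\sigma^2}\pi_0$ from~\eqref{eq:defW}, which gives $\partial_y\pi_n=\tfrac{2W}{\sigma^2}\pi_n+\pi_0\partial_y\phi_n$ and therefore
\begin{equation*}
 \pscal{\partial_y\pi_n}{f}
 = \frac{2}{\sigma^2}\pscal{\pi_0}{Wf\,\phi_n}+\pscal{\pi_0}{f\,\partial_y\phi_n}\;.
\end{equation*}
Squaring and summing, the first contribution is controlled by Bessel's inequality applied to the expansion of $Wf$ in the orthonormal basis $\{\phi_n\}$ of $L^2(\pi_0\6x)$, yielding $\sum_{n\geqs1}\pscal{\pi_0}{Wf\phi_n}^2\leqs\pscal{\pi_0}{W^2f^2}$, which after multiplication by $\sigma^4\cdot(2/\sigma^2)^2=4$ is absorbed in the $W^2$-term of the first bound.

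The delicate term is $\sum_{n\geqs1}\pscal{\pi_0}{f\,\partial_y\phi_n}^2$. Expanding $f=\sum_k a_k\phi_k$ with $a_k=\pscal{\pi_0}{f\phi_k}$ and $\sum_k a_k^2=\pscal{\pi_0}{f^2}$, I rewrite this sum as $\|Da\|_{\ell^2}^2$ where $D_{nk}:=\pscal{\pi_0}{\phi_k\partial_y\phi_n}$, so it suffices to prove the operator-norm bound $\|Da\|_{\ell^2}^2\lesssim\sigma^{-4}\pscal{\pi_0}{(1+W^2)f^2}$. I split $D=S+A$ into symmetric and antisymmetric parts. Differentiating the orthonormality $\pscal{\pi_0}{\phi_n\phi_k}=\delta_{nk}$ identifies $S=-\sigma^{-2}W_M$, where $W_M$ is the multiplication-by-$W$ matrix in the $\phi_n$-basis; Parseval gives directly $\|Sa\|^2_{\ell^2}=\sigma^{-4}\pscal{\pi_0}{W^2f^2}$, absorbed by the $W^2$-term. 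For the antisymmetric part, standard first-order perturbation theory applied to $(\sL_x+\lambda_k)\phi_k=0$, combined with $\partial_y\sL_x=-\partial_{xy}V_0\,\partial_x$ and an integration by parts using the identity $\partial_yU_0=2\partial_xV_0\,\partial_{xy}V_0-\sigma^2\partial_{xxy}V_0$, produces the explicit formula
\begin{equation*}
 A_{nk}=\frac{\pscal{\pi_0}{\partial_yU_0\,\phi_n\phi_k}}{2\sigma^2(\lambda_k-\lambda_n)}\;, \qquad n\neq k\;.
\end{equation*}
A Schur-type argument, combining Cauchy--Schwarz applied to $\sum_k \pscal{\pi_0}{\partial_y U_0\phi_n\phi_k}^2=\pscal{\pi_0}{(\partial_yU_0)^2\phi_n^2}$ (by Parseval for $\partial_y U_0\phi_n$) with the spectral gap of $\sL_x$ (of order $1$ between $\lambda_1$ and $\lambda_2$ and growing like $\lambda_n\asymp\sigma^2n^2$ at infinity), together with the Agmon-type spatial localisation of $\phi_n$ transferred through the Schr\"odinger representation~\eqref{eq:Schrodinger}, then gives the needed $\|Aa\|^2_{\ell^2}\lesssim\sigma^{-4}\pscal{\pi_0}{f^2}$ and completes the first bound.

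For the second bound I iterate the decomposition: differentiating once more,
\begin{equation*}
 \partial_{yy}\pi_n
 = \Bigl(\frac{2\partial_yW}{\sigma^2}+\frac{4W^2}{\sigma^4}\Bigr)\pi_n
 + \frac{4W}{\sigma^2}\pi_0\partial_y\phi_n + \pi_0\partial_{yy}\phi_n\;,
\end{equation*}
and pairing with $f$ produces three contributions: a direct Bessel term (which, after multiplication by $\sigma^8$, generates the $(1+W^2+\sigma^2\partial_yW)^2$-factor on the right), a cross term treated by Cauchy--Schwarz together with the first bound, and a term requiring the analogous operator-norm estimate $\|D^{(2)}\|_{\ell^2\to\ell^2}\lesssim\sigma^{-4}$ on the matrix $\pscal{\pi_0}{\phi_k\partial_{yy}\phi_n}$, obtained by iterating the perturbation-theoretic argument. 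The main obstacle throughout is the operator-norm bound on the antisymmetric part $A$: the required $\sigma^{-2}$-improvement must come entirely from a precise cancellation between the explicit $\sigma^{-2}$-prefactor in $A_{nk}$ and the reciprocal spectral gap $(\lambda_k-\lambda_n)^{-1}$, and turning this local cancellation into a uniform operator-norm estimate rests on the non-trivial localisation and decay information for the eigenfunctions~$\phi_n$ coming from the Schr\"odinger representation.
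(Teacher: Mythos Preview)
Your decomposition is structurally equivalent to the paper's. Indeed, your antisymmetric part $A$ is exactly the matrix $(\pscal{\psi_k}{\partial_y\psi_n})_{n,k}$ in the Schr\"odinger basis~\eqref{eq:psin}, and your symmetric part $S=-\sigma^{-2}W_M$ is precisely the difference between the $\phi_n$- and $\psi_n$-pictures encoded in~\eqref{eq:dypsin}. So the split into a $W$-multiplication piece and a $\partial_y\psi_n$ piece is the same.

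Where you diverge from the paper is in treating the antisymmetric piece, which you flag as the ``main obstacle'' and propose to attack with a Schur-type operator-norm estimate backed by Agmon localisation. This is where the paper is simpler: it applies Cauchy--Schwarz \emph{termwise},
\[
\sigma^4\pscal{\sqrt{\pi_0}f}{\partial_y\psi_n}^2 \leqs \sigma^4\pscal{\pi_0}{f^2}\,\|\partial_y\psi_n\|_{L^2}^2\;,
\]
so that after summing over $n$ one only needs the Hilbert--Schmidt bound $\sigma^4\sum_{n}\|\partial_y\psi_n\|_{L^2}^2\lesssim1$. Via the perturbation identity~\eqref{eq:ev_perturbation} this equals $\tfrac14\sum_{n\neq m}\pscal{\psi_n}{\partial_yU_0\,\psi_m}^2(\lambda_m-\lambda_n)^{-2}$, which converges because the matrix elements are uniformly bounded and the eigenvalue gaps grow. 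No operator-norm argument, no Agmon estimates, and no delicate $\sigma$-cancellations are needed: the factor $\sigma^4$ exactly cancels the $(2\sigma^2)^{-2}$ from squaring~\eqref{eq:ev_perturbation}.

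Two further remarks. First, your asymptotics $\lambda_n\asymp\sigma^2n^2$ are not right for the low spectrum (there $\lambda_n$ is of order~$1$), and if taken literally would spoil the $\sigma$-bookkeeping in a Schur argument; the Hilbert--Schmidt route avoids this issue entirely. Second, for the second-derivative bound the paper simply iterates the same Cauchy--Schwarz/HS argument after differentiating~\eqref{eq:dypsin} once more; there is no need for a separate operator-norm bound on~$D^{(2)}$.
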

\begin{proof}
By~\eqref{eq:dypsin}, we have 
\begin{equation}
 \sigma^4\pscal{\partial_y\pi_n}{f}^2
 \leqs 2 \pscal{\sqrt{\pi_0}}{Wf\psi_n}^2 
 + 2\sigma^4\pscal{\sqrt{\pi_0}}{\partial_y\psi_nf}^2\;. 
\end{equation}
Summing over $n$ yields two terms, the first one being equal to 
\begin{equation}
 2\pscal{\sqrt{\pi_0}}{W^2f^2\sqrt{\pi_0}}
 = 2\pscal{\pi_0}{W^2f^2}\;.
\end{equation} 
As for the second sum, the Cauchy--Schwarz inequality 
and~\eqref{eq:ev_perturbation} show that it is bounded by 
\begin{equation}
 2\sigma^4\sum_{n\geqs1} \pscal{\partial_y\psi_n}{\partial_y\psi_n} 
\pscal{\pi_0}{f^2}
 \leqs \frac12\pscal{\pi_0}{f^2}
 \sum_{n\neq m} \frac{\pscal{\psi_n}{\partial_y U_0\psi_m}^2}{(\lambda_m - 
\lambda_n)^2}\;.
\end{equation} 
The last sum is bounded, because $\lambda_n$ grows like $n^2$, while 
$\pscal{\psi_n}{\partial_y U_0\psi_m}$ is bounded uniformly in $n$ and $m$. 
This proves the first inequality, and the second one is proved in a similar 
way, taking one more derivative with respect to $y$. 
\end{proof}

\begin{remark}
In may happen that two eigenvalue $\lambda_n(y)$ and $\lambda_m(y)$ cross for 
particular value of $y$. In that case, the bound~\eqref{eq:ev_perturbation} 
becomes useless, but an equivalent result can be obtained by locally modifying 
the basis of eigenfunctions. For simplicity, we will not give details of this 
procedure here, but refer the reader to~\cite{Joye95,JMGJ99}.
\end{remark}

In the same spirit, the following lemma allows to estimate derivatives of 
functions expanded in the eigenbasis.

\begin{lemma}
\label{lem_dxPhi} 
Recall that $\cD=(x^*_-,x^*_+)$ and let 
\begin{equation}
 \Phi(x) = \sum_{n\geqs2} \alpha_n\phi_n(x)\;.
\end{equation} 
Then 
\begin{equation}
 \pscal{\pi_0}{(\partial_x\Phi)^2\indicator{\cD}}
 \lesssim \frac{1}{\sigma^4} \sum_{n\geqs2} \alpha_n^2
 + \frac{1}{\sigma^2} \sum_{n\geqs2} \lambda_n\alpha_n^2\;.
\end{equation} 
\end{lemma}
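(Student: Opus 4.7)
The cleanest route is to establish the stronger identity
$$\pscal{\pi_0}{\partial_x\phi_n\,\partial_x\phi_m} = \frac{2\lambda_n}{\sigma^2}\delta_{nm} \qquad n,m\geqs 1,$$
on all of $\R$, from which the claim follows at once. I would start from the left-hand side, integrate by parts, and exploit the identity $\partial_x(\pi_0\partial_x f) = (2/\sigma^2)\pi_0\sL_x f$ together with $\sL_x\phi_n = -\lambda_n\phi_n$ and the $L^2(\pi_0)$-orthonormality of the $\phi_n$. The boundary integrand $\phi_m\pi_0\partial_x\phi_n$ can be rewritten, via the Schr\"odinger conjugation~\eqref{eq:psin}, as $\psi_m\partial_x\psi_n + \sigma^{-2}(\partial_xV_0)\psi_m\psi_n$; the growth condition $xb(x,y)\leqs -Mx^2$ for $\abs{x}\geqs L$ forces $U_0$ to grow at least quadratically at infinity, so the $\psi_n$ decay like Gaussians and the boundary terms at $\pm\infty$ vanish.

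Expanding $\Phi=\sum_{n\geqs2}\alpha_n\phi_n$ and invoking the identity above yields the exact equality
$$\pscal{\pi_0}{(\partial_x\Phi)^2} = \frac{2}{\sigma^2}\sum_{n\geqs2}\lambda_n\alpha_n^2.$$
Since $(\partial_x\Phi)^2\indicator{\cD}\leqs (\partial_x\Phi)^2$ pointwise, the stated inequality follows immediately; in fact, it is slightly weaker than what this computation produces, because the $\sigma^{-4}\sum\alpha_n^2$ summand on the right-hand side is redundant.

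An alternative, entirely local argument that reproduces the bound in the stated form would set $\tilde\Phi=\sum_{n\geqs2}\alpha_n\psi_n$ so that $\Phi = \sqrt{Z_0}\,\e^{V_0/\sigma^2}\tilde\Phi$, and then expand $\pi_0(\partial_x\Phi)^2 = (\partial_x\tilde\Phi + \sigma^{-2}(\partial_xV_0)\tilde\Phi)^2$. Integrating by parts over $\cD$ in the cross term produces no boundary contribution because $\partial_xV_0$ vanishes at both endpoints $x^*_\pm(y)$, leading to
$$\pscal{\pi_0}{(\partial_x\Phi)^2\indicator{\cD}} = \int_\cD(\partial_x\tilde\Phi)^2\6x + \frac{1}{\sigma^4}\int_\cD U_0\,\tilde\Phi^2\,\6x.$$
Since $U_0$ is bounded on the compact set $\cD$, the second integral is controlled by $\sigma^{-4}\norm{\tilde\Phi}_{L^2(\R)}^2 = \sigma^{-4}\sum\alpha_n^2$, producing the first summand of the bound. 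Extending the first integral to $\R$, integrating by parts (again justified by the Gaussian decay of the $\psi_n$), and using $\partial_{xx}\tilde\Phi = (2/\sigma^2)\tilde\sL_x\tilde\Phi + \sigma^{-4}U_0\tilde\Phi$ gives an upper bound of order $\sigma^{-2}\sum\lambda_n\alpha_n^2 + \sigma^{-2}\sum\alpha_n^2$, the latter corrective term being absorbed into $\sigma^{-4}\sum\alpha_n^2$ for $\sigma\leqs1$. The only non-trivial point in either approach is the justification of the vanishing of boundary terms at $\pm\infty$; this is a consequence of the at-least-quadratic growth of $V_0$ imposed in Section~\ref{ssec:setup}.
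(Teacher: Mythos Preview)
Your proof is correct, and both routes work. The paper's argument is essentially your second approach: it writes $\partial_x\Phi = \pi_0^{-1/2}(\Psi_1+\Psi_2)$ with $\Psi_1 = \sigma^{-2}(\partial_x V_0)\Psi$ and $\Psi_2 = \partial_x\Psi$ (where $\Psi=\sum\alpha_n\psi_n$), then uses the crude splitting $(a+b)^2\leqs 2a^2+2b^2$, bounds $\pscal{\Psi_1\indicator{\cD}}{\Psi_1\indicator{\cD}}$ via boundedness of $\partial_x V_0$ on $\cD$, and handles $\pscal{\Psi_2}{\Psi_2}$ by integrating by parts against $\partial_{xx}\psi_m$ and invoking the Schr\"odinger eigenvalue equation. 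Your version of this route is slightly tidier: by integrating the cross term by parts over $\cD$ and using $\partial_x V_0(x^*_\pm)=0$, you recombine the $(\partial_x V_0)^2$ and $\partial_{xx}V_0$ pieces into $U_0$ exactly, rather than estimating them separately.

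Your first approach, however, is genuinely different and more elegant than the paper's. Working directly with the $\phi_n$ on all of $\R$ and exploiting the divergence form $\sL_x f = \tfrac{\sigma^2}{2}\pi_0^{-1}\partial_x(\pi_0\partial_x f)$ yields the exact orthogonality $\pscal{\pi_0}{\partial_x\phi_n\,\partial_x\phi_m}=(2\lambda_n/\sigma^2)\delta_{nm}$, hence $\pscal{\pi_0}{(\partial_x\Phi)^2}=(2/\sigma^2)\sum\lambda_n\alpha_n^2$ without any splitting. This shows that the $\sigma^{-4}\sum\alpha_n^2$ term in the stated bound is superfluous, and that the restriction to $\cD$ (which the paper uses to keep $\partial_x V_0$ bounded) is unnecessary. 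The only ingredient both approaches share is the vanishing of boundary terms at infinity, which you justify correctly via the decay of the Schr\"odinger eigenfunctions under the quadratic-growth assumption on $V_0$.
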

\begin{proof}
By~\eqref{eq:psin}, we have 
\begin{equation}
 \partial_x \Phi(x) 
 = \frac{1}{\sqrt{\pi_0(x)}} \sum_{n\geqs2} \alpha_n 
\biggbrak{\frac{1}{\sigma^2}\partial_x V_0(x)\psi_n(x) + \partial_x\psi_n(x)}
 =: \frac{1}{\sqrt{\pi_0(x)}} \bigbrak{\Psi_1(x) + \Psi_2(x)}\;. 
\end{equation} 
This implies 
\begin{equation}
 \pscal{\pi_0}{(\partial_x\Phi)^2\indicator{\cD}}
 \leqs 2\pscal{\Psi_1\indicator{\cD}}{\Psi_1\indicator{\cD}} + 
2\pscal{\Psi_2\indicator{\cD}}{\Psi_2\indicator{\cD}}\;.
\end{equation} 
The first term on the right-hand side satisfies the claimed bound since 
$\partial_x V_0$ is bounded on $\cD$. As for the second term, the fact that 
$\psi_m$ is an eigenfunction of $\tilde\sL_x$ implies 
\begin{equation}
 \pscal{\partial_x\psi_n}{\partial_x\psi_m}
 = - \pscal{\psi_n}{\partial_{xx}\psi_m}
 = \frac{2}{\sigma^2} \lambda_n \delta_{nm} - 
\frac{1}{\sigma^4}\pscal{\psi_n}{U_0\psi_m}\;.
\end{equation} 
This yields 
\begin{equation}
 \pscal{\Psi_2\indicator{\cD}}{\Psi_2\indicator{\cD}}
 = \frac{2}{\sigma^2} \sum_{n\geqs2} \lambda_n\alpha_n^2 
 - \frac{1}{\sigma^4} \pscal{\Psi}{\indicator{\cD}U_0\Psi}\;, \qquad 
 \Psi(x) = \sum_{n\geqs2}\alpha_n\psi_n(x)\;,
\end{equation} 
which also satisfies the claimed bound. 
\end{proof}

\begin{proof}[{\sc Proof of Proposition~\ref{prop:sum_fnm}}]
To prove the first two bounds, we note that owing to the completeness of the 
set of eigenfunctions, one has 
\begin{equation}
\sum_{n\geqs0} f_{n0}^2 
= \sigma^4 \sum_{n\geqs0} \pscal{\partial_y\pi_0}{\phi_n} \pscal{\pi_n}{W} 
= 4\pscal{\pi_0}{W^2}\;,
\end{equation} 
which has order $1$. In a similar way, we obtain
\begin{equation}
\sum_{n\geqs0} f_{n1}^2 
= 4\pscal{\pi_0}{\phi_1 W^2} + 2\sigma^2 \pscal{\pi_0}{\partial_y\phi_1 W}\;.
\end{equation} 
Using the Cauchy--Schwarz inequality, \eqref{eq:proj_dyphi1} and 
Remark~\ref{rem:pi0absphi1}, one obtains that both terms have again order $1$. 
The proof of the bounds involving $g_{ni}$ are similar. 

The last two bounds then follow directly from Lemma~\ref{lem:sum_dypin} with $n$ 
and $m$ interchanged, taking $f = \phi_n$, since $W$ is bounded uniformly on 
compact sets, while for large $\abs{x}$, the decay of $\pi_0(x)$ dominates any 
polynomially growing term.
\end{proof}

\begin{proof}[{\sc Proof of Proposition~\ref{prop:pin_h0}}]
Using again the completeness of the set of eigenfunctions, we have 
\begin{equation}
 \sum_{n\geqs0} \pscal{\pi_n}{h_0}^2
 = \sum_{n\geqs0} \pscal{\pi_0}{h_0\phi_n}\pscal{\pi_n}{h_0}
 = \pscal{\pi_0}{h_0^2} 
 = \pscal{\pi_0}{h_0} - \eta(y)\;.
\end{equation} 
At the same time, we also have 
\begin{align}
 \sum_{n=0}^1 \pscal{\pi_n}{h_0}^2
 &= \pscal{\pi_0}{h_0}^2 + \pscal{\pi_0}{\phi_1h_0}^2 \\[-8pt]
 &= \pscal{\pi_0}{h_0}^2 + \Bigbrak{\phi_-(y) \pscal{\pi_0}{h_0^2}
 + \phi_+(y)\pscal{\pi_0}{h_0(1-h_0)}}^2 
 \bigbrak{1+\Order{\lambda_1\ell}}\\
 &= \pscal{\pi_0}{h_0}^2 \bigbrak{1 + \e^{2\Deltabar(y)/\sigma^2}} 
 + \bigOrder{\eta(y)}\;.
\end{align} 
The result follows by subtracting the two sums, and using the 
definition~\eqref{eq:defDeltabar} of $\Deltabar(y)$. 
\end{proof}

\begin{proof}[{\sc Proof of Proposition~\ref{prop:f1mm1}}]
We will spell out the proofs in the case $i=1$, since the case $i=0$ is 
similar, though slightly easier. The first sum can be estimated by noting that 
\begin{equation}
 \sum_{m\geqs0} f_{1m}^2 
 = \sigma^4 \sum_{m\geqs0} \pscal{\pi_0}{\partial_y\phi_1\phi_m} 
\pscal{\pi_m}{\partial_y\phi_1}
 = \sigma^4 \pscal{\pi_0}{(\partial_y\phi_1)^2}
 = (\Deltabar')^2 + \Order{\lambda_1 \ell^2}\;,
\end{equation} 
where we have used~\eqref{eq:pi0dyphi12} in the last step.
Since Proposition~\ref{prop:f10} also yields 
\begin{equation}
 f_{10}^2 + f_{11}^2 = (\Deltabar')^2 + \Order{\lambda_1 \ell^2}\;,
\end{equation} 
we conclude that the first sum indeed has order $\lambda_1 \ell^2$. 
In the same spirit,
\begin{equation}
 \sum_{m\geqs0} f_{1m}f_{m1} 
 = -\sigma^4 \sum_{m\geqs0} \pscal{\partial_y\pi_1}{\phi_m} 
\pscal{\pi_m}{\partial_y\phi_1}
 = -\sigma^4 \pscal{\partial_y\pi_1}{\partial_y\phi_1}
 = A^2(\Deltabar')^2 + \Order{\lambda_1 \ell^2}
\end{equation} 
by~\eqref{eq:dypi1dyphi1}, while $f_{10}f_{01} + f_{11}^2 = A^2(\Deltabar')^2 + 
\Order{\lambda_1 \ell^2}$, showing the result for the second sum. 

Regarding the third sum, we use the decomposition $g_{1m} = -\ell_{1m} - 2 
k_{1m}$ given in~\eqref{eq:orth_fnm} and estimate separately the sums of 
squares of $\ell_{1m}$ and $k_{1m}$. Noting that 
\begin{equation}
 \sum_{m\geqs0} \ell_{1m}^2 
 = \sigma^8 \sum_{m\geqs0} \pscal{\pi_0}{\partial_{yy}\phi_1\phi_m} 
\pscal{\pi_m}{\partial_{yy}\phi_1}
 = \sigma^8 \pscal{\pi_0}{(\partial_{yy}\phi_1)^2}
\end{equation} 
and using~\eqref{eq:pi0dyyphi12}, we find that this sum is indeed equal to 
$\ell_{10}^2 + \ell_{11}^2 + \Order{\lambda_1\ell^3}$. As for the sum of 
$k_{1m}^2$, we note that~\eqref{eq:proj_dyphi1} implies 
\begin{align}
 k_{1m} &= 
 \sigma^2 \Deltabar' \pscal{\partial_y\pi_m}{A\phi_1+B}
 + \sigma^4 \pscal{\partial_y\pi_m}{R_1} \\
 &= \Deltabar' A f_{1m} + \sigma^2 \pscal{\partial_y\pi_m}{R_1}\;.
 \label{eq:k1m} 
\end{align}
We have already bounded $\sum_{m\geqs2} f_{1m}^2$, and the sum involving the 
error term $R_1$ can be bounded using Lemma~\ref{lem:sum_dypin} 
and~\eqref{eq:boundsR}. The proof is similar for the other sums.  
\end{proof}

In order to prove Proposition~\ref{prop:f1mm1lambdam}, we introduce two linear 
operators $\Pi_\perp$ and $\sL_\perp^{-1}$ defined by 
\begin{align}
 \bigpar{\Pi_\perp f}(x) 
 &= \sum_{m\geqs 2} \pscal{\pi_m}{f} \phi_m(x)\;, \\
 \bigpar{\sL_\perp^{-1} f}(x) 
 &= -\sum_{m\geqs 2} \frac{1}{\lambda_m}\pscal{\pi_m}{f} \phi_m(x)\;. 
\end{align}
The operator $\Pi_\perp$ is the projection on the complement of the span of 
$\phi_0$ and $\phi_1$, while $\sL_\perp^{-1}$ is the Green function of $\sL_x$ 
restricted to this complement. Note that $\sL_\perp^{-1} = 
\sL_\perp^{-1}\Pi_\perp = \Pi_\perp\sL_\perp^{-1}$. 

\begin{lemma}
\label{lem:Lperp}
Let $\sG_0$ be the Green function with Dirichlet boundary conditions, given 
by~\eqref{eq:Lx_inverse} for $x\in \cD=(x^*_-,x^*_+)$, and by  
\begin{equation}
 \bigpar{\sG_0f}(x) = 
 \begin{dcases}
  -\frac{2}{\sigma^2} \int_x^{x^*_-} \e^{2V_0(x_1)/\sigma^2}
  \int_{-\infty}^{x_1} \e^{-2V_0(x_2)/\sigma^2} f(x_2) \6x_2 \6x_1
  &\text{if $x<x^*_-$\;,} \\
  -\frac{2}{\sigma^2} \int_{x^*_+}^x \e^{2V_0(x_1)/\sigma^2}
  \int_{x_1}^{\infty} \e^{-2V_0(x_2)/\sigma^2} f(x_2) \6x_2 \6x_1
  &\text{if $x>x^*_+$\;.}  
 \end{dcases}
\end{equation} 
Then we have the representation 
\begin{equation}
\label{eq:Lperp_inverse} 
 \bigpar{\sL_\perp^{-1} f}(x) 
 = f_- h_0(x) + f_+ (1-h_0(x)) + \bigpar{\sG_0\Pi_\perp f}(x)\;,
\end{equation}
where the boundary values $f_\pm$ are given by 
\begin{align}
 f_- &= -\pscal{\pi_0}{\sG_0 \Pi_\perp f} 
 - \e^{\Deltabar/\sigma^2} \pscal{\pi_1}{\sG_0 \Pi_\perp f} 
\bigbrak{1+\Order{\lambda_1\ell}}\;, \\
 f_+ &= -\pscal{\pi_0}{\sG_0 \Pi_\perp f} 
 + \e^{-\Deltabar/\sigma^2} \pscal{\pi_1}{\sG_0 \Pi_\perp f} 
\bigbrak{1+\Order{\lambda_1\ell}}\;.
\end{align}
\end{lemma}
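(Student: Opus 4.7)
The plan is to characterise $u := \sL_\perp^{-1} f$ as the unique solution of $\sL_x u = \Pi_\perp f$ on $\R$ subject to the two orthogonality constraints $\pscal{\pi_0}{u} = \pscal{\pi_1}{u} = 0$, and to construct this solution as a particular Green-function inversion plus a homogeneous correction whose coefficients are fixed by the constraints.

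First I would verify that $v := \sG_0 \Pi_\perp f$ provides such a particular solution. A direct differentiation of the three piecewise defining formulas shows that $\sL_x v = \Pi_\perp f$ on each of the open regions $(-\infty,x^*_-)$, $\cD = (x^*_-,x^*_+)$ and $(x^*_+,\infty)$, with $v(x^*_\pm) = 0$ ensuring continuity across the nodes. The convergence of the improper integrals on the two outer regions, as well as the decay of $v$ at infinity, hinges on the cancellation $\int_\R \pi_0\,\Pi_\perp f\,\6x = \pscal{\pi_0}{\Pi_\perp f} = 0$, which holds because $\phi_0 \equiv 1$ is orthogonal to the range of $\Pi_\perp$.

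Next I would classify the admissible homogeneous corrections. Rewriting $\sL_x w = 0$ as $\partial_x(\e^{-2V_0/\sigma^2}\partial_x w) = 0$ shows that the local kernel is two-dimensional, spanned by the constant $1$ and by $\int^x \e^{2V_0(\bar x)/\sigma^2} \6\bar x$; the latter grows faster than $\pi_0$ decays, so on the outer regions only the constant survives in $L^2(\pi_0)$, while on $\cD$ the bounded $\sL_x$-harmonic functions are spanned by $h_0$ and $1-h_0$. Matching continuously at $x^*_\pm$, the most general admissible $w$ is therefore $w = f_- h_0 + f_+(1-h_0)$ for two constants $f_\pm$, where $h_0$ is taken equal to $1$ on $(-\infty,x^*_-]$ and to $0$ on $[x^*_+,\infty)$. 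This yields
\[
 u(x) = (\sG_0\Pi_\perp f)(x) + f_-\,h_0(x) + f_+\,(1 - h_0(x)).
\]

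Finally, the orthogonality conditions $\pscal{\pi_n}{u} = 0$ for $n = 0,1$ yield a linear $2\times2$ system for $(f_-,f_+)$ whose coefficients are accessible: from~\eqref{eq:defDeltabar} we have $\pscal{\pi_0}{h_0} = (1-A)/2$ and $\pscal{\pi_0}{1-h_0} = (1+A)/2$ exactly, while Propositions~\ref{prop:h0}, \ref{prop:h1} and~\ref{prop:phi_1} combine to give $\pscal{\pi_1}{h_0} = (B/2)\bigbrak{1+\Order{\lambda_1\ell}}$ and $\pscal{\pi_1}{1-h_0} = -(B/2)\bigbrak{1+\Order{\lambda_1\ell}}$. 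Applying Cramer's rule and using the identities $(1\pm A)/B = \e^{\pm\Deltabar/\sigma^2}$ then produces the claimed expressions for $f_\pm$. The main obstacle will be the bookkeeping of error terms: the determinant of the $2\times 2$ system is itself of order $B$, which can be exponentially small in $\sigma^{-2}$, so I must check that the $\Order{\eta}$ remainders appearing in $\pscal{\pi_1}{h_0}$ and $\pscal{\pi_1}{1-h_0}$ are absorbed cleanly into the factor $1+\Order{\lambda_1\ell}$ after division by this determinant, which is exactly what the bound $\eta(y) \lesssim \lambda_1(y)\ell(\sigma)B(y)^2$ from Proposition~\ref{prop:h0} ensures.
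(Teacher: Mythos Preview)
Your proposal is correct and follows essentially the same route as the paper: write $\sL_\perp^{-1}f$ as the particular solution $\sG_0\Pi_\perp f$ plus a harmonic correction $f_-h_0+f_+(1-h_0)$, and fix $f_\pm$ by the two orthogonality conditions $\pscal{\pi_0}{u}=\pscal{\pi_1}{u}=0$, solving the resulting $2\times 2$ system with the matrix entries supplied by~\eqref{eq:defDeltabar} and Propositions~\ref{prop:h0}--\ref{prop:phi_1}. Your extra care about the decay of the outer-region Green integrals (via $\pscal{\pi_0}{\Pi_\perp f}=0$) and about how the $\Order{\eta}$ remainders survive division by the order-$B$ determinant is well placed and matches what the paper leaves implicit.
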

\begin{proof}
We view $\sL_\perp^{-1} f$ as the solution, on each of the intervals 
$(-\infty,x^*_-)$, $\cD$ and $(x^*_+,\infty)$, of a Dirichlet--Poisson problem 
similar to~\eqref{eq:Poisson_problem}, but with boundary values $f_\pm$. 
The expression~\eqref{eq:Lperp_inverse} is checked in the same way as in 
Lemma~\ref{lem:Poisson}, recalling that $h_0$ is constant outside $\cD$. The 
boundary values $f_\pm$ follow from the conditions 
$\pscal{\pi_0}{\sL_\perp^{-1} f} = \pscal{\pi_1}{\sL_\perp^{-1} f} = 0$, which 
are equivalent to the linear system 
\begin{equation}
 \begin{pmatrix}
  \pscal{\pi_0}{h_0} & \pscal{\pi_0}{1-h_0} \\ 
  \pscal{\pi_1}{h_0} & \pscal{\pi_1}{1-h_0}
 \end{pmatrix}
 \begin{pmatrix}
  f_- \\ f_+ 
 \end{pmatrix}
 = - 
 \begin{pmatrix}
  \pscal{\pi_0}{\sG_0 \Pi_\perp f} \\ \pscal{\pi_1}{\sG_0 \Pi_\perp f}
 \end{pmatrix}\;.
\end{equation} 
Solving this system, using~\eqref{eq:defDeltabar} and the fact that 
\begin{equation}
 \pscal{\pi_1}{h_0} = -\pscal{\pi_1}{1-h_0}
 = \frac{1+\Order{\lambda_1\ell}}{\e^{\Deltabar/\sigma^2} + 
\e^{-\Deltabar/\sigma^2}}
\end{equation} 
as a consequence of Propositions~\ref{prop:h0}, \ref{prop:h1} 
and~\eqref{eq:phi1_rep} yields the result. 
\end{proof}

\begin{proof}[{\sc Proof of Proposition~\ref{prop:f1mm1lambdam}}]
The first sum can be written 
\begin{equation}
 S_1 := \sum_{m\geqs2} \frac{1}{\lambda_m} f_{1m}f_{m1}
 = \sigma^4 \pscal{\partial_y\pi_1}{\sL_\perp^{-1}\partial_y\phi_1}\;.
\end{equation} 
Applying Lemma~\ref{lem:Lperp} and using the 
representation~\eqref{eq:proj_dyphi1} of $\partial_y\phi_1$, we obtain 
\begin{align}
 S_1 
 &= \sigma^4 (f_- - f_+) \pscal{\partial_y\pi_1}{h_0}
 + \sigma^4 \pscal{\partial_y\pi_1}{\sG_0\Pi_\perp R_1} \\
 &= -\sigma^4 (\e^{\Deltabar/\sigma^2} + \e^{-\Deltabar/\sigma^2})
 \pscal{\partial_y\pi_1}{h_0} \pscal{\pi_1}{\sG_0\Pi_\perp R_1} 
 + \sigma^4 \pscal{\partial_y\pi_1}{\sG_0\Pi_\perp R_1}\;.
\end{align}
By the expressions~\eqref{eq:phi1_rep} of $\phi_1$, we have
\begin{equation}
 f_{11} = \sigma^2\pscal{\partial_y\pi_1}{\phi_1}
 = \sigma^2 (\e^{\Deltabar/\sigma^2} + \e^{-\Deltabar/\sigma^2}) 
 \pscal{\partial_y\pi_1}{h_0}[1 + \Order{\lambda_1\ell}]
 + \Order{\lambda_1\ell}\;,
\end{equation} 
which yields
\begin{equation}
 \sigma^2 (\e^{\Deltabar/\sigma^2} + \e^{-\Deltabar/\sigma^2}) 
 \pscal{\partial_y\pi_1}{h_0}
 = f_{11} + \Order{\lambda_1\ell}
 = -A\Delta' + \Order{\lambda_1\ell}\;.
\end{equation} 
Using the fact that $\partial_y\pi_1 = \partial_y\pi_0\phi_1 + 
\pi_0\partial_y\phi_1$ and the expression~\eqref{eq:defW} for 
$\partial_y\pi_0$, we arrive at 
\begin{align}
 S_1 &= \sigma^2 
 \bigpscal{2\pi_1 \bigpar{W+\Deltabar'A + \Order{\lambda_1\ell^2}} + 
\pi_0\bigpar{\Deltabar'B + R_1}}{\sG_0\Pi_\perp R_1} \\
 &\lesssim \pscal{\pi_0}{\abs{\sG_0\Pi_\perp R_1}} 
 + \pscal{\abs{\pi_1}}{\abs{\sG_0\Pi_\perp R_1}}\;.
\end{align} 
It remains to estimate $\sG_0\Pi_\perp R_1$. The remainder $R_1$ is a sum of 
several terms, but the leading contribution comes from 
$(\phi_--\phi_+)\sigma^2\partial_y h_0$. We have 
\begin{equation}
 \Pi_\perp(\sigma^2\partial_y h_0) 
 = \sigma^2\partial_y h_0 + c_0 + c_1\phi_1\;,
\end{equation} 
where $c_0 = -\pscal{\pi_0}{\sigma^2\partial_y h_0} = \Order{\lambda_1\ell 
B^2}$ and $c_1 = -\pscal{\pi_0}{\sigma^2\partial_y h_0} = \Order{\lambda_1\ell 
B}$. By Lemma~\ref{lem:Poisson}, we obtain 
\begin{equation}
 \bigabs{\sG_0\Pi_\perp (\sigma^2\partial_y h_0)}
 \lesssim h_0(1-h_0) + c_0 \ell + c_1\ell\abs{\phi_1}\;.
\end{equation} 
Thanks to Remark~\ref{rem:pi0absphi1}, we conclude that 
\begin{align}
 \pscal{\pi_0}{\abs{\sG_0\Pi_\perp (\sigma^2\partial_y h_0)}} 
 &\lesssim \lambda_1 \ell^2 B^2 \;, \\
 \pscal{\abs{\pi_1}}{\abs{\sG_0\Pi_\perp (\sigma^2\partial_y h_0)}} 
 &\lesssim \lambda_1 \ell^2 B \;. 
\end{align} 
After estimating the other terms of $R_1$, we arrive at the bound 
$S_1 \lesssim \lambda_1\ell^2$. 

The second sum can be written 
\begin{equation}
 \sum_{m\geqs2} \frac{1}{\lambda_m} f_{1m}g_{m1}
 = \sigma^6 \pscal{\partial_{yy}\pi_1}{\sL_\perp^{-1}\partial_y\phi_1}\;,
\end{equation} 
and can be estimated in a similar way, expressing $\partial_{yy}\pi_1$ in 
terms of $\partial_y\phi_1$ and $\partial_{yy}\phi_1$, where the 
latter can be written in terms of $\phi_1$ and a 
remainder using~\eqref{eq:dyyphi1_rep}. 

The third sum can be written, using~\eqref{eq:k1m}, as
\begin{equation}
 -\sum_{m\geqs2} \frac{1}{\lambda_m} \bigbrak{\ell_{1m}+2k_{1m}}f_{m1}
 = \sigma^6 \pscal{\partial_y\pi_1}{\sL_\perp^{-1}\partial_{yy}\phi_1} 
 + 2\sigma^2 S_1 
 - 2\sigma^6\sum_{m\geqs2} \frac{f_{m1}}{\lambda_m} 
  \pscal{\partial_y\pi_m}{R_1}\;,
\end{equation} 
where the last sum can be estimated via the Cauchy--Schwarz inequality. The 
case of the last sum is similar. 
\end{proof}

\begin{proof}[{\sc Proof of Proposition~\ref{prop:Phistar}}]
The expression~\eqref{eq:def_alphaperp} of $\alpha_\perp^*$ can be rewritten as 
\begin{equation}
 \alpha_n^*(y) = -\eps\frac{1}{\lambda_n} \pscal{\partial_y\pi_0 + 
\alpha_1\partial_y\pi_1}{\phi_n}\;.
\end{equation} 
Therefore, we have 
\begin{equation}
 \pscal{\pi_0}{\Phi_\perp^* f} 
 = \sum_{n\geqs2} \alpha_n^*(y) \pscal{\pi_n}{f} 
 = -\eps \pscal{\partial_y\pi_0 + 
\alpha_1\partial_y\pi_1}{\sL_\perp^{-1}f}\;.
\end{equation} 
This quantity can be estimated in a similar way as $S_1$ in the previous proof, 
by noting that the only thing that really matters is the fact that $f$ can be 
bounded by a constant times $h_0(1-h_0)$. 
\end{proof}




\section{Laplace asymptotics}
\label{app:Laplace} 

In this appendix, we gather a few standard results on Laplace asymptotics, 
which can be obtained from those in~\cite{Olver_book}.

\begin{lemma}
\label{lem:Laplace} 
Let $f\in\sC^2(\R,\R)$ satisfy the following conditions: 
\begin{itemize}
\item 	$\abs{f(x)}$ has at most polynomial growth for large $x$;
\item 	$f$ is bounded away from $0$ in neighbourhoods $I_\pm$ of $x^*_-(y)$ 
and $x^*_+(y)$, whose size does not depend in $\sigma$; 
\item 	$f'/f$ and $f''/f$ are bounded uniformly in $\sigma$ in $I_\pm$.
\end{itemize}
Then 
\begin{equation}
 \pscal{\pi_0}{f} 
 = \frac{f(x^*_-(y)) \e^{-\Deltabar(y)/\sigma^2} + f(x^*_+(y)) 
\e^{\Deltabar(y)/\sigma^2}}{\e^{-\Deltabar(y)/\sigma^2} 
+ \e^{\Deltabar(y)/\sigma^2 }} \bigbrak{1 + \Order{\sigma^2}}\;.
\end{equation} 
\end{lemma}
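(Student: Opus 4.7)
The plan is to reduce the statement to standard one-point Laplace asymptotics applied at each of the two minima $x^*_\pm(y)$, and then to massage the resulting ratio using the definition of $\Deltabar(y)$.

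First I would split the integral defining $\pscal{\pi_0}{f}$ into three parts: the contributions from neighbourhoods $I_-$ and $I_+$ of $x^*_-(y)$ and $x^*_+(y)$, and the contribution from the complement $\R\setminus(I_-\cup I_+)$. On the complement, $V_0(\cdot,y) - \min_\pm V_0(x^*_\pm(y),y)$ is bounded below by a positive constant $\kappa$ of order $1$, while the assumed polynomial growth of $\abs{f}$ together with the at-least-quadratic growth of $V_0$ at infinity ensures that this piece is bounded by $Z_0(y)^{-1}\e^{-2\kappa/\sigma^2}$ times a constant, hence is negligible compared to the final answer.

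On each neighbourhood $I_\pm$, I would perform the standard Laplace expansion: Taylor expand $V_0(x,y) = V_0(x^*_\pm(y),y) + \tfrac{1}{2}\omega_\pm(y)^2(x-x^*_\pm(y))^2 + \Order{(x-x^*_\pm(y))^3}$ and, using that $f$ is bounded away from $0$ on $I_\pm$ with $f'/f$ and $f''/f$ bounded there, write $f(x) = f(x^*_\pm(y))\bigl[1 + \Order{x-x^*_\pm(y)}\bigr]$. After the rescaling $u = (x-x^*_\pm(y))\sqrt{2}/\sigma$, the leading Gaussian integral contributes $f(x^*_\pm(y))\sqrt{\pi}\sigma/\omega_\pm(y)$ times $\e^{2h_\pm(y)/\sigma^2}$; the linear correction in $f$ vanishes by symmetry of the Gaussian, so the next nonzero correction is $\Order{\sigma^2}$, absorbing both the cubic term in $V_0$ and the quadratic term in $f$. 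Combining with the analogous expansion~\eqref{eq:Z0_Laplace} for $Z_0(y)$ in the denominator yields
\begin{equation*}
 \pscal{\pi_0}{f} = \frac{\dfrac{f(x^*_-(y))}{\omega_-(y)}\e^{2h_-(y)/\sigma^2} + \dfrac{f(x^*_+(y))}{\omega_+(y)}\e^{2h_+(y)/\sigma^2}}{\dfrac{1}{\omega_-(y)}\e^{2h_-(y)/\sigma^2} + \dfrac{1}{\omega_+(y)}\e^{2h_+(y)/\sigma^2}}\bigbrak{1+\Order{\sigma^2}}.
\end{equation*}

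Finally, I would rewrite this ratio in the desired form. Factoring $\omega_-(y)^{-1}\e^{2h_-(y)/\sigma^2}$ from the numerator and denominator introduces the ratio $(\omega_-(y)/\omega_+(y))\e^{2\Delta(y)/\sigma^2} = \e^{2\Deltabar(y)/\sigma^2}$, where the last equality is the definition~\eqref{eq:Deltabar} of $\Deltabar(y)$. Multiplying numerator and denominator by $\e^{-\Deltabar(y)/\sigma^2}$ then produces the symmetric expression stated in the lemma. There is no serious obstacle here beyond careful bookkeeping of error terms; the only point that deserves attention is checking that the hypotheses on $f'/f$ and $f''/f$, rather than on $\norm{f'}_\infty$ and $\norm{f''}_\infty$, are enough to guarantee a relative error of size $\Order{\sigma^2}$ uniformly, which is exactly what is needed to keep the error multiplicative in the final quotient.
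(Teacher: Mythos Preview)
Your proposal is correct and follows essentially the same route as the paper's proof: split off the negligible complement of $I_-\cup I_+$, apply a Taylor expansion with Gaussian rescaling on each $I_\pm$, take the ratio with the corresponding expansion of $Z_0(y)$, and then rewrite the result in terms of $\Deltabar(y)$. The paper's version is slightly terser (it does not spell out the final $\Deltabar$ algebra), but the argument is the same.
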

\begin{proof}
Using the change of variables $x = x^*_\pm(y) + \sigma z/(\sqrt 2 
\omega_-(y))$, 
one obtains 
\begin{align}
 &\int_{I_\pm} \e^{-2V_0(x,y)/\sigma^2} f(x)\6x \\
 &= \frac{\sigma}{\sqrt2\omega_-(y)} 
 \int_{\tilde I_\pm} \biggbrak{f(x^*_\pm) 
 + \frac{\sigma}{\sqrt2\omega_-(y)} z f'(x^*_-(y)) 
 + \frac{\sigma^2}{4\omega_-(y)^2} z^2 f''(x^*_\pm(y)+\theta)} \e^{-z^2/2}\6z\\
 &= \frac{\sigma\sqrt{\pi}}{\omega_-(y)} f(x^*_\pm(y))[1 + \Order{\sigma^2}]\;,
 \label{eq:Laplace_proof} 
\end{align} 
where $\tilde I_\pm = \sqrt2\omega_\pm(y)(x-x^*_\pm(y))/\sigma$. 
Furthermore, the integral over $\R\setminus(I_-\cup I_+)$ is negligible with 
respect to the sum of these two integrals. The result then follows from 
applying~\eqref{eq:Laplace_proof} first to $f=1$ to estimate $Z_0(y)$, and then 
to general $f$ satisfying the stated assumptions. 
\end{proof}

We will also need estimates involving the integral of $\e^{2V_0/\sigma^2}$ 
against a function vanishing polynomially at $x = x^*_0$. To ease notation, we 
will assume that $x^*_0 = 0$, and write  
\begin{equation}
 V_0(x) = -\frac12\omega_0^2 x^2 + W(x) 
\end{equation} 
where $W(x) = \Order{x^3}$. Consider the integrals 
\begin{align}
 I_n &= \int_{-\delta}^\delta x^n \e^{2V_0(x)/\sigma^2} \6x\;, \\
 J_n(x) &= \e^{-2V_0(x)/\sigma^2} \int_x^\delta x_1^n \e^{2V_0(x_1)/\sigma^2} 
\6x_1\;,
\end{align}
where $n\in\N_0$ and $\delta$ has order $1$. 

\begin{lemma}
\label{lem:Laplace2} 
We have the asymptotics 
\begin{equation}
\label{eq:In}
I_n = 
\begin{cases}
\Gamma\biggpar{\dfrac{n+1}{2}} \dfrac{\sigma^{n+1}}{\omega_0^{n+1}}
\bigbrak{1+\Order{\sigma^2}}
&\text{if $n$ is even\;,} \\[2ex]
\Order{\sigma^{n+2}}
&\text{if $n$ is odd\;.} 
\end{cases}
\end{equation} 
\end{lemma}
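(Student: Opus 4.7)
The plan is to carry out the standard Laplace rescaling $x = \sigma z/\omega_0$, then systematically expand the exponential correction coming from $W$ and exploit the symmetry of the Gaussian kernel on the symmetric interval of integration. Concretely, after the change of variables we get
\begin{equation}
 I_n = \frac{\sigma^{n+1}}{\omega_0^{n+1}} \int_{-\delta\omega_0/\sigma}^{\delta\omega_0/\sigma} z^n \e^{-z^2} \exp\biggpar{\frac{2W(\sigma z/\omega_0)}{\sigma^2}} \6z\;,
\end{equation}
so the claim reduces to estimating the rescaled integral up to a multiplicative error of size $\Order{\sigma^2}$ (for even $n$) or an additive error of size $\Order{\sigma}$ times a constant (for odd $n$).

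Next I would split the domain into a bulk region $|z| \leqs \sigma^{-1/4}$ and a tail region $|z| > \sigma^{-1/4}$, chosen so that the Taylor expansion of the exponential correction is well-controlled in the bulk. Since $W(x) = \Order{x^3}$, on the bulk we have $2W(\sigma z/\omega_0)/\sigma^2 = \Order{\sigma^{1/4}}$, which is small enough to justify expanding the exponential. Writing the Taylor expansion of $W$ as $W(x) = \frac{1}{6}W'''(0)x^3 + \frac{1}{24}W''''(0)x^4 + \Order{x^5}$, we obtain
\begin{equation}
 \exp\biggpar{\frac{2W(\sigma z/\omega_0)}{\sigma^2}} = 1 + \sigma\, p_1(z) + \sigma^2 p_2(z) + \Order{\sigma^3 q(z)}\;,
\end{equation}
where $p_1$ is an odd polynomial (coming from the $z^3$-contribution), $p_2$ is an even polynomial (combining the $z^4$-contribution with the square of the $z^3$-contribution), and $q(z)$ grows polynomially. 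On the tail region, the exponent of the full integrand $-z^2 + 2W(\sigma z/\omega_0)/\sigma^2$ stays bounded above by $-c'z^2$ for some $c' > 0$ (because $V_0$ has a genuine quadratic maximum at $0$ on $(-\delta,\delta)$), so the tail contributes only a super-exponentially small error.

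For even $n$, the symmetry of the interval $[-\delta\omega_0/\sigma, \delta\omega_0/\sigma]$ together with the fact that $z^n p_1(z)\e^{-z^2}$ is odd kills the $\sigma$-correction exactly; the leading term yields $\Gamma((n+1)/2)$ up to exponentially small tail corrections from replacing the truncated integral by $\int_{\R}$, and the first surviving correction is the $\sigma^2$ term, giving the advertised $[1 + \Order{\sigma^2}]$. For odd $n$, the leading Gaussian integral vanishes by symmetry, and the first surviving term is $\sigma \int z^n p_1(z)\e^{-z^2}\6z$ (a finite constant since $n+3$ is even), producing the $\Order{\sigma^{n+2}}$ bound. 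The main delicate point is the remainder bound: one must check that the $\Order{\sigma^3 q(z)}$ term in the expansion, once multiplied by $z^n\e^{-z^2}$ and integrated over the bulk, truly gives $\Order{\sigma^3}$ rather than something worse from the growth of $q(z)$; this follows because $q(z)$ grows only polynomially in $z$ while $\e^{-z^2}$ provides Gaussian decay, so all moments are finite.
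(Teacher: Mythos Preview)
Your proof is correct and follows the classical Laplace-method template: rescale, split into bulk and tail, Taylor-expand the perturbation $\exp(2W/\sigma^2)$ in the bulk, and exploit parity. The parity argument handles the even and odd cases uniformly and yields the $\Order{\sigma^2}$ improvement for even $n$ (because the first correction $p_1$ is odd) as well as the $\Order{\sigma^{n+2}}$ bound for odd $n$ (because the leading Gaussian moment vanishes).

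The paper takes a different route. For even $n$ it simply invokes Olver's Laplace-asymptotics theorem, noting that the first correction vanishes because the cubic term of $W$ is odd. For odd $n$ it does \emph{not} argue by symmetry; instead it integrates by parts against the Gaussian factor to obtain the recursion $I_n = \Order{\sigma^2 I_{n-1}} + \Order{\sigma^{n+2}}$, starting the induction at $n=1$ where the integral term is controlled via the already-established even case $I_2$. Your approach is more self-contained (no external reference, no induction) and gives a bit more structural information, namely the explicit form of the leading $\sigma^{n+2}$-coefficient for odd $n$; the paper's approach is shorter and avoids the bulk/tail bookkeeping, at the price of relying on Olver and on a reduction step.
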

\begin{proof}
The case of even $n$  follows from a direct application 
of~\cite[Theorem~8.1]{Olver_book}, where the fact that the 
error has order $\sigma^2$ is due to the leading term of $W$ being odd. 
When $n$ is odd, we use integration by parts to obtain 
\begin{equation}
\label{eq:ibp_proof} 
 I_n = -\frac{\sigma^2}{2\omega_0^2} \e^{-\omega_0^2 x^2/\sigma^2} 
x^{n-1}\e^{2W(x)/\sigma^2} \biggr\vert_{-\delta}^\delta 
+ \frac{\sigma^2}{2\omega_0^2} \int_{-\delta}^\delta \e^{-\omega_0^2 
x^2/\sigma^2} 
\dtot{}{x} \Bigbrak{x^{n-1}\e^{2W(x)/\sigma^2}}\6x\;.
\end{equation} 
If $n=1$, the integral has order $\sigma^3$ by~\eqref{eq:In} with $n=2$, while 
the boundary terms are negligible. For $n\geqs 3$, we 
obtain $I_n = \Order{\sigma^2I_{n-1}} + \Order{\sigma^{n+2}}$, so that the 
result follows by induction. 
\end{proof}

In particular, we have 
\begin{equation}
 I_0 = \frac{\sqrt{\pi}}{\omega_0} \sigma \bigbrak{1+\Order{\sigma^2}}\;, 
 \qquad 
 I_2 = \frac{\sqrt{\pi}}{2\omega_0^3} \sigma^3
 \bigbrak{1+\Order{\sigma^2}}\;.
\end{equation} 

\begin{lemma}
\label{lem:Laplace3} 
There is a constant $M>0$, independent of $\sigma$ and $\delta$, such that 
\begin{equation}
\label{eq:Jn} 
\bigabs{J_n(x)} 
\leqs M\sigma^2(\sigma + \abs{x})^{n-1}
\end{equation} 
holds for any $n\in\N$ and any $x\in[-\delta,\delta]$.
In particular, 
\begin{align}
 \label{eq:J1} 
 J_1(x) &= \frac{\sigma^2}{2\omega_0^2} 
 + \bigOrder{\sigma^2(\abs{x}+\sigma)}\;, \\
 J_3(x) &= \frac{\sigma^4}{2\omega_0^4} 
 + \bigOrder{\sigma^2(x^2+\sigma^3)}\;.
 \label{eq:J3} 
\end{align}
\end{lemma}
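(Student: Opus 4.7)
The plan is to prove all three bounds by integration by parts, mirroring the odd-$n$ argument in the proof of Lemma~\ref{lem:Laplace2}. Using $V_0'(x_1) = -\omega_0^2 x_1 + W'(x_1)$, one has the identity
\begin{equation*}
x_1 e^{2V_0(x_1)/\sigma^2} = -\frac{\sigma^2}{2\omega_0^2}\frac{d}{dx_1}e^{2V_0(x_1)/\sigma^2} + \frac{W'(x_1)}{\omega_0^2}e^{2V_0(x_1)/\sigma^2}.
\end{equation*}
Multiplying by $x_1^{n-1}$, integrating from $x$ to $\delta$, and performing one further integration by parts on the exact-derivative term yields, after multiplication by $e^{-2V_0(x)/\sigma^2}$, the recursion
\begin{equation*}
J_n(x) = \frac{\sigma^2 x^{n-1}}{2\omega_0^2} - \frac{\sigma^2\delta^{n-1}}{2\omega_0^2}e^{2(V_0(\delta)-V_0(x))/\sigma^2} + \frac{(n-1)\sigma^2}{2\omega_0^2}J_{n-2}(x) + \frac{1}{\omega_0^2}R_n(x),
\end{equation*}
where $R_n(x) = e^{-2V_0(x)/\sigma^2}\int_x^\delta x_1^{n-1}W'(x_1)e^{2V_0(x_1)/\sigma^2}\,dx_1$ collects the anharmonic correction.

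From this recursion I would first derive \eqref{eq:J1} directly (the term proportional to $n-1$ is absent), obtaining $J_1(x) = \sigma^2/(2\omega_0^2) + \Order{\sigma^2(\sigma+|x|)}$ after absorbing the exponentially small boundary contribution at $\delta$ and controlling $R_1$ via $|W'(x_1)| \leq C x_1^2$ combined with a Laplace-type bound of order $\sigma^3$ on $\int x_1^2 e^{2V_0/\sigma^2}\,dx_1$. A single application of the recursion at $n=3$ then reduces $J_3$ to $\sigma^2 x^2/(2\omega_0^2) + (\sigma^2/\omega_0^2)J_1(x)$ up to controlled corrections, and substituting \eqref{eq:J1} produces \eqref{eq:J3}. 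The general bound \eqref{eq:Jn} follows by induction on $n$: the leading term $\sigma^2 x^{n-1}/(2\omega_0^2)$ is trivially at most $\sigma^2(\sigma+|x|)^{n-1}$; the recursive term is bounded by $(n-1)\sigma^2 \cdot M\sigma^2(\sigma+|x|)^{n-3}/(2\omega_0^2) \leq \sigma^2(\sigma+|x|)^{n-1}$ via the inductive hypothesis and $\sigma^2 \leq (\sigma+|x|)^2$; and the $\delta$-boundary term is exponentially small in $\sigma^{-2}$ for $|x|$ bounded away from $\delta$.

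The main obstacle will be controlling the anharmonic remainder $R_n(x)$. A naive bound $|W'(x_1)| \leq C x_1^2$ expresses $R_n$ roughly as $J_{n+1}(x)$, a \emph{higher} index that breaks the induction. To resolve this I would split the integration range at $|x_1| \asymp \sigma$: on $|x_1| \leq C\sigma$ the bound $|W'(x_1)| \leq C\sigma^2$ gains an extra factor $\sigma^2$ over the polynomial-moment bound of order $n-1$, while on $|x_1| > C\sigma$ the Gaussian envelope $e^{-\omega_0^2 x_1^2/\sigma^2}$ absorbs the polynomial excess by the same endpoint-Laplace analysis used to estimate $I_n$ in \eqref{eq:In}. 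Combining the two regions yields $|R_n(x)| \lesssim \sigma^2(\sigma+|x|)^{n-1}$, closing the induction.
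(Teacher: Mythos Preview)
Your integration-by-parts recursion is correct, and your route to \eqref{eq:J1} and \eqref{eq:J3} through it is essentially the paper's. The paper, however, does \emph{not} obtain the general bound \eqref{eq:Jn} by this recursion. Instead it observes that $J_n$ solves the first-order ODE $\tfrac{\sigma^2}{2}J_n'(x)=-V_0'(x)\,J_n(x)-\tfrac{\sigma^2}{2}x^n$, whose slow manifold $J_n^\star(x)=\sigma^2 x^{n}/(-2V_0'(x))\asymp \sigma^2 x^{n-1}$ attracts solutions at rate $|V_0'(x)|/\sigma^2$; combined with the endpoint values at $x=0$ and $|x|$ of order~$1$ (Olver's Laplace asymptotics), this yields \eqref{eq:Jn} directly for each $n$, with no induction. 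Only afterwards does the paper deduce the sharper \eqref{eq:J1}, bounding the $W'$-integral via the already-established \eqref{eq:Jn} with $n=2$; your logical order (prove \eqref{eq:J1} first) leaves that step unsupported.

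Your inductive route has a genuine gap in the control of $R_n$. The splitting at $|x_1|\asymp\sigma$ misidentifies where the mass of the integral lies: $R_n(x)$ carries the weight $e^{2(V_0(x_1)-V_0(x))/\sigma^2}$, which equals $1$ at the endpoint $x_1=x$ and decays from there --- it is not a Gaussian centred at $0$. When $x\gg\sigma$ the whole range $[x,\delta]$ falls in your ``outer'' region, and endpoint Laplace gives $|R_n(x)|\sim|W'(x)|\,x^{n-1}\sigma^2/(-2V_0'(x))\asymp \sigma^2 x^{n}$, one power of $(\sigma+|x|)$ too many for the target $\sigma^2(\sigma+|x|)^{n-1}$. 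The same oversight invalidates your bound on $R_1$: the ``Laplace-type bound of order $\sigma^3$'' on $\int x_1^2e^{2V_0/\sigma^2}\,dx_1$ forgets the prefactor $e^{-2V_0(x)/\sigma^2}$, which is $O(1)$ only for $|x|\lesssim\sigma$. A workable fix within your framework (for $x\geqs0$) is to use the cruder estimate $|W'(x_1)|\leqs C\delta\,|x_1|$ on $[0,\delta]$, which gives $|R_n(x)|\leqs (C\delta/\omega_0^2)\,J_n(x)$ and can be absorbed into the left-hand side of the recursion for $\delta$ small enough; but you still need an independent argument for the base cases, which is precisely the ODE/endpoint-Laplace input the paper supplies.
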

\begin{proof}
For $x=0$ and for $\abs{x}$ of order $1$,~\eqref{eq:Jn} follows 
from~\cite[Theorem~8.1]{Olver_book}. For intermediate $x$, we can use the fact 
that 
\begin{align}
 \frac{\sigma^2}{2} J_n'(x) 
 &= -\partial_x V_0 J_n(x) - \frac{\sigma^2}{2} x^n \\
 &= \bigbrak{\omega_0^2 x - \partial_x W(x)}J_n(x) - \frac{\sigma^2}{2} x^n\;,
\end{align} 
whose right-hand side vanishes for $J_n(x) = J^\star_n(x) = \Order{\sigma^2 
x^{n-1}}$. Since $J_n(x)-J_n^*(x)$ is a decreasing function of $x$ for $x<0$, 
and~\eqref{eq:Jn} is satisfied for negative $x$ of order $1$, it holds for all 
$x\in[0,\delta]$. A similar argument applies for $x>0$ by changing $x$ into 
$-x$. To prove~\eqref{eq:J1}, we use a similar integration-by-parts argument as 
in~\eqref{eq:ibp_proof} to obtain  
\begin{equation}
\e^{2V_0(x)/\sigma^2}J_1(x) 
= \frac{\sigma^2}{2\omega_0^2} 
\bigbrak{\e^{2V_0(x)/\sigma^2} - \e^{2V_0(\delta)/\sigma^2}}
+ \frac{1}{\omega_0^2} \int_x^\delta \partial_x W 
\e^{2V_0(x_1)/\sigma^2}\6x_1\;.
\end{equation}
The integral on the right-hand side can be bounded using~\eqref{eq:Jn} with 
$n=2$, while the term $\e^{2V_0(\delta)/\sigma^2}$ is negligible. Finally, 
\eqref{eq:J3} follows from the integration-by-parts 
relation 
\begin{equation}
\e^{2V_0(x)/\sigma^2}J_1(x) 
= -\frac{x^2}{2} 
\bigbrak{\e^{2V_0(x)/\sigma^2} - \e^{2V_0(\delta)/\sigma^2}}
- \frac{1}{\sigma^2} \int_x^\delta x_1^2 \partial_x V_0 
\e^{2V_0(x_1)/\sigma^2}\6x_1\;,
\end{equation}
expressing the integral on the right-hand side in terms of $J_3(x)$. 
\end{proof}


\bibliographystyle{plain}
{\small \bibliography{KPref}}

\newpage
{\small \tableofcontents}

\vfill

\bigskip\bigskip\noindent
{\small
Institut Denis Poisson (IDP) \\ 
Universit\'e d'Orl\'eans, Universit\'e de Tours, CNRS -- UMR 7013 \\
B\^atiment de Math\'ematiques, B.P. 6759\\
45067~Orl\'eans Cedex 2, France \\
{\it E-mail address: }
{\tt nils.berglund@univ-orleans.fr}

\end{document}